\newcommand{\free}{\mathop{\Huge \mathlarger{*}}}
\def\acts{\curvearrowright}
\newtheorem{theorem}{Theorem}[section]
\newtheorem{corollary}[theorem]{Corollary}
\newtheorem{lemma}[theorem]{Lemma}
\newtheorem{proposition}[theorem]{Proposition}
\newtheorem{notation}[theorem]{Notation}
\newtheorem{case}[theorem]{Case}
\theoremstyle{definition}
\newtheorem{definition}[theorem]{Definition}
\newtheorem{example}[theorem]{Example}
\newtheorem{remark}[theorem]{Remark}
\newtheorem{problem}{\bf Problem}
\numberwithin{equation}{section}
\newcommand{\h}[2]{\ensuremath{H_{#1}^{#2}}}
\newcommand{\la}{\ensuremath{\left\langle}}
\newcommand{\ra}{\ensuremath{\right\rangle}}
\newcommand{\Tw}[7]{

\tikzmath{\x = #1;\y = #2;}
\draw  (\x, \y + 1)--(\x + 2, \y - 1);
\draw  (\x + 2, \y + 1)--(\x , \y - 1);
\draw [fill=black] (\x , \y + 1) circle (1.5pt) node[anchor=east]{$#3$};
\draw [fill=black] (\x + 2, \y + 1) circle (1.5pt) node[anchor=west]{$#4$};
\draw [fill=black]  (\x, \y - 1) circle (1.5pt) node[anchor=east]{$#5$};
\draw [fill=black] (\x + 2, \y - 1) circle (1.5pt)node[anchor=west]{$#6$};
\node at (\x + 1, \y - 1.5) {#7};

}
\tikzstyle{block} = [rectangle, draw, fill=blue!20, 
\tikzstyle{line} = [draw, -latex']
\tikzstyle{cloud} = [draw, ellipse,fill=red!20, text width=7em, text centered, node distance=3cm,
\begin{document}

\pagenumbering{roman} \thispagestyle{empty}

\title{Thickness of $\mathsf{Out}(A_1*...*A_n)$}

\author{\footnote{The author received partial support from the National Science Foundation under Grant No.~DMS-1406376, principal investigator Lee Mosher.} Saikat Das}
\maketitle

\begin{abstract}

In this paper we have examined $\Gamma_n := \mathsf{Out}(G_n)$ from the perspective of geometric group theory, where $G_n = A_1*...*A_n$, is a finite free product and each $A_i$ is a finite group. We wanted to inspect hyperbolicity and relative hyperbolicity of such groups. We used the $\mathsf{Out}(G_n)$ action on the Guirardel-Levitt deformation space, \cite{GL}, to find a virtual generating set and prove quasi isometric embedding of a large class of subgroups. To prove non-distortion we used arguments similar to those used in \cite{HM} and \cite{Ali}. We used these subgroups to prove that $\Gamma_n$ is thick for higher complexities. Thickness implies that the groups are non relatively hyperbolic for higher complexities, \cite{BDM}.

\end{abstract}

\tableofcontents

\pagebreak


\pagenumbering{arabic} \pagestyle{myheadings} \markboth{}{}

\section{Introduction}\label{section:intro}

Our research has been motivated by trying to answer the following questions:

\begin{problem}\label{problem:main}
If each $A_i$ is a finite group, then is $\Gamma_n := \mathsf{Out}(A_1*A_2*...*A_n)$ hyperbolic? If the answer is no, then is it relatively hyperbolic?
\end{problem}

Questions similar to these have been answered for $\mathsf{Out}(F_n)$ by Behrstock-Dru\c{t}u-Mosher\cite{BDM}. In case of mapping class groups, $\mathsf{MCG}(S)$, They have been independently answered by Karlsson-Noskov \cite{KN}, Bowditch \cite{bowh}; Anderson-Aramayona-Shackelton\cite{AAS}; Behrstock-Dru\c{t}u-Mosher  \cite{BDM}. These are two of the most studied groups in geometric group theory. 

\subsection{Main theorem}
The following theorem answers the original question. 
\begin{theorem} \label{thm:main} If each $A_i$ is finite group, and $\Gamma_n := \mathsf{Out}(A_1 *...*A_n)$, then for
\begin{enumerate}[wide, labelwidth=!, labelindent=0pt]
\item $n \leq 2, \Gamma_n$ is finite.
\item $n = 3, \Gamma_n$ is infinite hyperbolic. 
\item $n > 3$, $\Gamma_n$ is a thick group of order at most one. As a consequence, $\Gamma_n$ is non relatively hyperbolic when $n >  3$. 
\end{enumerate}
\end{theorem}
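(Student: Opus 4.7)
The plan is to treat the three regimes $n\leq 2$, $n=3$, and $n\geq 4$ separately, according to the increasing dimension of the Guirardel--Levitt deformation space $\mathcal{O}_n$ for $G_n$, which controls the geometry of $\Gamma_n$.

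For (1), the case $n=1$ is immediate because $A_1$ is finite. For $n=2$ I would invoke the Fouxe--Rabinovitch description of $\mathsf{Aut}(A_1*A_2)$: modulo inner automorphisms the generators are the factor automorphisms $\mathsf{Aut}(A_1)\times\mathsf{Aut}(A_2)$, a possible swap when $A_1\cong A_2$, and partial conjugations. With only two factors, each partial conjugation differs from an inner automorphism of $G_2$ by an inner automorphism of a single factor extended by the identity, so it contributes only a finite-order element to $\Gamma_2$. Hence $\Gamma_2$ is finite.

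For (2), when $n=3$ the deformation space $\mathcal{O}_3$ has simplicial rank $n-2=1$, so it is one-dimensional. I would check that $\Gamma_3$ acts cocompactly on $\mathcal{O}_3$ with virtually finite vertex and edge stabilizers coming from factor automorphisms together with a finite-rank twist contribution, and that the underlying space is quasi-isometric to a tree. From this $\Gamma_3$ is virtually free, hence infinite and Gromov hyperbolic.

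For (3), the main case, the goal is to verify the Behrstock--Dru\c{t}u--Mosher definition of thickness of order at most one. The strategy has two parts. First, I would identify an explicit finite family of subgroups $\{H_\alpha\}\subset\Gamma_n$ that are each thick of order zero and quasi-isometrically embedded: the natural candidates are Fouxe--Rabinovitch subgroups generated by partial conjugations and twists supported on a fixed sub-free-product decomposition of $G_n$. For $n\geq 4$, each such subgroup contains a free abelian subgroup of rank at least two, built from commuting partial conjugations with disjoint moving supports, which furnishes a quasi-flat and hence order-zero thickness. The quasi-isometric embedding into $\Gamma_n$ is precisely the non-distortion statement set up earlier in the paper via the $\Gamma_n$-action on $\mathcal{O}_n$, modeled on \cite{HM} and \cite{Ali}. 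Second, I would verify the network condition: that the $H_\alpha$ coarsely cover $\Gamma_n$ and that any two can be joined by a chain of family members whose consecutive coarse intersections are infinite.

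The hard step is this network / chaining property. It requires showing that by varying the supporting sub-decomposition one can always find overlapping partial-conjugation-plus-twist subgroups whose coarse intersection still contains a quasi-flat, and that any virtual generator of $\Gamma_n$ can be written as a bounded-length product that passes through such overlapping pieces. This is where the preparatory commutation analysis of partial conjugations and twists, together with the virtual generating set extracted from the action on $\mathcal{O}_n$, must be combined carefully. Once the network condition is in hand, thickness of order at most one follows, and the theorem of \cite{BDM} immediately yields non-relative hyperbolicity of $\Gamma_n$ for $n\geq 4$, completing (3).
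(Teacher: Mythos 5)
Your overall strategy for part (3) is the same as the paper's: exhibit a finite family of zero-thick, undistorted subgroups built from partial conjugations that thickly connect and virtually generate, then invoke \cite{BDM}. Your route for part (1) is genuinely different and perfectly viable: you argue algebraically from the Fouxe--Rabinovitch generators that every partial conjugation of $A_1*A_2$ is inner up to factor automorphisms, whereas the paper argues geometrically that $\mathcal{SPD}(G_2,\mathscr{H})$ is a single point (lemma \ref{lemma:unique_min}) and applies Milnor--\u{S}varc; your version is more elementary, the paper's version has the side benefit of establishing the uniqueness of $A_i*A_j$-minimal subtrees (remark \ref{rem:unique_min}), which is reused repeatedly in the non-distortion arguments.

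There are, however, two genuine gaps. First, in part (2) you assert that the one-dimensionality of the deformation space makes $\Gamma_3$ act on something ``quasi-isometric to a tree,'' but one-dimensionality alone buys nothing: a group acting geometrically on a graph is just a finitely generated group, and the graph could contain arbitrarily large embedded cycles. The missing ingredient is the contractibility of the deformation space (theorem \ref{thm:contractible}, due to Guirardel--Levitt), which combined with one-dimensionality forces $\mathcal{SPD}(G_3,\mathscr{H})$ to literally be a tree; this is the one place in the paper where the full strength of contractibility, rather than mere path-connectedness, is used. Without citing it your argument for hyperbolicity does not close.

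Second, in part (3) you correctly identify the network/chaining condition as the hard step, but you describe it as a goal rather than carrying it out, and you miss the one place where the construction genuinely bifurcates: the case $n=4$ with non-abelian factors. For $n\geq 5$ the paper's chain $H^{i_1i_2}$, $\langle N^{i_1i_2},N^{i_3i_4}\rangle$, $H^{i_3i_4}$ works uniformly (corollary \ref{cor:thickly_connected}), but for $n=4$ the subgroups $H^{12},H^{34}$ together with a $\mathbb{Z}\oplus\mathbb{Z}$ only virtually generate when each $A_i$ is abelian (lemma \ref{lemma:gen_G4}); in general one must adjoin the extra subgroup $M^{12}\oplus M^{34}$ built from the ``diagonal'' partial conjugations $H_i^i$, and prove its non-distortion by a separate retraction onto the subcomplex $\mathcal{M}_4$ (corollary \ref{cor:M4_undistorted}). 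You also need, and do not supply, the statement that the subgroup generated by all these pieces is finite index in $\Gamma_n$ (corollary \ref{cor:fin_ind}), which is what makes condition $\mathbf{AN_1}$ of definition \ref{defn:network} hold; in the paper this occupies all of section \ref{section:fin_index} and rests on the connectivity of the subcomplex spanned by trees of type $X$ and $Y$. As written, your part (3) is a faithful outline of the paper's architecture but not yet a proof.
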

\begin{remark}
Hyperbolicity for $n = 3$ was proved by Collins, \cite{col}. We will give an independent proof of hyperbolicity in lower complexities ($n \leq 3$) using theorem \ref{thm:contractible} and the topology of the deformation space. 
\end{remark}
\subsection{Methodology}\label{subsec:methods}
We have employed the following notable tools in our investigation - 
\begin{enumerate}[wide, labelwidth=!, labelindent=0pt]
\item \textbf{Deformation space of $G$-trees}, is a geodesic metric space on which $\Gamma_n$ acts by isometries such that the action is properly discontinuous. We follow the work of Guirardel-Levitt \cite{GL}, which is the most general theory of such spaces. Culler-Vogtmann spaces, see \cite{CV}, are examples of Guirardel-Levitt deformation spaces. The outer automorphisms we have used for understanding the action resemble the symmetric outer automorphisms investigated by McCullough-Miller, see \cite{MM}. 

\item \textbf{Algebraic thickness} of groups introduced by Behrstock-Dru\c{t}u-Mosher, see \cite{BDM}. Thickness is sufficient to conclude non-relative-hyperbolicity, see theorem \ref{thm:thickNRH}.
\end{enumerate}

 Guirardel-Levitt showed:
\begin{theorem}\label{thm:contractible}\cite[Theorem 6.1]{GL} $\mathcal{D}(G, \mathcal{H})$ is contractible.\end{theorem}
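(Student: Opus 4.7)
My plan is to construct a continuous deformation retraction of $\mathcal{D}(G,\mathcal{H})$ onto a fixed basepoint $T_0 \in \mathcal{D}(G,\mathcal{H})$. The deformation space decomposes naturally into open simplices, each one parametrized by the positive edge lengths on a fixed quotient graph of groups $T/G$; faces of a simplex are obtained by collapsing edges, provided that the resulting tree still has the prescribed elliptic subgroup class $\mathcal{H}$. The topology in play is the equivariant Gromov topology, equivalently the topology induced by the translation length functions $g \mapsto \ell_T(g)$ for $g \in G$.

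For an arbitrary $T \in \mathcal{D}(G,\mathcal{H})$, the fact that $T$ and $T_0$ share the same elliptic subgroup class guarantees the existence of a $G$-equivariant Lipschitz morphism $\phi_T : T \to T_0$. Following the folding-path idea of Skora, I would turn $\phi_T$ into a continuous family of intermediate trees $T_s \in \mathcal{D}$, $s \in [0,1]$, with $T_0 = T$ and $T_1 = T_0$, by progressively identifying the pairs of points that $\phi_T$ fails to distinguish. Setting $H(T,s) = T_s$ then provides the candidate deformation retraction, so long as the morphisms $\phi_T$ can be chosen canonically and continuously in $T$ (for instance as Lipschitz minimizers, or via a canonical equivariant train-track representative).

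The main obstacle is joint continuity of $H : \mathcal{D} \times [0,1] \to \mathcal{D}$ across simplex boundaries. Within a single open simplex, continuity of the folding process in the edge-length coordinates is essentially a calculation; the delicate case arises when an edge of $T$ degenerates to length zero, changing the combinatorial type of the quotient and potentially changing the stabilizer pattern. Here one must check that folding commutes, up to reparametrization, with edge collapse, so that the folding paths of a degenerating sequence of trees converge to the folding path of the collapsed limit. This coherence is the technical heart of the argument.

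As an alternative, more combinatorial implementation, I could first retract $\mathcal{D}$ onto its spine, the subcomplex consisting of trees in which all edges have equal length, via a straight-line rescaling homotopy, and then contract the spine by a Morse/poset argument that orders vertices by the number of edges in $T/G$ and exhibits contractible descending links. Either route reduces the theorem to the same coherence problem between folding and collapse, and once that is settled the contraction $H$ yields the result.
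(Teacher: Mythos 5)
The paper does not prove this statement at all: it is imported directly from Guirardel--Levitt \cite{GL} (their Theorem 6.1), so there is no internal proof to compare your attempt against. That said, your sketch does identify the strategy that Guirardel--Levitt actually use --- Skora-style folding paths from an arbitrary tree to a fixed basepoint, with the retraction to the spine appearing as a combinatorial variant --- so the road map is the correct one.

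As written, however, this is an outline rather than a proof: the points you defer are essentially the entire content of the theorem. First, the canonical and continuous choice of the morphism $\phi_T$ is nontrivial --- Lipschitz minimizers are generally not unique, and one must make a selection that varies continuously in $T$, including across the walls where the combinatorial type of $T/G$ changes. Second, the joint continuity of $(T,s)\mapsto T_s$, which you correctly flag as ``the technical heart,'' is asserted as a goal but never addressed. Third, and not mentioned in your proposal at all, one must check that the intermediate trees $T_s$ actually remain in $\mathcal{D}(G,\mathcal{H})$: folding can a priori create nontrivial edge stabilizers or vertex stabilizers outside the prescribed class $\mathcal{H}$, and controlling this is a substantive step in \cite{GL}. (There is also a small notational collision in your write-up, with $T_0$ denoting both the basepoint and the start of the folding path.) Since the conclusion of the theorem is exactly what remains once these points are granted, the proposal cannot stand as a proof; it is an accurate summary of where the difficulties lie in the argument of \cite{GL}.
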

For $n > 3$, in addition to theorem \ref{thm:contractible} we use our understanding of $\Gamma_n$ and its action on $\mathcal{D}(G, \mathcal{H})$ to inspect their thickness. Behrstock-Dru\c{t}u-Mosher showed:
\begin{theorem}\label{thm:thickNRH}
\cite[Corollary 7.9]{BDM} If $G$ is a finitely generated group which is thick, then $G$ is not relatively hyperbolic.
\end{theorem}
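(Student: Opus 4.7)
The plan is to use the asymptotic-cone characterization of relative hyperbolicity due to Dru\c{t}u--Sapir, together with an induction on the order of algebraic thickness. Recall that a finitely generated group $G$ is (non-trivially) relatively hyperbolic with respect to proper subgroups $\{P_1,\dots,P_k\}$ if and only if every asymptotic cone $\operatorname{Cone}_\omega(G)$ is tree-graded with respect to the pieces obtained as ultralimits of cosets of the $P_i$; in any such tree-grading, two distinct pieces meet in at most a point, and any two points not contained in a common piece are separated by a global cut-point. I will aim to show, by induction on the thickness order, that every asymptotic cone of a thick group $G$ is contained in a single piece of every purported tree-grading, which forces the corresponding peripheral subgroup to coarsely coincide with $G$ and hence makes the relative hyperbolicity trivial.

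For the base case, suppose $G$ is thick of order $0$, i.e.\ unconstricted, so at least one asymptotic cone $\operatorname{Cone}_\omega(G)$ has no global cut-point. If this cone were tree-graded with respect to a collection containing more than one piece, then choosing points in two distinct pieces would produce a global cut-point separating them, a contradiction. Hence $\operatorname{Cone}_\omega(G)$ lies inside a single piece, and by the Dru\c{t}u--Sapir equivalence the peripheral responsible for that piece must coarsely equal $G$, so $G$ is not properly relatively hyperbolic.

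For the inductive step, assume the conclusion for thickness order $\leq n$ and let $G$ be thick of order $n+1$ with a witnessing collection $\{H_i\}$ of quasi-isometrically embedded subgroups, each thick of order $\leq n$, whose union coarsely covers $G$, and with the connectivity property that any two $H_i, H_j$ are joined by a sequence $H_{i_0}, H_{i_1}, \dots, H_{i_m}$ in which each coarse intersection $H_{i_j}\cap H_{i_{j+1}}$ has infinite diameter. Suppose towards contradiction that $G$ is relatively hyperbolic with proper peripherals, and fix any cone $\operatorname{Cone}_\omega(G)$. The quasi-isometric embeddings realize each $\operatorname{Cone}_\omega(H_i)$ as a connected, bi-Lipschitz embedded subspace. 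By the inductive hypothesis, each $H_i$ is not properly relatively hyperbolic, and a key lemma adjacent to the base-case argument shows that a connected bi-Lipschitz subspace with this property must lie entirely in one piece $\mathcal{P}_i$ of the tree-graded decomposition of $\operatorname{Cone}_\omega(G)$. The infinite coarse intersection of consecutive $H_{i_j}$ and $H_{i_{j+1}}$ then yields an unbounded subspace lying in both $\mathcal{P}_{i_j}$ and $\mathcal{P}_{i_{j+1}}$; since distinct pieces share at most a point, the two must agree. Iterating along the chain, all $\operatorname{Cone}_\omega(H_i)$ lie in a single piece $\mathcal{P}$, and the coarse cover of $G$ by $\bigcup_i H_i$ forces $\operatorname{Cone}_\omega(G) = \mathcal{P}$. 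This contradicts properness of the peripheral structure, completing the induction.

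The main obstacle will be the transition from the coarse algebraic data of thickness to genuine geometric phenomena in the asymptotic cones. Two technical points require care: first, verifying that a quasi-isometrically embedded subgroup which is not properly relatively hyperbolic inherits a ``no cut-point'' property strong enough to trap its ultralimit inside one piece, rather than merely avoiding cut-points in its own cone; and second, ensuring that the infinite coarse intersection $H_{i_j}\cap H_{i_{j+1}}$ is witnessed by a sequence of elements whose ultralimit is unbounded and simultaneously lies in both $\operatorname{Cone}_\omega(H_{i_j})$ and $\operatorname{Cone}_\omega(H_{i_{j+1}})$. Both are standard but delicate ultrafilter and basepoint bookkeeping arguments; once they are in place, the rigidity of tree-graded spaces, namely that distinct pieces share at most one point, closes the argument automatically.
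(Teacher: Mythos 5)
The paper itself does not prove this statement: it imports it verbatim as \cite[Corollary 7.9]{BDM}, so the only thing to compare your proposal against is the original Behrstock--Dru\c{t}u--Mosher argument, which your sketch follows in outline (asymptotic cones, the Dru\c{t}u--Sapir tree-graded characterization, induction on thickness order). However, as written your induction carries the wrong invariant, and this is a genuine gap rather than bookkeeping. In your inductive step you assume only that each $H_i$ is \emph{not properly relatively hyperbolic} and then invoke ``a key lemma adjacent to the base-case argument'' to conclude that the bi-Lipschitz embedded copy of $\operatorname{Cone}_\omega(H_i)$ lies in a single piece of the tree-grading of $\operatorname{Cone}_\omega(G)$. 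That implication is false: NRH does not give a cut-point-free asymptotic cone, and indeed there exist finitely generated groups that are not relatively hyperbolic yet have global cut-points in \emph{every} asymptotic cone (the lacunary hyperbolic examples of Ol'shanskii--Osin--Sapir), so ``NRH'' alone cannot trap a subspace inside one piece. The property that actually propagates in the Behrstock--Dru\c{t}u--Mosher induction is stronger and of a different shape: \emph{every quasi-isometric embedding of $H$ into any space asymptotically tree-graded with respect to a collection $\mathcal{A}$ has image in a uniformly bounded neighborhood of a single $A \in \mathcal{A}$}. For unconstricted spaces this is their Theorem 4.1 (your base case is essentially correct, since one cut-point-free cone suffices against the tree-grading of all cones), and the inductive step must establish this trapping property for the network, not merely NRH. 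You flag exactly this issue yourself as the first ``technical point requiring care,'' but the proof as proposed never supplies it, and without restating the induction hypothesis the chain argument does not close.

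A secondary, fixable misstatement: condition $\mathbf{AN_1}$ (definition \ref{defn:network}) does not say that $\bigcup_i H_i$ coarsely covers $G$; it says a finite-index subgroup $G_1$ has a finite generating set contained in that union. So your final step, ``the coarse cover of $G$ by $\bigcup_i H_i$ forces $\operatorname{Cone}_\omega(G) = \mathcal{P}$,'' is unjustified as written. The standard repair is to run the chain argument over left cosets: writing $g \in G_1$ as $s_1 \cdots s_k$ with each $s_m$ in some $H_{i_m}$, the partial products give a chain of cosets $s_1 \cdots s_m H_{i_m}$ with coarsely unbounded consecutive intersections, each coset trapped (by the corrected inductive property, which is translation-invariant) near a single peripheral set; since distinct pieces of a tree-graded space meet in at most a point, all land near the same peripheral set $A$, whence $G_1 \subset \mathcal{N}_C(A)$ and the peripheral structure is trivial. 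With the induction hypothesis strengthened to the trapping property and the coset chain made explicit, your argument becomes the BDM proof; without those two repairs it does not go through.
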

 $\mathsf{Out}(F_n)$, $\mathcal{MCG}(S)$ and some other classes of geometrically interesting groups are thick for all but finitely many cases and hence non relatively hyperbolic. 

To prove thickness we have to find suitable \textit{undistorted}, zero thick subgroups of $\Gamma_n$. A subgroup is undistorted in $\Gamma_n$, if a Cayley graph of the subgroup can be quasi isometrically embedded in a Cayley graph of $\Gamma_n$. We use ideas from Handel-Mosher \cite{HM} to find a coarse Lipschitz retract from the spine of the deformation space to a sub-complex of the spine. Additionally we use ideas from Alibegovi\'c \cite{Ali} to prove non-distortion of another class of subgroups. A full justice to these ideas cannot be done in this short introduction; nonetheless, we would like to mention that one of the most innovative geometric idea in this work can be found in the definition of coarse Lipschitz-retraction map, see definition \ref{defn:retraction}. The author would like to express his gratitude towards Lee Mosher for this idea and most of the other ideas in this work.

Anthony Genevois has communicated that there is a nice argument for proving that $\Gamma_n$ is NRH for $n > 7$ which depends on \cite{GeM}.


\section{Organization}
In this section we will give a brief summary of each section in this exposition.

\begin{enumerate}[wide, labelwidth=!, labelindent=0pt]

\item[Section \ref{section:intro}: ] In the introductory section, we have stated the main question, Problem \ref{problem:main}. We have then stated our answer to the question in theorem \ref{thm:main}. We have also briefly discussed the methodologies, subsection \ref{subsec:methods}, used to investigate the question.

\item[Section \ref{section:background}: ] In this section we have discussed some of the basic definitions and results in geometric group theory, which are relevant to our research. The reader can skip this section if the reader feels comfortable about the notions of quasi isometry, Milnor-\u{S}varc lemma, hyperbolicity, relative hyperbolicity, Bass-Serre theory, undistorted subgroups and first barycentric subdivision.

\item[Section \ref{section:SPD}: ] In section \ref{section:SPD}, we have defined the deformation space, definition \ref{defn:def_sp} and in section \ref{subsec:def_sp} we have described the topology and geometry of the deformation space using collapse-expand moves (deformations). The contractibility of the deformation space in this topology is due to the work of Guirardel-Levitt, theorem \ref{thm:contractible}. We conclude the section by proving that $\Gamma_n$ acts geometrically on the spine of the deformation space, $\mathcal{SPD}(G,\mathscr{H})$ (remark \ref{rem:action}). The homotopy equivalence of the deformation space and its spine follows from lemma \ref{lemma:bsd}.

\item[Section \ref{section:G2G3}: ] In section \ref{subsec:G2}, we have proved the finiteness of $\Gamma_2$ using the triviality of $\mathcal{SPD}(G_2, \mathscr{H})$. An important consequence of this section is the uniqueness (up-to homeomorphism) of $A_i*A_j$-minimal sub-tree, discussed in remark $\ref{rem:unique_min}$. This uniqueness has been exploited in various times in sections \ref{section:fin_index}, \ref{section:undistorted}, to prove the ideas circling the most important results. In section \ref{subsec:G3} we have proved that $\Gamma_3$ is hyperbolic. This is the only result that uses the full power of the contractibility of the deformation space, theorem \ref{thm:contractible}; elsewhere we have used path connectedness of deformation space. Guirardel-Levitt has given credit to Max Forester \cite{For} for the proof of path connectedness of the deformation space. 
\item[Section \ref{section:fin_index}: ] In section \ref{section:fin_index}, we have considered a subgroup $\Gamma_n' \leq {\Omega_n} \leq \Gamma_n$, where $\Omega_n$ is the subgroup that fixes conjugacy class of each element in $\Gamma_n$ and $\Gamma_n'$ is generated by the outer automorphisms that have a representative automorphism which act by identity on at least one of the factors. We showed that $\Gamma_n'$ is finite index in $\Gamma_n$. The idea of the proof is to find a connected sub-complex of $\mathcal{SPD}(G_n,\mathscr{H})$ on which $\Gamma_n'$ and $\Gamma_n$ acts where the actions are co-compact and properly discontinuous. The essential part of the proof is to establish path connectedness of the sub-complex, which has been done in corollary \ref{cor:xy_connected}. 
\item[Section \ref{section:thick}: ] Careful inspection of the definition of $\Gamma_n'$ (definition \ref{defn:GN'}), made it clear that there is a substantial collection of subgroups, which are direct products of infinite subgroups, when $n \geq 4$. Once we observed the presence of these infinite subgroups, our motivation was to find a thickly connected network, definition \ref{defn:network}, of $\Gamma_4'$. So, a reader could start from section \ref{section:thick}, and \textit{see} that $\Gamma_n'$ has a thickly connected network, corollary \ref{cor:thickly_connected} . To prove thickness of $\Gamma_n$, definition \ref{defn:thickness}, we had to prove that $\Gamma_n'$ is finite index in $\Gamma_n$ (section \ref{section:fin_index}) and the subgroups in the network are undistorted in $\Gamma_n'$ (section \ref{section:undistorted}).

\item[Section \ref{section:undistorted}: ] In section \ref{section:undistorted}, we prove non distortion of certain classes of subgroups, corollaries \ref{cor:hij_undistorted}, \ref{cor:nij_undistorted}, \ref{cor:M4_undistorted}. The idea of the proofs of corollaries \ref{cor:hij_undistorted}, \ref{cor:M4_undistorted} are similar. We found a sub-complex of $\mathcal{SPD}$ which has a Lipschitz retraction from $\mathcal{SPD}$ and are quasi-isometric to these sub-groups. This idea draws inspiration from Handel-Mosher's paper \cite{HM}. The idea of the proof of corollary \ref{cor:nij_undistorted} has been motivated by Alibegovi\'c's work \cite{Ali}.

\item [Section \ref{section:summary}: ] In this section we have organized the our conclusions to give a complete overview of the proof of the theorem.

\end{enumerate}


\section{Definitions and Preliminaries}\label{section:background}

In this section, we will define and describe some of the fundamental concepts of geometric group theory. In geometric group theory, often the object of study is a geodesic metric space and a subgroup of its symmetry group. From another point of view the object of study is a group and its action on a geodesic metric space.

\subsection{Fundamental Observation in Geometric Group Theory}

The following fundamental observation in geometric group theory connects a group with the geodesic metric space on which it is acting.

\begin{lemma}[Milnor\cite{Mil}-\u{S}varc\cite{Sva} lemma]\label{lemma:mil_sva}
 For any group $G$ and any proper geodesic metric space $X$, if there exists a properly discontinuous, co-compact, isometric action $G \acts X$ then $G$ is finitely generated. Furthermore, for any such action and any point $x \in X$, the orbit map $g \mapsto g \cdot x$ is a quasi-isometry $\mathcal{O}: G \rightarrow X$, where $G$ is equipped with the word metric of any finite generating set.
\end{lemma}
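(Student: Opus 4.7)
The plan is to fix a basepoint $x_0 \in X$ and use cocompactness to pick $R > 0$ such that $G \cdot \overline{B(x_0, R)} = X$. I would then propose the candidate generating set
\[
S = \{g \in G \setminus \{1\} : d(x_0, g x_0) \leq 3R\}.
\]
To see $S$ is finite, note that $\overline{B(x_0, 3R)}$ is compact because $X$ is proper, and proper discontinuity of the action then forces only finitely many $g \in G$ to satisfy $g x_0 \in \overline{B(x_0, 3R)}$. This is the one place where both hypotheses, properness of $X$ and proper discontinuity of the action, must be used together.

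Next I would show simultaneously that $S$ generates $G$ and that the orbit map $\mathcal{O}(g) = g \cdot x_0$ satisfies one direction of the quasi-isometry bound. Given $g \in G$, choose a geodesic $\gamma$ from $x_0$ to $g x_0$ and subdivide it into segments of length at most $R$, producing points $x_0 = y_0, y_1, \dots, y_N = g x_0$ with $N \leq \lceil d(x_0, g x_0)/R\rceil + 1$. For each $y_i$ use the covering property to pick $g_i \in G$ with $d(y_i, g_i x_0) \leq R$, taking $g_0 = 1$ and $g_N = g$. The triangle inequality and the isometric action give
\[
d(x_0, g_i^{-1} g_{i+1} x_0) \;=\; d(g_i x_0, g_{i+1} x_0) \;\leq\; R + R + R \;=\; 3R,
\]
so each $s_i := g_i^{-1} g_{i+1}$ lies in $S \cup \{1\}$. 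Telescoping $g = s_0 s_1 \cdots s_{N-1}$ proves $\langle S \rangle = G$ and gives the bound $|g|_S \leq (1/R)\, d(x_0, g x_0) + 2$.

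The reverse direction of the quasi-isometry is immediate: writing $g$ as a shortest word $s_1 \cdots s_k$ with $k = |g|_S$, the triangle inequality yields $d(x_0, g x_0) \leq 3R\,|g|_S$, since each generator moves $x_0$ by at most $3R$. Coarse density of $\mathcal{O}(G)$ in $X$ is the very statement of cocompactness, with constant $R$. Combining these three estimates shows $\mathcal{O}$ is a quasi-isometry with constants depending only on $R$.

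The main obstacle, as mentioned above, is the finiteness of $S$; once that is in hand, the rest is essentially a geodesic-subdivision bookkeeping argument. A secondary care-point is verifying that the argument really produces a group-theoretic word of length at most $N$ (and not merely a sequence of approximating points), which is why I chose $g_0 = 1$ and $g_N = g$ as the endpoints of the subdivision.
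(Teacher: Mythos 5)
The paper does not prove this lemma; it is quoted as a classical result with citations to Milnor and \u{S}varc, so there is no in-paper argument to compare against. Your proposal is the standard and correct proof: the generating set $S=\{g\ne 1: d(x_0,gx_0)\le 3R\}$, the geodesic-subdivision telescoping $g=s_0\cdots s_{N-1}$, and the three estimates combine exactly as you say. The only point deserving one extra line is the finiteness of $S$: the paper's definition of proper discontinuity is the local one (each $x$ has a neighborhood $U_x$ with $\{g: gU_x\cap U_x\ne\emptyset\}$ finite), whereas your argument uses the compact-set version $\{g: gx_0\in \overline{B(x_0,3R)}\}$ finite; for an isometric action on a proper space the former implies the latter by an accumulation-point argument (infinitely many orbit points in a compact ball would accumulate, violating local finiteness at the accumulation point), and it is worth saying so explicitly.
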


\begin{definition}[Geodesic metric space]
    In a \textbf{geodesic metric space} we can define and measure length of any path using a function called \textbf{metric}. Additionally, any two points in the space can be connected by a shortest length path called \textbf{geodesic}.
 \end{definition}
 
 \begin{definition}
 A metric space is proper if a closed ball is compact.
 \end{definition}
 
Every finitely generated group act on its \textit{Cayley graphs} by isometries. A \textit{geometry} of a finitely generated group means the geometry of a Cayley graph of the group. Equivalently, it means the geometry of any geodesic metric space on which the group acts \textit{geometrically} (properly and co-compactly).

\begin{definition}[Properly discontinuous action]
An action of a finitely generated group $G$ on a geodesic metric space $(X, d)$ is \textbf{properly discontinuous} if $\forall x \in X$, there is a neighborhood $U_x$ of $x$ such that the set \\ $\{g \in G \vert g \cdot U_x \cap U_x \ne \phi\}$ is a finite set.
\end{definition}

\begin{definition}[Co-compact action]
An action of a finitely generated group $G$ on a geodesic metric space $(X, d)$ is \textbf{co-compact} if the quotient $G / X$ is compact.
\end{definition}

\begin{definition}[Cayley graph]
The \textbf{Cayley graph} of a group with respect to a finite generating set is a metric space on which the group acts geometrically. Given a finitely generated group $G$ and a finite generating set $S$, the Cayley graph of $G$ is a graph with vertex set labeled by group elements and two vertices labeled by group elements $g_1 \text{ and } g_2$ are connected by an edge directed from the former vertex to the latter vertex if $g_1^{-1}g_2$ is an element of $S$. If we assign length $1$ to each edge and define the distance between any two vertices on the Cayley graph by the minimum number of edges required to connect the two vertices, then the Cayley graph can be realized as a geodesic metric space. The metric on a Cayley graph is a \textbf{word metric} on $G$ with respect to the generating set $S$.

\begin{center}
\begin{tikzpicture}
\begin{scriptsize}
\draw[<->] (3,-1.5) to (7, -1.5);
      \draw[<->] (3,-1) to (7, -1);
      \draw[<->] (3,-.5) to (7, -.5);
      \draw[<->] (3, 0) to (7, 0);
      \draw[<->] (3,.5) to (7, .5);
      \draw[<->] (3,1) to (7, 1);
      \draw[<->] (3,1.5) to (7, 1.5);
       
       	\draw[<->] (3.5, -2) to (3.5, 2);
       	\draw[<->] (4, -2) to (4, 2);
       	\draw[<->] (4.5, -2) to (4.5, 2);
       	\draw[<->] (5, -2) to (5, 2);
     	\draw[<->] (5.5, -2) to (5.5, 2);
      	\draw[<->] (6, -2) to (6, 2);
	\draw[<->] (6.5, -2) to (6.5, 2);
	\node at (5, -2.3) [anchor=north]{An unlabeled Cayley graph of $\mathbb{Z} \oplus \mathbb{Z}$};
\end{scriptsize}
 \end{tikzpicture} 
     \end{center}
\end{definition}

One of the main objectives in geometric group theory is to classify geodesic metric spaces up-to \textit{quasi isometry}. Quasi isometry captures large scale geometric behaviors of metric spaces.
\begin{definition}[Quasi Isometry]
A geodesic metric space $(X, d_X)$ is said to be $(K, C)$-quasi isometrically embedded for $k \geq 1, C \geq 0$ in a geodesic metric space $(Y, d_Y)$ if there is a function $f: (X, d_X) \rightarrow (Y, d_Y)$ which follows the following inequality $\forall x_1, x_2 \in X$\\
$\dfrac{1}{K}d_X(x_1, x_2) - C \leq d_Y(f(x_1), f(x_2)) \leq K d_X(x_1, x_2) + C$. 
$f$ is called a quasi isometric embedding.

Additionally, $f$ is a quasi isometry if there is a $D \geq 0$ such that $\forall y \in Y, \exists x \in X$ with $d_Y(f(x), y) \leq D$. In this case $X$ and $Y$ are said to be quasi isometric.
\end{definition} 

One of the most prominent quasi-isometry invariant is \textit{hyperbolicity}. In other words a non hyperbolic space cannot be quasi isometric to a hyperbolic space.

 \begin{definition}[Hyperbolicity]
A geodesic metric space is called \textbf{hyperbolic} if all geodesic triangles are $\delta$-thin for some fixed $\delta \geq 0$, i.e., any point on one side is within a distance $\delta$ of other two sides. A group is hyperbolic if one of its Cayley graphs is hyperbolic.
    \begin{center}
    \begin{tikzpicture} \begin{scriptsize}
      \draw[-] (0,1) to [bend right](.865, -.5);
      \draw[-] (0,1) to [bend left] (-.865, -.5);
      \draw[-] (.865, -.5) to [bend right] (-.865, -.5);
      \draw[<->] (-.25, .075) to (0, -.25);
      \node at (0,1) [anchor=west]{$A$};
      \node at (.865,-.5) [anchor=west]{$B$};
      \node at (-.865,-.5) [anchor=east]{$C$};
      \node at (-.3, -.75) [anchor=south west]{$\leq \delta$};
      \node at (0, -.7) [anchor=north]{A $\delta-$thin geodesic triangle};

     \end{scriptsize} \end{tikzpicture} 
     \end{center}
  \end{definition}
  
\begin{example}
A tree with length of each edge $1$ is a $0$-hyperbolic geodesic metric space.

 \begin{center}
    \begin{tikzpicture} \begin{scriptsize}
      \draw[<->] (0,-1.3) to (0, 1.3);
      \draw[<->] (1.3, 0) to (-1.3, 0);
      \draw[<->] (.75,.5) to (.75, -.5);
      \draw[<->] (-.75,.5) to (-.75, -.5);
      \draw[<->] (.5,-.75) to (-.5, -.75);
      \draw[<->] (.5,.75) to (-.5, .75);
       \draw[<->] (.5,.25) to (1, .25);
       \draw[<->] (.5,-.25) to (1, -.25);
       \draw[<->] (-.5,.25) to (-1, .25);
       \draw[<->] (-.5,-.25) to (-1, -.25);
        \draw[<->] (-.25,-.5) to (-.25, -1);
       \draw[<->] (.25,-.5) to (.25, -1);
        \draw[<->] (-.25,.5) to (-.25, 1);
       \draw[<->] (.25,.5) to (.25, 1);
      
      \node at (0, -1.3) [anchor=north]{A valence $4$ tree};
    \end{scriptsize} \end{tikzpicture} 
    \end{center}  
      
\end{example}

\textit{Relative hyperbolicity} serves as a quasi-isometry invariant for the groups which fail to be hyperbolic. A group is relatively hyperbolic, if we can construct a hyperbolic space which follows an additional technical condition by converting a collection of infinite diameter regions in a Cayley graph of the group to finite diameter regions using a method called \textit{coning off}.

\begin{definition}[Relative hyperbolic groups]
If $G$ denotes a finitely generated group, $\mathcal{H} = \{H_1, . . . , H_n\}$ is a finite family of subgroups of $G$ and $\mathcal{LH}$ denotes the collection of left cosets of $\{H_1, . . . , H_n\}$ in $G$. The group $G$ is \textbf{weakly hyperbolic relative to} $\mathcal{H}$ if collapsing the left cosets in $\mathcal{LH}$ to finite diameter sets, in a Cayley graph of G, yields a $\delta$-hyperbolic space. The subgroups $H_1, . . . , H_n$ are called peripheral subgroups. The group $G$ is (strongly) \textbf{hyperbolic relative to $\mathcal{H}$} if it is weakly hyperbolic relative to $\mathcal{H}$ and if it has the bounded coset penetration property (BCP). BCP property, roughly speaking, requires that in a Cayley graph of $G$ with the sets in $\mathcal{LH}$ collapsed to bounded diameter sets, a pair of quasi-geodesics with the same endpoints travels through the collapsed $\mathcal{LH}$ in approximately the same manner, see \cite{Fa, Os, Bow}. When a group contains no collection of proper subgroups with respect to which it is relatively hyperbolic, we say the group is \textbf{non relatively hyperbolic}, (NRH).
\end{definition}

\begin{example} $\mathbb{Z} \oplus \mathbb{Z}$ is weakly hyperbolic relative to $\mathbb{Z}$ but not relatively hyperbolic. In fact $\mathbb{Z} \oplus \mathbb{Z}$ is NRH. If $A$ and $B$ are finitely generated groups, $A*B$ is hyperbolic relative to subgroups $A$ and $B$. \end{example}

\begin{center}
\begin{tikzpicture} \begin{scriptsize}
      
      \draw[<->] (3,-1.5) to (7, -1.5);
      \draw[<->] (3.1,-1) to (6.9, -1);
      \draw[<->] (3.2,-.55) to (6.8, -.55);
      \draw[<->] (3.3, -.15) to (6.7, -.15);
      \draw[<->] (3.4,.2) to (6.6, .2);
      \draw[<->] (3.5,0.5) to (6.5, 0.5);
      \draw[<->] (3.6,.75) to (6.4, .75);
       
       	\draw[<->] (3.5, -2) to (4.1, 1);
       	\draw[<->] (4, -2) to (4.35, 1);
       	\draw[<->] (4.5, -2) to (4.7, 1);
       	\draw[<->] (5, -2) to (5, 1);
     	\draw[<->] (5.5, -2) to (5.3, 1);
      	\draw[<->] (6, -2) to (5.65, 1);
	\draw[<->] (6.5, -2) to (5.9, 1);
	
	\fill[gray, opacity = 0.2] (3.5, -1.5) -- (8, -.55) -- (6.5, -1.5);
	\fill[gray, opacity = 0.2] (3.6, -1) -- (2, -.15) -- (6.4, -1);
	\fill[gray, opacity = 0.15] (3.7, -.55) -- (8, .2) -- (6.3, -.55);
	\fill[gray, opacity = 0.15] (3.8, -.15) -- (2, .5) -- (6.2, -.15);
	\fill[gray, opacity = 0.1] (3.9, .2) -- (8, .75) -- (6.1, .2);
	\fill[gray, opacity = 0.1] (4, .5) -- (2, .95) -- (6, .5);
	\fill[gray, opacity = 0.05] (4.1, .75) -- (8, 1.1) -- (5.9, .75);
	
	\draw [dashed] (3.5, -1.5) -- (8, -.55) -- (4, -1.5);
	\draw [dashed] (4.5, -1.5) -- (8, -.55) -- (5, -1.5);
	\draw [dashed] (5.5, -1.5) -- (8, -.55) -- (6, -1.5);
	\draw [dashed] (6.5, -1.5) -- (8, -.55);
	
	\draw [dashed] (3.5, -1) -- (2, -.15) -- (4, -1);
	\draw [dashed] (4.5, -1) -- (2, -.15) -- (5, -1);
	\draw [dashed] (5.5, -1) -- (2, -.15) -- (6, -1);
	\draw [dashed] (6.5, -1) -- (2, -.15);
	
	\draw [dashed] (3.7, -.55) -- (8, .2) -- (6.3, -.55);
	\draw [dashed] (3.8, -.15) -- (2, .5) -- (6.2, -.15);
	\draw [dashed] (3.9, .2) -- (8, .75) -- (6.1, .2);
	\draw [dashed] (4, .5) -- (2, .95) -- (6, .5);
	\draw [dashed] (4.1, .75) -- (8, 1.1) -- (5.9, .75);

	\node at (5, -2.3) [anchor=north]{A coned-off Cayley graph of $\mathbb{Z} \oplus \mathbb{Z}$};

     \end{scriptsize} \end{tikzpicture} 
     \end{center}
     
\begin{remark}

\item If $G$ is a finitely generated subgroup and $H \leq G$ is a finite index subgroup, then $H$ is quasi-isometric to $G$, where the quasi-isometry is given by the inclusion map. 

\item
\end{remark}

\subsection{Undistorted Subgroups}
The definition of algebraic thickness requires the existence of certain \textit{undistorted} subgroups. In this section we will define an undistorted subgroup of a finitely generated group and then discuss relevant properties of a subgroup to prove non-distortion.
\begin{definition}\label{defn:undistorted}
A finitely generated subgroup $H$ of a finitely generated group $G$ is said to be \textbf{undistorted} if the inclusion map of $H \xhookrightarrow{} G$
induces a quasi-isometric embedding of Cayley graphs.
\end{definition}

To prove non-distortion we have to prove only one side of the inequality, lemma \ref{lemma:undistorted}. The relevant side of the inequality can also be stated as a \textit{coarse Lipschitz map}

\begin{definition}[Coarse Lipschitz map]
For constants $K \geq 1, C \geq 0$, a function\\ 
$f: (X,d_X) \rightarrow (Y, d_Y)$ is $(K, C)-$\textbf{coarse Lipschitz} if \\
 $d_Y(f(x_1), f(x_2)) \leq Kd_X(x_1, x_2) + C$ for all $x_1, x_2 \in X$.
\end{definition}

The following results gives us a way of proving non-distortion using the action.

\begin{lemma}\cite[Corollary 10]{HM}
If $G$ is a finitely generated group acting properly discontinuously and
co-compactly by isometries on a connected locally finite simplicial complex $X$ , if
$H < G$ is a subgroup, and if $Y \subset X$ is a nonempty connected sub-complex which is
$H-$invariant and $H-$co-compact, then:
\begin{enumerate}[wide, labelwidth=!, labelindent=0pt]
\item $H$ is finitely generated.
\item $H$ is undistorted in $G$ if and only if the inclusion $Y \xhookrightarrow{}  X$ is a quasi-isometric
embedding.
\item The following are equivalent:
\begin{enumerate}[wide, labelwidth=!, labelindent=0pt]
\item $H$ is a Lipschitz retract of $G$.
\item The $0-$skeleton of $Y$ is a Lipschitz retract of the $0-$skeleton of $X$.
\item The $1-$skeleton of $Y$ is a Lipschitz retract of the $1-$skeleton of $X$.
\item $Y$ is a coarse Lipschitz retract of $X$.
\end{enumerate}
\end{enumerate}
\end{lemma}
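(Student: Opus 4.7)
The plan is to derive all three conclusions from the Milnor--\u{S}varc lemma applied to the two actions $G \acts X$ and $H \acts Y$, together with standard comparisons between a locally finite simplicial complex and its skeleta. For (1), I would first observe that $Y$, equipped with its induced path metric, is a proper geodesic metric space (locally finite as a subcomplex of a locally finite complex, and connected by hypothesis). The restricted action $H \acts Y$ is properly discontinuous because it inherits this property from $G \acts X$, and it is cocompact by assumption. Milnor--\u{S}varc then immediately gives that $H$ is finitely generated and, fixing a basepoint $y_0 \in Y^{(0)}$, that the orbit map $\mathcal{O}_H : H \to Y$, $h \mapsto h \cdot y_0$, is a quasi-isometry.

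For (2), I would apply Milnor--\u{S}varc to $G \acts X$ with the same basepoint $y_0$ to obtain a quasi-isometry $\mathcal{O}_G : G \to X$. The key observation is that $\mathcal{O}_G$ restricted to $H$ factors as $H \xrightarrow{\mathcal{O}_H} Y \hookrightarrow X$. Since pre- and post-composition with a quasi-isometry preserves the property of being a quasi-isometric embedding, the inclusion $H \hookrightarrow G$ is a quasi-isometric embedding iff $\mathcal{O}_G\vert_H$ is iff the inclusion $Y \hookrightarrow X$ is.

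For (3), I would first establish (b) $\Leftrightarrow$ (c) $\Leftrightarrow$ (d) by noting that in any locally finite simplicial complex, the inclusions of the $0$-skeleton into the $1$-skeleton, and of the $1$-skeleton into the full complex, are quasi-isometries in the induced path metrics (the complex is uniformly $1$-dense in the $1$-skeleton, and vertices are $\tfrac{1}{2}$-dense in the $1$-skeleton). Pre- and post-composing any coarse Lipschitz retract with these quasi-isometries transfers the retract property; and on discrete spaces like $Y^{(0)}$ and $X^{(0)}$ additive slack can always be absorbed into the Lipschitz constant, so coarse Lipschitz retracts become honest Lipschitz retracts. To close the loop with (a) $\Leftrightarrow$ (b), I would transport along the orbit maps. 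Given a Lipschitz retract $r : G \to H$, define $\bar r : X^{(0)} \to Y^{(0)}$ by first choosing, for each $x \in X^{(0)}$, some $g \in G$ with $g \cdot y_0$ at minimum distance from $x$, and setting $\bar r(x) = r(g) \cdot y_0$; properness of the action and local finiteness make the choice coarsely well-defined, and the fact that $G \cdot y_0$ and $H \cdot y_0$ are cobounded in the respective skeleta shows this yields a coarse Lipschitz retract. The converse is symmetric: pull a retract $X^{(0)} \to Y^{(0)}$ back through the orbit quasi-isometries to obtain one on the groups.

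The main obstacle I expect is the bookkeeping in (3): keeping straight that the various quasi-isometries (orbit maps, skeleton inclusions, passage between group word metric and Cayley graph) compose cleanly, and that at each stage finite stabilizers and local finiteness make the various set-valued choices collapse to coarsely well-defined maps. Everything else reduces to Milnor--\u{S}varc and the observation that quasi-isometric embeddings are preserved under composition with quasi-isometries.
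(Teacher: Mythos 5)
Your proposal is correct, but note that the paper does not prove this statement at all: it is quoted verbatim from Handel--Mosher \cite[Corollary 10]{HM} and used as a black box. Your sketch is a faithful reconstruction of the standard argument behind that result --- Milnor--\u{S}varc applied to both $G \acts X$ and $H \acts Y$, the factorization of the orbit map through the inclusion $Y \hookrightarrow X$ for part (2), and transport of (coarse) Lipschitz retractions along orbit maps and skeleton inclusions for part (3) --- so there is nothing to compare against within this paper, and no gap to report.
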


\begin{lemma}\cite[Lemma 11]{HM}\label{lemma:undistorted}
If $X$ is a geodesic metric space and $Y \subset X$ is a rectifiable subspace, and if $Y$ is a coarse Lipschitz retract of $X$, then the inclusion $Y \rightarrow X$ is a quasi-isometric embedding.
\end{lemma}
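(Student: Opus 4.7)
The plan is to verify both sides of the quasi-isometric embedding inequality separately. Let $d_X$ denote the given geodesic metric on $X$, and let $d_Y$ denote the induced path metric on $Y$ provided by rectifiability: $d_Y(y_1,y_2)$ is the infimum of lengths of rectifiable paths inside $Y$ joining $y_1$ to $y_2$. The quasi-isometric embedding in the conclusion is understood with respect to $d_Y$ on the source and $d_X$ on the target.

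The upper bound on $d_X$ comes for free: any rectifiable path in $Y$ is a rectifiable path in $X$ of the same length, and $d_X$ is realized by geodesics in $X$, so
\[ d_X(y_1,y_2) \leq d_Y(y_1,y_2) \]
for all $y_1,y_2 \in Y$. Thus the inclusion $(Y,d_Y) \hookrightarrow (X,d_X)$ is already $1$-Lipschitz with no additive error, giving one half of the quasi-isometric embedding inequality at once.

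For the matching lower bound I would invoke the coarse Lipschitz retract $r : X \to Y$. By hypothesis $r$ is $(K,C)$-coarse Lipschitz as a map into $(Y,d_Y)$, and after absorbing constants there is some $D \geq 0$ with $d_Y(r(y),y) \leq D$ for every $y \in Y$. Two applications of the triangle inequality in $(Y,d_Y)$ then yield, for any $y_1,y_2 \in Y$,
\[ d_Y(y_1,y_2) \leq d_Y(y_1,r(y_1)) + d_Y(r(y_1),r(y_2)) + d_Y(r(y_2),y_2) \leq 2D + K\, d_X(y_1,y_2) + C, \]
which rearranges to $\frac{1}{K} d_Y(y_1,y_2) - \frac{2D+C}{K} \leq d_X(y_1,y_2)$. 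Combining this with the $1$-Lipschitz bound from the previous paragraph is exactly the definition of a $(K, 2D+C)$-quasi-isometric embedding.

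I do not anticipate a genuine obstacle here; the argument is really just bookkeeping with the triangle inequality once the metrics have been pinned down. The only subtle point, and the point at which the rectifiability hypothesis is actually used, is that the path metric $d_Y$ (rather than the subspace metric inherited from $X$) is what appears on both sides of the retraction inequality. Without rectifiability $d_Y$ could be infinite and the triangle inequality step would be vacuous; with it, the retract $r$ automatically produces the needed coarse lower bound on $d_X$ from the coarse lower bound on $d_Y$.
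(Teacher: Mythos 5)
Your argument is correct: the one-sided bound $d_X \leq d_Y$ from inclusion of paths, combined with the triangle inequality through the coarse Lipschitz retraction $r$, is exactly the standard proof of this fact, and your remark about where rectifiability enters (finiteness of the induced path metric $d_Y$) is the right one. Note that the paper itself offers no proof of this lemma --- it is quoted verbatim from Handel--Mosher \cite[Lemma 11]{HM} --- and your argument coincides with the proof given in that source, so there is nothing further to compare.
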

     
\subsection{$G$-trees and graphs of groups}

Our understanding of $\mathsf{Out}(G)$ will be related to our understanding of a space of \textit{$G$-trees}, called \textit{deformation space} \cite{GL}. Roughly a deformation space is a space of \textit{$G$-equivariance} classes of \textit{$G$-trees}. The trees considered in this exposition are simplicial trees with metric. It may be convenient at times to only consider the underlying simplicial structure.
 
\begin{definition}[$G$-tree]
A group action of $G$ on a metric (resp., simplicial) tree $T$ via isometries (resp., simplicial homeomorphisms) is called \textbf{minimal}, if there are no proper $G$-invariant subtrees of $T$. A metric (resp., simplicial) tree on which $G$ acts minimally is called a \textbf{$G$-tree}.
\end{definition}

\begin{definition}[$G$-equivariant isometry]
Consider metric $G$-trees $T_1$ and $T_2$. $T_1$ and $T_2$ are \textbf{$G$-equivariantly isometric} if there is an isometry $f: T_1 \mapsto T_2$ such that $g \in G \implies f(g.x) = g.f(x),  \forall x\in T_1$
\end{definition}

A \textit{fundamental domain} for a $G$-tree gives us much relevant information about the action and the geometry and topology of the tree. In our research, a fundamental domain gives all the necessary information about the $G$-tree we are interested in.

\begin{definition}[Fundamental domain of a $G$-tree]
A subtree $F$ of a given $G$-tree is called a fundamental domain for the action $G \acts T$, if $G \cdot F \supset T$ and no other proper subtree of $F$ has this property.
\end{definition}

This interplay between a $G$-tree and its fundamental domain is captured by the Bass-Serre theory \cite{Se}.

\begin{definition}[Fundamental group of a graph of groups]
A \textbf{graph of groups} over a graph $X$ is an assignment 
\begin{enumerate}[wide, labelwidth=!, labelindent=0pt]
\item of a group $G_x$ to each vertex $x$ of $X$,
\item of a group $G_y$ to each edge $y$ of $X$, and 
\item monomorphisms $\phi_{y_0}$ and $\phi_{y_1}$ mapping $G_y$ into the groups assigned to the vertices at its ends. 
\end{enumerate}
Denote this graph of groups by $\mathbf{X}$.
If $X$ is a tree then define the \textbf{fundamental group} of $\mathbf{X}$ is defined as
$\Gamma :=\la G_x \vert x \in vert(X), \phi_{y_0}(e) = \phi_{y_1}(e) \forall e \in G_y\ra$
\end{definition}

\begin{theorem}\label{thm:bst}[Fundamental theorem of Bass-Serre theory]
Let $G$ be a group acting on a tree $T$ without inversions. Let $\mathbf{X}$ be the \textit{quotient graph of groups}. Then $G$ is isomorphic to the \textit{fundamental group of $\mathbf{X}$} and there is an equivariant isomorphism between the tree $T$ and the Bass-Serre covering tree $T_{\mathbf{X}}$ (definition \ref{defn:BSCoveringTree}). That is, there is a group isomorphism $i: G \rightarrow \Gamma $ and a graph isomorphism $j: T \rightarrow T_{\mathbf{X}}$ such that  $ \forall g \in G$,  $\forall$ vertex $x \in T$ and $\forall$ edge $e \subset T$ we have $j(g\cdot x) = g\cdot j(x)$ and $j(g\cdot e) = g\cdot j(e)$.

\end{theorem}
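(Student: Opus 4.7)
The plan is to prove this classical theorem by explicit construction, following Serre's original approach. First I would choose a spanning tree $T_0$ of the quotient graph $X := T/G$, which exists because $X$ is connected (since $T$ is). Then I would lift $T_0$ to a subtree $\widetilde{T_0} \subset T$; such a lift exists because the quotient map restricted to any minimal connected preimage of $T_0$ is a bijection onto $T_0$, using that $T$ is a tree and $G$ acts without inversions. The vertices of $\widetilde{T_0}$ then give a set of $G$-orbit representatives for the vertices of $T$, and its edges give orbit representatives for those edges of $T$ lying above $T_0$.

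Next, for each edge $e$ in $\mathrm{edge}(X) \setminus \mathrm{edge}(T_0)$, I would select a lift $\tilde{e} \subset T$ with one endpoint in $\widetilde{T_0}$; the other endpoint is $G$-equivalent to some vertex in $\widetilde{T_0}$, so there exists $g_e \in G$ carrying it there. These $g_e$ serve as the \emph{stable letters}. I would then assign the vertex group $G_v$ at $v \in \mathrm{vert}(X)$ to be the stabilizer of its lift $\tilde{v} \in \widetilde{T_0}$, and the edge group $G_e$ to be the stabilizer of $\tilde{e}$. The boundary monomorphisms $\phi_{e,0}, \phi_{e,1}$ are natural inclusions of edge stabilizers into vertex stabilizers, twisted by conjugation by $g_e$ at the non-lifted end when $e \notin T_0$. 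This yields the quotient graph of groups $\mathbf{X}$ and a natural homomorphism $\psi : \Gamma := \pi_1(\mathbf{X}) \to G$ sending each abstract vertex-group generator to its concrete counterpart and each abstract stable letter to $g_e$; one then takes $i := \psi^{-1}$.

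The heart of the proof is showing $\psi$ is an isomorphism and simultaneously constructing the $G$-equivariant graph isomorphism $j : T \to T_{\mathbf{X}}$. Surjectivity of $\psi$ is direct: given $g \in G$, consider the unique edge-path in $T$ from a base vertex $v_0 \in \widetilde{T_0}$ to $g \cdot v_0$. Each step either stays within $\widetilde{T_0}$ (absorbed by a vertex stabilizer element) or crosses a $G$-translate of some non-tree lift (contributing a conjugate of the corresponding $g_e$), yielding an expression for $g$ in the generators of $\Gamma$. For the map $j$, I would define it on $G$-translates of $\widetilde{T_0}$ by $h \cdot \tilde{v} \mapsto \psi^{-1}(h)\, G_{\tilde{v}}$ and extend edge-wise; well-definedness reduces to the observation that $h \cdot \tilde{v} = h' \cdot \tilde{v}$ implies $h^{-1}h' \in G_{\tilde{v}}$, and equivariance is immediate.

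The main obstacle is injectivity of $\psi$, equivalently the classical Bass-Serre normal-form theorem: every non-trivial reduced word in the generators of $\Gamma$ represents a non-identity element of $G$. The cleanest route is to prove independently that $T_{\mathbf{X}}$ is a tree: connected via path-lifting from a spanning tree, and acyclic via an inductive analysis of how a reduced word in the generators acts on the base coset $G_{v_0}$. Once this is established, the natural $\Gamma$-action on $T_{\mathbf{X}}$ is faithful (a kernel element would fix every vertex and edge), which forces $\psi$ to be injective. Finally, $j$ is a bijective graph morphism intertwining the two actions because it sends the fundamental domain $\widetilde{T_0}$ bijectively to the fundamental domain for $\Gamma \acts T_{\mathbf{X}}$ and both quotients recover $X$ with the same graph of groups; the universal property of $T_{\mathbf{X}}$ as the Bass-Serre covering then forces $j$ to be an isomorphism.
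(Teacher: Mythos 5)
First, note that the paper offers no proof of this statement: it is quoted as the classical fundamental theorem of Bass--Serre theory with a citation to Serre, so there is no in-paper argument to compare yours against. Your construction (lifting a spanning tree of $T/G$, choosing stable letters $g_e$, taking vertex and edge groups to be stabilizers, and proving surjectivity of $\psi:\Gamma\to G$ by reading off an edge-path from $v_0$ to $g\cdot v_0$) is the standard and correct skeleton.

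There is, however, a genuine gap in your injectivity step. You propose to prove that $T_{\mathbf{X}}$ is a tree and then conclude that ``the natural $\Gamma$-action on $T_{\mathbf{X}}$ is faithful (a kernel element would fix every vertex and edge), which forces $\psi$ to be injective.'' Both halves of this fail. The kernel of $\Gamma\acts T_{\mathbf{X}}$ is the intersection of all conjugates of the vertex groups, which is not trivial in general (e.g.\ $\mathsf{SL}_2(\mathbb{Z})\cong\mathbb{Z}_4*_{\mathbb{Z}_2}\mathbb{Z}_6$ acts on its Bass--Serre tree with kernel $\{\pm I\}$); an element fixing every vertex need not be the identity. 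And even granting faithfulness, it would not yield injectivity of $\psi$: to deduce that an element of $\ker\psi$ acts trivially on $T_{\mathbf{X}}$ you would need the $\psi$-equivariant isomorphism $j$ already in hand, which is precisely what is being constructed --- the argument is circular. The correct route, and the place where the hypothesis that $T$ is acyclic is actually used, is to take a reduced Britton normal form $a_0 t_{e_1}^{\epsilon_1}a_1\cdots t_{e_n}^{\epsilon_n}a_n$ for a putative kernel element with $n>0$, observe that its image under $\psi$ traces out a non-backtracking edge-path of length $n$ in $T$ from $\tilde v_0$ to $\psi(\gamma)\cdot\tilde v_0$ (non-backtracking because the word is reduced modulo the edge groups), and conclude from acyclicity of $T$ that $\psi(\gamma)\cdot\tilde v_0\neq\tilde v_0$; the case $n=0$ is handled because $\psi$ restricted to each vertex group is the inclusion of a stabilizer. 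With that repair the rest of your outline (well-definedness and equivariance of $j$, and bijectivity on orbits of the lifted fundamental domain) goes through.
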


\begin{remark}
In general, $\Gamma$ is defined for any graph $X$ (and not only a tree). However, assuming $X$ to be a tree simplifies the definition and is sufficient for this exposition. This will also simplify the definition of the \textit{Bass-Serre covering tree}.
\end{remark}

\begin{definition}[Bass-Serre covering tree]\label{defn:BSCoveringTree}
For a given graph of groups $\mathbf{X}$ with fundamental group $\Gamma$ and its underlying tree $X$ (we are considering special case where $X$ is a tree), let $G_x$ represent vertex group of a vertex $x$ of $X$ and $G_e$ represent edge group of an edge $e$ of $X$. Then define the \textbf{Bass-Serre covering tree} of $\mathbf{X}$, $T_{\mathbf{X}}$, as follows:
\begin{enumerate}[wide, labelwidth=!, labelindent=0pt]
\item The vertex set of $T_{\mathbf{X}}$ is a disjoint union of points labeled by the cosets:
$vert(T_{\mathbf{X}}) := \displaystyle\bigsqcup_{x \in vert(\mathbf{X})} \Gamma / G_x$
\item The edge set of $T_{\mathbf{X}}$ is a disjoint union of edges labeled by the cosets:
$edge(T_{\mathbf{X}}) := \displaystyle\bigsqcup_{e \in edge(\mathbf{X})} \Gamma / G_e$
\item An inclusion of groups $G_e \xhookrightarrow{} G_x$ induces a natural surjection map at the level of cosets $\Gamma / G_e \twoheadrightarrow \Gamma / G_x$. An edge is incident on a vertex if the edge label maps to the vertex label under this surjection.
\end{enumerate}
\end{definition}

\subsection{Simplicial complex}
Deformation space has an invariant spine on which $\mathsf{Out}(G)$ acts geometrically. The following theorem implies that the sub-complex spanned by the barycentric coordinates (spine) is 'good enough' substitute if the property of interest is a homotopy invariant.

\begin{lemma}\label{lemma:bsd}
Let, $S$ be a connected subset of a finite dimensional simplicial complex, $\Delta$, such that $S$ is the complement of a sub-complex of $\Delta$, then $S$ deformation retracts onto $S_B$; where $S_B$ is the sub-complex of the $1^{st}$ barycentric subdivision of $\Delta$ consisting of all simplices that lie entirely inside $S$.

\end{lemma}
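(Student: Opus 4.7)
The plan is to exhibit an explicit deformation retraction of $S$ onto $S_B$ that, on each simplex of the barycentric subdivision $\Delta'$, linearly collapses the ``$K$-part'' into the ``$L$-part''. Let $K$ denote the sub-complex with $S = \Delta \setminus K$, and call a simplex of $\Delta$ an $L$-simplex if it is not in $K$. A simplex of $\Delta'$ corresponds to a flag $\sigma_0 < \sigma_1 < \cdots < \sigma_k$ of simplices in $\Delta$ with vertices the barycenters $b(\sigma_0), \ldots, b(\sigma_k)$. Because $K$ is closed under faces, there is a (possibly $-1$) largest index $i_0$ with $\sigma_{i_0} \in K$; the faces $\sigma_0 < \cdots < \sigma_{i_0}$ all lie in $K$ and the cofaces $\sigma_{i_0+1} < \cdots < \sigma_k$ all lie in $L$. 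Thus every closed flag simplex of $\Delta'$ canonically splits as the join of a $K$-face $F_K$ and an $L$-face $F_L$, and this decomposition is natural under passage to sub-flags.

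First I would verify that $S_B$ is exactly the sub-complex of $\Delta'$ spanned by the $L$-vertices. A point in the open flag simplex lies in the open simplex $\mathrm{int}(\sigma_k)$ of $\Delta$, which is contained in $S$ precisely when $\sigma_k \in L$; hence a closed flag simplex lies entirely in $S$ iff every $\sigma_i$ lies in $L$, iff $F_K = \emptyset$. This identifies $S_B$ with the ``$L$-spanned'' subcomplex of $\Delta'$.

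Next I would construct the retraction. For any point $x$ in a flag simplex with $F_K \neq \emptyset$ and $x \in S$, write uniquely $x = \alpha\, y_K + (1 - \alpha)\, y_L$ with $y_K \in F_K$, $y_L \in F_L$, and $\alpha \in [0, 1)$, where the strict inequality $\alpha < 1$ holds because $x \notin K$ and $K$ meets the flag simplex exactly in $F_K$. Define
\[
H_t(x) := \alpha(1 - t)\, y_K + \bigl(1 - \alpha(1 - t)\bigr)\, y_L, \qquad t \in [0, 1],
\]
and $H_t(x) := x$ on $S_B$. Then $H_0 = \mathrm{id}$, $H_1$ lands in $S_B$, $H_t$ fixes $S_B$ throughout, and the coefficient $\alpha(1 - t)$ of the $K$-part remains strictly less than $1$, so the image avoids $K$ and $H_t(S) \subseteq S$.

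The step I expect to be the main obstacle is the consistency check: one must verify that the formulas $H_t$ defined on individual closed flag simplices of $\Delta'$ agree on their overlaps so as to produce a well-defined continuous global map. This reduces to the observation that the $(K,L)$-decomposition of any sub-flag is the restriction of the decomposition of the ambient flag, which is a routine but careful combinatorial verification. The connectedness hypothesis on $S$ plays no role in the construction itself; it merely ensures that the resulting retract $S_B$ is connected, which is what is needed in later applications in the paper.
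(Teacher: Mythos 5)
Your proposal is correct and is essentially the paper's own argument: the paper defines, in barycentric coordinates on each simplex of $\Delta_B$, exactly the projection $r_{\sigma_B}(a_0\beta_0+\cdots+a_k\beta_k)=\sum_{i>l}\frac{a_i}{1-\sum_{j\le l}a_j}\beta_i$ that kills the barycenters of $K$-faces and renormalizes the $L$-part, and then checks agreement on overlapping simplices, which is your $(F_K,F_L)$-join decomposition and consistency check in coordinate form. The only difference is cosmetic and in your favor: you write out the straight-line homotopy $H_t$ and note why its image stays in $S$, whereas the paper only exhibits the time-one retraction $r$ and asserts it is a deformation retract.
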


\begin{proof}
Let us denote the $1^{st}$ barycentric subdivision of $\Delta$ by $\Delta_B$. Let us assume that $S$ intersects a $k$-dimensional simplex of $\Delta$ denoted by $\sigma$, such that the $0$-simplices of $\sigma$ are denoted by $\{\alpha_0,..., \alpha_k\}$.  Let, $\sigma_B \subset \sigma$ be a k-dimensional simplex of $\Delta_B$ whose vertices are denoted by $\{\beta_0,..., \beta_k\}$ such that
$\beta_i = \displaystyle\sum_{j = 0}^i \dfrac{\alpha_j}{i + 1}$. 
Without loss of generality, assume that some of the vertices of $\sigma_B$ are not in $S$. As $S$ is the complement of a sub-complex of $\Delta$, so $\beta_i \notin S$ for some $i$ implies $\alpha_0 = \beta_0 \notin S$. Additionally, assume that - 
$\beta_i \notin S, \text{ when } i \in \{0,..., l\}$ and
$\beta_i \in S, \text{ when } i \in \{l+1,..., k\}$.

$\beta_i \notin S 
\implies 
\sigma \vert_{\{\alpha_0,..., \alpha_{i}\}} \cap S = \phi
\implies
\sigma_B \vert_{\{\beta_{0},..., \beta_i\} } \cap S = \phi
$

Now, we will define a projection map $r_{\sigma_B}$
\begin{gather*}r_{\sigma_B}: S \cap \sigma_B \rightarrow S_B \cap \sigma_B\\
a_0\beta_0 + a_1\beta_1 +...+ a_k\beta_k 
\mapsto
 \dfrac{a_{l+1}}{1 - \sum_{j = 0}^{l}a_j}\beta_{l+1} + ... + \dfrac{a_{k}}{1 - \sum_{j = 0}^{l}a_j}\beta_{k}\end{gather*}

 If $\sigma'_B$ is another simplex of $\Delta_B$ such that $\sigma_B \cap \sigma'_B \ne \phi$, then we will show that the map 
 $r_{\sigma_B}\vert_{\sigma_B \cap \sigma'_B} 
 = 
 r_{\sigma'_B}\vert_{\sigma_B \cap \sigma'_B}$.\\
 Assume that the $0$-simplices of $\sigma'_B \cap \sigma_B$ are given by $\{\beta_p, ..., \beta_s\}$, where 
 \begin{gather*}
 \beta_i \in S \iff i \in \{r+1, ..., s-1, s\} \\
 \text{With these notations,} \\
 r_{\sigma_B}\vert_{\sigma_B \cap \sigma'_B} (a_p\beta_p +...+ a_s\beta_s )\\
 =
  \dfrac{a_{r+1}}{1 - \sum_{j = p}^{r}a_j}\beta_{r+1} + ... + \dfrac{a_{s}}{1 - \sum_{j = p}^{r}a_j}\beta_{s}\\
  = r_{\sigma'_B}\vert_{\sigma_B \cap \sigma'_B} (a_p\beta_p +...+ a_s\beta_s )
 \end{gather*}
 Hence, $r_{\sigma_B}$ can be continuously extended to a map $r: S \rightarrow S_B$. By, definition $r \vert_{S_B}$ is the identity map. So, this map is a continuous deformation retract.
\end{proof}


\section{$\mathsf{Out}(A_1*...*A_n)$ and A Geometric Action}\label{section:SPD}

\subsection{$\Omega_n$ - a finite index subgroup of $\mathsf{Out}(A_1*...*A_n)$}

We want to know the coarse geometric structure of $\mathsf{Out}(A_1*...*A_n)$. It will be convenient to consider the maximal subgroup of the outer automorphism group that preserves the conjugacy class of every element of each $A_i, i \in \{1, ..., n\}$. In lemma \ref{lemma:con_class} we will prove that this subgroup is a finite index subgroup of $\mathsf{Out}(A_1*...*A_n)$. As a consequence, $\mathsf{Out}(A_1*...*A_n)$ will be quasi isometric to this subgroup.

\begin{definition}
Let, $S_n$ be the symmetry group on first $n$ natural numbers. Consider $\phi \in \Gamma_n$. If 
$\phi([A_i]) = [A_j], \text{ where } i, j \in \{1,..., n\},$
then define $s_\phi \in S_n$ to be the element such that 
$s_\phi(i) = j, \text{ here }i, j \in \{1,..., n\}.$
\end{definition}

\begin{lemma} \label{lemma:con_class}
Consider the following map from $\Gamma_n$ to the symmetry group on first $n$ natural numbers, $S_n$:
\begin{gather*}P: \Gamma_n \rightarrow S_n\\
\phi \mapsto s_\phi\end{gather*} $P$ is a homomorphism of groups and the kernel of the map is a subgroup of $\Gamma_n$ which preserves conjugacy classes of the free factors $A_i$.
\end{lemma}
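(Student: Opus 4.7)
The plan is to verify three things in sequence: first that the assignment $\phi \mapsto s_\phi$ is well-defined on $\Gamma_n$, second that the resulting map $P$ respects composition, and third that $\ker P$ consists exactly of outer automorphisms fixing each $[A_i]$.

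\textbf{Step 1 (Well-definedness).} The first thing to check is that every $\phi \in \Gamma_n$ actually permutes the set of conjugacy classes $\{[A_1], \dots, [A_n]\}$, so that $s_\phi$ makes sense. Since each $A_i$ is finite, I would invoke the Kurosh subgroup theorem: every finite subgroup of $G_n = A_1 * \cdots * A_n$ is conjugate into one of the $A_i$. Consequently, the conjugacy classes of the $A_i$ are precisely the conjugacy classes of the maximal finite subgroups of $G_n$, an intrinsic invariant of the group. Pick any automorphism $\widetilde{\phi}$ representing $\phi$; since $\widetilde{\phi}(A_i)$ is again a maximal finite subgroup of $G_n$, it is conjugate to some $A_j$, and $j$ depends only on $i$ and $\phi$ (not on the representative $\widetilde{\phi}$, since changing representative alters $\widetilde{\phi}(A_i)$ only by an inner automorphism, hence only within its conjugacy class). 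Thus $s_\phi: i \mapsto j$ is a well-defined function, and since $\widetilde{\phi}^{-1}$ provides an inverse assignment, $s_\phi \in S_n$.

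\textbf{Step 2 (Homomorphism property).} For $\phi, \psi \in \Gamma_n$, pick representatives $\widetilde{\phi}, \widetilde{\psi}$. Then $\widetilde{\psi}(A_i)$ is conjugate to $A_{s_\psi(i)}$, and applying $\widetilde{\phi}$ yields a subgroup conjugate to $\widetilde{\phi}(A_{s_\psi(i)})$, which is in turn conjugate to $A_{s_\phi(s_\psi(i))}$. Hence
\[
s_{\phi\psi}(i) \;=\; s_\phi\bigl(s_\psi(i)\bigr) \quad \text{for all } i \in \{1, \dots, n\},
\]
which is exactly $P(\phi\psi) = P(\phi)\,P(\psi)$.

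\textbf{Step 3 (Identification of the kernel).} By definition, $\phi \in \ker P$ iff $s_\phi$ is the identity in $S_n$, iff $\phi([A_i]) = [A_i]$ for every $i \in \{1, \dots, n\}$. This is precisely the statement that $\phi$ preserves the conjugacy class of each free factor $A_i$, so $\ker P$ is the asserted subgroup.

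The only genuinely nontrivial step is Step 1: concluding from Kurosh's theorem that automorphisms must send each $A_i$ to a conjugate of some $A_j$. Everything else is a straightforward tracking of conjugacy classes through composition.
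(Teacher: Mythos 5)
Your proof is correct, and Steps 2 and 3 (the homomorphism property and the identification of the kernel) are essentially identical to the paper's own verification, which simply tracks where conjugacy classes go under composition and inversion. The genuine difference is your Step 1: the paper's definition of $s_\phi$ silently presupposes that every outer automorphism permutes the set $\{[A_1],\dots,[A_n]\}$, and the paper's proof never addresses this. You supply the missing justification via the Kurosh subgroup theorem --- every finite subgroup of $A_1*\cdots*A_n$ is conjugate into some $A_i$, so the $[A_i]$ are exactly the conjugacy classes of maximal finite subgroups and are therefore permuted by any automorphism, independently of the choice of representative in the outer class. (Implicit in this is the standard fact that distinct free factors are never conjugate and that $A_i \cap gA_jg^{-1}$ is trivial unless $i=j$ and $g\in A_i$, which gives both maximality and the uniqueness of the index $j$.) This makes your argument strictly more complete than the paper's; the cost is the appeal to Kurosh, which the paper avoids only by leaving the well-definedness of $P$ unproved.
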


\begin{proof}
Let $\phi_1, \phi_2 \in \Gamma_n$ be such that $\phi_1([A_i]) = [A_j]$ and $\phi_2([A_j]) = [A_k]$, then 
$\phi_2\phi_1([A_i]) = [A_k] \implies P(\phi_2\phi_1) = s_{\phi_2}s_{\phi_1}$. If $\phi([A_i])= [A_j]$, then
$\phi^{-1}([A_j]) = [A_i] \implies P(\phi^{-1}) = s_\phi^{-1}$.
Now we will show that the kernel of the map $P$ is the subgroup of $\Gamma_n$ which preserves the conjugacy classes of free factors.
$\phi \in ker(P)
\iff s_{\phi} = id_{S_n}
\iff s_\phi(i) = i, \forall i
\iff \phi([A_i]) = [A_i], \forall i$.
\end{proof}

\begin{definition}\label{defn:fgamma_n}
$\Omega_n$ is the finite index subgroup of $\mathsf{Out}(A_1*...*A_n)$ that preserves the conjugacy classes of the free factors.
\end{definition}

\subsection{Deformation Space}\label{subsec:def_sp}

We will study the algebra and geometry of $\Omega_n$ by studying an action of $\Omega_n$ on a complete geodesic metric space. This geodesic metric space will be a subspace of the deformation space. The deformation space will be defined as a metric space depending on the following parameters - the group, $A_1*...*A_n$, and a class of subgroups. As a set, the deformation space is the set of equivalence classes $G$-trees with an additional condition on the vertex stabilizers, where two trees are equivalent if they are $G$-equivariantly isometric. In this section we will define the deformation space.

\subsubsection{$\mathcal{D}(G,\mathscr{H})$ as a set of $G$-trees} Deformation space has been defined in \cite{GL} more generally. In contrast, we will consider the following definition of deformation space (as a set). This definition will result in a space on which $\mathsf{Out}(A_1*...*A_n)$ will act isometrically, and properly discontinuously.

\begin{definition}[Deformation space as a set of $G$-trees]\label{defn:def_sp}
Consider a group $G$, which is a free product of finitely many finitely generated indecomposable groups. The deformation space $\mathcal{D}(G,\mathscr{H})$ of $G$ with respect to a collection of finitely generated subgroups $\mathscr{H}$ is the set of equivalence classes of minimal, metric $G$-trees with the following properties:

\begin{enumerate}[wide, labelwidth=!, labelindent=0pt]

\item \textbf{Equivalence relation}: Two trees $T_1, T_2$ are equivalent in $\mathcal{D}(G,\mathscr{H})$ if there is a $G$-equivariant isometry between the two trees.

\item \textbf{Vertex stabilizers belong to $\mathscr{H}$}: If $T$ is a $G$-tree from the equivalence class $[T] \in \mathcal{D}(G,\mathscr{H})$, and $v \in T$ is a vertex of $T$, then $Stab(v) \in \mathscr{H}$. Conversely, given a $H \in \mathscr{H}, \exists$ a vertex, $v \in T$, with $Stab(v) = H$. Moreover, valence of a vertex with trivial vertex stabilizer must be greater than $2$.

\item \textbf{Trivial edge stabilizer}: If $T$ is a $G$-tree from the equivalence class $[T] \in \mathcal{D}(G,\mathscr{H}) $, and $e$ is an edge of $T$; then Stab$(e) = \{id\}$.


\end{enumerate}

\end{definition}
\begin{remark}
The deformation space that we study in this exposition is a special case of the deformation space discussed in \cite{GL}. Here, the maximal elliptic subgroups of a tree are the conjugates of the free factors of $G$, which are also vertex stabilizers and the edge groups are trivial.

\end{remark}

\subsubsection{$\mathcal{D}(G,\mathscr{H})$ as a set of graph of groups} 
The goal of this subsection is to describe the deformation space as a set of equivalence classes of graph of groups whose fundamental group is a finitely generated group $G$. Consider a point of $\mathcal{D}(G,\mathscr{H})$, corresponding to a $G$-tree $T \in \mathcal{D}(G,\mathscr{H})$. This point also corresponds to a graph of groups $\mathbf{X}$, where $T$ is $G$-equivariantly isometric to the Bass-Serre tree of $\mathbf{X}$.

Consider a $G$-tree $T \in \mathcal{D}(G,\mathscr{H})$, $X := T\slash G$ is a finite graph. This is a result of the minimal action of the finitely generated group $G$ on $T$. If we choose a fundamental domain for the action of $G$ on $T$ we can associate a graph of groups, $\mathbf{X}$, to $T$. 

\begin{lemma}
Consider a $G$-tree $T$ such that the edge stabilizers are trivial and $X := T\slash G$ is a finite tree, then any fundamental domain of $T$ under the action of $G$ is isometric to $X$.
\end{lemma}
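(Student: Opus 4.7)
The plan is to show that the restriction of the $G$-quotient map $\pi : T \to X$ to the fundamental domain $F$ is a simplicial isometric bijection $\pi|_F : F \to X$; this immediately yields the claimed isometry. Simpliciality is inherited from $\pi$ since $F$ is a subtree, and surjectivity follows from the $G$-invariance of $\pi$:
\[
\pi(F) \;=\; \pi(G \cdot F) \;\supseteq\; \pi(T) \;=\; X,
\]
so $\pi(F) = X$.

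The substantive content is injectivity of $\pi|_F$. First I reduce to edge-injectivity: if two distinct vertices $v_1, v_2 \in F$ had $\pi(v_1) = \pi(v_2)$, the unique geodesic in $F$ from $v_1$ to $v_2$ would project to a closed combinatorial path in the tree $X$, forcing a backtrack at some interior vertex and hence two distinct edges of $F$ at a common vertex mapping to the same edge of $X$. So, supposing for contradiction that $e_1, e_2 \in F$ are distinct edges sharing a vertex $w$ with $\pi(e_1) = \pi(e_2)$, the triviality of edge stabilizers gives $e_2 = g e_1$ for a unique $g \neq \mathrm{id}$. Tree-ness of $X$ (no loops) forces $g w = w$, for otherwise $\pi(e_2)$ would project to a loop in $X$ based at $\pi(w)$; consequently $g \in \mathrm{Stab}(w)$ and $g v_1 = v_2$, where $v_i$ denotes the far endpoint of $e_i$.

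To complete the argument, I exhibit a proper subtree $F' \subsetneq F$ with $G F' \supseteq T$, contradicting the minimality of $F$. Take $F_2$ to be the closed branch of $F$ at $w$ through $e_2$, namely $\{w\}$ together with the component of $F \setminus \{w\}$ containing $v_2$, and set $F' := F \setminus (F_2 \setminus \{w\})$; then $F'$ is a connected subtree of $F$ properly contained in $F$ since $e_2 \notin F'$. To verify $G F' \supseteq T$, write any $x \in T$ as $x = h y$ with $y \in F$ and $h \in G$; the only nontrivial case is $y \in F_2 \setminus \{w\}$. For $y$ on $e_2$ or $y = v_2$, the translate $g^{-1} y$ lies on $e_1$ or equals $v_1$, both inside $F'$, so $x \in G F'$. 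Points deeper in $F_2$ require more care.

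The main obstacle is precisely that last case: the branches $F_1$ and $F_2$ on the two sides of the fold may have incomparable structure beyond $v_1, v_2$, so a single application of $g$ need not send $F_2$ into $F$. My remedy is an induction on $|E(F)|$: at each stage one identifies a ``leaf-duplicate'' in $F$ — a pair of duplicate edges one of which terminates in a leaf of $F$, which can be located by choosing the fold at a preimage in $F$ of a leaf of $X$ and walking outward, refining the fold until the $F_2$-branch becomes a single leaf edge. The removal of a leaf-duplicate is immediate by the short calculation above, and after finitely many steps one arrives at a fundamental domain with no duplicate edges. Such a fundamental domain maps edge-injectively, hence injectively, onto $X$, completing the proof.
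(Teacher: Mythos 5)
Your preliminary steps are sound: surjectivity of $\pi|_F$, the reduction of injectivity to edge-injectivity via a backtrack of the image path in the tree $X$, and the conclusion that a folded pair $e_1=[w,v_1]$, $e_2=[w,v_2]$ satisfies $e_2=ge_1$ for a unique $g\in\mathrm{Stab}(w)\setminus\{\mathrm{id}\}$ are all correct. The proof breaks exactly at the step you defer to an induction: the existence of a ``leaf-duplicate.'' That claim is false, and in fact the lemma itself fails for the paper's definition of fundamental domain (a subtree $F$ with $G\cdot F\supset T$ such that no proper subtree has this property). Take $G=A*B*C*D$ with each factor of order $2$ (generator $a\in A$, etc.), and let $T$ be the Bass--Serre tree of the star-shaped graph of groups whose central vertex $\beta$ carries $B$ and whose leaves $\alpha,\gamma,\delta$ carry $A,C,D$; so $X$ is a tripod with three edges $[\alpha\beta],[\beta\gamma],[\beta\delta]$. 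The embedded path
\[
F:\quad C \;-\; B \;-\; A \;-\; aB \;-\; aD
\]
is a subtree of $T$ with four edges lying in the orbits $[\beta\gamma],[\alpha\beta],[\alpha\beta],[\beta\delta]$; it meets every vertex and edge orbit, so $G\cdot F\supset T$, while every proper subtree of this path omits its unique representative of $[\beta\gamma]$ or of $[\beta\delta]$ and hence fails to cover. Thus $F$ is a fundamental domain in the stated sense, its only duplicated pair is the \emph{interior} pair $[B,A]$, $[A,aB]$ (folded by $a\in\mathrm{Stab}(A)$), both leaf edges are alone in their orbits, so no leaf-duplicate exists, your induction cannot start, and $F$ (a four-edge arc) is not isometric to the tripod $X$.

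The underlying obstruction is the one you already identified: the translate $g^{-1}B_2$ of the far branch need not lie in $F$, so the smaller covering subtree it produces is a subtree of $T$ but not of $F$, and therefore does not contradict the hypothesis that no proper subtree \emph{of $F$} covers. Minimality in that sense is strictly weaker than having the minimum number of edges; what is true (Serre) is that some covering subtree is an isomorphic lift of $X$, namely any one of minimum size. You should also know that the paper's own proof conceals the same gap in the sentence ``as the quotient is a tree, so no two points in $Y$ are in the same orbit,'' which is precisely the assertion the example above refutes. Repairing the lemma requires either strengthening the definition of fundamental domain (a subtree meeting each orbit exactly once, equivalently a lift of $X$) or restricting to covering subtrees with the minimum number of edges; under either reading your argument through the identification of the fold, combined with the immediate leaf-removal computation, does go through.
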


\begin{proof}
We will prove that any fundamental domain is isomorphic to the quotient $X := T\slash G$. 

Fix a fundamental domain of $T$ under the action of $G$ and name it $Y$. As the quotient is a tree, so no two points in $Y$ are in the same orbit. Hence, we can define a unique bijective map $f$ from $X$ to $Y$.\begin{gather*}f: X \rightarrow Y.\\
x \mapsto \text{the unique pre image of }x\end{gather*}
$f$ is an isometry as $G$ acts on $T$ by isometries.
\end{proof}

\begin{remark}\cite[Page 147]{GL}
If $T_1, T_2 \in \mathcal{D}(G,\mathscr{H})$ is a trees in a deformation space then, then rank of the quotient graphs $T_1/G$ and $T_2/G$ are same. Hence, the underlying graph of every quotient graph of groups in our case is a tree.
\end{remark}

\begin{lemma}
Consider a $G$-tree $T$ such that the edge stabilizers are trivial and $X := T\slash G$ is a finite tree. Fix a fundamental domain $Y$ of $T$, then $G$ is equal to the internal free product of vertex stabilizer subgroups of vertices in $Y$.
\end{lemma}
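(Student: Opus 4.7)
The plan is to apply the fundamental theorem of Bass-Serre theory, Theorem \ref{thm:bst}, to the quotient graph of groups associated to $T$, and then to exploit the simple observation that a tree of groups with trivial edge groups has fundamental group equal to the free product of its vertex groups.

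First I would use the preceding lemma, which says that $Y$ is isometric to $X = T/G$ via the quotient map, so vertices of $Y$ are in canonical bijection with vertices of $X$. Let $\mathbf{X}$ denote the quotient graph of groups associated to $T$ with respect to this fundamental domain. Then for each vertex $v \in V(Y)$ the vertex group $G_v$ of $\mathbf{X}$ coincides (as a subgroup of $G$) with the point-stabilizer $\mathrm{Stab}_G(v)$. By hypothesis every edge stabilizer of $T$ is trivial, so every edge group of $\mathbf{X}$ is trivial. Because $X$ is a finite tree, the fundamental group of $\mathbf{X}$ is built by iteratively amalgamating the vertex groups along the edges of $X$; amalgamation over the trivial group is the ordinary free product, so by induction on the number of edges of $X$ one obtains a canonical isomorphism
\begin{equation*}
\pi_1(\mathbf{X}) \;\cong\; \free_{v \in V(Y)} \mathrm{Stab}_G(v).
\end{equation*}

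Next I would invoke Theorem \ref{thm:bst}. It supplies a group isomorphism $i \colon G \to \pi_1(\mathbf{X})$ together with a $G$-equivariant isomorphism between $T$ and the Bass-Serre covering tree $T_{\mathbf{X}}$. Under the description of $T_{\mathbf{X}}$ given in Definition \ref{defn:BSCoveringTree}, the $\pi_1(\mathbf{X})$-stabilizer of the vertex represented by the trivial coset of $G_v$ is precisely $G_v$, so the isomorphism $i$ carries $\mathrm{Stab}_G(v)$ onto $G_v$ for each $v \in V(Y)$. Composing $i^{-1}$ with the free-product presentation above yields an isomorphism from the external free product $\free_{v \in V(Y)} \mathrm{Stab}_G(v)$ onto $G$ whose restriction to each factor is the inclusion map. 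This is exactly the statement that $G$ is the internal free product of the subgroups $\{\mathrm{Stab}_G(v) : v \in V(Y)\}$.

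The only real obstacle is bookkeeping: one must transparently identify (a) the vertex stabilizers $\mathrm{Stab}_G(v)$ sitting inside $G$, (b) the vertex groups $G_v$ occurring in the data of the quotient graph of groups $\mathbf{X}$, and (c) the point-stabilizers of the chosen base vertices of $T_{\mathbf{X}}$ sitting inside $\pi_1(\mathbf{X})$. Once these three identifications are all made compatible through the equivariant isomorphism $T \cong T_{\mathbf{X}}$, the free-product conclusion falls out of the tree structure of $X$ together with the triviality of every edge group. An alternative route, avoiding Bass-Serre theory explicitly, is to induct on the number of edges of $Y$: collapse a leaf edge, apply the inductive hypothesis to the resulting smaller $G$-tree, and re-attach the leaf via the trivial-edge-group amalgamation to adjoin one more free factor. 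The Bass-Serre route is cleaner since the required machinery is already in place.
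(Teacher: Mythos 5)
Your proposal is correct and follows essentially the same route as the paper: both form the quotient graph of groups with trivial edge groups over the finite tree $X$, invoke the Bass--Serre correspondence to identify $G$ with its fundamental group, and conclude that this is the free product of the vertex stabilizers. Your version simply spells out the bookkeeping (the identification of $Y$ with $X$ and of stabilizers with vertex groups) that the paper's two-line proof leaves implicit.
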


\begin{proof}
Consider a graph of groups $\mathbf{X}$ whose underlying tree is isometric to $X$ and the vertex group associated to a vertex of $\mathbf{X}$ under this isometry is the vertex stabilizer group of the corresponding vertex of $X$. Then, the Bass-Serre tree of $\mathbf{X}$ is equivariantly isometric to $T$. Hence, $G$ is isomorphic to internal the free product of vertex groups of $\mathbf{X}$.
\end{proof}

\subsubsection{A dictionary between two points of view of $\mathcal{D}(G, \mathscr{H})$}Consider a tree $T\in \mathcal{D}(G, \mathscr{H})$. We will now describe the graph of groups, $\mathbf{X}$, corresponding to $T$. The underlying graph of $\mathbf{X}$ is isomorphic to $X = T/G$. If $X$ is a tree then, $X$ is isomorphic to a fundamental domain of $T$ under the action of $G$. The vertex group associated to a vertex of $\mathbf{X}$ under this homeomorphism is the vertex stabilizer group of the corresponding vertex of $X$. The vertices having trivial vertex groups must have valence $3$ or more. The edge groups of $\mathbf{X}$ are trivial as the edge stabilizers of $T$ are trivial. 

Hence, a point of the deformation space can be represented by a graph of groups. Two graph of groups $\mathbf{X_1}$ and $\mathbf{X_2}$ represent the same point of $\mathcal{D}(G, \mathscr{H})$ if their Bass-Serre trees are $G$ equivariantly isometric.

\begin{remark}\label{rem:dictionary}
 We will use the following dictionary to change our viewpoint of $\mathcal{D}(G,\mathscr{H})$ from a set of trees to a set of graph of groups and vice versa.

\begin{enumerate}[wide, labelwidth=!, labelindent=0pt] \item Consider a tree $T \in \mathcal{D}(G,\mathscr{H})$, a graph of groups $\mathbf{X}_T \in \mathcal{D}(G,\mathscr{H})$ representing the point $T$ can be constructed once we fix a fundamental domain $F$ of $T$. $\mathbf{X}_T$ is isometric to $F$ as a graph and the vertex group associated to a vertex of $\mathbf{X}_T$ is the vertex stabilizer group of the corresponding vertex of $F$.

\item Consider a $\mathbf{X} \in \mathcal{D}(G,\mathscr{H})$ a tree $T_\mathbf{X} \in \mathcal{D}(G,\mathscr{H})$ representing the point $\mathbf{X}$ is the Bass-Serre tree of $\mathbf{X}$.

\end{enumerate}
\end{remark}


\subsection{Geometry of Deformation Space} 
\subsubsection{$\mathbb{O}_T$ - open cone of deformation space}
Consider an equivalence class, $[T] \in \mathcal{D}(G, \mathscr{H})$. Let, us choose a tree $T \in [T]$. If T has $k + 1$ orbits of edges then every tree in $[T]$ has $k + 1$ orbits of edges. In terms of the graph of groups, if $\mathbf{X}$ is a graph of groups corresponding to $T$; then $\mathbf{X}$ has $k+1$ edges. From this point, we will abuse notation and denote an equivalence class in $\mathcal{D}(G, \mathscr{H})$ by a tree (or a graph of groups) belonging to the equivalence class. Consider the set, $\mathbb{O}_T := \{S \in \mathcal{D}(G, \mathscr{H})\vert S \text{ is $G$-equivariantly homeomorphic to } T\}. \mathbb{O}_T$ can be naturally identified with the positive orthant of $\mathbb{R}^{k + 1}$ which induces a topology on $\mathbb{O}_T$. Consider a tree $S \in \mathbb{O}_T$ with edge lengths of distinct orbits of edges given by $e_0, e_1, ..., e_k$, then the identification map is described as follows:
\begin{gather*}\mathbb{O}_T \rightarrow \mathbb{R}^{k + 1}\\
S \mapsto (e_0, e_1, ..., e_k).\end{gather*}

Hence, $\mathbb{O}_T$ is realized geometrically as a metric space isometric to the positive orthant of $\mathbb{R}^{k + 1}$.



\subsubsection{Admissible collapse and expand moves in $ \mathcal{D}(G, \mathscr{H})$} A tree, $T \in \mathcal{D}(G, \mathscr{H})$ admits a \textbf{collapse move} if collapsing some edge orbits $G$-equivariantly, produces a tree $T' \in \mathcal{D}(G, \mathscr{H})$. \textbf{Admissible collapse move} is a relation $(T, T')$ on $\mathcal{D}(G,\mathscr{H})$. The inverse of an admissible collapse move is an \textbf{admissible expand move}. 

Admissible expand move can be defined independently. For a given fundamental domain $Y$ of $T$, a set of vertices $\{v_i\}$ of the fundamental domain; $T$ admits an expand move if all the $v_i$s follow one of the following two conditions:
\begin{enumerate}[wide, labelwidth=!, labelindent=0pt]
\item $stab(v_i) = id$ with valence of $v_i\vert_Y > 3$.
\item $stab(v_i) \ne id$ with valence of $v_i\vert_Y > 1$.
\end{enumerate}
This move produces a new tree $T' \in \mathcal{D}(G, \mathscr{H})$ by constructing a fundamental domain $Y'$ after a deformation of $Y$. We replace each $v_i$ by a finite sub-tree such that there are no vertices of valence $2$ or lower with trivial vertex stabilizer. We extend this operation equivariantly to all of $T$ to obtain $T'$.

If we apply an admissible collapse move on a tree $T$, then the resulting tree, $T'$, may not be in $\mathbb{O}_T$. In that case $T'$ may be associated to a point on the boundary of the positive orthant, with one or more $0$ coordinate, i.e., a point in one of the bounding hyperplanes of $\mathbb{O}_{T}$.

\begin{remark}
We have to administer our collapse moves cautiously, so that we do not produce a tree whose vertex stabilizer is not in the collection $\mathscr{H}$. Hence, the name admissible collapse move. Similarly, we have to exercise caution while applying expand moves to make sure that the resulting tree does not have a vertex of valence $\leq 2$ with trivial vertex stabilizer.
\end{remark}

\subsubsection{Boundary of $\mathbb{O}_T$}
After realizing $\mathcal{D}(G,\mathscr{H})$ as a collection of disjoint open orthants, our next goal is to give identification maps to the collection of open orthants. 

Let, $\mathbb{O}_{T'}$ be a $k'$ dimensional open simplex and $\mathbb{O}_T$ be a $k$ dimensional open simplex, where $k' \le k$. $\mathbb{O}_{T'}$ is a boundary of $\mathbb{O}_T$ if and only if $T'$ is isomorphic to a tree obtained collapse move applied on $T$.

\subsubsection{Topology of $ \mathcal{D}(G, \mathscr{H})$} A set in this space is open if and only if the intersection of the set with the relative interior of any simplex is a relative open subset of the simplex.

\subsubsection{$ \mathcal{PD}(G, \mathscr{H})$ - projectivized deformation space} $\mathbb{R} / \{0\}$ acts on the open cone and the quotient of the action can be identified with $\sigma_k = \{(e_0, e_1, ..., e_k)\vert \Sigma_{i = 0}^k e_i = 1 \}$, the $k$-dimensional open simplex in $\mathbb{R}^{k + 1}$. The sum of the edge lengths of different edge-orbits of a tree corresponding to a point of $\sigma_k$ is $1$.

\subsubsection{$ \mathcal{SPD}(G, \mathscr{H})$ - spine of $ \mathcal{PD}(G, \mathscr{H})$} Spine of the projectivized deformation space is a subspace of the projectivized deformation space. It is a simplicial complex spanned by the $0$-simplices corresponding to points having equal values on every coordinates. In other words $0$-simplices correspond to barycenter of every open simplex. For example, the $0$-simplex corresponding to a $k-$dimensional open simplex of the projectivized deformation space is given by \\
$\{(e_0, e_1, ..., e_k)\vert \Sigma_{i = 0}^k e_i = 1, e_0 = e_1 = ...= e_k \}.$ So, $\mathcal{SPD}(G,\mathscr{H})$ can be realized as the flag complex spanned by the barycenter of $\mathcal{PD}(G, \mathscr{H})$. Observe that $\mathcal{SPD}(G,\mathscr{H})$ is a simplicial complex, which is a deformation retract of $\mathcal{PD}(G,\mathscr{H})$ (lemma \ref{lemma:bsd}).

\begin{remark}
Contractibility of $\mathcal{D}(G,\mathscr{H}), \mathcal{PD}(G,\mathscr{H}), \mathcal{SPD}(G,\mathscr{H})$ follows from theorem \ref{thm:contractible}.
\end{remark}

\subsection{Action of $\mathsf{Out}(G)$ on $\mathcal{SPD}(G,\mathscr{H})$}

The goal of this section is to establish a geometric connection between $\mathsf{Out}(G)$ and $\mathcal{SPD}(G,\mathscr{H})$. We want to show that $\mathsf{Out}(G)$ action on $\mathcal{SPD}(G,\mathscr{H})$ is properly discontinuous and co-compact.

\begin{remark}
 For the rest of our exposition we will inspect the space $ \mathcal{D}(G, \mathscr{H})$ for 
\begin{gather*} G = G_n = A_1*...*A_n, \text{ where each } A_i \text{ is finite, and}\\ 
\mathscr{H} = \{H \leq G_n \vert H \text{ is conjugate to }A_i \text{ or the trivial subgroup}\}.
\end{gather*}
We may drop the subscript $n$ from $G_n$, if it is clear from the context.
\end{remark}

\subsubsection{Structure of graph of groups in $\mathcal{D}(G_n, \mathscr{H})$}

\begin{lemma} \label{lemma:count}
If a graph of groups has following properties:
\begin{enumerate}[wide, labelwidth=!, labelindent=0pt]
\item There are exactly $n$ non trivial vertex groups,  $n \geq 2$.
\item The vertices having trivial vertex groups have valence greater than $2$.
\item The edge groups are trivial.
\item The underlying graph is a finite tree.
\end{enumerate}
Then $n \leq \mathcal{V} \leq 2(n - 1), \text{ and }, (n - 1) \leq \mathcal{E} \leq 2n - 3$, where $\mathcal{V}, \mathcal{E}$ represent the number of vertices, edges of the underlying tree, respectively.
\end{lemma}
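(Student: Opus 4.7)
The plan is to exploit the two facts that (i) for a finite tree the number of edges equals the number of vertices minus one, and (ii) the handshake identity $\sum_{v}\deg(v) = 2\mathcal{E}$ can be combined with the valence lower bound on trivial vertices to bound how many trivial vertices can occur.

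First I would split the vertex set as $\mathcal{V} = n + \mathcal{V}_t$, where $\mathcal{V}_t$ denotes the number of vertices whose vertex group is trivial. Because the underlying graph is a finite tree, one immediately obtains the identity $\mathcal{E} = \mathcal{V} - 1 = n + \mathcal{V}_t - 1$. The lower bounds are then essentially free: $\mathcal{V}_t \geq 0$ gives $\mathcal{V} \geq n$ and $\mathcal{E} \geq n-1$, which handles one half of each inequality.

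For the upper bounds, which is where the real content lies, I would apply the handshake lemma. Since the tree has at least $n \geq 2$ vertices, it is connected with more than one vertex, so every vertex has valence at least $1$; by hypothesis (2) every vertex with trivial vertex group in fact has valence at least $3$. Combining these,
\begin{equation*}
2\mathcal{E} \;=\; \sum_{v}\deg(v) \;\geq\; 3\mathcal{V}_t + n.
\end{equation*}
Substituting $\mathcal{E} = n + \mathcal{V}_t - 1$ into the left-hand side yields $2n + 2\mathcal{V}_t - 2 \geq 3\mathcal{V}_t + n$, which simplifies to $\mathcal{V}_t \leq n - 2$. This is the crux of the argument. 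Feeding this back into $\mathcal{V} = n + \mathcal{V}_t$ and $\mathcal{E} = n + \mathcal{V}_t - 1$ produces the desired upper bounds $\mathcal{V} \leq 2(n-1)$ and $\mathcal{E} \leq 2n-3$.

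I do not expect any genuine obstacle here; the only subtlety worth pausing on is confirming that non-trivial-group vertices really do contribute at least $1$ to the degree sum, which requires noting that $n \geq 2$ rules out the isolated-vertex case where the handshake bound would fail. The edge-group triviality hypothesis (3) is not needed in the counting itself; its role is indirect, making the counting relevant within $\mathcal{D}(G,\mathscr{H})$.
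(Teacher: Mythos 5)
Your proof is correct, and it takes a genuinely different route from the paper. You obtain the upper bounds by a direct double-counting argument: writing $\mathcal{V}=n+\mathcal{V}_t$, using $\mathcal{E}=\mathcal{V}-1$ for a tree, and applying the handshake identity $2\mathcal{E}=\sum_v\deg(v)\geq 3\mathcal{V}_t+n$ to conclude $\mathcal{V}_t\leq n-2$. The substitution $2n+2\mathcal{V}_t-2\geq 3\mathcal{V}_t+n$ is right, and you correctly flag the one point that needs care, namely that every non-trivial-group vertex contributes at least $1$ to the degree sum, which holds because a tree with $n\geq 2$ vertices is connected and so has no isolated vertices. The paper instead argues by induction on $n$: it verifies the base case $n=2$ (where the only admissible configuration is two vertices joined by one edge), constructs for each $k$ a tree with $2(k-1)$ vertices realizing the bound, and handles the inductive step by collapsing an edge at a terminal vertex. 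Your argument is shorter and more self-contained, and isolates the quantitative content ($\mathcal{V}_t\leq n-2$) cleanly; the paper's induction has the side benefit of exhibiting explicit trees that attain both the lower and upper bounds, showing the inequalities are sharp, which your counting argument does not by itself provide (though sharpness is not asserted in the lemma statement).
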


\begin{proof}
The second set of inequalities follow from the first set of inequalities because in a finite tree the number of vertices is $1$ more than the number of edges.

$n \leq \mathcal{V}$ follows from the fact that there are $n$ non trivial vertex groups. Additionally, the lower bound is attained by a tree isometric to $[0 , n - 1]$ with the integer points of the interval realized as the vertices.

We will prove the second half of the first inequality by induction. Let us inspect a finite tree of groups having two vertex groups. Such a tree has at most $2$ vertices of valence $1$. The underlying space is homeomorphic to the interval $[0 , 1]$, as we do not allow vertices of valence less than $3$ for trivial vertex groups. So, the only possible configuration is a tree with $2$ vertices and $1$ edge. Now, let us assume this statement is true is for $n = k$. That is, a graph of groups with $k$ non trivial vertex groups and following conditions $2, 3,\text{ and } 4$ above has at most $2(k - 1)$ vertices; and there exists a tree with $2(k - 1)$. Using this tree we will construct a tree with $2k$ vertices having $k + 1$ vertices of valence $\leq 2$. Take this tree and choose an interior point of an edge. Attach the interval $[0, 1]$ to this point by a quotient map where only the point $0$ from the interval gets identified to the chosen point. In the quotient space, define the image of $0$ and $1$ to be new vertices. This way the quotient space formed can be realized as a tree with exactly $2(k - 1) + 2 = 2k$ vertices having $k + 1$ vertices of valence $1$.

Now, if there exists a tree $T$,  with $k + 1$ vertices of valence $\leq 2$ satisfying conditions $2, 3, \text{ and}, 4$ and $e$ is an edge containing a terminal vertex (a vertex of valence $1$); then $T/\{e\}$ is homeomorphic to a tree with $k$ vertices of valence $\leq 2$. From the previous paragraph it follows that such a tree can have at most $2(k - 1)$ vertices. So, $T$ can have at most $2k$ vertices.
\end{proof}

\subsubsection{$\mathsf{Out}(G)$ action on $\mathcal{D(G, \mathscr{H})}$}

We will take the help of the following proposition to define an action of $\phi \in \mathsf{Out}(G)$ on the deformation space.

\begin{definition}
If $\Phi \in \mathsf{Aut}(G)$ is an automorphism and $T$ is a $G$-tree, then define $\Phi(T)$ to be a $G$-tree which is isometric to $T$ with a \textit{twisted} action of $G$ on $T$. The action is denoted by $\cdot_{\Phi}$ and is defined as- $$g \cdot_{\Phi} x := \Phi(g) \cdot x, \forall x \in T, g \in G.$$

here the action($\cdot$) on the right denotes the original action.
\end{definition}

\begin{proposition}\label{prop:welldefined}
If $\Phi_1, \Phi_2 \in \mathsf{Aut}(G)$ are two automorphisms representing the same outer automorphism class $\phi \in \mathsf{Out}(G)$ and $T \in \mathcal{D}(G, \mathscr{H})$ is a $G$-tree, then $\Phi_1(T)$ is $G$-equivariantly isometric to $\Phi_2(T)$.

\end{proposition}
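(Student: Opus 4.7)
The plan is to exploit the one extra piece of information available: since $\Phi_1$ and $\Phi_2$ represent the same outer class, they differ by an inner automorphism, i.e.\ there exists $h \in G$ such that $\Phi_2 = i_h \circ \Phi_1$, where $i_h(g) = hgh^{-1}$. So for every $g \in G$ one has
\[
\Phi_2(g) = h\, \Phi_1(g)\, h^{-1}.
\]
The candidate equivariant isometry between $\Phi_1(T)$ and $\Phi_2(T)$ should therefore be built out of the action of $h$ on the underlying tree $T$.

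First I would recall that $\Phi_1(T)$ and $\Phi_2(T)$ share the same underlying metric space (namely $T$) and differ only by the twisted actions $\cdot_{\Phi_1}$ and $\cdot_{\Phi_2}$. I would then define the map
\[
f : \Phi_1(T) \longrightarrow \Phi_2(T), \qquad f(x) := h \cdot x,
\]
where the dot denotes the original (untwisted) action of $G$ on $T$. Since the original action of $G$ on $T$ is by isometries, $f$ is an isometry of the underlying metric trees; its inverse is the map $x \mapsto h^{-1}\cdot x$.

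The main thing to verify is $G$-equivariance with respect to the twisted actions. For $g \in G$ and $x \in T$,
\[
f(g \cdot_{\Phi_1} x) \;=\; f\bigl(\Phi_1(g)\cdot x\bigr) \;=\; h\,\Phi_1(g)\cdot x,
\]
while
\[
g \cdot_{\Phi_2} f(x) \;=\; \Phi_2(g)\cdot (h\cdot x) \;=\; \bigl(h\,\Phi_1(g)\,h^{-1}\bigr)\cdot (h\cdot x) \;=\; h\,\Phi_1(g)\cdot x.
\]
The two sides agree, so $f$ intertwines the twisted actions. This proves that $\Phi_1(T)$ and $\Phi_2(T)$ are $G$-equivariantly isometric, and hence define the same point of $\mathcal{D}(G,\mathscr{H})$.

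I do not expect a real obstacle here: the argument is essentially a bookkeeping check. The only subtlety is keeping the two different $G$-actions (the original action on $T$ and the twisted action defining $\Phi_i(T)$) cleanly separated when verifying equivariance; once the element $h$ implementing $\Phi_2 \Phi_1^{-1}$ as an inner automorphism is identified, the formula $f(x) = h \cdot x$ is essentially forced.
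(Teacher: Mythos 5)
Your proof is correct and follows essentially the same route as the paper: the paper reduces to the case where $\Phi$ is inner and uses the same map $x \mapsto h\cdot x$ with the same equivariance computation, whereas you handle the general pair $\Phi_1,\Phi_2$ directly, which is if anything slightly cleaner. No gap.
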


\begin{proof}
 We will prove that if $\Phi$ is a non-identity automorphism representing the identity outer automorphism class, then $\Phi(T)$ is $G$-equivariantly isometric to $T$. 

Let, $\Phi$ represent the trivial outer automorphism then $\exists h \in G$, such that
$\Phi(g) = hgh^{-1}, \forall g \in G$. This motivates the definition of an isometry, $f$, between $T$ and $\Phi(T)$
\begin{align*}
f: T \rightarrow & \Phi(T) \\
x \mapsto & h\cdot x
\end{align*}

Next, we will verify the $G$-equivariance of the map $f$.\\
In $T$ we have, 
$f(g \cdot x) = h\cdot (g \cdot x), \forall g \in G$.

In $\Phi(T)$ we have, 
$g \cdot_{\Phi} f(x) = hgh^{-1} \cdot (h \cdot x) = (hg) \cdot x, \forall g \in G$.

Hence, $f$ is a $G$-equivariant isometry.
\end{proof}

\begin{definition}[Definition of the action] 
Given a $G$-tree, $T \in \mathcal{D}(G, \mathscr{H})$ and $\phi \in \mathsf{Out}(G)$. $\phi(T)$ is the equivalence class of G-equivariant trees represented by $\Phi(T)$, where $\Phi$ is an automorphism from the class of outer automorphism $\phi$. 
\end{definition}

\begin{remark}\label{rem:action_stab}
If $v$ is a vertex of $T$, then stab$_{\Phi(T)}(v)$ = $\Phi(\text{stab}_T(v))$, where $\Phi \in \mathsf{Aut}(G)$. If $F$ is a fundamental domain of $T$ with vertices $v_1, v_2,..., v_d$ and vertex stabilizers $G_{v_1}, G_{v_2},..., G_{v_d}$, respectively; then $F$ is a fundamental domain in $\Phi(T)$ and the vertex stabilizers of the vertices $v_1, v_2,..., v_d$ are given by $\Phi(G_{v_1}), \Phi(G_{v_2}),..., \Phi(G_{v_d})$, respectively.
\end{remark}

Following remarks \ref{rem:dictionary} and \ref{rem:action_stab}, we can give a simpler description of the action $\mathsf{Out}(F_n) \acts \mathcal{D}(G,\mathscr{H})$, when the latter is considered as a space of graph of groups.

\begin{definition}
Let $\mathbf{X} \in \mathcal{D(G, \mathscr{H})}$ be a graph of groups whose underlying graph is denoted by $X$; and $\Phi \in \mathsf{Out}(G)$ be an automorphism. Define $\Phi(\mathbf{X})$ (denoted by $\mathbf{X'}$) to be a graph of groups whose underlying graph, $X'$, is related to $X$ by an isometry $i : X \rightarrow X'$, such that if $v$ is a vertex of $\mathbf{X}$ having $G_v$ as the vertex group; then the vertex group corresponding to $i(v)$ is $\Phi(G_v)$.

\begin{center}
\begin{tikzpicture} \begin{scriptsize}
\begin{scriptsize}

\Tw{-1}{0}{G_1}{G_2}{G_3}{G_4}{$\mathbf{X}$} 

\Tw{5}{0}{\Phi(G_1)}{\Phi(G_2)}{\Phi(G_3)}{\Phi(G_4)}{$\mathbf{X'}$}

\end{scriptsize}
\end{scriptsize} \end{tikzpicture} 
\end{center}
 
\end{definition}

\begin{proposition}
Let $\Phi \in \mathsf{Aut}(G)$ and $\mathbf{X} \in \mathcal{D}(G,\mathscr{H})$ be a graph of groups whose Bass-Serre tree is denoted by $T_{\mathbf{X}}$, then the Bass-Serre tree of $\Phi(\mathbf{X})$ is $\Phi({T_{\mathbf{X}}})$
\end{proposition}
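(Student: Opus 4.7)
The plan is to invoke the uniqueness part of the Bass--Serre theorem (Theorem \ref{thm:bst}): two $G$-trees with isomorphic quotient graphs of groups are $G$-equivariantly isometric to the common Bass--Serre covering tree. It therefore suffices to show that the quotient graph of groups of $\Phi(T_{\mathbf{X}})$, taken with respect to the twisted action $\cdot_{\Phi}$, is precisely $\Phi(\mathbf{X})$; once this is established, $\Phi(T_{\mathbf{X}})$ is $G$-equivariantly isometric to $T_{\Phi(\mathbf{X})}$, which is the content of the proposition.

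To carry this out, I would first fix a fundamental domain $F \subset T_{\mathbf{X}}$ for the standard $G$-action. Because $\Phi \colon G \to G$ is a bijection, the set $\{g \cdot_{\Phi} x : g \in G\} = \{\Phi(g) \cdot x : g \in G\}$ agrees with the standard orbit of $x$. Hence the orbits of $\cdot_{\Phi}$ coincide setwise with those of the standard action, so $F$ is also a fundamental domain for the twisted action, and the underlying graph of $\Phi(T_{\mathbf{X}})/G$ is identified with the underlying graph $X$ of $\mathbf{X}$ via the same quotient map. This identification is exactly the isometry $i \colon X \to X'$ appearing in the definition of $\Phi(\mathbf{X})$.

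It then remains to compare edge and vertex stabilizers. An edge $e \in F$ has trivial standard stabilizer, and $g \cdot_{\Phi} e = e$ forces $\Phi(g) = 1$, hence $g = 1$; so edge groups stay trivial, as $\Phi(\mathbf{X})$ requires. For a vertex $v \in F$ with standard stabilizer $G_v$, Remark \ref{rem:action_stab} gives $\mathrm{stab}_{\Phi(T_{\mathbf{X}})}(v) = \Phi(G_v)$, which is precisely the vertex group assigned to $i(v)$ in the definition of $\Phi(\mathbf{X})$. Combining these facts, the quotient graph of groups of $\Phi(T_{\mathbf{X}})$ equals $\Phi(\mathbf{X})$, and the Bass--Serre theorem finishes the argument. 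The only delicate point is the bookkeeping around the twisted action when computing stabilizers, but this is exactly what Remark \ref{rem:action_stab} isolates, so no genuine obstacle arises.
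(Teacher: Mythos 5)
Your proof is correct and follows essentially the same route as the paper: both identify a fundamental domain of $\Phi(T_{\mathbf{X}})$ whose vertex stabilizers are $\Phi(G_{v_1}),\dots,\Phi(G_{v_d})$ (via Remark \ref{rem:action_stab}) and then conclude via the correspondence between quotient graphs of groups and their Bass--Serre trees. Your write-up is somewhat more explicit about why the twisted orbits coincide with the standard orbits and why edge stabilizers remain trivial, points the paper leaves implicit, but there is no substantive difference in approach.
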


\begin{proof}

Let the vertices and the vertex groups of $\mathbf{X}$ be labeled as $v_1, v_2, ..., v_d$, and $G_{v_1}, G_{v_2}, ..., G_{v_d}$, respectively. We can use the same vertex labeling for the vertices of $\Phi(\mathbf{X})$ and the associated vertex groups are $\Phi(G_{v_1}), \Phi(G_{v_2}),..., \Phi(G_{v_d})$, respectively.

So, there is a fundamental domain of $T_{\mathbf{X}}$ and $T_{\Phi({\mathbf{X}})}$(Bass-Serre tree of $\Phi(\mathbf{X})$) with vertex stabilizers of the vertices given by $\{G_{v_1}, G_{v_2}, ..., G_{v_d}\}$ and \\ $\{\Phi(G_{v_1}), \Phi(G_{v_2}),..., \Phi(G_{v_d})\}$, respectively. On the other hand, $\Phi(T_\mathbf{X})$ has a fundamental domain with vertex stabilizer group given by \\ $\{\Phi(G_{v_1}), \Phi(G_{v_2}),..., \Phi(G_{v_d})\}$.
 
 From the bijective correspondence between the fundamental domain and the Bass-Serre tree in $\mathcal{D}(G,\mathscr{H})$ we conclude that $\Phi(T_\mathbf{X})$ is $G$-equivariantly isometric to $T_{\Phi({\mathbf{X}})}$.
\end{proof}

\begin{corollary}
If $\Phi_1, \Phi_2 \in \mathsf{Aut}(G)$ are two automorphisms representing the same outer automorphism class $\phi \in \mathsf{Out}(G)$ and $\mathbf{X} \in \mathcal{D}(G, \mathscr{H})$ is a graph of groups, then $\Phi_1(\mathbf{X})$ is $G$-equivariantly isometric to $\Phi_2(\mathbf{X})$.
\end{corollary}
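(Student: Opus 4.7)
The plan is to reduce the statement to Proposition~\ref{prop:welldefined}, the analogous fact for $G$-trees, by passing through the Bass--Serre tree construction. The ingredients are already in place: the previous proposition states that $T_{\Phi(\mathbf{X})}$ is $G$-equivariantly isometric to $\Phi(T_{\mathbf{X}})$, and Proposition~\ref{prop:welldefined} tells us that $\Phi_1(T) \cong_G \Phi_2(T)$ for any $T \in \mathcal{D}(G,\mathscr{H})$ whenever $\Phi_1$ and $\Phi_2$ differ by an inner automorphism.

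First, I would form the Bass--Serre tree $T_{\mathbf{X}}$ of the graph of groups $\mathbf{X}$, which lies in $\mathcal{D}(G,\mathscr{H})$ by Remark~\ref{rem:dictionary}. Then I would apply the previous proposition twice to obtain
\begin{equation*}
T_{\Phi_1(\mathbf{X})} \;\cong_G\; \Phi_1(T_{\mathbf{X}}) \qquad \text{and} \qquad T_{\Phi_2(\mathbf{X})} \;\cong_G\; \Phi_2(T_{\mathbf{X}}),
\end{equation*}
where $\cong_G$ denotes $G$-equivariant isometry. Since $\Phi_1$ and $\Phi_2$ represent the same outer automorphism class $\phi$, Proposition~\ref{prop:welldefined} yields $\Phi_1(T_{\mathbf{X}}) \cong_G \Phi_2(T_{\mathbf{X}})$. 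Chaining these three equivariant isometries gives $T_{\Phi_1(\mathbf{X})} \cong_G T_{\Phi_2(\mathbf{X})}$, so $\Phi_1(\mathbf{X})$ and $\Phi_2(\mathbf{X})$ have $G$-equivariantly isometric Bass--Serre trees.

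Finally, I would invoke the dictionary from Remark~\ref{rem:dictionary}: the assignment $\mathbf{X} \mapsto T_{\mathbf{X}}$ descends to a bijective correspondence between points of $\mathcal{D}(G,\mathscr{H})$ viewed as graphs of groups (up to the obvious equivalence) and as $G$-trees (up to $G$-equivariant isometry). Hence the equivariant isometry of the Bass--Serre trees forces $\Phi_1(\mathbf{X})$ and $\Phi_2(\mathbf{X})$ to be $G$-equivariantly isometric as graphs of groups, which is the desired conclusion.

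The only step that requires any thought is making the dictionary argument precise, i.e.\ turning a $G$-equivariant isometry between Bass--Serre trees into an isomorphism of the underlying graphs of groups that matches vertex groups with vertex groups. This is routine once one notes that any $G$-equivariant isometry $T_{\Phi_1(\mathbf{X})} \to T_{\Phi_2(\mathbf{X})}$ descends to an isomorphism of the quotient graphs $T/G$ and identifies vertex stabilizers along corresponding orbits, so I do not anticipate a genuine obstacle here; the proof is essentially a functoriality check.
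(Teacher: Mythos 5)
Your argument is correct and is exactly the route the paper intends: the corollary is stated without proof precisely because it follows by chaining the preceding proposition (identifying $T_{\Phi(\mathbf{X})}$ with $\Phi(T_{\mathbf{X}})$) with Proposition \ref{prop:welldefined} and the dictionary of Remark \ref{rem:dictionary}. Your added remark about descending the equivariant isometry to the quotient graphs is a reasonable way to make the last step explicit, but it introduces nothing beyond what the paper's setup already supplies.
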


\begin{definition}[{$\mathsf{Out}(G)$ action on a graph of groups}] 
If $\phi \in \mathsf{Out}(G)$ and $\mathbf{X} \in \mathcal{D}(G,\mathscr{H})$, then $\phi \cdot \mathbf{X}:= \Phi(\mathbf{X})$, where $\Phi$ is automorphism from the outer automorphism class $\phi$.
\end{definition}


\subsection{Properties of the action}
$\mathcal{D}(G, \mathscr{H})$ is locally finite when the the elements of $\mathscr{H}$ are finite subgroups. We will later prove proper discontinuity and co-compactness (when restricted to the spine) of the action.

\begin{lemma}
$\mathcal{D}(G, \mathscr{H})$ is a locally finite topological space when $\vert H \vert < \infty, \forall H \in \mathscr{H}$.
\end{lemma}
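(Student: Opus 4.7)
The plan is to show that any point $[T] \in \mathcal{D}(G, \mathscr{H})$ lies in the closure of only finitely many open cones $\mathbb{O}_{T'}$; by the description of the topology in terms of restriction to each open simplex, this is enough to produce a neighborhood of $[T]$ meeting only these finitely many cones.

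Step one is to translate the question into graphs of groups via Remark~\ref{rem:dictionary}. Fix a graph of groups $\mathbf{X}$ representing $[T]$. The cone $\mathbb{O}_{T'}$ has $[T]$ in its closure precisely when the graph of groups $\mathbf{X}'$ representing $T'$ admits a (possibly iterated) admissible collapse move to $\mathbf{X}$, equivalently when $\mathbf{X}'$ arises from $\mathbf{X}$ by applying admissible expand moves at some set of vertices. So it suffices to enumerate such expansions up to isomorphism of graphs of groups.

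Step two is the combinatorial bound. By Lemma~\ref{lemma:count}, any $\mathbf{X}' \in \mathcal{D}(G_n, \mathscr{H})$ has at most $2(n-1)$ vertices and $2n-3$ edges; in particular the valence $d_v$ of each vertex $v$ of $\mathbf{X}$ is at most $2n-3$. An admissible expand move at a vertex $v$ with group $G_v \in \mathscr{H}$ is specified by (i) a finite tree $\tau_v$ with at most $2(n-1)$ vertices, in which exactly one vertex retains $G_v$ (since $G_v$, being a conjugate of some $A_i$, is indecomposable and cannot be split by an expansion) and all other vertices have trivial stabilizer of valence at least three, together with (ii) a redistribution of the $d_v$ incident edges among the vertices of $\tau_v$. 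Both choices are finite, so each vertex admits only finitely many expand moves, and since $\mathbf{X}$ itself has only finitely many vertices, only finitely many graphs of groups $\mathbf{X}'$ arise.

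The hypothesis $|H| < \infty$ for every $H \in \mathscr{H}$ plays a supporting role: it guarantees that each Bass--Serre tree in $\mathcal{D}(G, \mathscr{H})$ is locally finite, since a vertex with stabilizer $G_v$ has exactly $|G_v| \cdot d_v$ incident edges (edge stabilizers being trivial). This is what ensures that the combinatorial enumeration on the graph of groups side faithfully parametrizes $G$-equivariance classes on the tree side, rather than producing infinitely many inequivalent cones at $[T]$ through an infinite family of $G_v$-equivariant local blow-ups. The main subtlety I would expect is verifying that distinct combinatorial expansions give rise to distinct cones in $\mathcal{D}$; however, since we only need an upper bound on the number of cones meeting a neighborhood of $[T]$, any overcounting at this step is harmless, and the argument completes.
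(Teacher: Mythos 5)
Your proposal is correct and follows essentially the same route as the paper's proof: bound the number of vertex and edge orbits via the counting lemma, observe that finiteness of the vertex groups bounds the valences and hence the number of $G$-equivariant expansions at each vertex, and conclude that each open cone is a face of only finitely many others. Your version is somewhat more explicit than the paper's about what data specifies an admissible expansion and why overcounting is harmless, but the underlying argument is the same.
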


\begin{proof}
Consider a tree $T \in \mathcal{D}(G, \mathscr{H})$. This point is part of boundary of other open simplices if we can equivariantly expand some edge-orbits of $T$. The number of edge orbits of $T$ is bounded above by $2n - 3$ and the number of vertex orbits are bounded above by $2n - 2$. Since, the vertex groups are finite, each vertex has a finite valence. Hence, the number of fundamental domains containing a vertex is bounded above. So, the number of $G$-equivariant vertex expansions is bounded above for the tree $T$.

Therefore, the relative open simplex containing $T$ can be a boundary to at most finitely many relative open simplices. As a result $\mathcal{D}(G, \mathscr{H})$ is locally finite. 
\end{proof}
 
 \begin{lemma}
Stabilizer of any point of $\mathcal{D}(G, \mathscr{H})$ under the action of \\$\mathsf{Out}(G)$ is finite.
 \end{lemma}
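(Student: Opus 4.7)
The plan is to embed $\text{Stab}([\mathbf{X}])$ into the finite automorphism group of a finite combinatorial object. Represent the given point by a graph of groups $\mathbf{X}$ with underlying tree $X$: by Lemma \ref{lemma:count}, $X$ has at most $2n-2$ vertices, every nontrivial vertex group is a conjugate of some (finite) $A_i$, and every edge group is trivial. The combinatorial automorphism group
\[
\text{Aut}(\mathbf{X}) := \{(\sigma, (\alpha_v)) : \sigma \in \text{Aut}(X),\ \alpha_v \colon \Gamma_v \xrightarrow{\sim} \Gamma_{\sigma(v)}\}
\]
is therefore finite, since both $X$ and each $\Gamma_v$ are finite.

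I would then construct a map $\rho \colon \text{Stab}([\mathbf{X}]) \to \text{Aut}(\mathbf{X})$ as follows. Given $\phi \in \text{Stab}([\mathbf{X}])$ with representative $\Phi \in \mathsf{Aut}(G)$, by definition of the action the Bass--Serre tree $T_\mathbf{X}$ is $G$-equivariantly isometric to $\Phi(T_\mathbf{X}) = T_{\Phi(\mathbf{X})}$. Passing to the quotient graphs of groups yields an isomorphism $\mathbf{X} \to \Phi(\mathbf{X})$; identifying the (identical) underlying graphs and then untwisting the vertex groups via $\Phi^{-1}$ gives a well-defined combinatorial automorphism $\rho(\phi) \in \text{Aut}(\mathbf{X})$.

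The main step is to show that $\ker \rho$ is finite (in fact, trivial). For $\phi \in \ker \rho$, one can arrange the representative $\Phi$ to preserve each free factor $A_i \subset G$ setwise and to restrict to the identity on each $A_i$. Since $G = A_1 * \cdots * A_n$ and the underlying graph of $\mathbf{X}$ is a tree (so there are no extra generators arising from loops), the universal property of the free product forces such a $\Phi$ to be the identity automorphism of $G$, whence $\phi$ is trivial in $\mathsf{Out}(G)$. Combined with the finiteness of $\text{Aut}(\mathbf{X})$, this yields finiteness of $\text{Stab}([\mathbf{X}])$.

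The main obstacle is the bookkeeping required to show that $\rho$ is well-defined modulo a finite ambiguity: the representative $\Phi$ is determined only up to inner automorphism of $G$, and the $G$-equivariant isometry $T_\mathbf{X} \to \Phi(T_\mathbf{X})$ is determined only up to the center of $G$ (which is trivial for $n \geq 2$, since a nontrivial free product has trivial center). One must verify that these ambiguities interact correctly so that the induced combinatorial data is canonical enough for the kernel argument above to go through.
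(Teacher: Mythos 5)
Your argument is correct, and it reaches the same finiteness through a slightly different door than the paper. The paper works upstairs in the tree: it fixes a fundamental domain $F$ and a vertex $v\in F$, normalizes the representative automorphism $\Phi$ so that $\mathrm{stab}(v)$ is preserved, and observes that $\Phi$ then permutes the finitely many fundamental domains identical to $F$ based at $v$ (finitely many because the finite vertex groups force finite valence). You instead work downstairs: you map the stabilizer into the automorphism group of the finite quotient graph of groups $\mathbf{X}$, which is finite precisely because the underlying tree is finite (lemma \ref{lemma:count}) and the vertex groups are finite, and then you kill the kernel by noting that a representative $\Phi$ inducing the trivial combinatorial automorphism can be normalized to restrict to the identity on each vertex group, whence $\Phi=\mathrm{id}$ by the universal property of the free product. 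The two proofs locate the finiteness in the same place (finiteness of the $A_i$), but your version makes explicit two points the paper leaves implicit: first, that an element acting trivially on the set of marked fundamental domains could still twist each finite vertex group by one of its finitely many automorphisms, which is why the data $(\alpha_v)$ must be carried along; and second, the exact mechanism (the universal property) that forces triviality of the residual kernel. The bookkeeping you flag as the main obstacle is genuinely controlled: the $G$-equivariant isometry between two trees in $\mathcal{D}(G,\mathscr{H})$ is unique, since two such isometries agree on every vertex with nontrivial stabilizer (trivial edge stabilizers force the fixed subtree of a nontrivial finite subgroup to be a single vertex) and hence everywhere by minimality, and the inner ambiguity in $\Phi$ only moves $\rho(\phi)$ within a finite set, which does not threaten the finiteness conclusion.
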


\begin{proof}
Consider a tree $T \in \mathcal{D}(G,\mathscr{H})$. $\phi$ is a stabilizer of the point $T$, if $\phi(T)$ is $G$-equivariantly isometric to $T$. Let us fix a fundamental domain of $T$ and name it $F$. As $\phi \in stab(T)$, $\phi(T)$ contains a fundamental domain identical to $F$ (isometric and same vertex stabilizers under the action of $G$). 

Now, let us fix a vertex $v \in F$ and choose a representative automorphism $\Phi$ from the outer class $\phi$ such that $stab(v)\vert_{\Phi(T)} = stab(v)\vert_{T}$. So, $\Phi$ permutes the fundamental domains identical to $F$ based at $v \in T$. However, there are only finitely many such fundamental domains at a given vertex and finitely many vertices $v$ of $F$. So, the vertex stabilizer subgroup is finite.
\end{proof}

A corollary of the two previous results is proper discontinuity of the action-
\begin{corollary}
The action of $\mathsf{Out}(G)$ on $\mathcal{D}(G,\mathscr{H})$ is properly discontinuous.
\end{corollary}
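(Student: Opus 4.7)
The plan is to combine the two immediately preceding lemmas: local finiteness of $\mathcal{D}(G,\mathscr{H})$ and finiteness of point stabilizers. Fix a point $x \in \mathcal{D}(G,\mathscr{H})$ and let $\sigma_1, \dots, \sigma_k$ be the open simplices whose closures contain $x$. By the local finiteness lemma, this collection is finite, so the open star $V := \bigcup_{i=1}^k \sigma_i$ is an open neighborhood of $x$ meeting only these finitely many cells. Let $U \subseteq V$ be any open neighborhood of $x$ contained in $V$ (for instance a small metric ball in the CW topology).

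Suppose $\phi \in \mathsf{Out}(G)$ satisfies $\phi \cdot U \cap U \neq \emptyset$. Pick $y$ in this intersection and write $y \in \sigma_i$, $\phi^{-1}(y) \in \sigma_j$ for some $i,j \in \{1, \dots, k\}$. Because the action carries simplices to simplices by isometry, it follows that $\phi(\sigma_j) = \sigma_i$. Therefore
\[
\{\phi \in \mathsf{Out}(G) : \phi \cdot U \cap U \neq \emptyset\} \;\subseteq\; \bigcup_{i,j=1}^{k} \{\phi : \phi(\sigma_j) = \sigma_i\}.
\]
It remains to show each of the $k^2$ sets on the right is finite. If one of them is nonempty, choose a witness $\phi_{ij}$; then the set equals the coset $\phi_{ij}\,\mathrm{Stab}(\sigma_j)$, where $\mathrm{Stab}(\sigma_j)$ denotes the setwise stabilizer of the open simplex $\sigma_j$.

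The key step, and the only one that is not completely formal, is to argue that $\mathrm{Stab}(\sigma_j)$ is finite. I would argue this by observing that $\mathrm{Stab}(\sigma_j)$ acts on $\sigma_j$ by isometries of a Euclidean simplex, and this action factors through the finite permutation group of the orbits of edges of the corresponding tree $T$. The kernel of this factor map fixes $\sigma_j$ pointwise, so it is contained in the stabilizer of any interior point of $\sigma_j$; by the previous lemma that point stabilizer is finite. A finite extension of a finite group is finite, so $\mathrm{Stab}(\sigma_j)$ is finite, and hence the displayed union is a finite union of finite cosets. This yields proper discontinuity. The main (very mild) obstacle is just to make rigorous the claim that $\phi$ permutes the open simplices of $\mathcal{D}(G,\mathscr{H})$, which follows from the fact that the $\mathsf{Out}(G)$--action preserves the combinatorial structure of the quotient graphs of groups and thus maps the open cone $\mathbb{O}_T$ isometrically onto $\mathbb{O}_{\phi(T)}$.
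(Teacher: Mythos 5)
Your proof is correct and takes the same route the paper intends: the paper states this corollary with no written argument, simply as an immediate consequence of the two preceding lemmas (local finiteness of $\mathcal{D}(G,\mathscr{H})$ and finiteness of point stabilizers), and your write-up is precisely the standard assembly of those two facts --- a finite open star of simplices around $x$, plus finiteness of each setwise simplex stabilizer via the permutation of edge-orbit coordinates and the finite pointwise stabilizer --- into proper discontinuity.
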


$\mathcal{D}(G,\mathscr{H})$ and $\mathcal{PD}(G,\mathscr{H})$ is not a simplicial complex. The spine of the $\mathcal{D}(G,\mathscr{H})$ is a simplicial complex whose $0$ skeleton is the barycenter of $\mathcal{PD}(G,\mathscr{H})$ and is denoted by  $\mathcal{SPD}(G,\mathscr{H})$. $\mathcal{PD}(G,\mathscr{H})$ deformation retracts onto $\mathcal{SPD}(G,\mathscr{H})$. The advantage of working with $\mathcal{SPD}(G,\mathscr{H})$ is that the quotient of the action $\mathsf{Out}(G) \acts \mathcal{SPD}(G,\mathscr{H})$ is compact, which is not true for the action on $\mathcal{PD}(G,\mathscr{H})$.

\begin{proposition}
The action of $\mathsf{Out}(G)$ on $\mathcal{SPD}(G,\mathscr{H})$ is co-compact.
\end{proposition}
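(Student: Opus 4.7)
The plan is to show that $\mathsf{Out}(G_n) \backslash \mathcal{SPD}(G_n,\mathscr{H})$ is compact by exhibiting only finitely many $\mathsf{Out}(G_n)$-orbits of simplices, which suffices because the spine is a locally finite simplicial complex of bounded dimension. As the dimension is bounded and local finiteness gives each vertex a finite link, it is enough to produce finitely many orbits of $0$-simplices.

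First I will identify each $0$-simplex of $\mathcal{SPD}(G_n,\mathscr{H})$ with an equivalence class of graphs of groups $\mathbf{X}$ via the dictionary of Remark \ref{rem:dictionary}. By Lemma \ref{lemma:count}, the underlying tree of any such $\mathbf{X}$ has at most $2(n-1)$ vertices and at most $2n-3$ edges, with exactly $n$ vertices carrying non-trivial vertex groups that together realize a free product decomposition of $G_n$. Hence only finitely many combinatorial types of pairs $(X, \text{vertex-group labeling})$ occur, bounded above by the number of finite trees of size $\leq 2(n-1)$ times $n!$ (the number of ways to label the $n$ non-trivial vertices by the factors $A_1, \ldots, A_n$).

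Next I will verify that any two graphs of groups $\mathbf{X}_1, \mathbf{X}_2 \in \mathcal{D}(G_n, \mathscr{H})$ of the same combinatorial type lie in a single $\mathsf{Out}(G_n)$-orbit. Both furnish internal free product decompositions $G_n = \tilde A_1 * \cdots * \tilde A_n = \tilde A_1' * \cdots * \tilde A_n'$, with $\tilde A_i$ and $\tilde A_i'$ each conjugate to $A_i$ in $G_n$ and matched via the tree isomorphism underlying the type equivalence. Fixing an abstract isomorphism $\tilde A_i \to \tilde A_i'$ for each $i$ and invoking the universal property of the free product produces an automorphism $\Phi \in \mathsf{Aut}(G_n)$ whose induced outer class carries the equivalence class of $\mathbf{X}_1$ to that of $\mathbf{X}_2$.

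Combining the two steps gives finitely many orbits of $0$-simplices, hence (by local finiteness and bounded dimension) finitely many orbits of simplices in every dimension, so the quotient is a finite simplicial complex and therefore compact. The main point requiring care is the orbit identification: the delicate ingredient is patching isomorphisms on the individual factors $\tilde A_i \to \tilde A_i'$ into a genuine automorphism of $G_n$, which is exactly where the universal property of the free product is essential.
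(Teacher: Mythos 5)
Your proposal is correct and follows essentially the same route as the paper: both arguments build an automorphism of $G_n$ from the internal free product decomposition given by the non-trivial vertex groups (via the universal property) to move any graph of groups into the orbit of one with a fixed labeling, and both then invoke the edge/vertex count of Lemma \ref{lemma:count} to bound the number of combinatorial types, concluding that the quotient is a finite complex. The only cosmetic difference is that the paper normalizes every graph of groups to the standard representative with vertex groups $\{A_1,\dots,A_n\}$, whereas you argue transitivity within each combinatorial type.
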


\begin{proof}

Consider a graph of groups $\mathbf{X} \in \mathcal{D}(G,\mathscr{H})$. Each vertex group is either trivial or a conjugate of exactly one of the $A_i, i \in \{1,..., n\}$ such that the fundamental group of the graph of groups is $G$. Hence, the internal free product of the vertex groups is $G$ and we can define a $G$ automorphism $\Phi$ which maps each $A_i, i \in \{1,..., n\}$ to the vertex group of $\mathbf{X}$ conjugate to $A_i$.

If $\phi$ is the outer automorphism defined by $[\Phi]$, then $\phi^{-1} \cdot \mathbf{X}$ is a graph of groups with the set of vertex groups $\{A_1,..., A_n\}$.

So, under the action of $\mathsf{Out}(G_n)$ on $\mathcal{D}(G,\mathscr{H})$ every graph of groups is in the orbit of a graph of groups with the set of vertex groups $\{A_1,..., A_n\}$.

The underlying graph of any graph of groups from $\mathcal{SPD}(G, \mathscr{H})$ is a tree with at most $2n - 3$ edges and at least $n - 1$ edges. Hence, up-to homeomorphism there are only finitely many graphs of groups with the set of vertex groups $\{A_1,..., A_n\}$.

$\mathsf{Out}(G)$ acts by isometries on $\mathcal{SPD}(G, \mathscr{H})$, which is a simplicial complex. The quotient is a finite dimensional locally finite simplicial complex such that there are only finitely many vertices. Hence, the quotient is compact.
\end{proof}

\begin{remark}\label{rem:action}
$\mathsf{Out}(G)$ action on $\mathcal{SPD}(G,\mathscr{H})$ is properly discontinuous and co-compact. Hence, by Milnor-\u{S}varc lemma $\mathsf{Out}(G)$ is quasi isometric to $\mathcal{SPD}(G,\mathscr{H})$. We will exploit this fact to answer the original question in lower complexities and also to find a virtual generating set of $\mathsf{Out}(G)$ in general.
\end{remark}

\section{Structure of Deformation Space in Lower Complexities}\label{section:G2G3}
Recall that $\Gamma_n := \mathsf{Out}(A_1*...*A_n)$, where each $A_i$ is a finite group. $\Omega_n$ is the finite index subgroup of $\mathsf{Out}(A_1*...*A_n)$ which preserves conjugacy class of each free factor. In this section we will prove that 
 $\Omega_2$ is finite and
 , $\Omega_3$ is a hyperbolic group (virtually free). We will also inspect a finite index subgroup of $\Omega_4$ and denote it by $\Gamma_4'$. This will lay the ground work for a similar inspection for $\Omega_n, (n \geq 5)$.

\subsection{Finiteness of $\Gamma_2$}\label{subsec:G2}

\begin{lemma}\label{lemma:unique_min}
$\mathcal{SPD}(G_2,\mathscr{H})$ is a point.
\end{lemma}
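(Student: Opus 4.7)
The plan is to apply Lemma \ref{lemma:count} with $n = 2$ to pin down the combinatorial shape of any graph of groups $\mathbf{X}$ representing a point of $\mathcal{D}(G_2,\mathscr{H})$, then use Bass-Serre theory to identify the vertex labels uniquely, and finally projectivize to obtain a single $0$-simplex.

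First, I would invoke Lemma \ref{lemma:count} with $n = 2$, which forces $\mathcal{V} = 2$ and $\mathcal{E} = 1$. So any such $\mathbf{X}$ has underlying graph a single edge joining two vertices, each of which carries a nontrivial vertex group; there are no trivially-stabilized valence-$\geq 3$ vertices to worry about. In particular the open cone $\mathbb{O}_T$ is one-dimensional, parameterized by the length of the single edge orbit.

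Second, I would identify the two vertex groups. Each lies in $\mathscr{H}$ and is therefore conjugate in $G_2$ to $A_1$ or $A_2$. If both were conjugate to the same $A_i$, then by Theorem \ref{thm:bst} the group $G_2$ would equal an internal free product of two conjugates of $A_i$, and hence would sit inside the normal closure of $A_i$ in $G_2$. But that normal closure is the kernel of the retraction $G_2 \twoheadrightarrow A_j$ (for $j \ne i$), hence a proper subgroup, contradiction. Therefore one vertex group is conjugate to $A_1$ and the other to $A_2$, and after a $G_2$-equivariant change of fundamental domain we may arrange them to be exactly $A_1$ and $A_2$. Any two such representatives produce $G_2$-equivariantly isometric Bass-Serre trees, so they represent the same point of $\mathcal{D}(G_2,\mathscr{H})$.

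Combining these observations, $\mathcal{D}(G_2, \mathscr{H})$ reduces to a single one-dimensional open cone $\mathbb{O}_T \cong \mathbb{R}_{>0}$; projectivizing collapses this ray to a point, which is automatically its own barycenter and hence the unique $0$-simplex of $\mathcal{SPD}(G_2, \mathscr{H})$. The only substantive step is ruling out the case of two vertex groups conjugate to the same $A_i$; everything else is bookkeeping from the counting lemma and the definition of the spine.
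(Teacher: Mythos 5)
Your reduction to a single-edge quotient graph carrying one vertex group from each conjugacy class is fine (and the normal-closure argument ruling out two conjugates of the same $A_i$ is a nice touch), but the proof has a gap at exactly the sentence ``after a $G_2$-equivariant change of fundamental domain we may arrange them to be exactly $A_1$ and $A_2$.'' A change of fundamental domain lets you translate one endpoint onto the vertex fixed by $A_1$; the other endpoint is then one of that vertex's neighbours, whose stabilizers are the groups $a_1wA_2w^{-1}a_1^{-1}$ for $a_1 \in A_1$. Since $A_2$ is self-normalizing in $A_1 * A_2$, one of these equals $A_2$ if and only if $w \in A_1A_2$. So your normalization step is equivalent to the assertion that every internal free product decomposition $G_2 = A_1 * wA_2w^{-1}$ forces $w \in A_1A_2$ --- equivalently, that a one-edge free splitting with vertex groups conjugate to $A_1$ and $A_2$ is unique up to $G_2$-equivariant isometry. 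That assertion is true, but it is the entire content of the lemma, and you do not argue it: ``any two such representatives produce $G_2$-equivariantly isometric Bass--Serre trees'' is stated, not proved.

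The paper closes exactly this gap by a different route: it exhibits the standard one-edge tree, observes that no admissible collapse or expand move can be applied to it (collapsing the unique edge orbit destroys the tree, and no vertex meets the expansion criteria), and then invokes the collapse--expand path-connectedness of the deformation space (via Theorem \ref{thm:contractible}) to conclude there can be no other point. If you want to keep your direct approach you must supply the rigidity statement yourself, e.g.\ by a normal-form or Kurosh-subgroup argument showing that $\langle A_1, wA_2w^{-1}\rangle$ is a proper subgroup of $G_2$ whenever $w \notin A_1A_2$. This is genuinely not automatic: for $A_1 = \langle a\rangle$ and $A_2 = \langle b\rangle$ of order two and $w = ba$, the subgroup $\langle A_1, wA_2w^{-1}\rangle$ has index $3$ in the infinite dihedral group, so some input beyond the counting lemma and bookkeeping is required.
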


\begin{proof}
Consider the graph of groups:
\begin{center}
\begin{tikzpicture} \begin{scriptsize}
\draw (1,0) -- (-1, 0);
\draw [fill=black] (1, 0) circle (1.5pt) node [anchor = west]{$A_2$};
\draw [fill=black] (-1, 0) circle (1.5pt) node [anchor = east]{$A_1$};
\end{scriptsize} \end{tikzpicture} 
\end{center}

The Bass-Serre tree of this graph of groups is a $G$-tree whose vertex stabilizers are conjugates of $A_i, i \in \{1, 2\}$. There is only one edge orbit. If we collapse an edge equivariantly in this tree, we will get a point. So, no $G$-equivariant collapses are possible. Contractibility of the deformation space due to theorem \ref{thm:contractible} implies that if there is a different tree non $G$-equivariantly homeomorphic to the given tree, then they can be connected in the deformation space by a collapse expand path. However, a collapse or expand move is not permissible due to constraint on the vertex stabilizers. So, we arrive at a contradiction.
\end{proof}

\begin{corollary}\label{cor:G2_fin}
$\Omega_2$ and $\Gamma_2$ is finite.
\end{corollary}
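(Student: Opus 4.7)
The plan is to exploit the geometric action of $\Gamma_2$ on $\mathcal{SPD}(G_2,\mathscr{H})$ set up in Remark \ref{rem:action}, combined with the triviality of this space established in Lemma \ref{lemma:unique_min}. Since $\mathcal{SPD}(G_2,\mathscr{H})$ consists of a single point $p$, every element of $\Gamma_2$ automatically fixes $p$, so $\Gamma_2 = \mathrm{Stab}_{\Gamma_2}(p)$.

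Next I would cite the finite-stabilizer lemma proved earlier in Section \ref{section:SPD}, which shows that the stabilizer of any point of $\mathcal{D}(G,\mathscr{H})$ under $\mathsf{Out}(G)$ is finite; the same conclusion transfers to $\mathcal{SPD}(G,\mathscr{H})$ since it is a subspace. Applying this to the unique point $p$ yields the finiteness of $\Gamma_2$. Because $\Omega_2$ is a subgroup of $\Gamma_2$ (Definition \ref{defn:fgamma_n}), finiteness of $\Omega_2$ is automatic. An equivalent route is to invoke the Milnor--\v{S}varc lemma directly: a group acting properly discontinuously and cocompactly on a one-point space is quasi-isometric to a point, hence has finite diameter in any word metric, and is therefore finite.

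There is essentially no serious obstacle here; the corollary is a direct application of Lemma \ref{lemma:unique_min} and the finite-stabilizer property. The only bookkeeping choice is whether to deduce finiteness of $\Omega_2$ from finiteness of $\Gamma_2$ by containment, or instead to first observe that $\Omega_2$ stabilizes $p$, conclude that $\Omega_2$ is finite, and then use the finite-index conclusion of Lemma \ref{lemma:con_class} to pass back to $\Gamma_2$. Either direction works and the proof should be only a few lines.
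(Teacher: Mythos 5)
Your proposal is correct and matches the paper's (implicit) argument exactly: the paper derives this corollary directly from the triviality of $\mathcal{SPD}(G_2,\mathscr{H})$ in lemma \ref{lemma:unique_min} together with the geometric action of remark \ref{rem:action}, which is precisely the Milnor--\u{S}varc/finite-stabilizer route you describe. Both of your bookkeeping options for handling $\Omega_2$ versus $\Gamma_2$ are fine.
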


\begin{remark}\label{rem:unique_min}
Consider finite subgroups $H$ and $K$ of the group $G := A_1*...*A_n$ which are factors in a collection of system of subgroups whose internal free product is $G$. Lemma \ref{lemma:unique_min} implies that $(H*K)$-minimal subtrees are $G$-equivariantly homeomorphic for any two $G$-trees, $T_1, T_2 \in \mathcal{D}(G,\mathscr{H})$. 
\end{remark}

\subsection{Hyperbolicity of $\Gamma_3$}\label{subsec:G3}

\begin{proposition}
$\mathcal{SPD}(G_3,\mathscr{H})$ is a $1$ dimensional simplicial complex.
\end{proposition}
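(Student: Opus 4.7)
The plan is to read off the dimension of $\mathcal{SPD}(G_3,\mathscr{H})$ directly from the classification of graphs of groups that represent points of $\mathcal{D}(G_3,\mathscr{H})$, using the edge count bound from Lemma~\ref{lemma:count}. First I would recall that the spine is a flag complex whose $m$-simplices correspond to ascending chains $\sigma_0\subsetneq\sigma_1\subsetneq\dots\subsetneq\sigma_m$ of open simplices of $\mathcal{PD}(G_3,\mathscr{H})$ under the face (admissible collapse) relation, and that the dimension of $\mathbb{O}_T$ is one less than the number of edge orbits of $T$. So bounding the number of edges that a graph of groups in $\mathcal{D}(G_3,\mathscr{H})$ can have bounds the lengths of such chains.

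Next I would invoke Lemma~\ref{lemma:count} with $n=3$ to conclude $2\le\mathcal{E}\le 3$, and verify by hand that there are only two combinatorial possibilities for the underlying graph. For $\mathcal{E}=2$, the underlying tree is a path on three vertices, each of which must carry one of $A_1,A_2,A_3$ (in some order), since no vertex can be trivial of valence $\le 2$. For $\mathcal{E}=3$, the tree has four vertices; a path on four vertices forces a trivial vertex of valence $2$, which is disallowed, so the only option is a tripod with a trivial valence-$3$ center and three leaves carrying $A_1,A_2,A_3$. Hence every open simplex of $\mathcal{PD}(G_3,\mathscr{H})$ has dimension $1$ (two-edge case, after projectivizing $\mathbb{R}^2$) or $2$ (three-edge case).

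I would then observe that the ascending chains of open simplices therefore have length at most $2$: a two-edge projectivized orthant can only be a face of the three-edge tripod orthant, and no further expansion is possible. Consequently $\mathcal{SPD}(G_3,\mathscr{H})$ contains simplices of dimension at most $1$. To pin the dimension at exactly $1$, I would check that a genuine $1$-simplex exists: collapsing any one of the three edges of the tripod graph of groups (trivial center, leaves $A_1,A_2,A_3$) is an admissible collapse move producing a path graph of groups $A_i-A_j-A_k$, so the barycenter of the tripod open simplex and the barycenter of such a path open simplex are joined by a $1$-simplex of the spine. Combined with the upper bound, this gives $\dim\mathcal{SPD}(G_3,\mathscr{H})=1$.

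The argument is essentially bookkeeping, so I do not anticipate a serious obstacle. The only point requiring care is to make sure that when enumerating the $\mathcal{E}=3$ case I have correctly ruled out configurations with trivial vertices of valence $\le 2$, and that each edge collapse from the tripod really lands in $\mathcal{D}(G_3,\mathscr{H})$ (i.e., does not merge free factors or create forbidden stabilizers); since collapsing a tripod edge $v_{\text{trivial}}\!-\!A_i$ simply promotes the center to a vertex with stabilizer $A_i$ in the resulting path graph, this is automatic.
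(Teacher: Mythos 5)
Your proposal is correct and follows essentially the same route as the paper: both arguments reduce to the bound $2\le\mathcal{E}\le 3$ from Lemma~\ref{lemma:count} with $n=3$, so that only one-step collapse chains are possible and the spine has no $2$-simplices. You are somewhat more careful than the paper in that you explicitly enumerate the two graph-of-groups types and verify that a $1$-simplex actually occurs, which the paper's proof takes for granted.
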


\begin{proof}
If $T \in \mathcal{SPD}(G_3,\mathscr{H})$ is a $G$-tree then the number of edge orbits of $T$ is at most $3$ and at least $2$. 
Hence, we can apply only $1$-edge orbit collapse move on $T$. So, $\mathcal{SPD}(G_3,\mathscr{H})$ does not have any $2$ dimensional simplex and is a $1$ dimensional simplicial complex.
\end{proof}

\begin{corollary}\label{cor:G3_hyp}
$\Omega_3$ and $\Gamma_3$ hyperbolic groups.
\end{corollary}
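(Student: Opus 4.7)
The plan is to combine the structural result just proved with the large-scale contractibility of the deformation space. The preceding proposition shows $\mathcal{SPD}(G_3,\mathscr{H})$ is a $1$-dimensional simplicial complex. On the other hand, $\mathcal{D}(G_3,\mathscr{H})$ is contractible by Theorem \ref{thm:contractible}, and by the earlier remark this contractibility passes to $\mathcal{PD}(G_3,\mathscr{H})$ and then to $\mathcal{SPD}(G_3,\mathscr{H})$ via the deformation retraction supplied by Lemma \ref{lemma:bsd}. A contractible $1$-dimensional simplicial complex is simply connected, and a connected simply connected graph is a simplicial tree, which is $0$-hyperbolic by the example following the definition of hyperbolicity.

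Next I would invoke the geometric action. By Remark \ref{rem:action}, $\Gamma_3 = \mathsf{Out}(G_3)$ acts on $\mathcal{SPD}(G_3,\mathscr{H})$ properly discontinuously, cocompactly, and by isometries. Since $\Omega_3$ has finite index in $\Gamma_3$ by Lemma \ref{lemma:con_class}, the restricted action $\Omega_3 \acts \mathcal{SPD}(G_3,\mathscr{H})$ is also properly discontinuous and cocompact. The Milnor-\u{S}varc lemma (Lemma \ref{lemma:mil_sva}) then yields quasi-isometries $\Omega_3 \to \mathcal{SPD}(G_3,\mathscr{H})$ and $\Gamma_3 \to \mathcal{SPD}(G_3,\mathscr{H})$. (Alternatively, for $\Gamma_3$ one may quote the remark that a finitely generated group is quasi-isometric to any of its finite-index subgroups.)

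Finally, since hyperbolicity is a quasi-isometry invariant of geodesic metric spaces, and $\mathcal{SPD}(G_3,\mathscr{H})$ is a $0$-hyperbolic tree, both $\Omega_3$ and $\Gamma_3$ are hyperbolic groups.

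The only subtle step is justifying that the spine itself, and not just the ambient deformation space, is contractible. This is where Lemma \ref{lemma:bsd} does the essential work, identifying $\mathcal{SPD}$ as a deformation retract of a subset of $\mathcal{PD}$, which inherits contractibility from $\mathcal{D}$ by collapsing along the $\mathbb{R}_{>0}$-rescaling direction on each open cone. Once this is in place, no additional obstacle is expected and the argument reduces to quoting Theorem \ref{thm:contractible}, Remark \ref{rem:action}, and the Milnor-\u{S}varc lemma in sequence.
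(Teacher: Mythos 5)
Your proposal is correct and follows essentially the same route as the paper: contractibility (Theorem \ref{thm:contractible}) plus one-dimensionality of $\mathcal{SPD}(G_3,\mathscr{H})$ forces it to be a tree, and the geometric action together with the Milnor--\u{S}varc lemma gives hyperbolicity of $\Omega_3$ and $\Gamma_3$. The extra care you take in passing contractibility from $\mathcal{D}$ to $\mathcal{PD}$ to $\mathcal{SPD}$ via Lemma \ref{lemma:bsd} is a welcome elaboration of a step the paper leaves implicit.
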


\begin{proof}
By Guirardel-Levitt's work (theorem \ref{thm:contractible}) $\mathcal{SPD}(G_3,\mathscr{H})$ is contractible. Also, $\mathcal{SPD}(G_3,\mathscr{H})$ is a $1$ dimensional simplicial complex. So, $\mathcal{SPD}(G,\mathscr{H})$ is a tree. 

$\Omega_3$ and $\Gamma_3$ act geometrically on $\mathcal{SPD}(G_3,\mathscr{H})$. Using lemma \ref{lemma:mil_sva} we can say that $\Omega_3$ and $\Gamma_3$ are hyperbolic.
\end{proof}

\section{A finite index subgroup of $\Omega_n$}\label{section:fin_index}

In this section we will investigate a subgroup generated by some elements of $\Omega_n$ and prove that the subgroup is finite index. In our subsequent discussions we have referred to a unique graph of groups of $\mathcal{SPD}$, frequently. The element is denoted by $\mathbf{X}$. We have also used another class of graph of groups $\mathbf{Y_i}$ for the proofs in section \ref{subsec:fin_index}.
\begin{notation}\label{not:X}

\begin{enumerate}[wide, labelwidth=!, labelindent=0pt]
\item Let, $\mathbf{X} \in \mathcal{SPD}(G,\mathscr{H})$ be the vertex of $\mathcal{SPD}$ given by the following graph of groups.

\begin{center}

\begin{tikzpicture} \begin{scriptsize}
\draw (2, 2)  -- (-2,-2);
\draw (-2, 2)  -- (2,-2);
\draw (0, 2.8) -- (0, -2.8);

\draw [fill=black] (2,-2) circle (1.2pt) node[anchor=west]{$A_{i-1}$};
\draw [fill=black] (0,2.8) circle (1.2pt) node[anchor=west]{$A_{1}$};
\draw [fill=black] (2,2) circle (1.2pt) node[anchor=west]{$A_{2}$};
\draw [fill=black] (0,-2.8) circle (1.2pt) node[anchor=west]{$A_{i}$};
\draw [fill=black] (-2,-2) circle (1.2pt) node[anchor=west]{$A_{i + 1}$};
\draw [fill=black] (-2,2) circle (1.2pt) node[anchor=west]{$A_{n}$};
\draw [fill=black] (1.4, 0) circle (.5pt);
\draw [fill=black] (1.2, .5) circle (.5pt);
\draw [fill=black] (1.2, -.5) circle (.5pt);
\draw [fill=black] (-1.4, 0) circle (.5pt);
\draw [fill=black] (-1.2, -.5) circle (.5pt);
\draw [fill=black] (-1.2, .5) circle (.5pt);
\end{scriptsize} \end{tikzpicture} 
\end{center}

\item Any graph of groups whose underlying graph is isomorphic (simplicially) to the underlying graph of $\mathbf{X}$ will be called a graph of groups of type $X$. Similarly, any tree $G$-equivariantly homeomorphic to the the Bass-Serre tree of a type $X$ graph of groups will be called a tree of type $X$.

\item Let, $\mathbf{Y_i} \in \mathcal{SPD}(G,\mathscr{H})$ be the vertex of $\mathcal{SPD}$ given by the following graph of groups. The subscript $i$ signifies that the vertex associated to the vertex group $A_i$ has valence $n - 1$ and the rest of the vertices have valence $1$.

\begin{center}
\begin{tikzpicture} \begin{scriptsize}
\draw (2, 2)  -- (-2,-2);
\draw (-2, 2)  -- (2,-2);
\draw (0, 2.8) -- (0, 0);

\draw [fill=black] (2,-2) circle (1.2pt) node[anchor=west]{$A_{i-1}$};
\draw [fill=black] (0,2.8) circle (1.2pt) node[anchor=west]{$A_{1}$};
\draw [fill=black] (2,2) circle (1.2pt) node[anchor=west]{$A_{2}$};
\draw [fill=black] (0, 0) circle (1.2pt) node[anchor=west]{$A_{i}$};
\draw [fill=black] (-2,-2) circle (1.2pt) node[anchor=west]{$A_{i + 1}$};
\draw [fill=black] (-2,2) circle (1.2pt) node[anchor=west]{$A_{n}$};
\draw [fill=black] (1.4, 0) circle (.5pt);
\draw [fill=black] (1.2, .5) circle (.5pt);
\draw [fill=black] (1.2, -.5) circle (.5pt);
\draw [fill=black] (-1.4, 0) circle (.5pt);
\draw [fill=black] (-1.2, -.5) circle (.5pt);
\draw [fill=black] (-1.2, .5) circle (.5pt);
\end{scriptsize} \end{tikzpicture}
\end{center}

\item Any graph of groups whose underlying graph is isomorphic to the underlying graph of $\mathbf{Y_i}$ will be called a graph of groups of type $Y$. In other words, a graph of groups with $1$ vertex of valence $n - 1$ and $n - 1$ vertices of valence $1$ is a graph of groups of type ${Y}$. Similarly, any tree $G$-equivariantly homeomorphic to the the Bass-Serre tree of a type $Y$ graph of groups will be called a tree of type $Y$.

\end{enumerate}
\end{notation}

\subsection{$\mathcal{D}(G,\mathscr{H})$ and a finite index subgroup of $\Omega_n$}\label{subsec:fin_index}

\subsubsection{Deformation Space of $G_n$-trees}
\begin{definition}\label{defn:FIJ} 
Given $w \in \displaystyle\bigsqcup_{j = 1}^n A_j$ and a fixed integer, $i \in \{1,..., n\}$, define a map $F_{A_i}^w: \displaystyle \bigsqcup_{j = 1}^{n} A_j \rightarrow \displaystyle \free_{j = 1}^n A_j$ as follows:
$$F_{A_i}^w(a) = \begin{cases}
waw^{-1} & , \text{if }a \in A_i \\
a & , \text{otherwise}\\
\end{cases}$$
By the universal property, this map can uniquely be extended to an automorphism $F_{A_i}^w: 
\displaystyle \free_{j = 1}^n A_j \rightarrow \free_{j = 1}^n A_j$. 
In general, for $w \in \displaystyle \free_{j = 1}^n A_i$ we define $F_{A_i}^w$ 
inductively as follows. If $w = uv$, 
then define $F_{A_i}^w := F_{A_i}^vF_{A_i}^u$.
\end{definition}

\begin{definition}\label{defn:F^w_H}
Define $f_{H}^w$ to be the outer automorphism defined by the automorphism $F_{H}^w$, 
where $H \in \{A_1,..., A_n\}$ and $w \in \displaystyle \free_{j = 1}^n A_j$.
\end{definition}

\begin{lemma}\label{lemma:diff_base_commute}
If $k, m \in \{1, ..., n\}$ are distinct integers, then for any $u, v \in \displaystyle \free_{\substack{i \ne k, m\\i = 1}}^n A_i$, $f_{A_k}^u$ commutes with $f_{A_m}^v$.
\end{lemma}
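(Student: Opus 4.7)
The plan is to prove the stronger statement that the representative automorphisms $F_{A_k}^u$ and $F_{A_m}^v$ already commute as elements of $\mathsf{Aut}(G)$, from which the commutativity of their outer classes follows immediately. I would do this by a direct computation on the standard generating set $\bigsqcup_{l=1}^n A_l$.

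First I would record the following characterization, which follows from the inductive definition of $F_H^w$ in Definition~\ref{defn:FIJ} by a short induction on the syllable length of $w$: for any $w \in \free_{j=1}^n A_j$, the automorphism $F_{A_i}^w$ acts as $a \mapsto waw^{-1}$ for $a \in A_i$, and as the identity on every other free factor. Proving this lemma is the only slightly non-trivial ingredient, since the definition in the excerpt is phrased inductively as $F_{A_i}^{uv} = F_{A_i}^v F_{A_i}^u$, so one must check that the composition on two syllables does yield simultaneous conjugation on $A_i$ and the identity elsewhere.

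With that characterization in hand, the key observation is that since $u \in \free_{j \ne k, m} A_j$, no letter of $u$ lies in $A_m$. Therefore $F_{A_m}^v$ fixes each syllable of $u$, hence fixes $u$ itself. By symmetry $F_{A_k}^u$ fixes $v$. I would then check equality of $F_{A_k}^u \circ F_{A_m}^v$ and $F_{A_m}^v \circ F_{A_k}^u$ on each factor:
\begin{itemize}
\item On $a \in A_k$: one composition gives $F_{A_m}^v(uau^{-1}) = F_{A_m}^v(u)\,a\,F_{A_m}^v(u^{-1}) = uau^{-1}$; the other gives $F_{A_k}^u(a) = uau^{-1}$.
\item On $b \in A_m$: symmetrically both compositions yield $vbv^{-1}$.
\item On $c \in A_l$ with $l \ne k, m$: both compositions fix $c$.
\end{itemize}

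Since the two automorphisms agree on a generating set, they are equal in $\mathsf{Aut}(G)$, and therefore $f_{A_k}^u f_{A_m}^v = f_{A_m}^v f_{A_k}^u$ in $\mathsf{Out}(G)$. The main (and really only) obstacle is the bookkeeping around the inductive definition of $F_{A_i}^w$; once the closed-form description is established, the commutativity is immediate from the hypothesis that $u$ and $v$ avoid the factors being conjugated by the other automorphism.
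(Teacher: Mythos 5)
Your proof is correct and takes essentially the same route as the paper, whose entire argument is the one-line assertion that Definition~\ref{defn:FIJ} already forces $F_{A_k}^u F_{A_m}^v = F_{A_m}^v F_{A_k}^u$ in $\mathsf{Aut}(G)$; you simply supply the verification on generators that the paper leaves implicit. One small caveat: your closed form $F_{A_i}^w(a) = waw^{-1}$ for $a \in A_i$ is only valid when no syllable of $w$ lies in $A_i$ itself (otherwise the inductive rule $F_{A_i}^{uv} = F_{A_i}^v F_{A_i}^u$ yields conjugation by the reversed product), but this is harmless here since the hypotheses guarantee the letters of $u$ and $v$ avoid both $A_k$ and $A_m$.
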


\begin{proof}
Definition \ref{defn:FIJ} implies $F_{A_k}^uF_{A_m}^v = F_{A_m}^vF_{A_k}^u$, when $m$ and $k$ are distinct integers and $u, v \in \displaystyle \free_{\substack{i \ne k, m\\i = 1}}^n A_i$. So,  $f_{A_k}^uf_{A_m}^v = f_{A_m}^vf_{A_k}^u$, when $m$ and $k$ are distinct integers and $u, v \in \displaystyle \free_{\substack{i \ne k, m\\i = 1}}^n A_i$.
\end{proof}

\begin{definition}\label{defn:hij}
For a fixed $i \in \{1,..., n\}$,
define the following subgroups
\begin{align*}
 \overline{H_i^j} := &
 \{ F_{A_i}^w \vert w \in A_j\} <
 \mathsf{Aut}(\displaystyle \free_{j = 1}^n A_j) \\
 H_i^j := &
\{f_{A_i}^w \vert w \in A_j\} <
\mathsf{Out}(\displaystyle \free_{j = 1}^n A_j)
\end{align*}
\end{definition}

\begin{proposition}\label{prop:hij_prod}
For a fixed $i \in \{1, ..., n\}$, we have the following isomorphisms
\begin{align*}
\displaystyle \left\langle \overline{H_i^j} \vert j \in \{1,..., n\} - \{i\}\right\rangle 
= & \free_{\substack{j \ne i\\j =1}} \overline{H_i^j} \\
 \left\langle {H_i^j} \vert j \in \{1,..., n\} - \{i\}\right\rangle
 =  & \free_{\substack{j \ne i\\j =1}} {H_i^j} \\
 \free_{\substack{j \ne i\\j =1}} \overline{H_i^j} \cong \free_{\substack{j \ne i\\j =1}} {H_i^j}  \cong & \free_{\substack{j \ne i\\j =1}}^n A_j
\end{align*}
\end{proposition}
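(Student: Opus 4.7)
The strategy is to establish all three isomorphisms simultaneously by constructing a single (anti-)isomorphism $\Psi \colon \free_{j\ne i} A_j \to \la \overline{H_i^j} : j \ne i \ra \subset \mathsf{Aut}(G)$ and then pushing it down to $\mathsf{Out}(G)$. First, for each fixed $j \ne i$ the assignment $w \mapsto F_{A_i}^w$ is an anti-homomorphism $A_j \to \mathsf{Aut}(G)$: the composition rule in Definition \ref{defn:FIJ}, restricted to $u,v \in A_j$, yields $F_{A_i}^{uv} = F_{A_i}^v F_{A_i}^u$. It is injective because its kernel consists of $w \in A_j$ centralizing $A_i$, and in $G = A_i * (\free_{k\ne i} A_k)$ a non-trivial element of $A_j$ cannot commute with any non-trivial element of $A_i$ by normal form. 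Hence $\overline{H_i^j} \cong A_j^{op} \cong A_j$, and the universal property of the free product assembles the individual maps into an (anti-)homomorphism $\Psi$ whose image is the subgroup $\la \overline{H_i^j} : j \ne i \ra$.

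For injectivity of $\Psi$, I would iterate the composition rule to collapse an arbitrary product of generators into a single expression
\[
F_{A_i}^{w_1} F_{A_i}^{w_2} \cdots F_{A_i}^{w_m} \;=\; F_{A_i}^{w_m w_{m-1}\cdots w_1}.
\]
If $w_1 w_2 \cdots w_m$ is a non-trivial reduced word in $\free_{j\ne i} A_j$, then its reversal $W := w_m \cdots w_1$ is also non-trivial and reduced. The same centralizer reasoning as above gives $F_{A_i}^W \ne \mathrm{id}$: for any non-trivial $a \in A_i$, $F_{A_i}^W(a) = W a W^{-1}$, which cannot equal $a$ by normal form in $G$. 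This confirms that $\Psi$ is injective, yielding the first and third displayed isomorphisms.

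Finally, composing $\Psi$ with the quotient $\mathsf{Aut}(G) \twoheadrightarrow \mathsf{Out}(G)$ gives a map whose image is $\la H_i^j : j \ne i \ra$, and the remaining isomorphism reduces to injectivity of this composite, i.e., to showing that $F_{A_i}^W$ is inner only when $W = 1$. This is the main obstacle. Suppose $F_{A_i}^W = \mathrm{ad}_g$. Since $F_{A_i}^W$ fixes $\free_{j\ne i} A_j$ pointwise, $g$ must centralize each $A_j$ for $j \ne i$. Here I would invoke the Bass--Serre fact that in a free product the centralizer of any non-trivial elliptic factor $A_j$ is contained in $A_j$ itself (an element commuting with a non-trivial element of $A_j$ must fix the unique vertex of the splitting tree stabilized by $A_j$). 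Since $n \ge 3$ there are at least two distinct non-trivial factors $A_{j_1}, A_{j_2}$ with $j_1, j_2 \ne i$, so $g \in A_{j_1} \cap A_{j_2} = \{1\}$; thus $F_{A_i}^W = \mathrm{id}$ and $W = 1$ by the previous paragraph, completing the argument.
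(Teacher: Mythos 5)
Your proposal is correct, and while the $\mathsf{Aut}$-level half runs along essentially the same lines as the paper's (both collapse a product $F_{A_i}^{u_1}\cdots F_{A_i}^{u_k}$ to a single conjugator $u_k\cdots u_1$ and invoke free-product normal form to detect non-triviality), your treatment of the $\mathsf{Out}$-level isomorphism takes a genuinely different route. The paper detects non-triviality of $\phi = f_{A_i}^{v_1}\cdots f_{A_i}^{v_r}$ geometrically: it acts on the star-shaped graph of groups $\mathbf{X}$ of Notation \ref{not:X} and argues that $\phi(T_{\mathbf{X}}) \cong_G T_{\mathbf{X}}$ forces $v_r\cdots v_1 \in \bigcap_{j\ne i}A_j = \{1\}$. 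You instead ask directly when $F_{A_i}^{W}$ can be inner: the hypothetical conjugator $g$ must centralize every factor $A_j$ with $j\ne i$, centralizers of non-trivial factors lie inside those factors, and two distinct factors meet trivially. Both arguments bottom out in the same fact about free products, but yours is self-contained and purely algebraic, whereas the paper's reuses the deformation-space machinery it develops anyway and which recurs in the same form in Proposition \ref{prop:hij_sum}. A point in your favor: you make explicit the hypothesis $n\ge 3$ needed to produce two distinct factors $A_{j_1},A_{j_2}$ with $j_1,j_2\ne i$; the paper's step ``$v_r\cdots v_1\in\bigcap_{j\ne i}A_j \iff v_r\cdots v_1 = id$'' needs the same restriction silently (and for $n=2$ the statement as written can genuinely fail, since $F_{A_i}^{w}$ is inner whenever $w$ is central in the lone remaining factor). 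Two cosmetic remarks: the universal property should be applied to the opposite group, or the individual maps post-composed with inversion, since $w\mapsto F_{A_i}^{w}$ is an anti-homomorphism; and the injectivity of $A_j\to\overline{H_i^j}$ and of $\Psi$ both tacitly use that $A_i$ is non-trivial, which holds here but is worth flagging.
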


\begin{proof}
Let $\Phi \in \left\langle \overline{H_i^j} \vert j \in \{1,..., n\} - \{i\}\right\rangle$ be an element such that it can be expressed as a composition of $F_{A_i}^w$s. That is, $\Phi = F_{A_i}^{u_1} \circ... \circ F_{A_i}^{u_k}$, where each $u_l \in \displaystyle\bigsqcup_{\substack{j \ne i\\j= 1}}^n A_j$. Then 
\begin{gather*}\Phi(A_t) = \begin{cases} A_t & \text{ if } t \ne i\\ 
(u_k...u_1)A_t(u_k...u_1)^{-1} & \text{ if } t = i\end{cases}\end{gather*}
Hence, 
\begin{align*}
\Phi = 
id \hspace{.3in} (\text{in }{ \left\langle \overline{H_i^j} \vert j \in \{1,..., n\} - \{i\}\right\rangle})
\iff &
u_k...u_1 = id\hspace{.3in}(\text{in }{\displaystyle\free_{\substack{j \ne i\\j = 1}}^n \overline{A_j}})\\
\iff &
F_{A_i}^{u_1}... F_{A_i}^{u_k}
=
id \hspace{.1in}(\text{in }{\displaystyle\free_{\substack{j \ne i\\j = 1}}^n \overline{H_i^j}})
\end{align*}
So, the following maps are well defined isomorphisms
\begin{align*}
\left\langle \overline{H_i^j} \vert j \in \{1,..., n\} - \{i\}\right\rangle
& \rightarrow &
\displaystyle\free_{\substack{j \ne i\\j =1}}^n \overline{H_i^j}
& \rightarrow &
\displaystyle \free_{\substack{j \ne i\\j =1}}^n A_j  \\
\Phi
& \mapsto &
F_{A_i}^{u_1}... F_{A_i}^{u_k}
& \mapsto &
u_k...u_1
\end{align*}

Similarly, let $\phi \in \left\langle {H_i^j} \vert j \in \{1,..., n\} - \{i\}\right\rangle$ be an element such that it can be expressed as a product of $f_{A_i}^w$s. That is, $\phi = f_{A_i}^{v_1}... f_{A_i}^{v_r}$, where each $v_l \in \displaystyle\bigsqcup_{j=1, j \ne i}^n A_j$. Consider the graph of groups $\mathbf{X} \in \mathcal{SPD}(G, \mathscr{H})$ (notation \ref{not:X}). Then the underlying graph of $\phi(\mathbf{X})$ is isomorphic to the underlying graph of $\mathbf{X}$ and the corresponding vertex groups are \\ $\{A_1,..., A_{i-1}, (v_r...v_1)A_i(v_r...v_1)^{-1}, A_{i+1},...,A_n\}$. If $\phi(\mathbf{X})$ is $G$-equivariantly isometric to $\mathbf{X}$ (denoted by $\phi(\mathbf{X}) \cong_G \mathbf{X}$), then $\Phi(A_i) = A_i, \forall i$.
\begin{align*}
\phi = 
id ( \text{in }{ \left\langle {H_i^j} \vert j \in \{1,..., n\} - \{i\}\right\rangle})
\iff &
\phi(\mathbf{X}) \cong_G \mathbf{X} \\
\iff & 
\phi(T_{\mathbf{X}}) \cong_G T_{\mathbf{X}} \\
\iff &
\text{the vertices labeled by }\\
& A_i, wA_iw^{-1} (w \in A_j, j \ne i) \text{ are } \\
&
\text{adjacent to } A_j (j \in \{1,..., n\}-\{i\}) \\
& \text{ in }\phi(T_{\mathbf{X}}) \\
\iff &
v_r...v_1 \in \displaystyle \bigcap_{\substack{j \ne i\\j = 1}}^n A_j \\
\iff &
v_r...v_1 = id \hspace{1in} (\text{in }{\displaystyle\free_{\substack{j \ne i\\j = 1}}^n A_j}) \\
\iff &
f_{A_i}^{v_1}... f_{A_i}^{v_r}
=
id  \hspace{.85in} (\text{in }{\displaystyle\free_{\substack{j \ne i\\j = 1}}^n {H_i^j}})
\end{align*}
So, the following maps are well defined isomorphisms
\begin{align*}
\left\langle {H_i^j} \vert j \in \{1,..., n\} - \{i\}\right\rangle
\rightarrow &
\displaystyle\free_{\substack{j \ne i\\j = 1}}^n {H_i^j}
\rightarrow &
\displaystyle \free_{\substack{j \ne i\\j = 1}}^n A_j  \\
\phi
\mapsto &
f_{A_i}^{v_1}... f_{A_i}^{v_r}
\mapsto &
v_r...v_1
\end{align*}
\end{proof}

\begin{proposition}\label{prop:hij_sum}
Consider two distinct, fixed integers $j_1, j_2 \in \{1, ..., n\}$, then\\
$\langle H_i^{j_1}, H_i^{j_2} \vert i \in \{1, ..., n\} - \{j_1, j_2\} \rangle = 
\displaystyle \bigoplus_{\substack{i \ne j_1, j_2\\ i = 1}}^n H_i^{j_1} \free H_i^{j_2} \cong
\bigoplus_{\substack{i \ne j_1, j_2\\ i = 1}}^n A_{j_1} \free A_{j_2}$
\end{proposition}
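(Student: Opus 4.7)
The plan is to combine Proposition \ref{prop:hij_prod}, applied at each fixed index $i$, with Lemma \ref{lemma:diff_base_commute}, which gives commutation across different indices, and then verify directness of the sum by analyzing when the composite outer automorphism is trivial.

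First, for each fixed $i \in \{1,\ldots,n\}-\{j_1,j_2\}$, I would apply Proposition \ref{prop:hij_prod} and use that any two factors of a free product freely generate their free subproduct, to conclude $\langle H_i^{j_1},H_i^{j_2}\rangle = H_i^{j_1}\free H_i^{j_2} \cong A_{j_1}\free A_{j_2}$. Next, for $i_1 \ne i_2$ both distinct from $j_1, j_2$, any generator $f_{A_{i_1}}^u$ of $\langle H_{i_1}^{j_1}, H_{i_1}^{j_2}\rangle$ has $u \in A_{j_1}\cup A_{j_2}$, which sits inside $\free_{r \ne i_1, i_2}A_r$ since $j_1, j_2 \notin \{i_1, i_2\}$; the same holds for generators of $\langle H_{i_2}^{j_1}, H_{i_2}^{j_2}\rangle$. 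Lemma \ref{lemma:diff_base_commute} then yields elementwise commutation of the two subgroups, so their product in $\Gamma_n$ is abelian in the sense required by the direct sum.

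The main step is direct-sum verification. Suppose $\phi_{i_1}\cdots\phi_{i_m} = id$ in $\Gamma_n$ with $\phi_{i_r} \in \langle H_{i_r}^{j_1}, H_{i_r}^{j_2}\rangle$ and the $i_r$ pairwise distinct. I would write each $\phi_{i_r}$ as the outer class of $F_{A_{i_r}}^{w_r}$ for some $w_r \in A_{j_1}\free A_{j_2}$ (using the isomorphism from Step 1). Since the $F_{A_{i_r}}^{w_r}$ commute pairwise in $\mathsf{Aut}(G_n)$ by Lemma \ref{lemma:diff_base_commute}, the composite $\Phi := F_{A_{i_1}}^{w_1}\cdots F_{A_{i_m}}^{w_m}$ sends $A_{i_r}\mapsto w_rA_{i_r}w_r^{-1}$ and fixes every other factor. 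Triviality of the outer class forces $\Phi = c_g$ for some $g \in G_n$. Since $c_g$ fixes both $A_{j_1}$ and $A_{j_2}$, the element $g$ normalizes each; the normalizer of a free factor in a finite free product equals itself (visible from the action on the Bass-Serre tree of the free decomposition, or via a normal-form argument), so $g \in A_{j_1}\cap A_{j_2} = \{id\}$. Then $\Phi = id$, giving $w_r A_{i_r}w_r^{-1} = A_{i_r}$, and one more normalizer argument plus triviality of $A_{i_r}\cap(A_{j_1}\free A_{j_2})$ forces $w_r = id$. Hence each $\phi_{i_r} = id$, as required; the identification with $\bigoplus_{i \ne j_1, j_2} A_{j_1}\free A_{j_2}$ then follows by Step 1.

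The main obstacle will be the final normalizer-and-intersection step; everything else is a direct application of the two earlier results. An alternative to the abstract normalizer argument --- which I would prefer, given the paper's emphasis on the action on $\mathcal{SPD}$ --- is to imitate the proof of Proposition \ref{prop:hij_prod} and track the images of $A_{j_1}$, $A_{j_2}$, and the $A_{i_r}$ as vertex labels of $\phi_{i_1}\cdots\phi_{i_m}(\mathbf{X})$, reading off triviality of each $w_r$ directly from the requirement that the resulting graph of groups be $G$-equivariantly isometric to $\mathbf{X}$.
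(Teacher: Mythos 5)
Your proposal is correct, and its skeleton matches the paper's: reduce each factor $\langle H_{i_r}^{j_1},H_{i_r}^{j_2}\rangle$ to a single $f_{A_{i_r}}^{w_r}$ with $w_r\in A_{j_1}*A_{j_2}$ via Proposition \ref{prop:hij_prod}, commute the factors past each other via Lemma \ref{lemma:diff_base_commute}, then show that triviality of the product forces each $w_r=id$. Where you genuinely diverge is in that last triviality step. The paper does it geometrically: it acts by $\phi$ on the star-shaped graph of groups $\mathbf{X}$ of Notation \ref{not:X} and reads off from the vertex stabilizers $w_rA_{i_r}w_r^{-1}$, $A_{j_1}$, $A_{j_2}$ of $\phi(T_{\mathbf{X}})$ that $G$-equivariant isometry with $T_{\mathbf{X}}$ forces $w_r\in A_{j_1}\cap A_{j_2}=\{id\}$. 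You instead argue algebraically: the representative $\Phi$ fixes $A_{j_1}$ and $A_{j_2}$ elementwise, so if $\Phi$ is conjugation by $g$ then $g$ lies in $N(A_{j_1})\cap N(A_{j_2})=A_{j_1}\cap A_{j_2}=\{id\}$, whence $w_r\in N(A_{i_r})\cap(A_{j_1}*A_{j_2})=A_{i_r}\cap(A_{j_1}*A_{j_2})=\{id\}$. Both routes are sound; note only that your normalizer facts require the free factors to be nontrivial (implicit throughout the paper) and are themselves most naturally proved using the trivial-edge-stabilizer tree, so the two arguments are close cousins. The algebraic version is self-contained and independent of the deformation-space machinery; the tree version --- which is the alternative you flag as preferable --- is the one the paper uses, and it has the advantage of rehearsing exactly the vertex-label bookkeeping that recurs in Sections \ref{section:fin_index} and \ref{section:undistorted}.
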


\begin{proof}
Let $k,  l \in \{1,..., n\} - \{j_1, j_2\}$ be distinct integers, then 
$\langle \overline{H_k^{j_1}}, \overline{H_k^{j_2}} \rangle$ commutes with 
$\langle \overline{H_l^{j_1}}, \overline{H_l^{j_2}} \rangle$. Hence, 
$\langle {H_k^{j_1}}, {H_k^{j_2}} \rangle$ commutes with 
$\langle {H_l^{j_1}}, {H_l^{j_2}} \rangle$.

Consider $\phi \in 
\langle H_i^{j_1}, H_i^{j_2} \vert i \in \{1, ..., n\} - \{j_1, j_2\} \rangle$.
Due to the commutativity stated above $\phi$ can be expressed as a product, 
$\phi = f_{A_{i_1}}^{w_{i_1}}...f_{A_{i_s}}^{w_{i_s}}$, 
where $i_1,..., i_s \in \{1,..., n\} - \{j_1, j_2\}$ 
are distinct integers and 
$w_{i_1},..., w_{i_s} \in A_{j_1} \free A_{j_2}$. We want to show that $\phi$ 
is the identity outer automorphism if and only if $w_1,..., w_s$ are all identity elements. 
To prove this we will consider the action of $\phi$ on the Bass-Serre tree ($T_{\mathbf{X}}$) of the graph of groups $\mathbf{X} \in \mathcal{SPD}(G,\mathscr{H})$ (notation \ref{not:X}). The underlying graph of $\mathbf{X}$ has $n$ vertices of valence $1$ and $1$ vertex of valence $n$. The non-trivial vertex groups of $\mathbf{X}$ are $\{A_1,...,  A_n\}$ (the groups assigned to the valence $1$ vertices). Then the underlying graph of $\phi(\mathbf{X})$ is isomorphic to the underlying graph of $\mathbf{X}$ and the corresponding vertex groups are \\
$\{w_1A_1w_1^{-1},
..., 
w_{j_1-1}A_{j_1-1}w_{j_1-1}^{-1},
A_{j_1},
w_{j_1+1}A_{j_1+1}w_{j_1+1}^{-1},
..., \\
w_{j_2-1}A_{j_2-1}w_{j_2-1}^{-1},
A_{j_2},
w_{j_2+1}A_{j_2+1}w_{j_2+1}^{-1},
..., 
w_nA_nw_n^{-1}
\}$. Without loss of generality, we have assumed $1 \ne j_1 < j_2 \ne n$.
\begin{align*}
\phi = id 
\iff & 
T_\mathbf{X} \text{ is $G$-equivariantly isometric to } \phi({T_\mathbf{X}}) \\
\iff &
w_i \in A_{j_1} \cap A_{j_2}, \forall i \in \{1,..., n\}-\{j_1, j_2\} \\
\iff & 
w_i = id,  \forall i \in \{1,..., n\}-\{j_1, j_2\} \\
\iff &
f_{A_i}^{w_i} = id, \forall i \in \{1,..., n\}-\{j_1, j_2\} \\
\iff & 
\langle H_k^{j_1}, H_k^{j_2}\rangle \cap \langle H_l^{j_1}, H_l^{j_2}\rangle = \{\}, \forall k \ne l \in \{1,..., n\}-\{j_1, j_2\}
\end{align*}
If we combine this with the commutativity of the subgroups 
$\langle {H_k^{j_1}}, {H_k^{j_2}} \rangle$ and 
$\langle {H_l^{j_1}}, {H_l^{j_2}} \rangle$ for distinct $k, l \in \{1,..., n\}-\{j_1, j_2\}$, 
then we get decomposition into direct products as follows - 
\begin{align*}
 \langle H_i^{j_1}, H_i^{j_2} \vert i \in \{1, ..., n\} - \{j_1, j_2\} \rangle 
 = &
\displaystyle \bigoplus_{i \ne j_1, j_2, i = 1}^n \langle H_i^{j_1}, H_i^{j_2} \rangle \\
(\text{proposition } \ref{prop:hij_prod} \implies)
= &
\displaystyle \bigoplus_{i \ne j_1, j_2, i = 1}^n H_i^{j_1} \free H_i^{j_2} \\
(\text{proposition } \ref{prop:hij_prod} \implies)
\cong & 
\bigoplus_{i \ne j_1, j_2, i = 1}^n A_{j_1} \free A_{j_2}
\end{align*}
\end{proof}
\begin{corollary}
$\mathsf{Out}(\displaystyle\free_{i = 1}^n A_i)$ is not hyperbolic, when $n \geq 4$.
\end{corollary}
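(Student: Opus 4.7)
The plan is to extract an obstruction to hyperbolicity directly from Proposition \ref{prop:hij_sum}. For any $n \geq 4$ and any choice of distinct $j_1, j_2 \in \{1,\dots,n\}$, that proposition exhibits a subgroup of $\Gamma_n$ isomorphic to
\[
\bigoplus_{\substack{i \ne j_1, j_2 \\ i = 1}}^{n} A_{j_1} \free A_{j_2}.
\]
When $n \geq 4$, the index set $\{1,\dots,n\}\setminus\{j_1,j_2\}$ has cardinality at least two, so the direct sum has at least two summands. Hence $\Gamma_n$ contains a subgroup isomorphic to $(A_{j_1}\free A_{j_2}) \oplus (A_{j_1}\free A_{j_2})$.

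The next step is to produce a $\mathbb{Z}\oplus\mathbb{Z}$ subgroup. Since each $A_i$ is a nontrivial finite group, picking nontrivial elements $a \in A_{j_1}$ and $b \in A_{j_2}$, the element $ab \in A_{j_1} \free A_{j_2}$ is a cyclically reduced word of length two in the free product and therefore has infinite order (by the normal form theorem for free products). Thus each summand contains an infinite cyclic subgroup, and so the direct sum of two such summands contains $\mathbb{Z}\oplus\mathbb{Z}$ as a subgroup of $\Gamma_n$.

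The final step is to invoke the classical fact that a word-hyperbolic group cannot contain $\mathbb{Z}\oplus\mathbb{Z}$ as a subgroup (since every abelian subgroup of a hyperbolic group is virtually cyclic; see Bridson--Haefliger). This forces $\Gamma_n$ to be non-hyperbolic for $n \geq 4$.

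I do not anticipate a serious obstacle here; the entire content has been packaged in Proposition \ref{prop:hij_sum}. The only thing requiring a bit of care is the tacit assumption that each $A_i$ is nontrivial (otherwise $A_1 \free \cdots \free A_n$ would collapse to a free product of fewer factors and the problem would reduce to smaller $n$), and checking that the embedding of $\mathbb{Z}\oplus\mathbb{Z}$ into the direct sum is genuine, which follows from the normal form theorem applied in each factor independently.
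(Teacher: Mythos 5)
Your proof is correct and rests on the same key input as the paper's, namely Proposition \ref{prop:hij_sum} and the observation that for $n \geq 4$ the index set $\{1,\dots,n\}\setminus\{j_1,j_2\}$ has at least two elements, so that $\Gamma_n$ contains a direct sum of at least two copies of $A_{j_1} \free A_{j_2}$. Where you diverge is in the last step, and your version is actually the more careful one: the paper argues that this subgroup is itself non-hyperbolic (being a direct product of infinite groups) and then concludes that $\Gamma_n$ is non-hyperbolic, but non-hyperbolicity of a subgroup does not in general pass to the ambient group (hyperbolic groups can contain badly behaved finitely generated subgroups). Your route --- extracting an infinite-order element $ab$ in each free-product summand via the normal form theorem, assembling a $\mathbb{Z}\oplus\mathbb{Z}$ inside the direct sum, and invoking the fact that a hyperbolic group has no $\mathbb{Z}\oplus\mathbb{Z}$ subgroup --- uses an obstruction that genuinely is inherited by overgroups, so it closes the small logical gap in the paper's phrasing. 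Your remark about the tacit nontriviality of each $A_i$ is also apt; the paper assumes this implicitly throughout.
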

\begin{proof}
When $n \geq 4$, the cardinality of the set $\{1,..., n\} - \{j_1, j_2\}$ is greater than $1.$ Hence, $\displaystyle \bigoplus_{\substack{i \ne j_1, j_2\\ i = 1}}^n H_i^{j_1} \free H_i^{j_2}$ is a direct sum of more than one infinite groups, which violates the hyperbolicity of $\displaystyle \bigoplus_{\substack{i \ne j_1, j_2\\ i = 1}}^n H_i^{j_1} \free H_i^{j_2}$. As a result, $\mathsf{Out}(\displaystyle\free_{i = 1}^n A_i)$ is not hyperbolic.
\end{proof}

\subsubsection{$\Gamma_n'$ - a finite index subgroup of $\Omega_n$}\label{subsec:fin_index}

\begin{definition}\label{defn:GN'}

Consider the subgroup $\Gamma_n' \leq$ $ \Omega_n$, generated by outer automorphisms of the form $f_H^w$ (definition \ref{defn:F^w_H}), where $w \in  \displaystyle\bigcup_{i = 1}^n A_i - H$.
\end{definition}
\begin{remark} From definitions \ref{defn:hij}, \ref{defn:GN'} we get, 
$\Gamma_n' = \la H_i^j \vert i, j \in \{1,..., n\}, i \ne j \ra$. We will prove that $\Gamma_n'$ is a finite index subgroup of $\Omega_n$. We will refer to graph of groups $\mathbf{X}, \mathbf{Y_i}$, graph of groups (and $G$-trees) of type $X$ and type $Y$ from notation \ref{not:X} for our discussion in this section.
\end{remark}


\begin{remark}
Our strategy for the proof of finite index of $\Gamma_n'$ in $\Omega_n$ is described below.

\begin{enumerate}[wide, labelwidth=!, labelindent=0pt]
\item In lemma \ref{lemma:conjugacy_relation} we will give a relation between the vertex stabilizers of two vertices of a $G$-tree which are in the same $G$-orbit and are part of two fundamental domains with non trivial intersection(s). 

\item Corollary \ref{cor:1_edge_exp} will follow from lemma \ref{lemma:conjugacy_relation}. In corollary \ref{cor:1_edge_exp} we will establish a relation between the vertex stabilizer subgroups in a selected fundamental domain of two different trees when the trees differ by $1$-edge orbit expansion. 

\item In lemma \ref{lemma:1_edge_exp_X} we will see how to construct a path between two trees of type $X$ using trees of type $X$ and $Y$, when there is a fundamental domain in the respective trees that share a relation similar to that as in corollary \ref{cor:1_edge_exp}. 

\item In lemma \ref{lemma:same_fund} we will prove that any two trees that have the same non trivial vertex stabilizer subgroups in a fundamental domain can be connected. 

\item In lemma \ref{lemma:edge_exp} we will connect any two trees of type $X$ by trees of type $X$ and $Y$ under certain restriction. 

\item Corollary \ref{cor:xy_connected} will follow from lemma \ref{lemma:edge_exp}, where we will prove that the sub-complex of $\mathcal{SPD}$ spanned by trees of type $X$ and $Y$ is connected in $\mathcal{SPD}$.

\item In lemma \ref{lemma:same_orbit_x} we will prove that two trees of type $X$ which are distance $2$ apart are in the same $\Gamma_n'$ orbit.

\end{enumerate}
\end{remark}

\begin{lemma}\label{lemma:conjugacy_relation}
Consider $T \in \mathcal{SPD}^{0}(G,\mathscr{H})$ and two fundamental domains $F_1, F_2$ of $T$ such that the vertex stabilizer groups of $F_1$ are given by $W_1, ..., W_{n }$ and the vertex stabilizer groups of $F_2$ are given by $V_1, ..., V_{n}$ where $V_k$ is conjugate to $W_k, \forall k \in \{1,..., n\}$. Assume that $V_i = W_i$ for a fixed $i$ and for $j \ne i$ the vertices with non trivial vertex stabilizers in the shortest path between $W_i$, and $W_j$ (excluding $W_i$ and $W_j$) are labeled as $W_{j_1},...,W_{j_{n_j}}$ in increasing order of distance from $W_i$, then the conjugacy relations are given by - 
\begin{equation}\label{eqn:1_edge_exp}
V_{j_p} = (w_iw_{j_1}...w_{j_{p - 1}})W_{j_p}(w_iw_{j_1}...w_{j_{p - 1}})^{-1}
\end{equation}
where, $w_r \in W_r$, for $r \in \{i, j_1,..., j_p,.., j_{n_j}\}$
\end{lemma}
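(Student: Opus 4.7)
The plan is to induct on $p$ by walking along the unique path from $v_i$ to $v_{j_p}$ in the fundamental domain $F_1$ and tracking, edge by edge, the $G$-translation needed to align the corresponding edge of $F_2$. The key structural fact I will exploit is that edge stabilizers are trivial (definition \ref{defn:def_sp}), so for each edge $e \subset F_1$ there is a \emph{unique} element $g_e \in G$ with $g_e \cdot e = e'$, where $e'$ is the unique representative of the orbit $G \cdot e$ inside $F_2$. For each vertex $v \in F_1$, I will let $v'$ denote the corresponding vertex of $F_2$ (well defined because $F_1$ and $F_2$ both map isomorphically onto the quotient graph $T/G$) and define $g_v \in G$ so that $g_v \cdot v = v'$; such a $g_v$ is unique when $\mathrm{Stab}(v)$ is trivial and otherwise determined up to right multiplication by $\mathrm{Stab}(v)$.

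Next I would establish the recursion that drives the induction. If $e_k = u_k u_{k+1}$ and $e_{k+1} = u_{k+1} u_{k+2}$ are consecutive edges along a path in $F_1$, then $g_{e_k} \cdot u_{k+1} = u_{k+1}' = g_{e_{k+1}} \cdot u_{k+1}$, so $g_{e_k}^{-1} g_{e_{k+1}} \in \mathrm{Stab}(u_{k+1})$. Consequently, traversing an edge picks up a factor on the right from the stabilizer of the shared vertex: at a trivial-stabilizer vertex this factor is forced to be $e$, and at a non-trivial-stabilizer vertex $v_{j_r}$ it is an unspecified element $w_{j_r} \in W_{j_r}$. The normalization $V_i = W_i$ means I may set $g_{v_i} = e$ as my base case, after which $g_{e_0} \in W_i$ contributes the initial factor $w_i$.

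Iterating along the unique arc $v_i = u_0, u_1, \ldots, u_M = v_{j_p}$ in $F_1$, the non-trivial-stabilizer vertices encountered strictly between the endpoints are precisely $v_{j_1}, \ldots, v_{j_{p-1}}$ by hypothesis, so the cumulative product factors exactly as
\[
g_{e_{M-1}} = w_i w_{j_1} w_{j_2} \cdots w_{j_{p-1}}, \qquad w_r \in W_r.
\]
Finally, $g_{v_{j_p}} = g_{e_{M-1}} \cdot s$ for some $s \in W_{j_p}$, so
\[
V_{j_p} = g_{v_{j_p}} W_{j_p} g_{v_{j_p}}^{-1} = (w_i w_{j_1} \cdots w_{j_{p-1}}) \, s W_{j_p} s^{-1} \, (w_i w_{j_1} \cdots w_{j_{p-1}})^{-1},
\]
and the trailing factor $s$ is absorbed because it normalizes $W_{j_p}$. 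This yields exactly equation \eqref{eqn:1_edge_exp}.

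The main obstacle I anticipate is bookkeeping: one must cleanly separate the ambiguity at non-trivial-stabilizer vertices (where $g_v$ is defined only modulo a vertex group) from the rigidity at trivial-stabilizer vertices and at edges, and then argue that the residual ambiguity in $g_{v_{j_p}}$ is innocuous because it acts by conjugation on $W_{j_p}$ from within $W_{j_p}$ itself. Once the edge-by-edge recursion is written down carefully and the trivial-stabilizer intermediate vertices are shown to contribute nothing, the conclusion is essentially a telescoping product.
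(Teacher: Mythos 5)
Your proof is correct and follows essentially the same route as the paper: both arguments walk outward from the vertex fixed by $W_i$ along the arc to $v_{j_p}$ and accumulate one conjugating factor from each intermediate non-trivial vertex group, producing the telescoping product $w_i w_{j_1}\cdots w_{j_{p-1}}$. Your edge-by-edge recursion $g_{e_k}^{-1}g_{e_{k+1}} \in \mathrm{Stab}(u_{k+1})$, grounded in the triviality of edge stabilizers, is in fact a more careful justification of the same bookkeeping that the paper only sketches by listing the admissible vertex choices in successive minimal subtrees.
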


\begin{proof}
The choice of vertices in the respective conjugacy classes of subgroups for the fundamental domain based at the vertex $W_i$ is outlined below:

\begin{enumerate}[wide, labelwidth=!, labelindent=0pt]
\item[$W_{j_1}$] In the $W_i*W_{j_1}$ minimal subtree of $T_1$ we choose the vertex labeled by $w_iW_{j_1}w_i^{-1}$ (for the conjugacy class of $W_{j_1}$ in the fundamental domain). 

\item[$W_{j_2}$] In the $w_iW_{j_1}w_i^{-1} * w_iW_{j_2}w_i^{-1}$ minimal subtree of $T_1$ we choose the vertex labeled by $w_iw_{j_1}W_{j_2}w_{j_1}^{-1}w_i^{-1}$ (for the conjugacy class of $W_{j_2}$ in the fundamental domain). 

\vdots

\item[$W_{j_p}$] In the $(w_iw_{j_1}...w_{j_{p - 2}})W_{j_{p - 1}}(w_iw_{j_1}...w_{j_{p - 2}})^{-1} * (w_iw_{j_1}...w_{j_{p - 2}})W_{j_p}(w_iw_{j_1}...w_{j_{p - 2}})^{-1}$ minimal subtree of $T_1$ we choose the vertex labeled by\\ $(w_iw_{j_1}...w_{j_{p - 1}})W_{j_p}(w_iw_{j_1}...w_{j_{p - 1}})^{-1}$ (in the conjugacy class of $W_{j_p}$ for the fundamental domain).

\end{enumerate}
\end{proof}

\begin{corollary}\label{cor:1_edge_exp}
Consider $T_1 \in \mathcal{SPD}^{0}(G,\mathscr{H})$ and fix a fundamental domain of $T_1$ whose nontrivial vertex stabilizers are given by $W_1,..., W_n$. Let $T_2 \in \mathcal{SPD}^{0}(G,\mathscr{H})$ be obtained from $T_1$ by equivariantly expanding the vertex labeled by $W_i$. Then there exists a fundamental domain of $T_2$ whose non trivial vertex stabilizers are labeled by $V_1,..., V_n$, where each $W_k$ is conjugate to $V_k$ for $k \in \{1,..., n\}$, and the conjugacy relations are given by-
\begin{enumerate}[wide, labelwidth=!, labelindent=0pt]
\item $V_i = W_i$ for some $i \in \{1,...,n\}$

\item If $j \ne i$ and the vertices with non trivial vertex stabilizers in the shortest path between $W_i$, and $W_j$ (excluding $W_i$ and $W_j$) are labeled as $W_{j_1},...,W_{j_{n_j}}$ in increasing order of distance from $W_i$, then
\begin{equation}\label{eqn:1_edge_exp}
V_{j_p} = (w_iw_{j_1}...w_{j_{p - 1}})W_{j_p}(w_iw_{j_1}...w_{j_{p - 1}})^{-1}
\end{equation}
where, $w_r \in W_r$, for $r \in \{i, j_1,..., j_p,.., j_{n_j}\}$
\end{enumerate}   
\end{corollary}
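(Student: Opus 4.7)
The plan is to reduce the corollary to a direct application of Lemma \ref{lemma:conjugacy_relation} inside the single tree $T_2$. The crux is to exhibit a canonical lift of the fundamental domain $F_1$ of $T_1$ to a fundamental domain $\tilde{F}_1$ of $T_2$, and then to play $\tilde{F}_1$ off against an arbitrary fundamental domain $F_2$ of $T_2$ that agrees with $\tilde{F}_1$ on the vertex labeled $W_i$.

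First I would observe that an equivariant expansion at the vertex labeled $W_i$ produces a canonical $G$-equivariant collapse map $c\colon T_2 \to T_1$ that collapses precisely the newly introduced edge orbits; all such edges have trivial stabilizer. The preimage under $c$ of the $W_i$-vertex orbit of $T_1$ is a disjoint family of finite subtrees; let $Y$ be the one containing the lift of the vertex labeled $W_i$ in $F_1$. Exactly one vertex of $Y$, call it $u_i$, carries the full stabilizer $W_i$, and every other vertex of $Y$ has trivial stabilizer; otherwise $T_2$ would have strictly more nontrivial vertex-stabilizer orbits than $T_1$, contradicting the fact that both lie in $\mathcal{D}(G,\mathscr{H})$ and that the number of nontrivial vertex-orbits is an invariant of the deformation space.

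Next I would form $\tilde{F}_1$ by gluing $Y$ to the natural lifts through $c$ of the remaining vertices and edges of $F_1$. This is a fundamental domain of $T_2$ whose nontrivial vertex stabilizers are exactly $W_1,\ldots, W_n$, whose $W_i$-vertex is $u_i$, and which tiles $T_2$ under the $G$-action (since $Y$ tiles the $W_i$-orbit part of $T_2$ under the induced $W_i$-action and the lifts through $c$ tile the rest consistently). Because the collapse map $c$ sends the shortest $T_2$-path from $u_i$ to the $W_j$-vertex onto the shortest $T_1$-path from $W_i$ to $W_j$, and because the expansion introduces no new nontrivial vertex stabilizers outside the $W_i$-orbit, the sequence of nontrivial-stabilizer vertices encountered along that $T_2$-path is precisely $W_{j_1},\ldots, W_{j_{n_j}}$, as in $T_1$.

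The conclusion then follows by applying Lemma \ref{lemma:conjugacy_relation} to $T_2$ with $\tilde{F}_1$ in the role of the first fundamental domain and the desired $F_2$ in the role of the second; formula \eqref{eqn:1_edge_exp} is then exactly the output of that lemma. The existence of an $F_2$ realizing any prescribed choice of conjugators $w_r \in W_r$ is built into the construction in the proof of Lemma \ref{lemma:conjugacy_relation}: one walks outward along $T_2$ from $u_i$, and at each nontrivial vertex one uses the freedom to replace the next vertex label with its conjugate by an element of the preceding stabilizer group. The only mildly nontrivial step is verifying that $\tilde{F}_1$ really is a fundamental domain of $T_2$, which I expect to be the main (and still essentially routine) obstacle; everything else is a bookkeeping reduction to the preceding lemma.
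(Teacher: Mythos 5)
Your proposal is correct and takes essentially the same route as the paper: the paper's own proof is a one-line observation that $V_i = W_i$ forces $T_2$ to carry two fundamental domains satisfying the hypotheses of Lemma \ref{lemma:conjugacy_relation}, which is precisely your reduction. The additional work you do --- constructing the lifted fundamental domain $\tilde{F}_1$ through the collapse map and checking that its nontrivial stabilizers are still $W_1,\ldots,W_n$ --- is exactly the detail the paper leaves implicit.
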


\begin{proof}
$V_i = W_i$ implies that $T_2$ has two fundamental domains which satisfies the conditions of lemma $\ref{lemma:conjugacy_relation}$. Hence, we have the result.
\end{proof}

\begin{lemma}\label{lemma:1_edge_exp_X}

\begin{enumerate}[wide, labelwidth=!, labelindent=0pt] Consider two trees $T_1$ and $T_2$ with the following properties
\item $T_1 \in \mathcal{SPD}^{0}(G,\mathscr{H})$ is a tree of type $X$ with a fundamental domain whose non trivial vertex stabilizers are labeled by $W_1,..., W_n$. 
\item$T_2 \in \mathcal{SPD}^{0}(G,\mathscr{H})$ is also a tree of type $X$ with a fundamental domain whose non trivial vertex stabilizers are labeled by $V_1,..., V_n$ 
\item For each $k \in \{1,..., n\}, W_k, \text{is related to } V_k$ by the equation \ref{eqn:1_edge_exp} given in the corollary \ref{cor:1_edge_exp}. That is
\begin{enumerate}[wide, labelwidth=!, labelindent=0pt]
\item $V_i = W_i$ for a fixed $i \in \{1,...,n\}$

\item If $j \ne i$, then 
$V_{j_p} = (w_iw_{j_1}...w_{j_{p - 1}})W_{j_p}(w_iw_{j_1}...w_{j_{p - 1}})^{-1}$
where, $w_r \in W_r$ for $r \in \{i, j_1,..., j_p,.., j_{n_j}\}$
\end{enumerate}
\end{enumerate} 
Then we can connect $T_1$ and $T_2$ by a path in $\mathcal{SPD}^{1}(G,\mathscr{H})$ using trees of type $X$ and ${Y}$.

\end{lemma}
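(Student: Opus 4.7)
The plan is to exhibit a two-step path $T_1 \to S \to T_2$ in $\mathcal{SPD}^1(G, \mathscr{H})$ whose intermediate vertex $S$ is a type $Y_i$ tree, obtained from $T_1$ by a single collapse move and from $T_2$ by a single expand move.

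First I would unpack what the hypothesis says in the type $X$ regime. In a type $X$ graph of groups every non-trivial vertex is a leaf adjacent to the unique trivial-stabilizer central vertex, so the shortest path between the $W_i$-vertex and any $W_j$-vertex passes only through the central trivial vertex and contains no intermediate non-trivial vertex (i.e.\ $n_j = 0$). With the natural convention that equation \ref{eqn:1_edge_exp} is extended to the terminal $W_j$, the hypothesis then reduces to the existence of a single element $w_i \in W_i$ such that $V_i = W_i$ and $V_j = w_i W_j w_i^{-1}$ for every $j \ne i$.

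Next, I would build $S$ by collapsing, in $T_1$, the edge orbit of the edge joining the central trivial vertex of the fixed fundamental domain to its $W_i$-leaf. This merges the $W_i$-leaf into the central vertex, so the resulting fundamental domain has one valence $n-1$ vertex with stabilizer $W_i$ adjacent to $n-1$ leaves with stabilizers $W_j$, $j \ne i$; thus $S$ is of type $Y_i$, and since it is obtained from $T_1$ by a single edge-orbit collapse, $T_1$ and $S$ lie in a common closed simplex of $\mathcal{PD}$ and are adjacent in $\mathcal{SPD}^1$. I would then apply corollary \ref{cor:1_edge_exp} at the central $W_i$-vertex of $S$ (whose non-trivial stabilizer and valence $n - 1 \ge 2$ make the expand move admissible) to produce a type $X$ tree $T_2'$ with fundamental domain stabilizers $V_i' = W_i$ and $V_j' = w_i' W_j (w_i')^{-1}$ for some $w_i' \in W_i$. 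Because the corollary permits a free choice of the conjugating element, I select $w_i' = w_i$, so that $T_2'$ and $T_2$ have identical labeled fundamental domains. By the fundamental theorem of Bass-Serre theory (theorem \ref{thm:bst}), combined with the triviality of edge groups and the spirit of remark \ref{rem:unique_min}, the graph of groups determines the $G$-tree up to $G$-equivariant isometry, hence $T_2' \cong_G T_2$ in $\mathcal{SPD}^0$. Thus $S$ is also adjacent to $T_2$ in $\mathcal{SPD}^1$, and the path $T_1 \to S \to T_2$ passes through a type $Y$ tree as required.

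The step I expect to require the most care is matching the conjugating element produced by corollary \ref{cor:1_edge_exp} with the $w_i$ supplied by the hypothesis. The corollary's construction is essentially a coset-representative choice at the newly inserted edge, so the needed flexibility should be built in, but one must inspect the argument to confirm that every element of $W_i$ actually arises. A secondary admissibility check is that the newly created trivial-stabilizer vertex ends up with valence $n$, which is automatic in our regime $n \ge 4$.
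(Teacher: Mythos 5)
There is a genuine gap, and it lies in your opening reduction of the hypothesis. You argue that because $T_1$ and $T_2$ are of type $X$, the shortest path between the $W_i$-vertex and any $W_j$-vertex in the fundamental domain has no intermediate non-trivial vertices, so $n_j=0$ and the hypothesis collapses to the existence of a \emph{single} $w_i\in W_i$ with $V_j=w_iW_jw_i^{-1}$ for all $j\ne i$. That is not the intended (or usable) reading. The conjugacy data in hypothesis (3) is \emph{inherited} from corollary \ref{cor:1_edge_exp} applied to an auxiliary, generally non-star tree: this is exactly how the lemma is invoked in lemma \ref{lemma:edge_exp}, where $T_{i+1}$ is a one-edge-orbit expansion of a tree $T_i$ that need not be of type $X$, and $S_i,S_{i+1}$ are the type $X$ trees carrying the same stabilizers. (Indeed the corollary cannot even be instantiated with a type $X$ tree as its $T_1$, since the $W_i$-leaf of a star fundamental domain is not expandable.) So the conjugators really are nested products $w_iw_{j_1}\cdots w_{j_{p-1}}$ with factors drawn from \emph{different} groups $W_i,W_{j_1},\ldots$, and different $V_j$'s carry different such products.

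Your single step $T_1\to S\to T_2$ then fails: after equivariantly collapsing the edges at the $W_i$-vertex, the central vertex of the resulting type $Y_i$ tree has stabilizer $W_i$, and the alternative fundamental domains at that vertex are obtained by replacing each leaf $W_j$ by $w'W_jw'^{-1}$ with $w'\in W_i$ only. Hence one collapse–expand at $W_i$ reaches exactly the type $X$ trees whose stabilizers are $W_i$ and $W_i$-conjugates of the $W_j$; it cannot produce a target such as $V_{j_2}=(w_iw_{j_1})W_{j_2}(w_iw_{j_1})^{-1}$ with $w_{j_1}\in W_{j_1}\setminus\{1\}$. This is precisely why the paper's proof iterates: it collapses and expands successively at $W_i$, then at $w_iW_{j_1}w_i^{-1}$, then at $(w_iw_{j_1})W_{j_2}(w_iw_{j_1})^{-1}$, and so on, building the nested conjugators one factor at a time, with each intermediate tree of type $X$ or $Y$. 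Your argument is, in effect, only the first step (the base case) of that induction; in the special case where all conjugators equal a single $w_i\in W_i$ it is correct and matches the paper's first collapse–expand pair, but it does not prove the lemma in the generality in which it is stated and later used.
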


\begin{proof}
We will import the notations from corollary \ref{cor:1_edge_exp} and show that if 
\begin{itemize}[wide, labelwidth=!, labelindent=0pt]
\item $V_i = W_i$
\item $V_{j_1} = w_iW_{j_{1}}w_i^{-1}$
\item $V_{j_2} = w_iw_{j_1}W_{j_2}w_i^{-1}w_{j_1}^{-1}$

\vdots
\item $V_{j_p} = (w_iw_{j_1}...w_{j_{p - 1}})W_{j_p}(w_iw_{j_1}...w_{j_{p - 1}})^{-1}$

\end{itemize}
then to get to $T_2$ from $T_1$ we have to apply some number of carefully chosen collapse-expand moves. Our choice will be motivated by the previous lemma so that all the intermediate trees are only of type $X$ and $Y$.

The steps of the expand and collapse moves are underlined below:
\begin{itemize}[wide, labelwidth=!, labelindent=0pt]

\item On $T_1$ apply the following moves:
\begin{enumerate}[wide, labelwidth=!, labelindent=0pt]
\item Collapse the edges of $T_1$ adjacent to vertex labeled by $W_i$, equivariantly. The resulting tree is of type $Y$.
\item Choose a fundamental domain replacing each $W_r$ by $w_iW_rw_i^{-1}$, $\forall$ $r \in \{j_1,..., j_p\}$.
\item Expand (equivariantly) the vertex labeled by $W_i$. This tree is of type $X$.
\end{enumerate}

\item On the resulting tree we apply the following moves:
\begin{enumerate}[wide, labelwidth=!, labelindent=0pt]
\item Collapse the edges adjacent to vertex labeled by $w_iW_{j_1}u^{-1}$, equivariantly. The resulting tree is of type $Y$.
\item Choose a fundamental domain replacing $w_iW_rw_i^{-1}$ by $w_iw_{j_1}W_rw_{j_1}^{-1}w_i^{-1}$, 
\\ $\forall$ $r \in \{j_2,..., j_p\}$.
\item Expand (equivariantly) the vertex labeled by $w_iW_{j_1}{w_i}^{-1}$. This tree is of type $X$.
\end{enumerate}

\vdots

\item This is the final step:
\begin{enumerate}[wide, labelwidth=!, labelindent=0pt]
\item Collapse the edges adjacent to vertex labeled by \\$(w_iw_{j_1}...w_{j_{p - 2}})W_{j_{p - 1}}(w_iw_{j_1}...w_{j_{p - 2}})^{-1}$, equivariantly. The resulting tree is of type $Y$.
\item Choose a fundamental domain replacing \\$(w_iw_{j_1}...w_{j_{p - 2}})W_{j_{r}}(w_iw_{j_1}...w_{j_{p - 2}})^{-1}$ by $(w_iw_{j_1}...w_{j_{p - 1}})W_{j_{r}}(w_iw_{j_1}...w_{j_{p - 1}})^{-1}$,\\
for $r =j_p$.
\item Expand (equivariantly) the vertex labeled by \\$(w_iw_{j_1}...w_{j_{p - 2}})W_{j_{p - 1}}(w_iw_{j_1}...w_{j_{p - 2}})^{-1}$. This tree is of type $X$.
\end{enumerate}
\end{itemize}
\end{proof}

\begin{lemma}\label{lemma:same_fund}
Consider $T \in \mathcal{SPD}(G,\mathscr{H})$ with a fundamental domain having non-trivial vertex stabilizer subgroups labeled by $W_1, ..., W_n$. If $T' \in \mathcal{SPD}(G,\mathscr{H})$ has a fundamental domain with the non trivial vertex stabilizer subgroups labeled by $W_1,..., W_n$, then $T$ and $T'$ are connected by a expand-collapse path in $\mathcal{SPD}(G,\mathscr{H})$ such that every intermediate tree in that path has a fundamental domain with the non trivial vertex stabilizer subgroups labeled by $W_1,..., W_n$.
\end{lemma}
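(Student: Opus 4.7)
The plan is to exhibit a path in $\mathcal{SPD}(G,\mathscr{H})$ from each of $T$ and $T'$ to a common canonical tree $T_{\star}$, lying entirely in the sub-complex
\[
\mathcal{S} := \{S \in \mathcal{SPD}(G,\mathscr{H}) : S \text{ admits a fundamental domain with non-trivial vertex stabilizers } W_1,\dots,W_n\}.
\]
Here $T_{\star}$ is the vertex of $\mathcal{SPD}$ whose fundamental domain is a star with one trivial central vertex of valence $n$ and non-trivial leaves $W_1,\dots,W_n$ (a tree of type $X$ in Notation \ref{not:X}). Both $T,T'\in\mathcal{S}$ by hypothesis and $T_{\star}\in\mathcal{S}$ by construction, so concatenating the $T\to T_{\star}$ and $T_{\star}\to T'$ paths gives the result. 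Throughout, the invariant maintained is the \emph{exact} equality of non-trivial vertex stabilizers with $W_1,\dots,W_n$ in some fundamental domain (not merely their conjugates).

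I first reduce the number of trivial vertices in the fundamental domain to at most one by iterated admissible collapses. If the current fundamental domain contains two adjacent trivial vertices $t,t'$, then collapsing $tt'$ is admissible ($\mathrm{val}(t)+\mathrm{val}(t')-2\geq 4$) and keeps every $W_i$ as a non-trivial vertex stabilizer. Otherwise, pick a trivial vertex $t$ (which is then adjacent only to non-trivial vertices) and collapse an edge from $t$ to a neighboring $W_i$; this is admissible, merges $t$ into $W_i$ so that the new vertex has stabilizer $W_i$, and leaves the non-trivial vertex stabilizer set unchanged. Each such collapse strictly reduces the trivial vertex count, so in finitely many steps we reach a tree $T_1\in\mathcal{S}$ whose fundamental domain has at most one trivial vertex.

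Next I connect $T_1$ to $T_{\star}$. If $T_1$ has zero trivial vertices, its fundamental domain is a tree on $W_1,\dots,W_n$ alone; picking any interior $W_j$ (valence $\geq 2$) and applying an admissible expand move that replaces $W_j$ by $W_j\text{--}t$, with a new trivial vertex $t$ inheriting every former neighbor of $W_j$ (so $\mathrm{val}(t)\geq 3$), produces a tree in $\mathcal{S}$ with exactly one trivial vertex. So assume $T_1$ has exactly one trivial vertex $t$ in the fundamental domain $F$, and induct on $D(T_1):=\sum_{i=1}^n d_F(t,W_i)$. If $D(T_1)=n$ then every $W_i$ is a leaf adjacent to $t$ and $T_1=T_{\star}$. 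Otherwise some $W_i$ satisfies $d_F(t,W_i)\geq 2$; let $W_j$ be the neighbor of $t$ on the unique $F$-path from $t$ to $W_i$. Since $\mathrm{val}(W_j)\geq 2$, I perform an admissible expand at $W_j$ introducing a trivial vertex $t'$ of valence $3$ which inherits from $W_j$ exactly the two neighbors $t$ and the next vertex on the path to $W_i$; then I collapse the newly created edge $tt'$. This expand-collapse pair stays in $\mathcal{S}$, and a direct check shows the merged trivial vertex gains the next-path-vertex toward $W_i$ as an immediate neighbor while losing no adjacencies, so $D$ strictly decreases. The induction terminates at $T_{\star}$.

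The main obstacle is the bookkeeping: at each move one must verify both that the move is admissible (trivial vertices retain valence $\geq 3$, vertex stabilizers remain in $\mathscr{H}$) and that a fundamental domain of the new tree with non-trivial vertex stabilizers again exactly $W_1,\dots,W_n$ can be explicitly read off. Both verifications follow immediately from the description of admissible moves and the dictionary of Remark \ref{rem:dictionary}: collapsing an edge between a trivial and a non-trivial vertex simply inherits the non-trivial stabilizer, collapsing between two trivial vertices yields a trivial stabilizer, and the expand moves invoked never touch the stabilizer of any $W_i$.
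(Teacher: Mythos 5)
Your proof is correct and, like the paper's, works by routing both $T$ and $T'$ to the canonical type-$X$ star $T_{\star}$ with leaves labeled $W_1,\dots,W_n$; but the route you take to get there is genuinely different. The paper's proof consists of two terse assertions: first expand $T$ to a tree with the \emph{maximal} number of edge orbits carrying the same labels (all $W_i$ at valence-one vertices, all trivial vertices trivalent), then collapse all edges joining trivial vertices to fall onto $T_{\star}$. You instead go downward first, collapsing to a fundamental domain with at most one trivial vertex, and then run an induction on the monovariant $D(T_1)=\sum_i d_F(t,W_i)$, decreasing it by expand--collapse pairs performed at the neighbor $W_j$ of $t$ on the path toward a distant $W_i$. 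Your version costs more bookkeeping but supplies exactly the admissibility and label-preservation checks the paper omits; in particular the paper's first step --- that every labeled tree can be expanded to a maximal labeled tree --- is asserted without argument, whereas your induction replaces it with a verified strictly decreasing quantity. One cosmetic slip: after the expand at $W_j$ the edge you collapse is $tt'$, which is the old edge $tW_j$ re-attached to $t'$ rather than the ``newly created'' edge (that one is $W_jt'$); the move is nonetheless admissible since both endpoints are trivial and the merged vertex has valence $\mathrm{val}(t)+1\geq 4$, so the argument stands.
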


\begin{proof}
We will show the existence of such a path in a few steps. 

\begin{enumerate}[wide, labelwidth=!, labelindent=0pt]
\item $T$ is connected to a tree with maximum number of edge orbits having a fundamental domain such that the non trivial vertex stabilizers are labeled by $W_1,..., W_n$.

\item Any two trees with maximum number of edge orbits having a fundamental domain with non trivial vertex stabilizer subgroups labels $W_1,..., W_n$ are connected. This is because both of them are connected to the tree of type $X$ with a fundamental domain labeled by $W_1,..., W_n$ via collapse moves.
\end{enumerate}

So, $T$ and $T'$ are connected to the same tree of type $X$.
\end{proof}

\begin{lemma}\label{lemma:edge_exp}
Consider $T, T' \in \mathcal{SPD}^{0}(G,\mathscr{H})$ such that the distance between them is $1$ in $\mathcal{SPD}$. If $S, S' \in \mathcal{SPD}^{0}(G,\mathscr{H})$ are trees of type $X$ whose nontrivial vertex stabilizer subgroups in a fundamental domain are same as that of in a fundamental domain of $T, T'$, respectively. Then $S$ and $S'$ can be connected by a path in $\mathcal{SPD}$ consisting only of trees of type $X$ and $Y$.
\end{lemma}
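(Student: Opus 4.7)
The plan is to compare two fundamental domains of whichever of $T, T'$ sits at the larger end of the face relation, and to convert that comparison into a chain of collapse-shift-expand moves via the two preceding lemmas. Without loss of generality I would assume $T'$ is obtained from $T$ by collapsing some edge orbits; the reverse case is symmetric. I would pick the fundamental domain $F_T$ of $T$ realizing the stabilizers $W_1, \ldots, W_n$ of $S$ and push it through the collapse, obtaining a fundamental domain $F_{T'}^{(1)}$ of $T'$ with the same non-trivial stabilizer subgroups $W_1, \ldots, W_n$. This uses the fact that admissible collapses identify trivial vertices with non-trivial ones only, so the non-trivial subgroup labels of a fundamental domain are preserved verbatim across the collapse.

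Next I would invoke the hypothesis for $S'$: there is a fundamental domain $F_{T'}^{(2)}$ of $T'$ whose non-trivial stabilizers $V_1, \ldots, V_n$ match those of $S'$. After reindexing so that each $V_i$ is conjugate to the corresponding $W_i$ in $G$, and after shifting $F_{T'}^{(2)}$ by a group element $g \in G$ chosen so that its first stabilizer becomes exactly $W_1$, the two fundamental domains $F_{T'}^{(1)}$ and the shifted $F_{T'}^{(2)}$ of $T'$ share the stabilizer at one vertex. Lemma \ref{lemma:conjugacy_relation} then supplies the chain formula \ref{eqn:1_edge_exp}, expressing each shifted $V_j'$ as $(w_1 w_{j_1} \cdots w_{j_{p-1}}) W_{j_p} (w_1 w_{j_1} \cdots w_{j_{p-1}})^{-1}$ with $w_r \in W_r$ along the path from $W_1$ to $V_j'$ in $F_{T'}^{(1)}$. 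Since the underlying tree does not depend on the choice of fundamental domain, $S'$ is also the type $X$ tree whose fundamental-domain stabilizers are $W_1, V_2', \ldots, V_n'$.

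At this point the type $X$ trees $S$ and $S'$ (the latter realized via the shifted stabilizers) satisfy exactly the hypothesis of Lemma \ref{lemma:1_edge_exp_X}, and the iterated collapse-shift-expand algorithm in the proof of that lemma produces the required path in $\mathcal{SPD}$ through trees of type $X$ and $Y$. The main obstacle I anticipate is the alignment step: I must check that a single shift by $g \in G$ genuinely produces a fundamental domain of $T'$ sharing one stabilizer with $F_{T'}^{(1)}$ and that the chain formula then falls out verbatim in the form Lemma \ref{lemma:1_edge_exp_X} demands. This reduces to routine manipulation of fundamental domains together with the explicit conjugation description supplied by Lemma \ref{lemma:conjugacy_relation}. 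A secondary subtlety worth checking is that when $T \to T'$ collapses several edge orbits at once rather than one, preservation of non-trivial stabilizer labels in the fundamental domain still holds; this is immediate because the property is local to each edge orbit being collapsed.
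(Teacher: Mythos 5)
Your argument is correct, but it reorganizes the proof around a different decomposition from the paper's. The paper takes $T'$ to be obtained from $T$ by expanding $p$ edge orbits, factors this into $p$ single-edge-orbit expansions $T = T_0, T_1, \ldots, T_p = T'$, assigns to each $T_i$ an intermediate type $X$ tree $S_i$, and applies Corollary \ref{cor:1_edge_exp} together with Lemma \ref{lemma:1_edge_exp_X} at each of the $p$ steps to chain $S = S_0$ to $S' = S_p$. You instead work in one shot on the collapsed tree: you push the fundamental domain of $T$ realizing $W_1, \ldots, W_n$ forward to a fundamental domain of $T'$ with the same non-trivial stabilizers (valid here because an admissible collapse cannot enlarge a vertex stabilizer that is already a conjugate of some $A_i$, as no conjugate of one free factor properly contains another), and then compare it directly with the fundamental domain of $T'$ realizing the stabilizers of $S'$ via Lemma \ref{lemma:conjugacy_relation}, after the conjugating shift that pins down a common vertex. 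This uses the full strength of Lemma \ref{lemma:conjugacy_relation} for an arbitrary pair of fundamental domains sharing a vertex, rather than only its single-expansion instance Corollary \ref{cor:1_edge_exp}, but that lemma is stated in the generality you need; in exchange you invoke Lemma \ref{lemma:1_edge_exp_X} only once and avoid the bookkeeping of the intermediate trees $S_i$ and the implicit compatibility of fundamental-domain choices along the paper's chain. The alignment step you flag does go through: since edge stabilizers are trivial and each $A_i$ is its own normalizer in $G$, a conjugate of $A_i$ stabilizes a unique vertex, so conjugating the second fundamental domain by $g$ with $V_1 = gW_1g^{-1}$ really does produce a fundamental domain of $T'$ containing the vertex stabilized by $W_1$, and the chain formula \ref{eqn:1_edge_exp} then holds with $w_r \in W_r$ exactly as Lemma \ref{lemma:1_edge_exp_X} requires.
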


\begin{proof}
Since $T$ and $T'$ are at a distance of $1$. So, without loss of generality let us assume $T'$ is
obtained by expanding $p$ edge orbits of $T$, equivariantly. 

We can find trees $T = T_0, T_1,..., T_{p - 1}, T_p = T'$ such that $T_{i + 1}$ is obtained from $T_i$ by one edge orbit expansion. That is, we find trees so that the $p$-edge orbit expansions are factored into $p$ $1$-edge orbit expansions.

For each $T_i$ let $S_i$ denote the tree of type $X$ with a fundamental domain whose non trivial vertex stabilizer subgroups are same as that of a fundamental domain of $T_i$.

From lemma \ref{lemma:1_edge_exp_X} of this subsection we know that $S_i$ and $S_{i + 1}$ can be connected by an expand-collapse path consisting only of trees of type $X$ and $Y$.
\end{proof}

\begin{corollary}\label{cor:xy_connected}
The sub-complex of the $1$-skeleton of $\mathcal{SPD}(G,\mathscr{H})$ spanned by vertices corresponding to graph of groups of type $X$ and $Y$ is connected.

\end{corollary}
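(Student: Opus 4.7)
The plan is to combine the path-connectedness of $\mathcal{SPD}(G,\mathscr{H})$ with Lemma \ref{lemma:edge_exp} to replace an arbitrary edge-path in $\mathcal{SPD}$ by one staying inside the sub-complex of type $X$ and type $Y$ vertices. The underlying mechanism is that every vertex $T$ of $\mathcal{SPD}^0$ admits a canonical type $X$ ``companion'' $S_T$, namely the tree of type $X$ whose fundamental domain has the same non-trivial vertex stabilizers as a chosen fundamental domain of $T$, and one can argue that $T$ and $S_T$ lie close together within the target sub-complex.

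First, I would take $T, T'$ in the sub-complex and use path-connectedness of $\mathcal{SPD}(G,\mathscr{H})$ (guaranteed by Theorem \ref{thm:contractible}) to choose an edge-path $T = T_0, T_1, \dots, T_m = T'$ in the $1$-skeleton. For each $T_i$, I would fix a fundamental domain with non-trivial vertex stabilizers $W_1^{(i)}, \dots, W_n^{(i)}$ and let $S_i$ be the tree of type $X$ realizing these same stabilizers. Lemma \ref{lemma:edge_exp} applied to each adjacent pair $(T_i, T_{i+1})$ then produces a path from $S_i$ to $S_{i+1}$ consisting only of type $X$ and type $Y$ trees, and concatenating these pieces yields a sub-complex path from $S_0$ to $S_m$.

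To complete the argument, I would connect each endpoint to its companion inside the sub-complex. If $T$ is already of type $X$, choose $S_0 = T$. If $T$ is of type $Y_j$, then $S_0$ is obtained from $T$ by a single edge-orbit expansion --- insert a trivial-stabilizer vertex $w$ of valence $n$ adjacent to the valence-$(n-1)$ vertex labeled $A_j$, and re-route the other $n-1$ edges to terminate at $w$ --- so that $T$ and $S_0$ are joined by a single $1$-skeleton edge that lives inside the sub-complex, and likewise for $T'$ and $S_m$. The main technical input, Lemma \ref{lemma:edge_exp}, has already been proved, so the corollary reduces to a concatenation exercise. The only mild check is that each auxiliary type $X$ tree $S_i$ is actually an element of $\mathcal{D}(G,\mathscr{H})$, which is automatic since the $n$-tuple of non-trivial vertex stabilizers extracted from any valid fundamental domain of a tree in $\mathcal{D}(G,\mathscr{H})$ already forms an internal free product equal to $G$.
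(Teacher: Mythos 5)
Your argument is essentially the paper's own proof: take an arbitrary edge-path in $\mathcal{SPD}$, replace each vertex $T_i$ by its type $X$ companion $S_i$, and apply Lemma \ref{lemma:edge_exp} to each adjacent pair to get a concatenated path through type $X$ and $Y$ trees. Your additional step connecting a type $Y$ endpoint to its type $X$ companion by a single expansion edge is a small but worthwhile extra care that the paper's proof (which only treats type $X$ endpoints explicitly) leaves implicit.
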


\begin{proof}
If $S, S' \in \mathcal{SPD}^0(G,\mathscr{H})$ are trees of type $X$. Consider a path of length $q$ connecting them. Starting from $S$ let the trees in this path be given by $T_0 = S, T_1, ..., T_{q - 1}, T_q = S'$.

For a given $T_i$, let $S_i$ represent the tree of type $X$ having a fundamental domain with non trivial vertex stabilizer subgroups identical to that of a fundamental domain of $T_i$. 

Following the previous lemma, lemma \ref{lemma:edge_exp}, we see that $S_i$ and $S_{i + 1}$ can be connected by a path containing only of trees of type $X$ and $Y$. So the alternative path would consist of trees $S_0 = S, S_1,..., S_{q-1}, S_q = S'$ and all the trees between each $S_i$ and $S_{i + 1}$.
\end{proof}

\begin{lemma}\label{lemma:same_orbit_x}
If $S, S' \in \mathcal{SPD}(G, \mathscr{H})$ are two trees of type $X$ which are distance $2$ apart, such that the non trivial vertex subgroups of a fundamental domain of the tree $S$ are $A_1,..., A_n$. Then there is an outer automorphism $\phi \in \Gamma_n'$ such that $\phi(S) = S'$
\end{lemma}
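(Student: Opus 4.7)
Since $S$ and $S'$ are distance $2$ apart in $\mathcal{SPD}(G,\mathscr{H})$, they share a common neighbor $T$ in the $1$-skeleton. I would first argue that $T$ must be a type $Y$ tree lying in the boundary of both the cells of $S$ and $S'$. Indeed, an equivariant collapse of two or more edges of a type $X$ tree would merge two leaves with the trivial center, producing a vertex stabilizer of the form $A_i * A_j$, which is not in $\mathscr{H}$; hence the only valid collapses of $S$ or $S'$ are single-edge collapses, yielding type $Y$ trees. On the other hand, an expansion $T$ of $S$ preserves the leaf stabilizers $A_1,\ldots,A_n$, so if such a $T$ were also an expansion of $S'$ then $S'$ would share those leaf stabilizers up to a single global $G$-conjugation, and the trivial pairwise intersections $A_k \cap A_\ell = \{1\}$ (recall $N_G(A_k) = A_k$) would force $S = S'$. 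The mixed cases are ruled out by edge count. By Remark \ref{rem:unique_min} I can therefore choose fundamental domains so that $T$ has central vertex stabilizer $A_i$ for some $i \in \{1,\ldots,n\}$ and leaf stabilizers $\{A_j : j \ne i\}$, matching the fundamental domain of $S$ via the collapse.

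Next, I would describe $S'$ as an equivariant expansion of $T$ at the central vertex and read off the relevant data. In the Bass-Serre tree of $T$, the central vertex $v_0$ has stabilizer $A_i$ and its incident edges form $n-1$ orbits under $A_i$, one per leaf $u_j$ ($j \ne i$), each of size $|A_i|$. The expansion introduces a new trivial vertex $v_1$ together with its $A_i$-translates and the new edge $v_0 v_1$; equivariance dictates that for each $j \ne i$ there is an element $w_j \in A_i$ such that $w_j \cdot u_j$ becomes a neighbor of $v_1$. A fundamental domain $\{v_0, v_1\} \cup \{w_j u_j : j \ne i\}$ for $S'$ then has nontrivial vertex stabilizers
\[
V_i = A_i, \qquad V_j = w_j A_j w_j^{-1} \quad (j \ne i).
\]

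With the tuple $(w_j)_{j \ne i} \in A_i^{n-1}$ in hand, I would define
\[
\Phi := \prod_{j \ne i} F_{A_j}^{w_j}, \qquad \phi := [\Phi] \in \mathsf{Out}(G).
\]
The factors commute by Lemma \ref{lemma:diff_base_commute}, since each $w_j$ lies in $A_i$ and $i$ is distinct from any two indices $j, j'$ involved; hence $\Phi$ is well-defined. Directly from Definition \ref{defn:FIJ}, $\Phi(A_i) = A_i$ and $\Phi(A_j) = w_j A_j w_j^{-1}$ for $j \ne i$, so the graph of groups $\phi \cdot S$ is of type $X$ with exactly the nontrivial vertex stabilizers of $S'$. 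Since a type $X$ graph of groups is determined as a point of $\mathcal{D}(G,\mathscr{H})$ by its vertex stabilizers, $\phi(S) = S'$. Finally, $f_{A_j}^{w_j} \in H_j^i \subset \Gamma_n'$ for each $j \ne i$ by Definitions \ref{defn:hij} and \ref{defn:GN'}, so $\phi = \prod_{j \ne i} f_{A_j}^{w_j} \in \Gamma_n'$.

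The main obstacle is the combinatorial bookkeeping in the second step: identifying every $A_i$-equivariant expansion of $T$ producing a type $X$ tree with the parameter space $A_i^{n-1}$ modulo the global $A_i$-translation (corresponding to the choice of base point $v_1$ within its orbit). This requires a careful check that $A_i$-equivariance forces the attachment of each leaf orbit to $\{a v_1 : a \in A_i\}$ to be a single coset translation, and that two choices of $(w_j)$ differing by a uniform global $A_i$-factor yield $G$-equivariantly isometric trees, so that the $\Phi$ constructed is unambiguous.
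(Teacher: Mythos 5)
Your proposal is correct and follows essentially the same route as the paper's proof: the common neighbor $T$ is forced to be a type $Y$ tree obtained by a single equivariant edge-orbit collapse at some $A_i$, the type $X$ expansions of $T$ are parametrized by tuples $(w_j)_{j\ne i}\in A_i^{n-1}$, and the required element is $\phi=\prod_{j\ne i}f_{A_j}^{w_j}\in\Gamma_n'$. Your way of excluding the expand-then-collapse configuration (leaf stabilizers persist under expansion at the trivial vertex, and a type $X$ tree is determined by the nontrivial stabilizers of a fundamental domain, so this would force $S'=S$) is a slightly more explicit rendering of the paper's ``extremities'' argument, but the substance is identical.
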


\begin{proof}
If the distance between $S$ and $S'$ is $2$, then there is a tree $T$ such that distance of $T$ from $S$ is $1$ and the distance of $T$ from $S'$ is also $1$. We will prove that $T$ must be a tree of type $Y$. We will show that to get to $S'$ from $S$ via another tree $T$ we must collapse an orbit of edge $G$-equivariantly of $S$ and then expand an orbit of vertex $G$-equivariantly of $T$.

Complete list of vertex stabilizer subgroups in a fundamental domain uniquely (up to equivariant homeomorphism) determines a tree of type $X$. So, $S$ and $S'$ do not have a fundamental domain whose non trivial vertex stabilizer subgroup match. To move to a different tree in $\mathcal{SPD}$ from $S$ we must apply either a collapse move or an expand move. 

Expand move must be applied to the orbit of vertices with trivial vertex stabilizer subgroup, as the other orbits of vertices have valence $1$, when restricted to any fundamental domain. The vertex groups of any fundamental domain for a tree of type $X$ is at the extremity of the fundamental domain. Every tree obtained from applying only expand move to a tree of type $X$ must also have a fundamental domain that has all the non trivial vertex groups in the extremities of the fundamental domain.

Any tree with a fundamental domain that has all the vertex groups at the extremities of the fundamental domain does not have edge overlap from two distinct fundamental domains. As a result, one expand move followed by one collapse move on a tree of type $X$ does not give rise to a different tree due to inability to choose a different fundamental domain. So, to get to a different tree of type $X$, we need to apply collapse move first and then expand move.

There can only be $1$-edge orbit collapse move. Let the edge adjacent to the vertex group labeled by $A_i$ be collapsed, equivariantly. Let us denote this tree by $T_i$. Then we have to apply expand move on the vertex labeled by $A_i$ of the tree $T_i$ to get $S'$.

The choices of vertices for a fundamental domain of $T$ are as follows:
\begin{itemize}[wide, labelwidth=!, labelindent=0pt]
\item In the $A_1*A_i$ minimal subtree we can choose vertex labeled by $a_{i1}A_1a_{i1}^{-1}$.
\item In the $A_2*A_i$ minimal subtree we can choose vertex labeled by $a_{i2}A_2a_{i2}^{-1} $.

\vdots
\item In the $A_{i - 1}*A_i$ minimal subtree we can choose vertex labeled by $a_{i{i-1}}A_{i - 1}a_{i{i - 1}}^{-1} $.
\item $A_i$.
\item In the $A_{i + 1}*A_i$ minimal subtree we can choose vertex labeled by $a_{i{i + 1}}A_{i + 1}a_{i{i + 1}}^{-1} $.

\vdots
\item In the $A_n*A_i$ minimal subtree we can choose vertex labeled by $a_{in}A_na_{in}^{-1} $
Here, $a_{ik} \in A_i, \text{for } k \in \{1,..., n\}.$

\end{itemize}
For such a choice, the vertex stabilizer subgroup of a fundamental domain of $S'$ are given by 

$a_{i1}A_1a_{i1}^{-1}$
, $a_{i2}A_2a_{i2}^{-1} ,\hdots , a_{i{i-1}}A_{i - 1}a_{i{i - 1}}^{-1} $, $A_i$, $a_{i{i + 1}}A_{i + 1}a_{i{i + 1}}^{-1} , \hdots , a_{in}A_na_{in}^{-1} $.

In this case, $\phi := f_{A_n}^{a_{in}} \hdots f_{A_{i + 1}}^{a_{i{i + 1}}} f_{A_{i - 1}}^{a_{i{i - 1}}} \hdots f_{A_1}^{a_{i1}} $.
\end{proof}

\begin{corollary}\label{cor:fin_ind}
$\Gamma_n'$ is a finite index subgroup of $\Omega_n$
\end{corollary}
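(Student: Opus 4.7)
The plan is to exhibit a connected sub-complex $K \subset \mathcal{SPD}(G_n,\mathscr{H})$ on which both $\Omega_n$ and $\Gamma_n'$ act properly discontinuously and co-compactly; the finite index $[\Omega_n : \Gamma_n'] < \infty$ will then follow by a fundamental-domain argument. I would take $K$ to be the sub-complex of the $1$-skeleton of $\mathcal{SPD}$ spanned by the type $X$ and type $Y$ vertices, which is connected by Corollary \ref{cor:xy_connected}. Since an outer automorphism preserves the underlying graph of a graph of groups, $K$ is $\Omega_n$-invariant (and hence $\Gamma_n'$-invariant), and both actions are properly discontinuous by restriction from $\mathcal{SPD}$. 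Co-compactness of $\Omega_n$ on $K$ is routine, since there is only one combinatorial shape of type $X$ and of type $Y$, so only finitely many $\Omega_n$-orbits of vertices in $K$ occur.

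The substantive step is co-compactness of the $\Gamma_n'$-action on $K$. I would prove that the type $X$ vertices form a single $\Gamma_n'$-orbit, namely that of $\mathbf{X}$, and that the type $Y$ vertices lie in at most $n$ such orbits. For the type $X$ claim, let $S$ be any type $X$ vertex and use connectedness of $K$ to choose a path from $\mathbf{X}$ to $S$. Since no two type $X$ vertices (each with $n$ edges) and no two type $Y$ vertices (each with $n-1$ edges) can be adjacent in $\mathcal{SPD}$, the path must alternate $\mathbf{X} = S_0, T_1, S_1, \ldots, T_k, S_k = S$, forcing each pair of consecutive type $X$ vertices $S_{j-1}, S_j$ to be at distance exactly $2$. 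I would then iterate Lemma \ref{lemma:same_orbit_x}: assuming inductively that $S_{j-1} = \phi_1\phi_2\cdots\phi_{j-1}(\mathbf{X})$ with each $\phi_l \in \Gamma_n'$, the isometry $(\phi_1\cdots\phi_{j-1})^{-1}$ carries $S_j$ to a type $X$ tree at distance $2$ from $\mathbf{X}$; since $\mathbf{X}$ has vertex stabilizers $A_1, \ldots, A_n$, Lemma \ref{lemma:same_orbit_x} applies to produce $\phi_j \in \Gamma_n'$ with $(\phi_1\cdots\phi_{j-1})^{-1}(S_j) = \phi_j(\mathbf{X})$, hence $S_j \in \Gamma_n' \cdot \mathbf{X}$. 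For the type $Y$ claim, any type $Y$ vertex $T$ admits an expansion of its unique non-trivial central vertex into a trivial valence-$n$ vertex, producing a type $X$ neighbor $S = \phi(\mathbf{X})$ in $K$; then $\phi^{-1}(T)$ is a type $Y$ vertex adjacent to $\mathbf{X}$, hence equal to one of the $n$ collapses $\mathbf{Y_1}, \ldots, \mathbf{Y_n}$ of $\mathbf{X}$, placing $T \in \bigcup_{i=1}^n \Gamma_n' \cdot \mathbf{Y_i}$.

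To close, a standard fundamental-domain argument finishes the proof. Let $V$ be a finite set of $\Gamma_n'$-orbit representatives for the vertices of $K$, and fix a base vertex $v_0 \in V$. For any $g \in \Omega_n$, the image $g \cdot v_0$ is $\Gamma_n'$-equivalent to some $v' \in V$, so $g \in \Gamma_n' \cdot E$, where $E := \bigcup_{v' \in V} \{h \in \Omega_n : h \cdot v_0 = v'\}$ is a finite union of cosets (possibly empty) of the stabilizer of $v_0$ in $\Omega_n$. By proper discontinuity this stabilizer is finite, so $E$ is finite and $[\Omega_n : \Gamma_n'] \leq |E| < \infty$. The main technical obstacle in this plan is the iterative use of Lemma \ref{lemma:same_orbit_x}, whose hypothesis is only stated for the base vertex $\mathbf{X}$ with canonical stabilizers $A_1, \ldots, A_n$; one must transport each successive distance-$2$ pair back to $\mathbf{X}$ via the already-constructed element of $\Gamma_n'$ rather than apply the lemma directly to the current intermediate vertex, and verify along the way that both the type and the distance of $2$ are preserved under such translation (which is immediate from the isometric nature of the action).
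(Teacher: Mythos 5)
Your proposal is correct and follows essentially the same route as the paper: connectedness of the type $X$/type $Y$ sub-complex via Corollary \ref{cor:xy_connected}, a single $\Gamma_n'$-orbit of type $X$ vertices via Lemma \ref{lemma:same_orbit_x}, and then co-compactness plus proper discontinuity of both actions to conclude finite index. You fill in two details the paper leaves implicit --- the need to translate each distance-$2$ pair back to the base vertex $\mathbf{X}$ before applying Lemma \ref{lemma:same_orbit_x}, and the finiteness of type $Y$ orbits --- but the underlying argument is the same (the paper closes with Milnor--\u{S}varc where you use an equivalent coset-counting argument).
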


\begin{proof}
Consider the $1$-skeleton of the sub-complex of $\mathcal{SPD}(G,\mathscr{H})$ spanned by trees of type $X$ and $Y$. By corollary \ref{cor:xy_connected}, it is connected. By lemma \ref{lemma:same_orbit_x}, any two trees of type $X$ which are distance $2$ apart are in the same $\Gamma_n'$ orbit. Hence,  all trees of type $X$ corresponding to a vertex of $\mathcal{SPD}$ are in the same $\Gamma_n'$ orbit. $\Gamma_n'$ acts co-compactly as there is only $1$ orbit of $0$-cells of trees of type $X$. The action is properly discontinuous as $\Gamma_n' \leq {\Omega_n}$.

So, by Milnor-\u{S}varc lemma, lemma \ref{lemma:mil_sva}, $\Gamma_n' $ is finite index in $\Omega_n$.
\end{proof}

\begin{remark}
The elements considered by McCullough-Miller, see \cite{MM}, were of the form $f_H^w$, as well. A key difference is we are restricting further by requiring $w \notin H$. So, $\Gamma_n'$ is a proper subgroup of the symmetric outer automorphisms considered by them.
\end{remark}
\section{Algebraically Thick Groups}\label{section:thick}

A major tool used in the investigation of the original question (relative hyperbolicity of $\mathsf{Out}(A_1*...*A_n)$) for higher complexities is \textit{algebraic thickness}. Theorem \ref{thm:thickNRH} by Behrstock-Dru\c{t}u-Mosher underscores the relevance of the study of algebraic thickness. According to theorem \ref{thm:thickNRH}, thickness of a finitely generated group implies non-relative hyperbolicity of the group. Thickness has been developed in full generality by Behrstock-D\c{r}utu-Mosher in \cite{BDM}. 

In section \ref{section:thick_defn}, we will briefly describe the terms related to the definition of algebraic thickness. Our exposition closely follow the exposition in \cite{BDM}. We will start by defining a \textit{non-principal ultrafilter} in definition \ref{defn:ultrafilter}. Then, we will define \textit{ultralimit} in definition \ref{defn:ultralimit}. Using the concept of ultralimit of a family of metric spaces we will define the \textit{asymptotic cone} of a metric space $(X, \text{dist})$ in definition \ref{defn:asymptotic_cone}. We will use the concepts of ultrafilter and asymptotic cone to define an \textit{unconstricted metric space} in definition \ref{defn:unconstricted_space}. Algebraic thickness of a group is an inductive property, where the base case or algebraically thick group of order at most zero are groups which are unconstricted. We will use the notion of \textit{algebraic network of subgroups}, definition \ref{defn:network}, to define algebraic thickness in higher order, definition \ref{defn:thickness}.

\subsection{Definition}\label{section:thick_defn}
\begin{definition}\label{defn:ultrafilter}
A \textit{non-principal ultrafilter} on the positive integers, denoted by $\omega$, is a non-empty collection of sets of positive integers with the following properties:
\begin{enumerate}[wide, labelwidth=!, labelindent=0pt]
\item If $S_1 \in \omega, \text{and } S_2 \in \omega$, then $S_1 \cap S_2 \in \omega$.
\item If $S_1 \subset S_2$ and $S_1 \in \omega$, then $S_2 \in \omega$.
\item For each $S \subset \mathbb{N}$ exactly one of the following must occur: $S \in \omega$ or $\mathbb{N}\\S \in \omega$.
\item $\omega$ does not contain any finite set.
\end{enumerate}
\end{definition}

\begin{definition}\label{defn:ultralimit}
For a non-principal ultrafilter $\omega$, a topological space $X$, and a sequence of points $(x_i)_{i \in \mathbb{N}}$ in $X$, we define $x$ to be the \textit{ultralimit} of $(x_i )_{i \in \mathbb{N}}$ with respect to $\omega$, and we write $x = \text{lim}_{\omega} x_i$, if and only if for any neighborhood $\mathcal{N}$ of $x$ in $X$ the set $\{i \in \mathbb{N} : x_i \in \mathcal{N}\}$ is in $\omega$.
\end{definition}

\begin{remark}
\begin{enumerate}[wide, labelwidth=!, labelindent=0pt]
\item When $X$ is compact any sequence in $X$ has an ultralimit.
\item If moreover $X$ is Hausdorff then the ultralimit of any sequence is unique.
\end{enumerate}
\end{remark}

Fix a non-principal ultrafilter $\omega$ and a family of based metric spaces $(X_i, x_i, \text{dist}_i)$. Using the ultrafilter, a pseudo distance on $\displaystyle\prod_{i \in \mathbb{N}}X_i$ is provided by:
$$\text{dist}_{\omega}((a_i ), (b_i )) = \text{lim}_\omega \text{dist}_i (a_i , b_i ) \in [0,\infty].$$

One can eliminate the possibility of the previous pseudo-distance taking the value $\infty$
by restricting to sequences $y = (y_i )$ such that $\text{dist}_\omega(y, x) < \infty$, where $x = (x_i )$. A
metric space can be then defined, called the \textit{ultralimit} of $(X_i , x_i , \text{dist}_i )$, by:
$$\text{lim}_\omega (X_i , x_i , \text{dist}_i ) = \left\{ y \in \displaystyle\prod_{i \in \mathbb{N}} X_i : \text{dist}_\omega(y, x) < \infty \right\} / \sim,$$
where two points $y, z \in \displaystyle\prod_{i\in \mathbb{N}}X_i$ we define $y \sim z$ if and only if $\text{dist}_\omega(y, z) = 0.$ The pseudo-distance on $\displaystyle\prod_{i\in \mathbb{N}}X_i$ induces a complete metric on $\text{lim}_\omega (X_i , x_i , \text{dist}_i )$.

\begin{definition} \label{defn:asymptotic_cone}
For a metric space $(X, \text{dist})$, consider $x = (x_n)$ a sequence of points in
$X$, called \textit{observation points}, and $d = (d_n)$ a sequence of positive numbers such that
$\text{lim}_\omega d_n = \infty$, called scaling constants. The \textit{asymptotic cone
of} $(X, \text{dist})$ \textit{relative to the non-principal ultrafilter} $\omega$ \textit{and the sequences} $x$ \textit{and} $d$ is given by:
$\text{Cone}_\omega(X, x, d) = \text{lim}_\omega\left( X, x_n, \dfrac{1}{d_n} \text{dist} \right).$
\end{definition}

\begin{remark}
\textit{Convention}: If $X$ is a connected metric space and $X - \{x\}$ is not connected then $x$ is a cut point of $X$. By cut-points we always mean global cut-points. We consider a singleton
to have a cut-point.
\end{remark}

\begin{definition}\label{defn:unconstricted_space}[Definition 3.1 (Unconstricted space/ group)]\cite{BDM}
A path connected metric space $B$ is \textit{unconstricted} if the following two properties hold:
\begin{enumerate}[wide, labelwidth=!, labelindent=0pt]
\item there exists a non-principal ultrafilter $\omega$ and a sequence $d$ such that for every sequence of
observation points $b$, $\text{Cone}_\omega(B, b, d)$ does not have cut-points;
\item for some constant $c$, every point in $B$ is at distance at most $c$ from a bi-infinite
geodesic in $B$.
\end{enumerate}
An infinite finitely generated \textit{group is unconstricted} if at least one of its asymptotic cones does not have cut-points.
\end{definition}

\begin{definition}\label{defn:network}[Definition 5.2(Algebraic network of subgroups)]\cite{BDM} Let $G$ be a finitely generated group, let $\mathcal{H}$ be a finite collection of subgroups of $G$ and let $M > 0$. The group $G$ is an $M-$\textit{algebraic network with respect to }$\mathcal{H}$ if:
\begin{enumerate}[wide, labelwidth=!, labelindent=0pt]
\item[$\mathbf{AN_0}$] All subgroups in $\mathcal{H}$ are finitely generated and undistorted in $G$.

\item[$\mathbf{AN_1}$] There is a finite index subgroup $G_1$ of $G$ such that $G \subset \mathcal{N}_M(G_1)$, such that a finite generating set of $G_1$ is contained in $\displaystyle\bigcup_{H \in \mathcal{H}}H$.

\item[$\mathbf{AN_2}$] Any two subgroups $H, H'$ in $\mathcal{H}$ can be \textit{thickly connected} in $\mathcal{H}$: there exists a finite sequence $H = H_1,..., H_n = H'$ of subgroups in $\mathcal{H}$ such that for all $1 \leq i < n, H_i \cap H_{i + 1}$ is infinite.

\end{enumerate}
\end{definition}

\begin{definition}\label{defn:thickness}[Definition 7.3(Algebraic thickness)]\cite{BDM} Consider a finitely generated group $G$.
\begin{enumerate}[wide, labelwidth=!, labelindent=0pt]
\item[$\mathbf{A_1}$] $G$ is called \textit{algebraically thick of order zero} if it is unconstricted.

\item[$\mathbf{A_2}$] $G$ is called $M$-\textit{algebraically thick of order at most} $n + 1$ with respect to $\mathcal{H}$, where $\mathcal{H}$ is a finite collection of subgroups of $G$ and $M > 0$, if:
\begin{enumerate}[wide, labelwidth=!, labelindent=0pt]
\item[-] $G$ is an $M$-algebraic network with respect to $\mathcal{H}$;
\item[-] all subgroups in $\mathcal{H}$ are algebraically thick of order at most $n$.
\end{enumerate}

\end{enumerate}
$G$ is said to be \textit{algebraically thick of order at most} $n + 1$ \textit{with respect to} $\mathcal{H}$ if there is a $M>0$, such that $G$ is $M$-\textit{algebraically thick of order at most} $n + 1$ with respect to $\mathcal{H}$. $G$ is said to be \textit{algebraically thick of order} $n + 1$ \textit{with respect to} $\mathcal{H}$, when $G$ is algebraically thick of order at most $n + 1$ and $G$ is not algebraically thick of order at most $n$.
 \end{definition}
 
\begin{remark}
We will show that in higher complexities, $\Gamma_n'$ is algebraically thick of order at most $1$. In order to show thickness, we will start from a collection of an algebraic network of undistorted, zero thick subgroups. A subgroup is zero thick if it is unconstricted \cite[Definition 3.4]{BDM}. Examples inspired by the following class of unconstricted spaces will be our base case. \end{remark}

\begin{example}\cite[Definition 3.4]{BDM}
A cartesian product of two geodesic metric spaces of infinite diameter is an example of an unconstricted space.
\end{example}

\begin{remark}
A Cayley graph of direct product of groups which have infinite diameter is an example of an unconstricted space.
\end{remark}

\subsection{In search for thickly connected subgroups} In this section we will define subgroups generated by carefully selected elements from the set of generators defined in section \ref{subsec:fin_index}. These subgroups will serve as building blocks for potential thickly connected network of $0$-thick subgroups. These subgroups are $H_i^j$ from definition \ref{defn:hij}.


\subsubsection{Some thickly connected subgroups of $\Gamma_4'$} 

We will consider two separate cases to investigate thickly connected subgroups of $\Gamma_4$
\begin{case}\label{case:G4_abelian}
Each $A_i$ is abelian. We choose to portray this separately as the subgroups used for this case is similar to the subgroups used for $\Gamma_n', (n > 4)$.
\end{case}

\begin{case}\label{case:G4_nonabelian} In general we will not assume that $A_i$s are abelian  and investigate thickly connected subgroups $\Omega_4$ (Definition \ref{defn:fgamma_n}).
\end{case}

Case \ref{case:G4_abelian}: In this case we will consider $H_i^j$ from definition \ref{defn:hij}, such that $i \ne j$. We will organize the generating subgroups, $H_i^j$, of $\Gamma_4'$ (definition \ref{defn:GN'}) into the following table. A subgroup generated by any two subgroups in a row is a direct product of those two subgroups by proposition \ref{prop:hij_sum}. Subgroup generated by any two subgroups in a column is a free product of those two subgroups by proposition \ref{prop:hij_prod}. 

\begin{center}
\renewcommand{\arraystretch}{1.2}
\begin{tabular}{c|c|c|c}
		& $\h{2}{1}$ 	& $\h{3}{1}$ \cellcolor{blue!25}	& $\h{4}{1}$ \cellcolor{blue!25}	\\ \hline

$\h{1}{2}$	& 			& $\h{3}{2}$ \cellcolor{blue!25}	& $\h{4}{2}$ \cellcolor{blue!25} \\ \hline
  
$\h{1}{3}$ \cellcolor{red!25}	& $\h{2}{3}$ \cellcolor{red!25}	&	  		& $\h{4}{3}$ \\ \hline
  
$\h{1}{4}$ \cellcolor{red!25}	& $\h{2}{4}$ \cellcolor{red!25}	& $\h{3}{4}$	& 		 \\ 
\end{tabular}
\end{center}

\begin{lemma}\label{lemma:gen_G4}
If each $A_i$ is an abelian group, then the subgroups in the shaded region of the table generate $\Gamma_4'$
\end{lemma}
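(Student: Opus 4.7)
The idea is that every inner automorphism of $G$ is trivial in $\mathsf{Out}(G)$, so any factorisation of an inner automorphism as a product of $F_{A_i}^w$'s yields a relation among the corresponding outer automorphisms $f_{A_i}^w$. Fix $j\in\{1,2,3,4\}$ and $w\in A_j$. Because $A_j$ is abelian, conjugation by $w$ fixes $A_j$ pointwise and acts as conjugation by $w$ on each remaining free factor $A_i$. I therefore expect the inner automorphism $\mathrm{conj}_w$ to agree, as an element of $\mathsf{Aut}(G)$, with the composition $\prod_{i\ne j} F_{A_i}^w$; the order is irrelevant since the three factors commute (each leaves all free factors other than its own $A_i$ untouched, and in particular leaves $w$ fixed). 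Verifying both equalities is a one-line check on generators.

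Passing to $\mathsf{Out}(G)$ kills the left hand side, so the relation
\[
\prod_{i\ne j} f_{A_i}^w \;=\; \mathrm{id} \qquad (w\in A_j)
\]
holds for every $j\in\{1,2,3,4\}$. Taking $j=1$ and solving for $f_{A_2}^w$ gives
\[
f_{A_2}^w \;=\; f_{A_4}^{w^{-1}}\, f_{A_3}^{w^{-1}} \;\in\; \la H_3^1,\,H_4^1\ra,
\]
using that $w^{-1}\in A_1$ and that $(f_{A_i}^w)^{-1} = f_{A_i}^{w^{-1}}$ (which follows from the defining identity $F_{A_i}^{uv} = F_{A_i}^v F_{A_i}^u$). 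As $w$ varies over $A_1$ this shows $H_2^1\subseteq\la H_3^1,H_4^1\ra$. Running the same argument with $j=2,3,4$ produces the three parallel inclusions $H_1^2\subseteq\la H_3^2,H_4^2\ra$, $H_4^3\subseteq\la H_1^3,H_2^3\ra$, and $H_3^4\subseteq\la H_1^4,H_2^4\ra$, so each unshaded cell of the table is absorbed into the subgroup generated by the shaded cells.

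Combining these four inclusions with the identity $\Gamma_n' = \la H_i^j \mid i,j \in \{1,\dots,n\},\ i\ne j\ra$ recorded after Definition~\ref{defn:GN'} finishes the proof. The argument is essentially algebraic and the abelian hypothesis enters in exactly one place: it is what allows $\mathrm{conj}_w$ to be written as a product indexed by only the three values $i\ne j$, rather than requiring a fourth factor to re-conjugate $A_j$. I do not foresee any real obstacle; the main point to watch is the composition convention $F_{A_i}^{uv} = F_{A_i}^v F_{A_i}^u$, which must be respected when inverting the relation above.
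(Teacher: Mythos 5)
Your argument is correct and is essentially identical to the paper's: both observe that for $w\in A_j$ abelian the product $\prod_{i\ne j}F_{A_i}^{w}$ is conjugation by $w$, hence trivial in $\mathsf{Out}(G)$, and then solve the resulting relation to absorb each unshaded $H_i^j$ into the shaded cells. The only cosmetic difference is that you write the solved relation using $f_{A_i}^{w^{-1}}$ while the paper writes it with formal inverses $(f_{A_i}^{a_1})^{-1}$; these agree by the composition convention you correctly flag.
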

\begin{proof}
Fix $a_1 \in A_1$, then 
\begin{gather*}f_{A_2}^{a_1}f_{A_3}^{a_1}f_{A_4}^{a_1}(a) = 
\begin{cases} 
a_1aa_1^{-1}, \text{when }a \in A_2 \cup A_3 \cup A_4 \\
a, \text{when }a \in A_1
\end{cases}\\
\text{If, $A_1$ is abelian}
\implies  a =  a_1aa_1^{-1}, 
\text{when }a\in A_1\\
\text{Hence, }
f_{A_2}^{a_1}f_{A_3}^{a_1}f_{A_4}^{a_1}  =  id \text{,    (Conjugation by $a_1$)}\\
\implies f_{A_2}^{a_1} = \left(f_{A_3}^{a_1}\right)^{-1} \left(f_{A_4}^{a_1}\right)^{-1}
\implies H_2^1 \subset \la H_3^1, H_4^1 \ra \\
\text{Similarly, }
H_1^2 \subset \la H_3^2, H_4^2 \ra,
H_4^3 \subset \la H_1^3, H_2^3 \ra \text{ and }
H_3^4 \subset \la H_1^4, H_2^4 \ra
\end{gather*}
\end{proof}

Now we will define some $0$-thick subgroups of $\Gamma_4'$ such that together they can be potential candidates for proving thickness of $\Gamma_4'$. 
\begin{definition}\label{defn:n1234}
Fix non identity elements $a_i \in A_i$.
\begin{gather*}g_{12} := f_{A_3}^{a_1}f_{A_3}^{a_2}f_{A_4}^{a_1}f_{A_4}^{a_2} \in  \left(H_3^1 * H_3^2\right) \oplus \left(H_4^1 * H_4^2\right)\\
g_{34} := f_{A_1}^{a_3}f_{A_1}^{a_4}f_{A_2}^{a_3}f_{A_2}^{a_4} \in \left(H_1^3 * H_1^4\right) \oplus \left(H_2^3 * H_2^4\right)\\
\text{We will use the following notations ($N^{ij}$) in subsection \ref{subsec:undistorted2} in a more general}\\\text{capacity. For this section let us define $N^{12}, N^{34}$.}\\
N^{12} := \la g_{12} \ra  \cong \mathbb{Z}\\
N^{34} := \la g_{34} \ra \cong \mathbb{Z}\\
\text{Define a subgroup $H_3$ as follows. The last equality will be proved in corollary \ref{cor:nij_zerothick}}\\
H_3 := \la g_{12}, g_{34} \ra = \la N^{12}, N^{34} \ra \cong \mathbb{Z} \oplus  \mathbb{Z}\end{gather*}

\end{definition}

\begin{notation}\label{not:h1234n1234}
List of the $0$-thick subgroups we will consider using notations used in sections \ref{section:undistorted}, \ref{subsec:undistorted2} for further discussions are as follows
\begin{enumerate}[wide, labelwidth=!, labelindent=0pt]
\item $H^{12} := \left(H_3^1 * H_3^2\right) \oplus \left(H_4^1 * H_4^2\right) $
\item $H^{34} := \left(H_1^3 * H_1^4\right) \oplus \left(H_2^3 * H_2^4\right) $
\item $H_3$

\end{enumerate}
\end{notation}
If we combine all the information from this section. We get
\begin{gather*}\la H^{12}, H^{34}, H_3 \ra = \Gamma_4'\\
H^{12} \cap H_3 \cong H^{34} \cap H_3 \cong \mathbb{Z}\\
H^{12} \cong \left(A_1 * A_2\right) \oplus \left(A_1 * A_2\right)\\
H^{34} \cong \left(A_3 * A_4\right) \oplus \left(A_3 * A_4\right)\\
H_3 \cong \mathbb{Z} \oplus  \mathbb{Z}\end{gather*}
Hence, $\Gamma_4'$ will be algebraically thick of order at most $1$ when each $A_i$ is abelian, if we can prove that $H^{12}, H^{34}$ and $H_3$ are undistorted subgroups in $\Gamma_4'$. We will do this in sections \ref{section:undistorted}, \ref{subsec:undistorted2}.

Case \ref{case:G4_nonabelian}: Now we will not assume that $A_i$s are abelian. Here, we will investigate a finitely generated subgroup $M_4 \leq {\Omega_4}$ (Definition \ref{defn:fgamma_n}) for thickly connected subgroups. The definition will imply $\Gamma_4' \leq M_4 \leq \Omega_4$. So, $M_4$ will be a finite index subgroup of $\Gamma_4 (= \mathsf{Out(A_1*A_2*A_3*A_4)})$.

\begin{definition} \label{defn:M4}
$M_4 : = \la H_i^j \vert i, j \in \{1, 2, 3, 4\} \ra$. Recall the definition of $H_i^j$ from definition \ref{defn:hij}.
\end{definition}
With the notations described in definition \ref{defn:M4}, subgroups $H_i^j \leq M_4$ can be organized in a table similar to the previous case - 

\begin{center}
\renewcommand{\arraystretch}{1.2}
\begin{tabular}{c|c|c|c}\label{tab:G4}
$\h{1}{1}$ \cellcolor{green!25}	& $\h{2}{1}$  \cellcolor{green!25}	& $\h{3}{1}$ \cellcolor{blue!25}	& $\h{4}{1}$ \cellcolor{blue!25}	\\ \hline

$\h{1}{2}$	  \cellcolor{green!25}& 	$\h{2}{2}$ \cellcolor{green!25}		& $\h{3}{2}$ \cellcolor{blue!25}	& $\h{4}{2}$ \cellcolor{blue!25} \\ \hline
  
$\h{1}{3}$ \cellcolor{red!25}	& $\h{2}{3}$ \cellcolor{red!25}	&	  $\h{3}{3}$ \cellcolor{green!25}		& $\h{4}{3}$  \cellcolor{green!25}\\ \hline
  
$\h{1}{4}$ \cellcolor{red!25}	& $\h{2}{4}$ \cellcolor{red!25}	& $\h{3}{4}$	 \cellcolor{green!25}& $\h{4}{4}$ \cellcolor{green!25}		 \\ 
\end{tabular}
\end{center}

For a thickly connected network of $M_4$, we have to consider the following subgroups in addition to the subgroups $H^{12}, H^{34}$ (see notation \ref{not:h1234n1234}) considered in the previous case.

\begin{definition} \label{defn:M1234}
$M^{12} := \la \h{1}{1}, \h{2}{1}, \h{1}{2}, \h{2}{2}\ra; M^{34} := \la \h{3}{3}, \h{4}{3}, \h{3}{4}, \h{4}{4}\ra$

\end{definition}

\begin{lemma}\label{lemma:M1234thick}
$\la M^{12}, M^{34} \ra =  M^{12} \oplus M^{34}$
\end{lemma}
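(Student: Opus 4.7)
The plan is to show that $\langle M^{12}, M^{34}\rangle$ is an internal direct product by verifying the two standard criteria: (i) element-wise commutation of $M^{12}$ and $M^{34}$, and (ii) $M^{12} \cap M^{34} = \{\mathrm{id}\}$. The key observation throughout is that $G_4 = (A_1 * A_2) * (A_3 * A_4)$ as a free product, and the generators of $M^{12}$ and $M^{34}$ respect this decomposition in an essentially orthogonal way.

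For the commutation (i), I will lift to $\mathsf{Aut}(G)$. Each generator $F_{A_i}^w$ of the preimage of $M^{12}$ ($i \in \{1,2\}$, $w \in A_1 \cup A_2$) fixes $A_3$ and $A_4$ pointwise and restricts to an automorphism of $A_1 * A_2$; dually, each generator $F_{A_k}^v$ of the preimage of $M^{34}$ ($k \in \{3,4\}$, $v \in A_3 \cup A_4$) fixes $A_1$ and $A_2$ pointwise and restricts to an automorphism of $A_3 * A_4$. Evaluating $F_{A_i}^w F_{A_k}^v$ and $F_{A_k}^v F_{A_i}^w$ on each free factor then gives the same answer (the second map in each pair is simply the identity on the relevant factor), so they agree on all of $G$. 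Since the automorphisms commute, so do their outer classes, and therefore $[M^{12}, M^{34}] = 1$ in $\Omega_4$. This can also be phrased as a mild extension of Lemma \ref{lemma:diff_base_commute}, but direct computation is cleanest because the hypothesis of that lemma is not literally met (the diagonal factors $H_i^i$ are involved).

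For the trivial intersection (ii), pick $\phi \in M^{12} \cap M^{34}$ and choose representatives $\Phi_1$ (a word in $F_{A_i}^w$ with $i \in \{1,2\}$, $w \in A_1 \cup A_2$) and $\Phi_2$ (a word in $F_{A_k}^v$ with $k \in \{3,4\}$, $v \in A_3 \cup A_4$). A short induction on word length (mirroring the argument inside Proposition \ref{prop:hij_prod}) shows that $\Phi_1$ fixes $A_3, A_4$ pointwise and acts on each $A_i$ ($i \in \{1,2\}$) as conjugation by some $u_i \in A_1 * A_2$; symmetrically $\Phi_2$ fixes $A_1, A_2$ pointwise and acts on $A_k$ ($k \in \{3,4\}$) as conjugation by some $v_k \in A_3 * A_4$. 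Since $[\Phi_1] = [\Phi_2]$, write $\Phi_1 = i_g \circ \Phi_2$ for some $g \in G$. Comparing actions on $a \in A_1$ gives $u_1 a u_1^{-1} = g a g^{-1}$, so $u_1^{-1}g \in C_G(A_1)$; in a free product of finite groups the centralizer of a free factor $A_1$ lies inside $A_1$, so $u_1^{-1}g \in A_1$, giving $g \in A_1 * A_2$. The same argument on $a \in A_3$ yields $g \in A_3 * A_4$. By the normal form theorem for free products, $(A_1 * A_2) \cap (A_3 * A_4) = \{1\}$ inside $G$, hence $g = 1$, hence $\Phi_1 = \Phi_2$ fixes every $A_i$ pointwise and is the identity automorphism.

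The only non-trivial technical ingredient is the centralizer computation $C_G(A_i) \subseteq A_i$, which is standard Bass–Serre theory: any element of $G$ commuting with a non-trivial torsion element of a free factor must lie in a conjugate of that factor, and commuting with the whole factor pins it down to $Z(A_i) \subseteq A_i$. With this in hand, no further surprises arise, and the combination of (i) and (ii) gives $\langle M^{12}, M^{34}\rangle = M^{12} \oplus M^{34}$ as required.
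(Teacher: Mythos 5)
Your proof is correct, but the key step --- showing $M^{12}\cap M^{34}=\{\mathrm{id}\}$ --- is carried out by a genuinely different route than the paper's. The paper argues geometrically: it lets a generic element of each subgroup act on the graph of groups $\mathbf{X}$ with vertex groups $A_1,A_2,A_3,A_4$, observes that the resulting vertex groups are $uA_ku^{-1}$ with $u\in A_1*A_2$ for $k=1,2$ and $vA_kv^{-1}$ with $v\in A_3*A_4$ for $k=3,4$, and then invokes the uniqueness (up to equivariant homeomorphism) of the $A_1*A_2$- and $A_3*A_4$-minimal subtrees (remark \ref{rem:unique_min}) to conclude $u=v=\mathrm{id}$. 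You instead work entirely in $\mathsf{Aut}(G)$: you write the hypothesis $[\Phi_1]=[\Phi_2]$ as $\Phi_1=i_g\circ\Phi_2$, compare the two sides on $A_1$ and on $A_3$, and use the centralizer fact $C_G(A_i)\subseteq A_i$ together with $(A_1*A_2)\cap(A_3*A_4)=\{1\}$ to force $g=1$. Your version is more self-contained algebraically and, usefully, handles explicitly the ambiguity by a global conjugation that the paper's ``$m_{12}m_{34}(\mathbf{X})=\mathbf{X}\implies u=v$'' step passes over rather tersely; the paper's version buys consistency with the deformation-space machinery it uses everywhere else. Your commutation step is essentially the paper's (the paper simply asserts that the generating subgroups commute), and your remark that lemma \ref{lemma:diff_base_commute} does not literally cover the diagonal generators $H_i^i$ is accurate. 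The only hypothesis you should make explicit is that each $A_i$ is nontrivial, which is needed for $C_G(A_i)\subseteq A_i$ and is implicit in the paper's standing assumptions.
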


\begin{proof}
\begin{enumerate}[wide, labelwidth=!, labelindent=0pt]
\item The generating subgroups of $M^{12}$ commute with the generating subgroups of $M^{34}$. Hence, $M^{12}, M^{34} \trianglelefteq \la M^{12}, M^{34} \ra$

\item Now we will show that $M^{12} \cap M^{34} = \{id\}$. We will show this by considering the action of a generic element of $M^{12}$ and a generic element of $M^{34}$ on the graph of groups $\mathbf{X}$ described below

\begin{center}
\begin{tikzpicture} \begin{scriptsize}
\begin{scriptsize}
\Tw{5}{0}{A_1}{A_2}{A_3}{A_4}{$\mathbf{X}$} 
\end{scriptsize}
\end{scriptsize} \end{tikzpicture} 
\end{center}

Let, $m_{12} \in M^{12}$ and $m_{34} \in M^{34}$. Then, $m_{12}m_{34}(\mathbf{X}) = $

\begin{center}
\begin{tikzpicture} \begin{scriptsize}
\begin{scriptsize}
\Tw{5}{0}{uA_1u^{-1}}{uA_2u^{-1}}{vA_3v^{-1}}{vA_4v^{-1}}{ Here, $u \in A_1*A_2; v \in A_3*A_4$} 
\end{scriptsize}
\end{scriptsize} \end{tikzpicture} 
\end{center}

\begin{gather*} M^{12} \cap M^{34} \ne \{id\} \implies \exists m_{12}, m_{34}\\
 \text{such that } m_{12}m_{34}(\mathbf{X}) = \mathbf{X}\\
 \iff m_{12}m_{34}(T_\mathbf{X}) = T_\mathbf{X}\end{gather*}
 By uniqueness of $A_1*A_2$-minimal subtree and $A_3*A_4$-minimal subtree in every tree of $\mathcal{SPD}$, 
 $m_{12}m_{34}(\mathbf{X}) = \mathbf{X} \implies u = v \implies u = v = id$
\end{enumerate}

Hence, $\la M^{12}, M^{34} \ra =  M^{12} \oplus M^{34}$.
\end{proof}

\begin{remark}\label{rem:M4_connection}
\begin{enumerate}[wide, labelwidth=!, labelindent=0pt]
\item $M^{12}, M^{34}$ each contain an element of infinite order. Fix $a_i \in A_i / \{id_{A_i}\} (i \in \{1, 2, 3, 4\})$, then the elements of infinite order are 
$ f_{A_1}^{a_1}f_{A_2}^{a_1}f_{A_1}^{a_2}f_{A_2}^{a_2} \in M^{12},
f_{A_3}^{a_3}f_{A_4}^{a_3}f_{A_3}^{a_4}f_{A_4}^{a_4}\in M^{34}$.

\item $ f_{A_1}^{a_1}f_{A_2}^{a_1}f_{A_3}^{a_1}f_{A_4}^{a_1} = id_{\Gamma_4}
\implies  f_{A_1}^{a_1}f_{A_2}^{a_1} = \left(f_{A_4}^{a_1}\right)^{-1}\left(f_{A_3}^{a_1}\right)^{-1} \in \la \h{3}{1}, \h{4}{1}\ra $. 
Similarly, $f_{A_1}^{a_2}f_{A_2}^{a_2} = \left(f_{A_4}^{a_2}\right)^{-1}\left(f_{A_3}^{a_2}\right)^{-1} \in \la \h{3}{2}, \h{4}{2}\ra
\implies  f_{A_1}^{a_1}f_{A_2}^{a_1}f_{A_1}^{a_2}f_{A_2}^{a_2} \in  \la \h{1}{3}, \h{1}{4}, \h{2}{3}, \h{2}{4}\ra = H^{34}$.
\item Similarly, $f_{A_3}^{a_3}f_{A_4}^{a_3}f_{A_3}^{a_4}f_{A_4}^{a_4} \in  \la \h{3}{1}, \h{3}{2}, \h{4}{1}, \h{4}{2}\ra = H^{12}$.
\end{enumerate}
$\left(M^{12} \oplus M^{34}\right) \cap H^{12}, \left(M^{12} \oplus M^{34}\right) \cap H^{34} $ is infinite. Hence the thickly connected subgroups of $M_4$ are $H^{12}, H^{34}, \left(M^{12} \oplus M^{34}\right)$. To prove thickness of $M_4$ we will show in section \ref{subsec:M4_nondistortion} that all of the above subgroups are undistorted.
\end{remark}


\subsection{Some thickly connected subgroups of $\Gamma_n'$, when $n \geq 5$} In this subsection we will generalize the analysis of $\Gamma_4'$ in case \ref{case:G4_abelian} to $\Gamma_n', n\geq 5$. A major difference when $n \geq 5$ is that potential algebraic networks can be found in $\Gamma_n'$ without any assumptions on the free factors, $A_i$ (For $\Gamma_4'$ in case \ref{case:G4_abelian}, we assumed each $A_i$ is abelian).

In accordance to our discussion of $\Gamma_4'$ in case \ref{case:G4_abelian}, we have organized the subgroups $H_i^j, (i \ne j,\text{ and } i, j\in \{1,..., n\})$ in the following table. $H_i^j, (i \ne j,\text{ and } i, j\in \{1,..., n\})$ will be the building blocks for the $0$-thick subgroups, which can form algebraic network if the $0$-thick subgroups are quasi isometrically embedded in $\Gamma_n'$. In contrast to the case \ref{case:G4_nonabelian}, the diagonal groups $H_i^i$s have not been considered.

\begin{center}
\renewcommand{\arraystretch}{1.2}
\begin{tabular}{c|c| c |c|c}\label{tab:GN}
		& $\h{2}{1}$ 	& $\hdots$	& $\h{n-1}{1}$ 	& $\h{n}{1}$	\\ \hline
				
$\h{1}{2}$	& 			& $\hdots$ 	& $\h{n-1}{2}$	& $\h{n}{2}$	\\ \hline

$\vdots$	& $\vdots$	& $\ddots$ 	& $\vdots$	& $\vdots$	\\  \hline
  
$\h{1}{n-1}$& $\h{2}{n-1}$	& $\hdots$  	& 			& $\h{n}{n-1}$ 	\\ \hline
  
$\h{1}{n}$	& $\h{2}{n}$	& $\hdots$	& $\h{n-1}{n}$	& 		 	\\ 
\end{tabular}
\end{center}

The notation for the two different classes of subgroups that we will consider are $H^{ij} ( i \ne j \in \{1,...,n\})$ and $\la N^{i_1i_2}, N^{i_3i_4}\ra (i_1, i_2, i_3, i_4$ are distinct integers from the set $\{1,..., n\})$.
\begin{definition}\label{defn:H^ij}
$H^{ij} :=\displaystyle \bigoplus_{\substack{k \ne i, j\\ k =1}}^{k = n} H_{k}^i * H_{k}^j$
\end{definition}
We observe that, $\Gamma_n' \subset \la \displaystyle\bigcup_{i \ne j}^n H^{ij} \ra$. Now we will define an infinite order element of $H^{ij}$, and call the group generated by that element as $N^{ij}$

\begin{definition}
Fix distinct integers $i, j \in \{1,..., n\}$ and $x_i \in A_i - \{id_{A_i}\}, x_j \in A_j - \{id_{A_j}\}. 
 \text{ Define an outer automorphism, } f^{ij} := \displaystyle\prod_{\substack{k \ne i, j\\k =1}}^n \left(f_{A_k}^{x_i} f_{A_k}^{x_j}\right) \in H^{ij}$ and a subgroup of 
$\Gamma_n' \geq N^{ij} := \la f^{ij} \ra.$

\end{definition}
 
 In section \ref{subsec:undistorted2}, we will prove the following results
 
 \begin{enumerate}[wide, labelwidth=!, labelindent=0pt]

\item $N^{ij} \cong \mathbb{Z}$

\item $\la N^{i_1i_2}, N^{i_3i_4} \ra \cong \mathbb{Z} \oplus \mathbb{Z}$,
	where $i_1, i_2, i_3, i_4$ are all different integers.

\item $\la N^{i_1i_2}, N^{i_3i_4} \ra$ is undistorted in $\Gamma_n'$

\end{enumerate}

The  following corollary follows from definition of $H^{ij}$ and $N^{ij}$
\begin{corollary}\label{cor:thickly_connected}
If $i_1, i_2, i_3, i_4$ are distinct integers from the set $\{1,..., n\}$, then the collection of subgroups of the form $\{H^{i_1i_2}, \la N^{i_1i_2}, N^{i_3i_4}\ra, H^{i_3i_4}\}$ constitute a thickly connected collection of subgroup of $\Gamma_n'$, where $n > 4$.
\end{corollary}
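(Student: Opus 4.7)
\textbf{Proof plan for Corollary \ref{cor:thickly_connected}.} The goal is to verify condition $\mathbf{AN_2}$ of Definition \ref{defn:network} for the three-element collection $\mathcal{H} = \{H^{i_1i_2}, \la N^{i_1i_2}, N^{i_3i_4}\ra, H^{i_3i_4}\}$: every pair of subgroups in $\mathcal{H}$ must be joined by a chain in $\mathcal{H}$ whose consecutive pairs intersect in an infinite subgroup. Since the collection has only three elements, it suffices to show that the two adjacent pairs $(H^{i_1i_2}, \la N^{i_1i_2}, N^{i_3i_4}\ra)$ and $(\la N^{i_1i_2}, N^{i_3i_4}\ra, H^{i_3i_4})$ intersect in infinite subgroups; the remaining pair $(H^{i_1i_2}, H^{i_3i_4})$ will then be thickly connected by routing through the middle subgroup.

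The first step is to check the containment $N^{i_1i_2} \subseteq H^{i_1i_2}$ directly from the definitions. By construction, $f^{i_1i_2} = \prod_{k \ne i_1, i_2}\left(f_{A_k}^{x_{i_1}} f_{A_k}^{x_{i_2}}\right)$, with each factor $f_{A_k}^{x_{i_l}}$ lying in $H_k^{i_l}$, so the product lies in $\bigoplus_{k \ne i_1, i_2} (H_k^{i_1} * H_k^{i_2}) = H^{i_1i_2}$, giving $N^{i_1i_2} = \la f^{i_1i_2}\ra \subseteq H^{i_1i_2}$. Symmetrically $N^{i_3i_4} \subseteq H^{i_3i_4}$. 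By the definition of the generated subgroup, both $N^{i_1i_2}$ and $N^{i_3i_4}$ also sit inside $\la N^{i_1i_2}, N^{i_3i_4}\ra$.

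It then remains to confirm that $N^{i_1i_2}$ and $N^{i_3i_4}$ are infinite. This is precisely the first of the results announced just before the corollary (namely $N^{ij} \cong \mathbb{Z}$), and the plan is to take it as granted here, deferring its verification to Section \ref{subsec:undistorted2}. Granting this, both inclusions
\[
N^{i_1i_2} \subseteq H^{i_1i_2} \cap \la N^{i_1i_2}, N^{i_3i_4}\ra, \qquad N^{i_3i_4} \subseteq \la N^{i_1i_2}, N^{i_3i_4}\ra \cap H^{i_3i_4}
\]
exhibit infinite intersections, and the chain $H^{i_1i_2} \to \la N^{i_1i_2}, N^{i_3i_4}\ra \to H^{i_3i_4}$ thickly connects the two outer subgroups, completing the verification of $\mathbf{AN_2}$.

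There is no real obstacle here: the corollary is essentially an unpacking of Definition \ref{defn:network}, combined with the observation that $f^{i_1i_2}$ was deliberately built as an infinite-order element of $H^{i_1i_2}$ and $f^{i_3i_4}$ analogously inside $H^{i_3i_4}$. The harder companion facts, namely that each $N^{ij} \cong \mathbb{Z}$ and that $\la N^{i_1i_2}, N^{i_3i_4}\ra \cong \mathbb{Z} \oplus \mathbb{Z}$ is undistorted in $\Gamma_n'$ (both needed before this network actually certifies algebraic thickness of $\Gamma_n'$), are precisely what remains to be established in Section \ref{subsec:undistorted2}.
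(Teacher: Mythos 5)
Your proposal is correct and matches the paper's (essentially unstated) argument: the paper simply asserts that the corollary "follows from definition of $H^{ij}$ and $N^{ij}$," and your unpacking — $N^{i_1i_2} \subseteq H^{i_1i_2} \cap \la N^{i_1i_2}, N^{i_3i_4}\ra$ and $N^{i_3i_4} \subseteq \la N^{i_1i_2}, N^{i_3i_4}\ra \cap H^{i_3i_4}$, with infiniteness deferred to the later proof that $N^{ij} \cong \mathbb{Z}$ — is exactly the intended reasoning. You also correctly respect the paper's logical ordering by citing $N^{ij} \cong \mathbb{Z}$ as a result announced before the corollary but proved afterward in section \ref{subsec:undistorted2}.
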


\section{Some Undistorted Subgroups of $\Gamma_n'$}\label{section:undistorted}
In this section we will prove that $H^{ij}, N^{ij}$ and $\left( M^{12} \oplus M^{34} \right)$ discussed in section \ref{section:thick} are quasi isometrically embedded in $\Gamma_n'$. The idea of the proof of non-distortion of $H^{ij}, \left( M^{12} \oplus M^{34} \right)$ is inspired by work of Handel-Mosher \cite{HM}. Proof of non-distortion of $N^{ij}$ is inspired by work of Alibegovi\'c \cite{Ali}.
\subsection{An important class of undistorted subgroups of $\Gamma_n'$}
In this section we will prove that $H^{ij}$ is quasi isometrically embedded in $\Gamma_n'$. The strategy of the proof is to find a sub-complex $\mathscr{K}^{ij}$ of $\mathcal{SPD}$ on which $H^{ij}$ acts geometrically and there is a Lipschitz retraction from $\mathcal{SPD}$ to $\mathscr{K}^{ij}$ implying quasi isometric embedding of $\mathscr{K}^{ij}$ into $\mathcal{SPD}$.

\begin{definition}\label{defn:Kij}

\begin{enumerate}[wide, labelwidth=!, labelindent=0pt]Define $\mathscr{K}^{ij}$ to be the sub-complex of $\mathcal{SPD}(G,\mathscr{H})$
\item Spanned by vertices of $\mathcal{SPD}$ which are trees with a fundamental domain containing vertices stabilized by $A_i$ and $A_j$. 
\item The other vertices in the fundamental domain are stabilized by conjugates of $A_k, k \ne i, j$ and the conjugating elements are from the subgroup $A_i * A_j$. 

\end{enumerate}

\end{definition}

\begin{example}
A graph of groups representing a vertex of $\mathscr{K}^{{ij}}$ from definition \ref{defn:Kij} is given below: \begin{center}
\begin{tikzpicture} \begin{scriptsize}
\draw (2, 2)  -- (-2,-2);
\draw (-2, 2)  -- (2,-2);
\draw (0, 2.8) -- (0, -2.8);

\draw [fill=black] (2,-2) circle (1.2pt) node[anchor=west]{$w_{j-1}A_{j-1}w_{j-1}^{-1}$};
\draw [fill=black] (0,2.8) circle (1.2pt) node[anchor=west]{$A_{i}$};
\draw [fill=black] (2,2) circle (1.2pt) node[anchor=west]{$w_{i+1}A_{i+1}w_{i+1}^{-1}$};
\draw [fill=black] (0,-2.8) circle (1.2pt) node[anchor=west]{$A_{j}$};
\draw [fill=black] (-2,-2) circle (1.2pt) node[anchor=east]{$w_{j+1}A_{j+1}w_{j+1}^{-1}$};
\draw [fill=black] (-2,2) circle (1.2pt) node[anchor=east]{$w_{i-1}A_{i-1}w_{i-1}^{-1}$};
\draw [fill=black] (1.4, 0) circle (.5pt);
\draw [fill=black] (1.2, .5) circle (.5pt);
\draw [fill=black] (1.2, -.5) circle (.5pt);
\draw [fill=black] (-1.4, 0) circle (.5pt);
\draw [fill=black] (-1.2, -.5) circle (.5pt);
\draw [fill=black] (-1.2, .5) circle (.5pt);
\end{scriptsize} \end{tikzpicture} 
\end{center}

where, $w_k \in A_i * A_j$ for all $k $. Recall that, a graph of groups whose underlying graph is isomorphic to the underlying graph of this graph of groups is called a graph of groups of type $X$ (in accordance with notation \ref{not:X}). 

\end{example}

\begin{lemma} \label{lemma:kconnected}

Let $\mathbf{X}\in \mathscr{K}^{{ij}^0}$ denote a graph of groups of type $X$ with non trivial vertex groups $A_1,..., A_n$. Consider a graph of groups $\mathbf{X'} \in \mathscr{K}^{{ij}^0}$ of type $X$, then $\mathbf{X}$ and $\mathbf{X'}$ can be connected by a path in $\mathscr{K}^{ij}$.
\end{lemma}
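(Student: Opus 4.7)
The plan is to connect $\mathbf{X}$ and $\mathbf{X'}$ by a path that builds up each conjugator $w_k \in A_i * A_j$ one letter at a time via collapse--expand moves, each passing through a single type $Y$ intermediate that itself lies in $\mathscr{K}^{ij}$. Since $w_k \in A_i * A_j$, I write it as a word $w_k = c_1^{(k)} c_2^{(k)} \cdots c_{m_k}^{(k)}$ with letters alternating in $A_i \cup A_j$. Starting from $\mathbf{X}$, whose conjugators are all trivial, I perform a sequence of elementary moves, each prepending one letter to one conjugator while leaving $A_i$, $A_j$, and every other conjugator fixed. After prepending $c_{m_k}^{(k)}, c_{m_k-1}^{(k)}, \ldots, c_1^{(k)}$ in that order, the $k$-th conjugator becomes $w_k$; iterating over all $k \ne i,j$ terminates at $\mathbf{X'}$.

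The elementary move is the following. Let $\mathbf{Z} \in \mathscr{K}^{ij}$ be a type $X$ graph of groups whose leaves carry $A_i, A_j$, and $u_{k'} A_{k'} u_{k'}^{-1}$ with $u_{k'} \in A_i * A_j$; fix $k \ne i,j$ and $c \in A_i$ (the case $c \in A_j$ is symmetric after exchanging the roles of $i$ and $j$). Collapsing the edge orbit incident on the $A_i$-leaf yields a type $Y$ graph $\mathbf{Y}$ whose central vertex (of valence $n-1$) carries $A_i$ and whose leaves carry $A_j$ and the $u_{k'} A_{k'} u_{k'}^{-1}$. Re-expanding the central $A_i$-vertex, I choose a fundamental domain for the resulting tree in which the $A_j$-orbit representative at the central $A_i$-vertex is $A_j$ itself, the $(u_k A_k u_k^{-1})$-orbit representative is $(cu_k) A_k (cu_k)^{-1}$, and every other orbit representative agrees with the original $u_{k'} A_{k'} u_{k'}^{-1}$. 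The resulting type $X$ tree $\mathbf{Z}'$ then has $u_k$ replaced by $c u_k$ and all other data unchanged.

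It is immediate that the type $X$ endpoints $\mathbf{Z}, \mathbf{Z}'$ of each elementary move lie in $\mathscr{K}^{ij}$ by construction. The type $Y$ intermediate $\mathbf{Y}$ also lies in $\mathscr{K}^{ij}$: its fundamental domain contains $A_i$ at the central vertex and $A_j$ at a leaf, while the remaining leaf stabilizers are $u_{k'} A_{k'} u_{k'}^{-1}$ with $u_{k'} \in A_i * A_j$, matching Definition~\ref{defn:Kij}. Concatenating the elementary segments over all $(k,l)$ thus yields a path in $\mathscr{K}^{ij}$ joining $\mathbf{X}$ to $\mathbf{X'}$, of length $2\sum_k m_k$ in the $1$-skeleton of $\mathcal{SPD}$.

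The main technical point, and the place where care is needed, is justifying that the orbit representatives at the central $A_i$-vertex of $\mathbf{Y}$ can be chosen independently across orbits during the re-expansion. This follows from the $G$-equivariance of the expansion move together with the observation that, at the central $A_i$-vertex of $T_\mathbf{Y}$, the stabilizer $A_i$ acts simply transitively on each of the $n-1$ incident edge orbits, while the choices across distinct orbits are uncoupled. Equivalently, this is a repeated application of Corollary~\ref{cor:1_edge_exp} along the length-two path from the central $A_i$-vertex through the trivial centre to each chosen leaf; this is exactly the geometric mechanism exploited in the proof of Lemma~\ref{lemma:1_edge_exp_X}, here specialized to the type $X$ setting where each pair of non-trivial vertex groups is separated by the single trivial central vertex.
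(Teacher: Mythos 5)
Your proof is correct and takes essentially the same route as the paper's: the paper likewise builds each conjugator $w_k \in A_i * A_j$ one alternating letter at a time, each step being a collapse of the edge orbit at the $A_i$- (or $A_j$-) leaf to a type $Y$ tree followed by a re-expansion with a different choice of fundamental domain, and then iterates over the indices $k \ne i,j$. Your explicit verification that the type $Y$ intermediates lie in $\mathscr{K}^{ij}$ and that the orbit representatives can be chosen independently during re-expansion is, if anything, slightly more careful than the paper's write-up.
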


\begin{proof}
The proof will be broken down into two parts: In part 1. We will assume that $\mathbf{X}$ and $\mathbf{X'}$ only differ at one vertex (The vertex labeled by the conjugate of the group $A_p, \text{ where $p$ is an arbitrary fixed integer from the set } \{1,.., n\} - \{i, j\}$). In the second part we will consider more general $\mathbf{X'}$.

\begin{enumerate}[wide, labelwidth=!, labelindent=0pt]
\item

Consider, graph of groups $\mathbf{X_1}, \mathbf{X_2} \in \mathscr{K}^{{ij}^0}$ of type $X$. Assume that $\mathbf{X_1}$ and $\mathbf{X_2}$ are identical except for the vertex corresponding to vertex group congruent to $A_p$, where $p \in \{1,.., n\} - \{i, j\}$ is an arbitrary fixed integer. The vertex group congruent to $A_p$ in $\mathbf{X_1}$ is $A_p$; whereas in $\mathbf{X_2}$ the vertex group congruent to $A_p$ is $wA_pw^{-1}$ (where, $w \in A_i*A_j)$. In this proof we will show that in such a situation $\mathbf{X_1}$ and $\mathbf{X_2}$ can be connected by a path in $\mathscr{K}^{{ij}}$. 

First let us assume, that $w = vu$ is a word of length $2$, such that $u \in  A_i$ and $v \in A_j$. So, $\mathbf{X_2}=f_{A_p}^w(\mathbf{X_1})$, where $f_{A_p}^w \in \Gamma_n$ has been defined in definition \ref{defn:F^w_H}. In $f_{A_p}^w(\mathbf{X_1}) (=\mathbf{X_2})$, the non-trivial vertex group conjugate to $A_p$ is $uvA_pv^{-1}u^{-1}$.

We will give a collapse-expand route from $\mathbf{X_1}$ to $f_p(\mathbf{X_2})$ lying in $\mathscr{K}^{ij}$. Observe that our argument is inductive and we have started with the base case where $w$ is a word of length $2$ (instead of $1$). However, the description of the collapse-expand path when $w$ has word length $1$ is contained in part a of the base case.
\begin{enumerate}[wide, labelwidth=!, labelindent=0pt]
\item 
\begin{enumerate}[wide, labelwidth=!, labelindent=0pt]
\item \textbf{Collapse:} Starting from $T_{\mathbf{X_1}}$ we collapse the edges adjacent to the vertex labeled by $A_j$, equivariantly. 
\item \textbf{Expand:} Expand the edges adjacent to $A_j$ after choosing the vertex labeled by $vA_pv^{-1}$ in the $A_j*A_p$ minimal subtree as a replacement vertex for the vertex label $A_p$ of the fundamental domain. \end{enumerate}

\item Starting from this tree we follow a similar procedure as described above to obtain $f_{A_p}^w(\mathbf{X_1})$. 
\begin{enumerate}[wide, labelwidth=!, labelindent=0pt]
\item \textbf{Collapse:} This time we collapse the edges adjacent to the vertex labeled by $A_i$, equivariantly, 

\item \textbf{Expand:} Expand the edges adjacent to $A_i$ after choosing the vertex labeled by $uvA_pv^{-1}u^{-1}$ in the $A_i*vA_pv^{-1}$ minimal subtree as a replacement vertex for the vertex label $vA_pv^{-1}$ of the fundamental domain. The resulting tree is equivariantly homeomorphic to $f_{A_p}^w(\mathbf{X_1})$. 
\end{enumerate}

\end{enumerate}

Notice - $w$ is a word of length $2$. More generally, for any word $w \in A_i* A_j$ this proof can be extended by induction on the length of the word $w$, when $w$ is expressed as an alternating product of elements of $A_i$ and $A_j$. So, that concludes the proof of the part 1, where $\mathbf{X}$ and $\mathbf{X'}$ only differ at the vertex labeled by conjugate of $A_p$.

\item $\mathbf{X'}$ can be expressed as $f(\mathbf{X})$, where 
$
f = \displaystyle \prod_{\substack{w_p \in A_i*A_j\\p\ne i, j\\ p= p_1}}^{p_l}f_{A_p}^{w_p}
$ (definition \ref{defn:F^w_H}), such that
 $p_i \in \{1,.., n\} - \{i, j\}$. Hence, we can connect
\begin{itemize}[wide, labelwidth=!, labelindent=0pt] 
\item $\mathbf{X}$ to $f_{A_{p_1}}^{w_{p_1}}(\mathbf{X})$ via a path in $\mathscr{K}^{ij}$.
\item $f_{A_{p_1}}^{w_{p_1}}(\mathbf{X})$ to $f_{A_{p_2}}^{w_{p_2}}(\mathbf{X})$ via a path in $\mathscr{K}^{ij}$.

\vdots

\item $\displaystyle \prod_{\substack{w_p \in A_i*A_j\\p\ne i, j\\ p= p_1}}^{p_{l - 1}}f_{A_p}^{w_p}(\mathbf{X}$) to $\displaystyle \displaystyle \prod_{\substack{w_p \in A_i*A_j\\p\ne i, j\\ p= p_1}}^{p_l}f_{A_p}^{w_p}(\mathbf{X})= \mathbf{X'}$ via a path in $\mathscr{K}^{ij}$.

\end{itemize}

\end{enumerate}
\end{proof}

\begin{remark}
We will use the lemma \ref{lemma:same_fund} in our following discussion, which states that two different graphs of groups having same vertex groups can be connected by a path consisting of graphs of groups having same vertex groups in $\mathcal{SPD}$

\end{remark}

\begin{corollary}
$\mathscr{K}^{ij}$ is connected.
\end{corollary}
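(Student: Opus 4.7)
The plan is to reduce the general connectivity claim to Lemma \ref{lemma:kconnected}, which already handles all type~$X$ vertices of $\mathscr{K}^{ij}$, by showing that every vertex of $\mathscr{K}^{ij}$ can be joined inside $\mathscr{K}^{ij}$ to a type~$X$ vertex whose fundamental domain records the same stabilizer data.

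First I would fix a vertex $\mathbf{Y}\in\mathscr{K}^{ij}$ and read off from a fundamental domain its non-trivial vertex stabilizers, which by Definition \ref{defn:Kij} are $A_i$, $A_j$, and $w_k A_k w_k^{-1}$ for $k\neq i,j$, with each $w_k\in A_i*A_j$. I would then let $\mathbf{X}_Y$ be the (unique up to $G$-equivariant isomorphism) type~$X$ graph of groups with non-trivial vertex groups exactly $\{A_i,A_j,w_kA_kw_k^{-1}:k\neq i,j\}$; this $\mathbf{X}_Y$ lies in $\mathscr{K}^{ij}$ because its stabilizers have precisely the form prescribed by Definition \ref{defn:Kij}.

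Next I would invoke Lemma \ref{lemma:same_fund} to produce an expand–collapse path from $\mathbf{Y}$ to $\mathbf{X}_Y$ in which every intermediate tree admits a fundamental domain with the same non-trivial vertex stabilizer list. Since that list still consists of $A_i$, $A_j$, and conjugates of the remaining $A_k$ by elements of $A_i*A_j$, each intermediate tree satisfies the defining conditions of $\mathscr{K}^{ij}$, so the whole path stays inside $\mathscr{K}^{ij}$. The canonical type~$X$ tree $\mathbf{X}$ with non-trivial vertex groups $A_1,\dots,A_n$ lies in $\mathscr{K}^{ij}$ (take each $w_k=\mathrm{id}\in A_i*A_j$), and Lemma \ref{lemma:kconnected} provides a path inside $\mathscr{K}^{ij}$ from $\mathbf{X}_Y$ to $\mathbf{X}$. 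Concatenating gives a path from an arbitrary $\mathbf{Y}$ to the fixed vertex $\mathbf{X}$, hence $\mathscr{K}^{ij}$ is path connected.

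The main obstacle — and really the only nontrivial check — is verifying that the path furnished by Lemma \ref{lemma:same_fund} does not leave $\mathscr{K}^{ij}$; this reduces to the observation that $\mathscr{K}^{ij}$ is cut out purely by the conjugacy-class data of the non-trivial vertex stabilizers in a fundamental domain, a condition preserved by the moves in that lemma. Everything else is bookkeeping: identifying $\mathbf{X}_Y$, noting $\mathbf{X}\in\mathscr{K}^{ij}$, and concatenating the two subpaths.
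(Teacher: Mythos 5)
Your proposal is correct and follows essentially the same route as the paper: Lemma \ref{lemma:same_fund} carries an arbitrary vertex of $\mathscr{K}^{ij}$ to a type~$X$ vertex with the same stabilizer data while staying inside $\mathscr{K}^{ij}$, and Lemma \ref{lemma:kconnected} then joins that type~$X$ vertex to the basepoint $\mathbf{X}$. Your explicit check that the Lemma \ref{lemma:same_fund} path preserves the defining conjugacy-class conditions of $\mathscr{K}^{ij}$ is a welcome elaboration of a point the paper only asserts.
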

\begin{proof}

\begin{enumerate}[wide, labelwidth=!, labelindent=0pt]

\item By lemma \ref{lemma:same_fund}, we can connect any graph of groups in $\mathscr{K}^{ij}$  to a graph of groups of type $X$ via a path contained inside $\mathscr{K}^{ij}$. 

\item By lemma \ref{lemma:kconnected} we can connect any graph of groups of type $X$ inside $\mathscr{K}^{ij}$ to a graph of group of type $X$ whose non trivial vertex groups are $A_1,..., \text{ and }A_n$, via a path contained inside $\mathscr{K}^{ij}$. Both paths can be constructed so that they are entirely contained inside $\mathscr{K}^{ij}$.
\end{enumerate}\end{proof}

\begin{remark}
Recall definition \ref{defn:H^ij} from section \ref{section:thick},
$H^{ij} := \displaystyle \bigoplus_{\substack{k\ne i, j\\ k=1}}^n H_k^i * H_k^j $

\end{remark}

\begin{lemma}
$\mathscr{K}^{ij}$ is invariant under the action of the subgroup $H^{ij}$.
\end{lemma}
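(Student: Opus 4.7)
The plan is to reduce the question to a statement about how a well-chosen representative automorphism of an element of $H^{ij}$ transforms vertex stabilizers in a fundamental domain. Since $\mathscr{K}^{ij}$ is spanned by the vertices described in Definition \ref{defn:Kij} and $\Gamma_n$ acts simplicially, it suffices to show that if $\phi \in H^{ij}$ and $T \in \mathscr{K}^{ij}$ is a $G$-tree representing a $0$-cell of $\mathscr{K}^{ij}$, then $\phi(T)$ also represents a $0$-cell of $\mathscr{K}^{ij}$, i.e.\ $\phi(T)$ admits a fundamental domain whose vertex stabilizers are $A_i$, $A_j$, and conjugates $u_k A_k u_k^{-1}$ with $u_k \in A_i * A_j$.

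First I would unpack a generic element $\phi \in H^{ij}$. Using Proposition \ref{prop:hij_sum} I can lift $\phi$ to a representative automorphism of the form $\Phi = \displaystyle\prod_{\substack{k \ne i,j}} \prod_{\ell} F_{A_k}^{w_{k,\ell}}$ with each $w_{k,\ell}\in A_i \cup A_j$; composing and using Definition \ref{defn:FIJ} I can rewrite this as $\Phi = \displaystyle\prod_{\substack{k \ne i,j}} F_{A_k}^{w_k}$ with $w_k \in A_i * A_j$. The key algebraic observation is that such a $\Phi$ fixes $A_i$ and $A_j$ pointwise (since none of the constituent factors has base $A_i$ or $A_j$, and the conjugating elements are always in $A_i * A_j$, which commutes with conjugation of other factors in the free product), and sends $A_k$ to $w_k A_k w_k^{-1}$ for each $k \ne i,j$. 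In particular, $\Phi$ restricts to the identity on the whole of $A_i * A_j$.

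Next I invoke Remark \ref{rem:action_stab}: if $F$ is a fundamental domain of $T$ with vertex stabilizers $\{A_i,\ A_j,\ u_k A_k u_k^{-1}\}_{k\ne i,j}$ (with $u_k \in A_i * A_j$, as guaranteed by $T \in \mathscr{K}^{ij}$), then $F$ is also a fundamental domain for $\Phi(T)$ and its vertex stabilizers are $\{\Phi(A_i),\ \Phi(A_j),\ \Phi(u_k A_k u_k^{-1})\}_{k\ne i,j}$. Using the algebraic computation above together with $\Phi(u_k) = u_k$, these become $\{A_i,\ A_j,\ (u_k w_k) A_k (u_k w_k)^{-1}\}_{k\ne i,j}$. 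Since $u_k w_k$ again lies in $A_i * A_j$, this exhibits $\phi(T)$ as a $0$-cell of $\mathscr{K}^{ij}$.

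Finally, since $\mathscr{K}^{ij}$ is the full sub-complex of $\mathcal{SPD}(G,\mathscr{H})$ spanned by the vertices just described and the $\Gamma_n$-action is simplicial, invariance of the $0$-skeleton upgrades automatically to invariance of $\mathscr{K}^{ij}$. The only place where care is required is the algebraic claim that every element of $H^{ij}$ admits a representative of the product form $\prod F_{A_k}^{w_k}$ with $w_k \in A_i*A_j$; I expect this to be the main (though mild) obstacle, and it is handled by Proposition \ref{prop:hij_sum} together with the commutation relations in Lemma \ref{lemma:diff_base_commute} applied to factors with distinct bases.
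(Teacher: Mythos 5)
Your proof is correct and follows essentially the same route as the paper: choose a representative automorphism $\Phi$ that fixes $A_i$ and $A_j$ and conjugates each $A_k$ ($k\ne i,j$) by an element of $A_i*A_j$, compute its effect on the vertex stabilizers of a fundamental domain to see that the new conjugators $u_kw_k$ still lie in $A_i*A_j$, and then upgrade invariance of the $0$-skeleton to the whole sub-complex using that the action is simplicial/isometric. The only difference is that you explicitly justify the existence of such a representative via Lemma \ref{lemma:diff_base_commute} and Proposition \ref{prop:hij_sum}, a step the paper's proof takes for granted.
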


\begin{proof}
Let, $T \in \mathscr{K}^{{ij}^0}$ and $\phi \in H^{ij}$. Assume, that $ \Phi \in \mathsf{Aut}(G_n)$ be such that $\phi = [\Phi]$ and
\begin{gather*} 
\Phi(A_k) =  A_k \text{, when } k = i, j \text{, and } \\
\Phi(A_k) =  u_kA_ku_k^{-1}  (\text{where }k\ne i,j \text{ and } u_k \in A_i*A_j)
\end{gather*}

There is a fundamental domain of $T$, such that the non-trivial vertex stabilizers are given by $A_i, A_j$, and  $ w_kA_kw_k^{-1} (\text{where }k\ne i,j \text{ and } w_k \in A_i*A_j)$.\begin{gather*}\Phi(A_i) = A_i; \Phi(A_j) = A_j; \text{and } \\
\Phi(w_kA_kw_k^{-1}) = \Phi(w_k)\Phi(A_k)\Phi(w_k^{-1}) = w_k\Phi(A_k)w_k^{-1} = w_ku_kA_ku_k^{-1}w_k^{-1}\end{gather*}

So, $\phi(T) \in\mathscr{K}^{{ij}^0}$. If $e$ is any edge of length $1$ connecting two vertices of $\mathscr{K}^{ij}$, then $\phi(e)$ is also an edge of length $1$ as the $\Gamma_n$ action is isometric. Hence, it is in $\mathscr{K}^{ij}$.
\end{proof}

\begin{lemma}
$H^{ij} \acts \mathscr{K}^{ij}$ is properly discontinuous and co-compact.
\end{lemma}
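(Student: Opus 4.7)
The plan is to dispatch proper discontinuity immediately and then concentrate on co-compactness by constructing an explicit finite set of orbit representatives in the zero-skeleton $\mathscr{K}^{{ij}^0}$.

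\textbf{Proper discontinuity.} Since $H^{ij}\leq\Gamma_n$ and the ambient action of $\Gamma_n$ on $\mathcal{SPD}(G,\mathscr{H})$ has already been shown to be properly discontinuous, the restricted action on the $H^{ij}$-invariant subspace $\mathscr{K}^{ij}$ is automatically properly discontinuous; no new work is required.

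\textbf{Co-compactness via normalization.} For co-compactness the plan is to show that every vertex of $\mathscr{K}^{ij}$ lies in the $H^{ij}$-orbit of a ``normalized'' tree whose non-trivial vertex stabilizers in some fundamental domain are exactly $\{A_1,\ldots,A_n\}$ (i.e.\ all conjugating words trivial). Given $T\in\mathscr{K}^{{ij}^0}$ with a fundamental domain having non-trivial stabilizers $A_i,A_j$ and $w_kA_kw_k^{-1}$ for $k\neq i,j$ with $w_k\in A_i*A_j$, set
\[
\phi:=\prod_{k\neq i,j}f_{A_k}^{w_k^{-1}}\;\in\;H^{ij}.
\]
This is well-defined because the factors pairwise commute by lemma \ref{lemma:diff_base_commute}. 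Lifting to an automorphism $\Phi$ and using that each $F_{A_k}^{w_k^{-1}}$ with $k\neq i,j$ fixes $A_i*A_j$ pointwise, while $F_{A_l}^{w_l^{-1}}$ sends $A_l$ to $w_l^{-1}A_lw_l$, one computes $\Phi(A_i)=A_i$, $\Phi(A_j)=A_j$, and
\[
\Phi(w_lA_lw_l^{-1})=\Phi(w_l)\,\Phi(A_l)\,\Phi(w_l^{-1})=w_l\cdot w_l^{-1}A_lw_l\cdot w_l^{-1}=A_l
\]
for each $l\neq i,j$. Hence $\phi(T)$ is a normalized tree with the same underlying graph as $T$.

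\textbf{Finiteness of normalized trees and conclusion.} By lemma \ref{lemma:count} there are only finitely many combinatorial types of underlying trees of groups in $\mathcal{D}(G,\mathscr{H})$ (the numbers of vertices and edges being bounded in terms of $n$), and for each such type only finitely many ways to assign the labels $A_1,\ldots,A_n$ to the non-trivial vertices. Consequently the set of normalized trees in $\mathscr{K}^{{ij}^0}$ is finite, yielding finitely many $H^{ij}$-orbits on the zero-skeleton. Since $\mathscr{K}^{ij}$ inherits local finiteness from $\mathcal{SPD}(G,\mathscr{H})$ and $H^{ij}$ acts simplicially, each vertex-orbit contributes only finitely many simplex-orbits, so $\mathscr{K}^{ij}/H^{ij}$ is a finite complex, hence compact. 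The only real technical content is the identity $\Phi(w_lA_lw_l^{-1})=A_l$, which rests on the fact that conjugating words drawn from $A_i*A_j$ are invisible to the $F_{A_k}^{\,\cdot}$-twists for $k\neq i,j$; once this is in hand the co-compactness statement follows from bookkeeping via lemma \ref{lemma:count}.
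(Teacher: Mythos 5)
Your proof is correct, and the co-compactness argument is essentially the paper's own: the paper likewise normalizes a tree with stabilizers $A_i, A_j, w_kA_kw_k^{-1}$ by applying the inverses of the $f_{A_k}^{w_k}$ (which is exactly your $\prod_{k\neq i,j}f_{A_k}^{w_k^{-1}}$, well-defined by lemma \ref{lemma:diff_base_commute}) and then counts the finitely many normalized graphs of groups; you are merely more explicit about the computation $\Phi(w_lA_lw_l^{-1})=A_l$. The only real divergence is proper discontinuity: the paper re-derives it from local finiteness of $\mathscr{K}^{ij}$ together with finiteness of point stabilizers (finitely many fundamental domains containing the vertices labeled $A_i$ and $A_j$), whereas you simply inherit it from the already-established proper discontinuity of $\mathsf{Out}(G)\acts\mathcal{SPD}(G,\mathscr{H})$ restricted to the invariant subcomplex; your route is shorter and equally valid.
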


\begin{proof}
There are only finitely many graphs of groups in $\mathscr{K}^{ij}$ (up-to homeomorphism) such that vertex groups are either trivial or $A_k$. We will show that any other graph of groups in $\mathscr{K}^{ij}$ is in the $H^{ij}$-orbit of a graph of groups described in the first line. This will prove co-compactness.

Let, $T \in \mathscr{K}^{ij} \cap \mathcal{SPD}^{0}(G,\mathscr{H})$ be a tree whose graph of groups is represented by $\mathbf{X}$ and the non trivial vertex groups are given by $A_i, A_j$ and $w_kA_kw_k^{-1}$ (where $k \ne i, j$ and $w_k \in A_i * A_j$). Consider $f_{A_k}^{w_k} \in \Gamma_n'$ (definition \ref{defn:F^w_H}), then $(f_{A_k}^{w_k})^{-1}(\mathbf{X})$ is a graph of groups described in the first line of the proof. So, there are finitely many orbits (up-to homeomorphism) of graphs of groups in $\mathscr{K}^{ij}$ under the action of $H^{ij}$. The sub-complex is locally finite. If we look at the Bass-Serre tree of any graph of groups, then there are finitely many fundamental domains containing the vertices labeled by $A_i$ and $A_j$. So, point stabilizer is finite. Hence, the action is properly discontinuous.
\end{proof}

\begin{lemma}\label{lemma:def_TIJ}
Fix distinct integers $i, j \in \{1,..., n\}$ and $w_k \in A_i*A_j$, where $k \in \{1, ..., n\}-\{i, j\}$, then the fundamental group of a graph of groups having non trivial vertex groups $A_i, A_j, w_kA_kw_k^{-1}, k \in \{1, ..., n\} - \{i, j\}$ is $\displaystyle \free_{l = 1}^n A_l $.
\end{lemma}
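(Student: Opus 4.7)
The plan is to verify that the graph of groups described has fundamental group isomorphic to $G_n = *_{l=1}^n A_l$, and the cleanest route is via Bass--Serre theory together with an explicit automorphism of $G_n$ that realizes the claimed vertex groups as the image of a standard free product decomposition.

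First, I would recall that the deformation space $\mathcal{D}(G,\mathscr{H})$ consists of graphs of groups whose underlying graph is a tree with trivial edge groups. By the fundamental theorem of Bass--Serre theory (Theorem \ref{thm:bst}) applied to a tree of groups with trivial edge groups, the fundamental group is simply the free product of the non-trivial vertex groups. Hence the fundamental group of the graph of groups in the statement is
\[
A_i \; * \; A_j \; * \; \free_{\substack{k \neq i,j \\ k=1}}^{n} (w_k A_k w_k^{-1}).
\]
Each $w_k A_k w_k^{-1}$ is abstractly isomorphic to $A_k$, so this free product is abstractly isomorphic to $\free_{l=1}^n A_l = G_n$.

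To be concrete about the isomorphism with the ambient group $G_n$ (and not just an abstract one), I would construct the automorphism $\Phi := \prod_{k \neq i,j} F_{A_k}^{w_k} \in \mathsf{Aut}(G_n)$ using Definition \ref{defn:FIJ}. By Lemma \ref{lemma:diff_base_commute}, the factors $F_{A_k}^{w_k}$ with distinct base indices $k$ commute up to inner automorphism (here they actually commute on the nose since the conjugating words $w_k$ lie in $A_i * A_j$, which is disjoint from the bases), so $\Phi$ is well defined. Directly from the definition, $\Phi(A_i) = A_i$, $\Phi(A_j) = A_j$, and $\Phi(A_k) = w_k A_k w_k^{-1}$ for $k \neq i,j$. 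Since $\Phi$ is an automorphism of $G_n = \free_{l=1}^n A_l$, applying $\Phi$ to the standard free product decomposition exhibits $G_n$ as the internal free product of $A_i$, $A_j$, and the $w_k A_k w_k^{-1}$. This proves that the natural map from the fundamental group of the graph of groups to $G_n$ is an isomorphism.

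I do not anticipate a significant obstacle: both steps are essentially bookkeeping once we have Bass--Serre theory and Definition \ref{defn:FIJ} in hand. The only mildly subtle point is confirming that the composition defining $\Phi$ is well defined and is genuinely an automorphism, which follows because each $F_{A_k}^{w_k}$ is an automorphism (by the universal property of the free product) and the factors with different $k$-indices commute as noted. No convergence or ordering issue arises because the product is finite and the conjugating words $w_k$ lie in $A_i * A_j$, which $\Phi$ fixes pointwise.
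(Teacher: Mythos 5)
Your proposal is correct and takes essentially the same approach as the paper: the paper's proof constructs exactly the automorphism you call $\Phi$ (there denoted $\mathbb{A}$, defined on $\bigcup_l A_l$ by fixing $A_i, A_j$ and conjugating each $A_k$ by $w_k$, extended via the universal property and shown invertible by exhibiting $\mathbb{A}'$). Your realization of it as the finite composition $\prod_{k\ne i,j} F_{A_k}^{w_k}$ of the automorphisms from Definition \ref{defn:FIJ} is the same map, and the Bass--Serre bookkeeping is identical.
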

\begin{proof}
Consider the map

$
 \mathbb{A}: \displaystyle\bigcup_{l = 1}^n A_l \rightarrow \free_{l = 1}^n A_l \\
$
$
a \mapsto
\begin{cases}
a , & \text{if } a \in A_i \cup A_j \\
w_kaw_k^{-1}, & \text{if } a \in \displaystyle\bigcup_{\substack{l \ne i, j\\l = 1}}^n A_l
\end{cases}
$

By the universal property of the free products, this map can be uniquely extended to a homomorphism denoted by $\mathbb{A}: \displaystyle\free_{l = 1}^n A_l \rightarrow \free_{l = 1}^n A_l$ (abusing notation). We will define a homomorphism $\mathbb{A'}: \displaystyle\free_{l = 1}^n A_l \rightarrow \free_{l = 1}^n A_l$, such that $\mathbb{A} \circ \mathbb{A'} = \mathbb{A'} \circ \mathbb{A} = id$. 

The map $
 \mathbb{A'}: \displaystyle\bigcup_{l = 1}^n A_l \rightarrow \free_{l = 1}^n A_l \\
$
$
a \mapsto
\begin{cases}
a , & \text{if } a \in A_i \cup A_j \\
w_k^{-1}aw_k, & \text{if } a \in \displaystyle\bigcup_{\substack{l \ne i, j\\l=1}}^n A_l
\end{cases}
$

By the universal property of the free products, this map can be uniquely extended to a homomorphism denoted by $\mathbb{A'}: \displaystyle\free_{l = 1}^n A_l \rightarrow \free_{l = 1}^n A_l$ (abusing notation). Hence, $\mathbb{A}$ is an automorphism and the fundamental group of a graph of groups having non trivial vertex groups $A_i, A_j, w_kA_kw_k^{-1}, k \in \{1, ..., n\} - \{i, j\}$ is $\displaystyle \free_{l = 1}^n A_l $.
\end{proof}

The goal of our next definition is to assign a tree in $\mathscr{K}^{ij}$ for a given tree in $\mathcal{SPD}(G,\mathscr{H})$

\begin{definition}\label{defn:retraction}
Consider a tree $T \in \mathcal{SPD}$ and fix two distinct integers $i, j \in \{1, ..., n\}$. We will build a metric tree, $\overline{T}^{ij}$, using $T$ tree as follows: 
\begin{enumerate}[wide, labelwidth=!, labelindent=0pt]
\item Start with the $A_i*A_j$-minimal subtree in $T$ and call it $T^{ij}$.
\item If the nearest point projection to $T^{ij}$ of the vertex stabilized by $A_k, k \ne i, j$  is contained in the fundamental domain of $A_i * A_j \acts T^{ij}$ whose extremities are stabilized by the subgroups $w_kA_iw_k^{-1}$ and $w_kA_jw_k^{-1}$, then the nearest point projection of the vertex labeled by the subgroup $w_k^{-1}A_kw_k$ to $T^{ij}$ is contained in the fundamental domain labeled by the subgroups $A_i$ and $A_j$. If the nearest point projection of $A_k, k \ne i, j$ is part of more than one fundamental domains of $A_i*A_j \acts {T^{ij}}$, then choose the fundamental domain closest to the fundamental domain whose vertices are labeled by $A_i$ and $A_j$.
\item Construct a graph of groups, such that the underlying geometry is isometric to the geometry of the smallest subtree of $T$ containing the vertices labeled by the groups from the following set - $\{A_i, A_j, w_k^{-1}A_kw_k \vert k \in \{1,..., n\} - \{i, j\}\}$ and the corresponding non trivial vertex groups are $\{A_i, A_j, w_k^{-1}A_kw_k \vert k \in \{1,..., n\} - \{i, j\}\}$. $\overline{\mathbf{X}}^{ij}_{\mathcal{PD}}$ is the graph of groups homothetic to the above graph of groups such that the sum of edge lengths is 1. By lemma \ref{lemma:def_TIJ} it follows that $\overline{\mathbf{X}}^{ij}_\mathcal{PD}$ is an  element of $\mathcal{PD}$. Define $\overline{\mathbf{X}}^{ij}$ to be the image of $\overline{\mathbf{X}}^{ij}_\mathcal{PD}$ in $\mathcal{SPD}$ under the retraction stated in lemma \ref{lemma:bsd} and $\overline{{T^{ij}}}$ is the Bass-Serre tree of $\overline{\mathbf{X}}^{ij}$. 

\end{enumerate}
\end{definition}

Our next goal is to define a map which can be extended to a Lipschitz retraction.

\begin{definition} \label{defn:retraction_map}

Define a map \begin{gather*}L_{ij}: \mathcal{SPD}^0 (G,\mathscr{H}) \rightarrow \mathscr{K}^{ij} \\
T\mapsto \overline{T}^{ij}\end{gather*}

\end{definition}

\begin{lemma} \label{lemma:retract}
If $T_1, T_2 \in \mathcal{SPD} (G,\mathscr{H})$ are two distinct vertices which are distance $1$ apart, then $L_{ij}(T_1)$ and $L_{ij}(T_2)$ in $\mathscr{K}^{ij}$ is at most $2$. 
\end{lemma}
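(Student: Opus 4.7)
Since $T_1$ and $T_2$ are adjacent vertices of the spine, the flag-complex description of $\mathcal{SPD}(G,\mathscr{H})$ will allow me to assume without loss of generality that $T_2$ is obtained from $T_1$ by a $G$-equivariant collapse of one or more edge orbits. I will analyse $L_{ij}(T_1)$ versus $L_{ij}(T_2)$ by splitting into two cases according to whether any collapsed edge meets the $A_i*A_j$-minimal subtree $T_1^{ij}$.

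\textbf{Easy case.} When every collapsed orbit is disjoint from $T_1^{ij}$, the quotient map $T_1 \to T_2$ restricts to a $G$-equivariant isometry $T_1^{ij} \to T_2^{ij}$ (in accordance with Remark~\ref{rem:unique_min}), and for each $k \neq i,j$ the nearest-point projection of the $A_k$-stabilised vertex onto the minimal subtree lies in the same fundamental domain of $A_i*A_j \acts T^{ij}$ for both trees. Consequently every label $w_k \in A_i*A_j$ appearing in $\overline{T_1}^{ij}$ and $\overline{T_2}^{ij}$ agrees, so $L_{ij}(T_1) = L_{ij}(T_2)$ and the distance is $0$.

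\textbf{Hard case.} When at least one collapsed orbit meets $T_1^{ij}$, the fundamental domain of $A_i*A_j \acts T_1^{ij}$ is a path from the $A_i$-vertex to the $A_j$-vertex passing through some valence-two trivial-stabiliser vertices, and the collapse identifies one or more of these with an extremity. I will show that, as the projection of each $A_k$-vertex slides across the collapsed edges, its fundamental-domain representative is left-multiplied by a \emph{single} element $a \in A_i$ (or $A_j$), uniformly in $k$, namely the element sitting at the chosen extremity. Every label $w_k$ of $\overline{T_2}^{ij}$ will therefore equal $a$ times the corresponding label of $\overline{T_1}^{ij}$. This is precisely the situation of the base (length-one word) step in part~1(a) of the proof of Lemma~\ref{lemma:kconnected}: collapse the edges adjacent to the $A_i$-vertex (respectively the $A_j$-vertex) equivariantly, rechoose the conjugacy representatives using $a$, and re-expand. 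The intermediate graph of groups produced by that collapse is of type $Y$ and lies in $\mathscr{K}^{ij}$, and it is adjacent in $\mathscr{K}^{ij}$ to both $L_{ij}(T_1)$ and $L_{ij}(T_2)$, yielding $d_{\mathscr{K}^{ij}}(L_{ij}(T_1),L_{ij}(T_2)) \leq 2$.

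\textbf{Main obstacle.} The technical heart of the argument is the bookkeeping in the hard case: I must verify that the same multiplier $a \in A_i \cup A_j$ works uniformly for every $k$ (so that a single collapse-expand pair suffices), even when several edge orbits of $T_1^{ij}$ are collapsed simultaneously, and I must check that the intermediate tree produced by that move really does satisfy the definition of $\mathscr{K}^{ij}$ in Definition~\ref{defn:Kij}—i.e., that its conjugators still lie in $A_i*A_j$. Once those two points are settled, the length-two path in $\mathscr{K}^{ij}$ is read off directly from the construction appearing in the proof of Lemma~\ref{lemma:kconnected}.
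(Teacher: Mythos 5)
Your case division (collapsed orbits meeting $T^{ij}$ or not) matches the paper's, but the resolution of the hard case has a genuine gap. You assert that as the projections of the $A_k$-vertices slide across collapsed edges, every label $w_k$ is multiplied by a \emph{single} element $a\in A_i$ (or $A_j$), uniformly in $k$. This is not justified and is false in general: each projection point $p_k$ sits at its own position inside its own translate $w_k\cdot[x_i,x_j]$ of the fundamental domain of $A_i*A_j\acts T^{ij}$, and a given collapse merges $p_k$ into whichever extremity happens to be adjacent to it --- a conjugate of $A_i$ for some $k$ and of $A_j$ for others, by different group elements for different $k$. Without uniformity, your single collapse--expand pair through one type-$Y$ vertex does not suffice; if some discrepancies live at the $A_i$-vertex and others at the $A_j$-vertex you would need two such pairs, giving a bound of $4$ rather than $2$. (A secondary issue: the graphs of groups $L_{ij}(T)$ are built on the smallest subtree of $T$ containing the chosen vertices and need not be of type $X$, so the construction of Lemma \ref{lemma:kconnected} does not apply verbatim; and in your easy case the conclusion $L_{ij}(T_1)=L_{ij}(T_2)$ is too strong, since collapses outside $T^{ij}$ can still alter the underlying graph of $\overline{T}^{ij}$ through the branches joining the $A_k$-vertices to their projections.)

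The paper's argument avoids the uniformity problem entirely, and this is the idea you are missing: the per-$k$ discrepancy $w_k'^{-1}w_k$ always lies in the stabilizer of the extremity into which $p_k$ was merged, so replacing $w_k^{-1}A_kw_k$ by $w_k'^{-1}A_kw_k'$ is merely a different choice of fundamental domain for the \emph{same} $G$-tree --- the two labelings yield $G$-equivariantly isometric Bass--Serre trees. Consequently $L_{ij}(T_2)$ is, up to equivariant isometry, obtained from $L_{ij}(T_1)$ by the collapse move induced by $T_1\to T_2$; no intermediate type-$Y$ vertex and no re-expansion are needed, and the bound of $2$ follows at once. To repair your proof you should either establish this ``fundamental-domain artifact'' observation, or else verify the uniformity claim you rely on, which I do not believe can be done.
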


\begin{proof}
If $T_1$ and $T_2$ are distance $1$ apart, then without loss of generality we can assume that $T_2$ is obtained by collapsing one or more edge orbits of $T_1$, equivariantly. We will show that the simplex containing $L_{ij}(T_2)$ is a boundary to the simplex containing $L_{ij}(T_1)$. Hence, the distance is at most $2$ in $\mathcal{SPD}$. That is, a tree in the $G$-equivariant homeomorphism class of $L_{ij}(T_2)$ and a tree in the $G$-equivariant homeomorphism class of $L_{ij}(T_1)$ are related by a collapse move in $\mathcal{SPD}$. So, a tree in their corresponding $G$-equivariant homeomorphism classes are related by a collapse move in $\mathscr{K}^{ij}$ and hence the distance between  $L_{ij}(T_2)$ and $L_{ij}(T_1)$ in $\mathscr{K}^{ij}$ is at most $2$.

One of the following situations can occur when collapsing edges of $T_1$ to produce $T_2$- 
\begin{enumerate}[wide, labelwidth=!, labelindent=0pt]
\item None of the edges undergoing collapses are contained in the $A_i*A_j$-minimal subtree. In this case $L_{ij}(T_2)$ and  $L_{ij}(T_1)$ have a fundamental domain with identical non-trivial vertex stabilizer subgroups. Hence, the distance between them is at most $2$, by lemma \ref{lemma:same_fund}.
\item One or more collapsed edge-orbits are contained in the $A_i*A_j$-minimal subtree. Then $L_{ij}(T_2)$ and $L_{ij}(T_1)$ may have different fundamental domain (explained in the picture below). For example consider the case where, the nearest point projection of $A_k, (k \ne i, j)$ onto the $A_i*A_j$-minimal subtree in $T_1$ is contained in the fundamental domain of $A_i*A_j \acts T^{ij}$ labeled by $w_kA_iw_k^{-1}$ and $w_kA_jw_k^{-1}$; and the nearest point projection of $A_k$ onto the $A_i*A_j$-minimal subtree in $T_2$ is contained in the fundamental domain of $A_i*A_j \acts T^{ij}$ labeled by $w_k'A_iw_k'^{-1}$ and $w_k'A_jw_k'^{-1}$. Now, $L_{ij}(T_2)$, contains a fundamental domain some of whose vertices are labeled by the subgroups $A_i, A_j, w_k'^{-1}A_kw_k'$ using the definition of the map $L_{ij}$. On the other hand, a collapse move on $L_{ij}(T_1)$ produces a tree with a fundamental domain some of whose vertices are labeled by  the subgroups $A_i, A_j, w_k^{-1}A_kw_k$. However, due to the rigidity (up-to $G$ equivariant homeomorphism) of the $A_i*A_j$ minimal subtree we have $w_k'^{-1}w_k \in A_j$. Hence, these two trees are $G$-equivariantly isometric. So, a tree in the $G$-equivariant homeomorphism class of $L_{ij}(T_2)$ and a tree in the $G$-equivariant homeomorphism class of $L_{ij}(T_1)$ are related by a collapse move in $\mathcal{SPD}$.
\end{enumerate}

\begin{center}
\begin{tikzpicture} \begin{scriptsize}
\draw (0, 0) -- (0, -2);
\draw (2, 0) -- (2, -2);
\draw (7, 0) -- (7, -2);
\draw (5, 0) -- (5, -2);

\draw [fill=black] (-1, 0) circle (1.2pt) node[anchor = south]{$w_kA_iw_k^{-1}$};
\draw [fill=black] (1, 0) circle (1.2pt)node[anchor = south, rotate = 90]{$w_kA_jw_k{-1}$};
\draw [fill=black] (1, 0) circle (1.2pt)node[anchor = north, rotate  = 90]{$ = w_k'A_jw_k'^{-1}$};

\draw [fill=black] (3, 0) circle (1.2pt)node[anchor = north]{$w_k'A_iw_k'^{-1}$};
\draw [fill=black] (4, 0) circle (1.2pt)node[anchor = south]{$a_jA_ia_j^{-1}$};
\draw [fill=black] (6, 0) circle (1.2pt)node[anchor = south]{$A_j = a_jA_ja_j^{-1}$};
\draw [fill=black] (8, 0) circle (1.2pt)node[anchor = south]{$A_i$};

\draw [fill=black] (0, -2) circle (1.2pt) node[anchor = north]{$A_k$};
\draw [fill=black] (2, -2) circle (1.2pt) node[anchor = north]{$a_j^{-1}A_ka_j$};
\draw [fill=black] (5, -2) circle (1.2pt) node[anchor = north]{$w_k'^{-1}A_kw_k'$};
\draw [fill=black] (7, -2) circle (1.2pt) node[anchor = north]{$w_k^{-1}A_kw_k$};

\draw (-1,0) -- node[fill=white,inner sep=-1.25pt,outer sep=0,anchor=center]{$\hdots \hdots$} (8,0);
\draw (0,-4) -- node[fill=white,inner sep=-1.25pt,outer sep=0,anchor=center]{$\hdots \hdots$} (6,-4);

\draw [fill=black] (0, -4) circle (1.2pt) node[anchor = north]{$w_kA_iw_k^{-1}$};
\draw [fill=black] (1, -4) circle (1.2pt)node[anchor = south]{$w_k'A_jw_k'{-1}$};
\draw [fill=black] (2, -4) circle (1.2pt)node[anchor = north west]{$w_k'A_iw_k'^{-1}$};

\draw [fill=black] (4, -4) circle (1.2pt)node[anchor = south]{$a_jA_ia_j^{-1}$};
\draw [fill=black] (5, -4) circle (1.2pt)node[anchor = south]{$A_j$};
\draw [fill=black] (6, -4) circle (1.2pt)node[anchor = south]{$A_i$};

\draw [fill=black] (0, -5) circle (1.2pt) node[anchor = north]{$A_k$};
\draw [fill=black] (2, -5) circle (1.2pt) node[anchor = north]{$a_j^{-1}A_ka_j$};
\draw [fill=black] (4, -5) circle (1.2pt) node[anchor = north]{$w_k'^{-1}A_kw_k'$};
\draw [fill=black] (6, -5) circle (1.2pt) node[anchor = north]{$w_k^{-1}A_kw_k$};

\draw (1, -4) -- (0, -5);
\draw (1, -4) -- (2, -5);
\draw (5, -4) -- (4, -5);
\draw (5, -4) -- (6, -5);

\draw node at (3, -3) [anchor=south]{$T_1$};
\draw node at (3, -6) [anchor=south]{$T_2$};

\end{scriptsize} \end{tikzpicture} 
\end{center}
\end{proof}

\begin{corollary}
The map $L_{ij}$ from definition \ref{defn:retraction_map} can be extended to a continuous Lipschitz retraction $L_{ij}: \mathcal{SPD}^1 (G,\mathscr{H}) \rightarrow \mathscr{K}^{ij}$ \end{corollary}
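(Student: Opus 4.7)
The plan is to extend the vertex map $L_{ij}$ linearly across each $1$-cell of $\mathcal{SPD}^1$, using Lemma \ref{lemma:retract} to guarantee that the images of adjacent vertices are close enough in $\mathscr{K}^{ij}$ to be joined by a short edge path. Once the extension is in place, the Lipschitz constant will come essentially for free by a constant-speed parametrization, and the retraction property will follow from inspecting what $L_{ij}$ does to a tree already in $\mathscr{K}^{ij}$.

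Concretely, for each $1$-cell $e$ of $\mathcal{SPD}^1$ with endpoints $T_1, T_2$, Lemma \ref{lemma:retract} supplies an edge path $\gamma_e$ in $\mathscr{K}^{ij}$ from $L_{ij}(T_1)$ to $L_{ij}(T_2)$ of combinatorial length at most $2$ (taking $\gamma_e$ constant when $L_{ij}(T_1)=L_{ij}(T_2)$). I would fix one such $\gamma_e$ for each edge once and for all, and define $L_{ij}\vert_e:[0,1]\to\mathscr{K}^{ij}$ to be its constant-speed parametrization. The values at the endpoints of $e$ coincide with the pre-existing definition of $L_{ij}$ at vertices, so the pieces glue continuously to a well-defined map $L_{ij}:\mathcal{SPD}^1\to\mathscr{K}^{ij}$.

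For the Lipschitz bound, each unit-length edge of $\mathcal{SPD}^1$ maps to a rectifiable path of length at most $2$ in $\mathscr{K}^{ij}$, traversed at constant speed, so the restriction to each edge is $2$-Lipschitz. Since any geodesic in $\mathcal{SPD}^1$ decomposes into finitely many full edges plus at most two partial edge fragments at the ends, and the image paths concatenate, $L_{ij}$ is globally $2$-Lipschitz. For the retraction property I would observe that if $T \in \mathscr{K}^{ij}\cap\mathcal{SPD}^0$, then the $A_i*A_j$-minimal subtree of $T$, together with the nearest-point projections of the $A_k$-vertices and the associated conjugating elements, reproduces $T$ up to the ambiguity in selecting the coset representatives $w_k$; by Remark \ref{rem:unique_min} this ambiguity keeps $L_{ij}(T)$ in a uniformly bounded neighborhood of $T$ in $\mathscr{K}^{ij}$, which suffices for the coarse Lipschitz retract condition required by Lemma \ref{lemma:undistorted}.

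The main obstacle is this last point: while the extension of a discretely-defined Lipschitz map to a continuous one is a standard piecewise-linear technique over a simplicial complex, quantifying how much the construction of Definition \ref{defn:retraction} can move a vertex already in $\mathscr{K}^{ij}$ requires invoking the rigidity of the $A_i*A_j$-minimal subtree to control all admissible choices of $w_k$ in a uniform way. Everything else in the proof is a formal consequence of Lemma \ref{lemma:retract} together with the constant-speed parametrization described above.
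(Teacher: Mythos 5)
Your proposal is correct and follows essentially the same route as the paper: extend $L_{ij}$ over each edge of $\mathcal{SPD}^1$ via a short path supplied by Lemma \ref{lemma:retract}, read off the Lipschitz bound from the constant-speed parametrization, and observe that Definition \ref{defn:retraction_map} already gives the retraction property on $\mathscr{K}^{ij}$. Your constant of $2$ is in fact the one consistent with Lemma \ref{lemma:retract} (the paper asserts $1$-Lipschitz, which appears to be a slip), and your extra care about the ambiguity in the $w_k$ only strengthens the argument since a coarse Lipschitz retract suffices for Lemma \ref{lemma:undistorted}.
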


\begin{proof}
We will extend the map linearly on each edge of $\mathcal{SPD}^{1}$. Lemma \ref{lemma:retract} implies the map is $1$-Lipschitz. Definition \ref{defn:retraction_map} implies the map is a retract. 
\end{proof}

\begin{corollary}\label{cor:hij_undistorted}
$H^{ij}$ is an undistorted subgroup of $\Gamma_n$
\end{corollary}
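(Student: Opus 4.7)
The plan is to assemble four ingredients already established in the excerpt and stitch them together by the standard orbit-map argument. First, the immediately preceding lemma records that the action $H^{ij} \acts \mathscr{K}^{ij}$ is both properly discontinuous and co-compact; since $\mathscr{K}^{ij}$ is a connected sub-complex of the locally finite simplicial complex $\mathcal{SPD}(G,\mathscr{H})$, it is a proper geodesic metric space. Hence the Milnor--\u{S}varc lemma (lemma \ref{lemma:mil_sva}) supplies a quasi-isometry $\mathcal{O}_H : H^{ij} \to \mathscr{K}^{ij}$ given by an orbit map $h \mapsto h \cdot x_0$ for any chosen basepoint $x_0 \in \mathscr{K}^{ij}$.

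Second, remark \ref{rem:action} together with the same Milnor--\u{S}varc lemma gives a quasi-isometry $\mathcal{O}_\Gamma : \Gamma_n \to \mathcal{SPD}(G,\mathscr{H})$ through the orbit map with the same basepoint $x_0$; since $\mathcal{SPD}$ is a finite dimensional simplicial complex, passing to the $1$-skeleton only changes large-scale metrics by a bounded multiplicative factor, so we may regard this as a quasi-isometry $\Gamma_n \to \mathcal{SPD}^1(G,\mathscr{H})$. Combining this with the coarse Lipschitz retraction $L_{ij} : \mathcal{SPD}^1 \to \mathscr{K}^{ij}$ produced in the preceding corollary, and invoking lemma \ref{lemma:undistorted} — whose rectifiability hypothesis is immediate because $\mathscr{K}^{ij}$ is a connected sub-complex of a locally finite simplicial complex — we conclude that the inclusion $\iota : \mathscr{K}^{ij} \hookrightarrow \mathcal{SPD}^1$ is a quasi-isometric embedding.

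To finish, I would observe that the inclusion $H^{ij} \hookrightarrow \Gamma_n$ fits into a commutative square with $\mathcal{O}_H$ and $\mathcal{O}_\Gamma$ as vertical arrows and $\iota$ as the bottom arrow: for any $h \in H^{ij}$ one has $\mathcal{O}_\Gamma(h) = h \cdot x_0 = \iota(\mathcal{O}_H(h))$ because the action of $h \in H^{ij}$ coincides with its action as an element of $\Gamma_n$ and $x_0$ lies in $\mathscr{K}^{ij}$. Since both vertical arrows are quasi-isometries and the bottom arrow is a quasi-isometric embedding, the top arrow is also a quasi-isometric embedding by a routine composition-and-inverse argument. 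This is precisely the statement that $H^{ij}$ is undistorted in $\Gamma_n$ (definition \ref{defn:undistorted}).

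The step requiring the most care is the verification that the compositions of quasi-isometries used in the last paragraph genuinely yield a quasi-isometric embedding on the subgroup side, and in particular that the coarse inverse of $\mathcal{O}_\Gamma$ sends the image of $H^{ij}$ uniformly close to $H^{ij}$ itself; this follows from co-compactness of the $\Gamma_n$ action combined with the fact that orbits of $H^{ij}$ lie inside the $H^{ij}$-invariant set $\mathscr{K}^{ij}$. No deeper obstacle is anticipated: the geometric action of $\Gamma_n$ on $\mathcal{SPD}$, the geometric action of $H^{ij}$ on $\mathscr{K}^{ij}$, and the Lipschitz retraction $L_{ij}$ do all the heavy lifting.
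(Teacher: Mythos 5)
Your proposal is correct and follows essentially the route the paper intends: the paper leaves this corollary without an explicit proof precisely because the preceding lemmas (connectedness and $H^{ij}$-invariance of $\mathscr{K}^{ij}$, proper discontinuity and co-compactness of $H^{ij}\acts\mathscr{K}^{ij}$, and the Lipschitz retraction $L_{ij}$) feed directly into \cite[Corollary 10]{HM} and lemma \ref{lemma:undistorted}, which is exactly the orbit-map argument you spell out. Your inline verification of the commutative square is just an unpacking of that cited result, so the two arguments coincide in substance.
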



\subsection{A second class of undistorted subgroups of $\Gamma_n'$} \label{subsec:undistorted2}

In this section we will find a class of subgroups $N^{ij}$ (here, $i \ne j \in \{1, ..., n\}$) of $\Gamma_n'$ which satisfy the following properties:

\begin{enumerate}[wide, labelwidth=!, labelindent=0pt]

\item $N^{ij} < H^{ij}$
\item $N^{ij} \cong \mathbb{Z}$
\item $\langle N^{i_1i_2}, N^{i_3i_4} \rangle \cong \mathbb{Z} \oplus \mathbb{Z}$,
	where $i_1, i_2, i_3, i_4$ are distinct integers from the set $\{1,..., n\}$.
\item $\langle N^{i_1i_2}, N^{i_3i_4} \rangle$ is undistorted in $\Gamma_n$
\end{enumerate}

We will define $N^{ij}$ next. 

\begin{definition}\label{defn:nij}
Fix two distinct integers $i, j \in \{1,..., n\}$. For $p \in \{i, j\} $ fix $x_p \in A_p/\{id_{A_p}\}.\\
f^{ij} := \displaystyle\prod_{\substack{p \ne i, j\\p=1}}^n f_{A_p}^{x_i} f_{A_p}^{x_j} = \prod_{\substack{p \ne i, j\\p=1}}^n f_{A_p}^{x_jx_i}  \in H^{ij}, N^{ij} := \left\langle f^{ij} \right\rangle$

\end{definition}

\begin{lemma}\label{lemma:NijZ}
Consider distinct integers $i, j \in \{1,..., n\}$, then $N^{ij} \cong \mathbb{Z}$.
\end{lemma}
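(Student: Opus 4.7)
The plan is to reduce the statement to a computation inside the direct sum of free products provided by Proposition \ref{prop:hij_sum}. Since $N^{ij}$ is cyclic by construction, it suffices to prove that the generator $f^{ij}$ has infinite order in $\Gamma_n'$, in which case $N^{ij} \cong \mathbb{Z}$.

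First, I would invoke Proposition \ref{prop:hij_sum}, with the roles of $(j_1, j_2)$ played by $(i, j)$. This provides an isomorphism $H^{ij} = \bigoplus_{p \ne i, j} H_p^i * H_p^j \cong \bigoplus_{p \ne i, j} A_i * A_j$, where each factor $H_p^i * H_p^j$ is identified with a copy of $A_i * A_j$ via the isomorphism of Proposition \ref{prop:hij_prod}. Tracking the generator through these isomorphisms, $f^{ij} = \prod_{p \ne i, j} f_{A_p}^{x_i} f_{A_p}^{x_j}$ lies in $H^{ij}$, and its entry in the $p$-th summand is the image of $f_{A_p}^{x_i} f_{A_p}^{x_j}$ in $A_i * A_j$. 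Since the isomorphism of Proposition \ref{prop:hij_prod} sends $f_{A_p}^{v_1} f_{A_p}^{v_2} \mapsto v_2 v_1$, this image is $x_j x_i$. Hence $f^{ij}$ corresponds to the diagonal tuple $(x_j x_i, \ldots, x_j x_i)$, and its $k$-th power corresponds to $((x_j x_i)^k, \ldots, (x_j x_i)^k)$, which is trivial if and only if $(x_j x_i)^k = 1$ in $A_i * A_j$.

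Finally, I would show that $x_j x_i$ has infinite order in $A_i * A_j$. Since $x_i$ and $x_j$ are nontrivial elements of $A_i$ and $A_j$ respectively, the element $x_j x_i$ is a cyclically reduced word of length two in the free product $A_i * A_j$. By the standard structure of torsion in free products (every torsion element of a free product is conjugate to a torsion element of one of the factors, hence has cyclically reduced length at most one), $x_j x_i$ cannot have finite order. Therefore $(x_j x_i)^k \neq 1$ for every $k \neq 0$, so $(f^{ij})^k \neq \mathrm{id}$, and $N^{ij} \cong \mathbb{Z}$. The only mild bookkeeping issue is remembering that the isomorphism of Proposition \ref{prop:hij_prod} reverses the order of the letters; once that is accounted for, the argument is essentially a translation through already-established isomorphisms.
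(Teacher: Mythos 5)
Your proof is correct, but it follows a different route from the paper's. The paper argues geometrically: it lets $(f^{ij})^m$ act on the type-$X$ graph of groups $\mathbf{X}$ with vertex groups $A_1,\dots,A_n$ and observes that the distance in the Bass--Serre tree between the vertices stabilized by $A_i$ and by $A_k$ ($k\ne i,j$) grows from $2$ to $4m+2$, which forces $(f^{ij})^m\ne \mathrm{id}$ for $m\ne 0$. You instead stay entirely inside the algebraic structure already established: you identify $H^{ij}$ with $\bigoplus_{p\ne i,j} A_i * A_j$ via Propositions \ref{prop:hij_prod} and \ref{prop:hij_sum}, track $f^{ij}$ to the diagonal element $(x_jx_i,\dots,x_jx_i)$ (correctly accounting for the order reversal $f_{A_p}^{v_1}\cdots f_{A_p}^{v_r}\mapsto v_r\cdots v_1$), and then appeal to the classical fact that a cyclically reduced word of length two in a free product has infinite order. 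Both arguments ultimately rest on the same combinatorial fact --- that $(x_jx_i)^m$ does not reduce --- but yours has the advantage of making the dependence on Propositions \ref{prop:hij_prod} and \ref{prop:hij_sum} explicit and of avoiding any further computation in the deformation space, while the paper's version is self-contained at the level of tree distances and rehearses the technique (comparing vertex distances in Bass--Serre trees) that it reuses in the subsequent lemma showing $\left(f^{i_1i_2}\right)^m = \left(f^{i_3i_4}\right)^l$ implies $m=l=0$. Either proof is acceptable.
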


\begin{proof}
We will prove that $\left(f^{ij}\right)^m = id \implies m = 0$. Consider a graph of groups $\mathbf{X}$ such that the underlying graph has $1$ vertex of valence $n$ and $n$ vertices of valence $1$; and the vertex groups are $\{A_1, A_2, ..., A_n\}$.

By definition of $f^{ij}$, there is a representation of $(f^{ij})^m(\mathbf{X})$, such that the vertex groups are given by $\{A_i, A_j, (x_jx_i)^mA_k(x_jx_i)^{-m}\vert k \ne i, j \}$. 

Let us fix a $k \ne i, j$. In the Bass-Serre tree of $\mathbf{X}$, the distance between the vertex labeled by $A_i$ and $A_k$ is $2$. However, in the Bass-Serre tree of $(f^{ij})^m(\mathbf{X})$ the distance between the vertex labeled by $A_i$ and $A_k$ is $4m + 2$. So, $m \ne 0 \implies (f^{ij})^m \ne id$. Hence, $\la f^{ij} \ra = N^{ij} = \mathbb{Z}$.
\end{proof}
\begin{lemma}
If $i_1, i_2, i_3, i_4 \in \{1, ..., n\}$ are distinct integers, 
then $f^{i_1i_2}$ commutes with $f^{i_3i_4}$.
\end{lemma}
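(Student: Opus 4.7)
My plan is to exhibit representative automorphisms of the outer classes $f^{i_1i_2}$ and $f^{i_3i_4}$ that commute on the nose in $\mathsf{Aut}(G_n)$, from which the commutativity in $\mathsf{Out}(G_n)$ follows immediately.

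First I would unpack the definition of $f^{i_1i_2}$ to observe that its natural representative $F^{i_1i_2} := \prod_{p \ne i_1,i_2} F_{A_p}^{w_{12}}$, with $w_{12} := x_{i_2}x_{i_1}$, acts on $G_n$ by fixing $A_{i_1} \cup A_{i_2}$ pointwise and by conjugating each remaining $A_p$ by $w_{12}$. I would then replace $F^{i_1i_2}$ by the better representative $G^{i_1i_2} := \mathrm{conj}_{w_{12}^{-1}} \circ F^{i_1i_2}$ of the same outer class $f^{i_1i_2}$; a direct check shows that $G^{i_1i_2}$ is the identity on each $A_p$ with $p \ne i_1,i_2$, and conjugates $A_{i_1} \cup A_{i_2}$ by $w_{12}^{-1}$. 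I would define $G^{i_3i_4}$ analogously, using $w_{34} := x_{i_4}x_{i_3}$.

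Because the index sets $\{i_1,i_2\}$ and $\{i_3,i_4\}$ are disjoint, $G^{i_1i_2}$ is non-trivial only on $A_{i_1} \cup A_{i_2}$, which is fixed pointwise by $G^{i_3i_4}$, and symmetrically. In particular, since $w_{34} \in A_{i_3}\free A_{i_4}$ and $w_{12} \in A_{i_1}\free A_{i_2}$, we have $G^{i_1i_2}(w_{34}) = w_{34}$ and $G^{i_3i_4}(w_{12}) = w_{12}$. Checking the composition on a generator $a \in A_p$ in the three cases $p \in \{i_1,i_2\}$, $p \in \{i_3,i_4\}$, and $p \notin \{i_1,i_2,i_3,i_4\}$, one finds $G^{i_1i_2} G^{i_3i_4}(a) = G^{i_3i_4} G^{i_1i_2}(a)$ in each case (the common value is $w_{12}^{-1} a w_{12}$, $w_{34}^{-1} a w_{34}$, or $a$, respectively). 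By the universal property of the free product this extends to all of $G_n$, so $G^{i_1i_2}$ and $G^{i_3i_4}$ commute in $\mathsf{Aut}(G_n)$, and hence $f^{i_1i_2}$ commutes with $f^{i_3i_4}$ in $\mathsf{Out}(G_n)$.

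The main subtlety, rather than a real obstacle, is the choice of representative. The raw generators $F^{i_1i_2}$ and $F^{i_3i_4}$ do not commute in $\mathsf{Aut}(G_n)$; a direct calculation shows they differ by conjugation by the commutator $[w_{12},w_{34}]$, so Lemma \ref{lemma:diff_base_commute} cannot be applied factor by factor (even when the base indices $p, q$ of two factors differ, the exponents $w_{12}, w_{34}$ can hit the base of the other factor). Twisting $F^{i_1i_2}$ by $\mathrm{conj}_{w_{12}^{-1}}$ absorbs exactly this commutator obstruction and makes the commutation visible at the automorphism level, which is why $G^{ij}$ is the right representative to work with.
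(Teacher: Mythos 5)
Your proof is correct and is essentially the paper's argument in different packaging: the paper uses the relation $\prod_{k=1}^{n} f_{A_k}^{x_{i}} = \mathrm{id}$ to rewrite $f^{ij}$ as $\bigl(f_{A_i}^{x_i}\bigr)^{-1}\bigl(f_{A_j}^{x_i}\bigr)^{-1}\bigl(f_{A_i}^{x_j}\bigr)^{-1}\bigl(f_{A_j}^{x_j}\bigr)^{-1}$, which is precisely your inner-twisted representative $G^{ij}$ supported on $A_i\cup A_j$, and both arguments then conclude from disjointness of supports. Your closing observation that the raw representatives already satisfy $F^{i_1i_2}F^{i_3i_4}=\mathrm{conj}_{[w_{12},w_{34}]}\circ F^{i_3i_4}F^{i_1i_2}$ is also correct and would by itself give an even shorter proof, since it exhibits the two compositions as representatives of the same outer class.
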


\begin{proof} 
If an automorphism conjugates every element of the group $G_n$ by a fixed element, then the automorphism represents the outer class of the identity automorphism. So,
\begin{gather*}\prod_{k = 1}^n f_{A_k}^{x_i} = id_{\Gamma_n} 
\implies \prod_{\substack{k \ne i, j\\k=1}}^n f_{A_k}^{x_i} = 
\left(f_{A_i}^{x_i}\right)^{-1}\left(f_{A_j}^{x_i}\right)^{-1}\\
\implies f^{ij} =\prod_{\substack{k \ne i, j\\k = 1}}^n f_{A_k}^{x_i} f_{A_k}^{x_j}
=  \left(\prod_{\substack{k \ne i, j\\k = 1}}^n f_{A_k}^{x_i} \right) \left(\prod_{\substack{k \ne i, j\\k = 1}}^n f_{A_k}^{x_j}\right)\\
=  \left(f_{A_i}^{x_i}\right)^{-1}  \left(f_{A_j}^{x_{i}}\right)^{-1}  \left(f_{A_i}^{x_j}\right)^{-1}  \left(f_{A_j}^{x_j}\right)^{-1}
\end{gather*}

If $i_1, i_2, i_3, \text{ and } i_4$ are all distinct numbers, then using an argument similar to the one used in proving lemma \ref{lemma:diff_base_commute} we see that, $f^{i_1i_2}$ and $f^{i_3i_4}$ commute.

\begin{notation}
Consider the graph of groups, $\mathbf{X}$, from notation \ref{not:X}, then $\left(f^{i_1i_2}\right)^m \left(f^{i_3i_4}\right)^l(\mathbf{X})$ can be represented by the following graph of groups:

\begin{center}

\begin{tikzpicture} \begin{scriptsize}
\draw (0, 2.8) -- (0, -2.8);
\draw(-2.8, 0) -- (2.8, 0);
\draw(-2.6, 1.1) -- (2.6, -1.1);
\draw(2.6, 1.1) -- (-2.6, -1.1);
\draw(1.1, -2.6) -- (-1.1, 2.6);
\draw(1.1, 2.6) -- (-1.1, -2.6);

\draw [fill=black] (-1.1, 2.6) circle (1.2pt) node[anchor=east]{$A_{{i_1} - 1}$};
\draw [fill=black] (0,2.8) circle (1.2pt) node[anchor=south]{$(x_{i_1}x_{i_2})^mA_{i_1}(x_{i_1}x_{i_2})^{-m}$};
\draw [fill=black] (1.1, 2.6) circle (1.2pt) node[anchor=west]{$A_{{i_1} + 1}$};

\draw [fill=black] (2.6, 1.1) circle (1.2pt) node[anchor=west]{$A_{{i_3} - 1}$};
\draw [fill=black] (2.8, 0) circle (1.2pt) node[anchor=west]{$(x_{i_3}x_{i_4})^lA_{i_3}(x_{i_3}x_{i_4})^{-l}$};
\draw [fill=black] (2.6, -1.1) circle (1.2pt) node[anchor=west]{$A_{{i_3} + 1}$};

\draw [fill=black] (1.1, -2.6) circle (1.2pt) node[anchor=west]{$A_{{i_2} - 1}$};
\draw [fill=black] (0, -2.8) circle (1.2pt) node[anchor=north]{$(x_{i_1}x_{i_2})^mA_{i_2}(x_{i_1}x_{i_2})^{-m}$};
\draw [fill=black] (-1.1, -2.6) circle (1.2pt) node[anchor=east]{$A_{{i_2} + 1}$};

\draw [fill=black] (-2.6, -1.1) circle (1.2pt) node[anchor=east]{$A_{{i_4} - 1}$};
\draw [fill=black] (-2.8, 0) circle (1.2pt) node[anchor=east]{$(x_{i_3}x_{i_4})^lA_{i_4}(x_{i_3}x_{i_4})^{-l}$};
\draw [fill=black] (-2.6, 1.1) circle (1.2pt) node[anchor=east]{$A_{{i_4} + 1}$};

\draw [fill=black] (1.7, 2.1) circle (.5pt);
\draw [fill=black] (1.9, 1.9) circle (.5pt);
\draw [fill=black] (2.1, 1.7) circle (.5pt);
\draw [fill=black] (1.7, -2.1) circle (.5pt);
\draw [fill=black] (1.9, -1.9) circle (.5pt);
\draw [fill=black] (2.1, -1.7) circle (.5pt);
\draw [fill=black] (-1.7, -2.1) circle (.5pt);
\draw [fill=black] (-1.9, -1.9) circle (.5pt);
\draw [fill=black] (-2.1, -1.7) circle (.5pt);
\draw [fill=black] (-1.7, 2.1) circle (.5pt);
\draw [fill=black] (-1.9, 1.9) circle (.5pt);
\draw [fill=black] (-2.1, 1.7) circle (.5pt);
\end{scriptsize} \end{tikzpicture}

\end{center}
\end{notation}
\end{proof}

\begin{lemma}
If $i_1, i_2, i_3, i_4 \in \{1, ..., n\}$ are distinct numbers and 
$\left(f^{i_1i_2}\right)^m = \left(f^{i_3i_4}\right)^l$, then $m = l = 0$.
\end{lemma}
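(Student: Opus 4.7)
My plan is to work at the level of automorphism representatives rather than directly with the tree action, since an explicit inner-automorphism comparison pins down everything cleanly. Fix the natural lifts $\Phi_1, \Phi_2 \in \mathsf{Aut}(G_n)$ of $(f^{i_1 i_2})^m$ and $(f^{i_3 i_4})^l$ respectively: by definition \ref{defn:FIJ}, $\Phi_1$ fixes every element of $A_{i_1} \cup A_{i_2}$ and conjugates each $A_k$ (for $k \ne i_1, i_2$) by $w_1 := (x_{i_2} x_{i_1})^m$, while $\Phi_2$ fixes $A_{i_3} \cup A_{i_4}$ elementwise and conjugates each $A_k$ (for $k \ne i_3, i_4$) by $w_2 := (x_{i_4} x_{i_3})^l$. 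If the outer classes coincide, there exists $h \in G_n$ with $\Phi_1 = i_h \circ \Phi_2$, where $i_h$ denotes conjugation by $h$.

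Next I evaluate this identity on each of the free factors $A_{i_1}, A_{i_2}, A_{i_3}, A_{i_4}$. Applying both sides to $A_{i_1}$ gives $A_{i_1} = (h w_2) A_{i_1} (h w_2)^{-1}$, so $h w_2$ normalizes $A_{i_1}$; since each free factor of a free product is self-normalizing in $G_n$ (a standard consequence of Bass--Serre theory applied to the tree $T_{\mathbf{X}}$), this forces $h w_2 \in A_{i_1}$. The analogous computation on $A_{i_2}$ gives $h w_2 \in A_{i_2}$, and $A_{i_1} \cap A_{i_2} = \{1\}$, whence $h = w_2^{-1}$. Repeating the computation on $A_{i_3}$ yields $w_1 A_{i_3} w_1^{-1} = w_2^{-1} A_{i_3} w_2$, so $w_2 w_1 \in A_{i_3}$; evaluating on $A_{i_4}$ gives $w_2 w_1 \in A_{i_4}$; and intersecting shows $w_2 w_1 = 1$, i.e.\ $w_1 = w_2^{-1}$.

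It remains to handle the identity $(x_{i_2} x_{i_1})^m = (x_{i_3}^{-1} x_{i_4}^{-1})^{l}$ in $G_n = A_1 \ast \cdots \ast A_n$. The left-hand side is a reduced word alternating between letters of $A_{i_2}$ and $A_{i_1}$ of normal-form length $2|m|$, while the right-hand side is a reduced word alternating between letters of $A_{i_3}$ and $A_{i_4}$ of length $2|l|$. By the normal form theorem for free products, two reduced words whose letters come from disjoint collections of free factors can be equal only if both are empty. Since each $x_{i_p}$ is nontrivial, this forces $m = l = 0$.

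The only nontrivial ingredient is the self-normalization of the factors $A_{i_p}$ in $G_n$, which is immediate from Bass--Serre theory (a nontrivial element of $A_{i_p}$ fixes a unique vertex in the Bass--Serre tree of the free product splitting, so any element conjugating $A_{i_p}$ into itself must stabilize that vertex). Modulo this, the argument is the natural two-factor upgrade of the length-doubling argument already used in lemma \ref{lemma:NijZ}, and I do not anticipate any subtle obstacle.
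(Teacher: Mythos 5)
Your proof is correct, but it takes a genuinely different route from the paper. The paper argues geometrically: it writes down the graphs of groups $\left(f^{i_1i_2}\right)^m(\mathbf{X})$ and $\left(f^{i_3i_4}\right)^l(\mathbf{X})$ explicitly and then distinguishes the corresponding Bass--Serre trees by measuring distances between specific labeled vertices, invoking the uniqueness (remark \ref{rem:unique_min}) of the $A_{i_1}*A_{i_2}$- and $A_{i_3}*A_{i_4}$-minimal subtrees; since equal outer automorphisms act identically on $\mathcal{SPD}$, exhibiting distinct image trees suffices. You instead work entirely at the level of automorphism representatives: you write the hypothetical equality as $\Phi_1 = i_h\circ\Phi_2$, pin down $h=w_2^{-1}$ by evaluating on $A_{i_1}$ and $A_{i_2}$ and using self-normalization of free factors together with $A_{i_1}\cap A_{i_2}=\{1\}$, deduce $w_1=w_2^{-1}$ from $A_{i_3}$ and $A_{i_4}$, and finish with the normal form theorem, since $(x_{i_2}x_{i_1})^m$ and $(x_{i_3}^{-1}x_{i_4}^{-1})^l$ live in free factors $A_{i_1}*A_{i_2}$ and $A_{i_3}*A_{i_4}$ that intersect trivially. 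The two arguments rest on the same underlying structure theory (self-normalization of factors is itself a Bass--Serre fact, as you note), but yours is more self-contained and identifies the inner twist exactly, whereas the paper's stays inside the deformation-space framework it uses everywhere else; your approach also sidesteps the paper's somewhat informal distance computations. One small point to keep explicit: the argument needs each $A_{i_p}$ nontrivial (guaranteed here since $x_{i_p}\in A_{i_p}-\{id\}$ is fixed in the definition of $f^{ij}$), as self-normalization fails for a trivial factor.
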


\begin{proof}
Recall that we have denoted a graph of groups with underlying graph having $1$ vertex of valence $n$ and $n$ vertices of valence $1$ as a graph of groups of type $X$. Let, $\mathbf{X}$ be a graph of groups of type $X$ such that the non trivial vertex groups are $\{A_1,..., A_n\}$.
Since, $i_1, i_2, i_3, i_4 \in \{1, ..., n\}$ are distinct numbers, without loss of generality assume that $1<i_1<i_2<i_3<i_4<n$. So, $x_{i_1}, x_{i_2}, x_{i_3}$, and $x_{i_4}$ are all distinct elements of the group $G_n$. Also, recall the identity $f^{ij} = \left(f_{A_i}^{x_i}\right)^{-1}  \left(f_{A_j}^{x_{i}}\right)^{-1}  \left(f_{A_i}^{x_j}\right)^{-1}  \left(f_{A_j}^{x_j}\right)^{-1}$.

Then
\begin{enumerate}[wide, labelwidth=!, labelindent=0pt]
\item $(f^{i_1i_2})^m(\mathbf{X})$ is a graph of groups of type $X$ with vertex groups \\
$\{A_1,...,\
A_{i_1-1}, 
(x_{i_2}x_{i_1})^{-m}A_{i_1}(x_{i_2}x_{i_1})^m,\ 
A_{i_1+1},...,\
A_{i_2-1},\\
(x_{i_2}x_{i_1})^{-m}A_{i_2}(x_{i_2}x_{i_1})^m,
A_{i_2+1},...,\
A_n\}$

\item $(f^{i_3i_4})^l(\mathbf{X})$ is a graph of groups of type $X$ with vertex groups \\
$\{A_1,...,\
A_{i_3-1}, 
(x_{i_4}x_{i_3})^{-l}A_{i_3}(x_{i_4}x_{i_3})^l,\ 
A_{i_3+1},...,\
A_{i_4-1},\\
(x_{i_4}x_{i_3})^{-l}A_{i_4}(x_{i_4}x_{i_3})^l,
A_{i_4+1},...,\
A_n\}$
\end{enumerate}
 We will show that $(f^{i_1i_2})^m \ne (f^{i_3i_4})^l$ in $\Gamma_n$ by showing that  $(f^{i_1i_2})^m(T_\mathbf{X}) \ne (f^{i_3i_4})^l(T_\mathbf{X})$ in $\mathcal{SPD}$, where $T_\mathbf{X}$ is the Bass-Serre tree of $\mathbf{X}$.  
 
The vertex labeled by $A_{i_1}$ is at a distance of $2$ from the vertex labeled by $A_1$ in $ (f^{i_3i_4})^l(T_\mathbf{X})$; whereas the vertex labeled by $(x_{i_2}x_{i_1})^{-m}A_{i_1}(x_{i_2}x_{i_1})^m$ is at a distance $2$ from the vertex labeled by $A_1$ in $(f^{i_1i_2})^m(T_\mathbf{X})$. By uniqueness of $A_{i_1}*A_{i_2}$-minimal subtree  the vertex labeled by $A_{i_1}$ cannot be at a distance $2$ from the vertex labeled by $A_1$ in $(f^{i_1i_2})^l(T_\mathbf{X})$. Hence, $(f^{i_1i_2})^m = (f^{i_3i_4})^l \implies m = 0$. Similarly, The vertex labeled by $A_{i_3}$ is at a distance of $2$ from the vertex labeled by $A_3$ in $ (f^{i_1i_2})^m(T_\mathbf{X})$; whereas the vertex labeled by $(x_{i_4}x_{i_3})^{-l}A_{i_3}(x_{i_4}x_{i_3})^l$ is at a distance $2$ from the vertex labeled by $A_3$ in $(f^{i_3i_4})^l(T_\mathbf{X})$. By uniqueness of $A_{i_3}*A_{i_4}$-minimal subtree the vertex labeled by $A_{i_3}$ cannot be at a distance $2$ from the vertex labeled by $A_3$ in $(f^{i_3i_4})^l(T_\mathbf{X})$. Hence, $(f^{i_1i_2})^m = (f^{i_3i_4})^l \implies l = 0$
 \end{proof}

\begin{corollary} \label{cor:nij_zerothick}
$\langle N^{i_1i_2}, N^{i_3i_4} \rangle \cong \mathbb{Z} \oplus \mathbb{Z}$,
	where $i_1, i_2, i_3, i_4$ are all different integers.
\end{corollary}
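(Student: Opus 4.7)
The plan is to combine the two immediately preceding lemmas to conclude the isomorphism. By the first of these lemmas, $f^{i_1i_2}$ and $f^{i_3i_4}$ commute in $\Gamma_n'$, so every element of the subgroup $\langle N^{i_1i_2}, N^{i_3i_4}\rangle$ can be put in the normal form $(f^{i_1i_2})^m (f^{i_3i_4})^l$ for some integers $m,l$. This gives a well-defined surjective homomorphism
\begin{equation*}
\Psi: \mathbb{Z} \oplus \mathbb{Z} \twoheadrightarrow \langle N^{i_1i_2}, N^{i_3i_4}\rangle, \qquad (m,l) \mapsto (f^{i_1i_2})^m (f^{i_3i_4})^l,
\end{equation*}
and the goal reduces to showing that $\Psi$ has trivial kernel.

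Next I would verify injectivity: suppose $(f^{i_1i_2})^m (f^{i_3i_4})^l = \mathrm{id}$ in $\Gamma_n'$. Rearranging, $(f^{i_1i_2})^m = (f^{i_3i_4})^{-l}$, and the previous lemma (which shows $(f^{i_1i_2})^m = (f^{i_3i_4})^l \implies m = l = 0$) immediately forces $m = 0$ and $l = 0$. Hence $\ker \Psi = 0$ and $\Psi$ is an isomorphism, giving $\langle N^{i_1i_2}, N^{i_3i_4}\rangle \cong \mathbb{Z} \oplus \mathbb{Z}$.

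There is no genuine obstacle here; the corollary is really just the packaging step for the two preceding lemmas. The one thing worth being explicit about is why the preceding equality lemma applies in the rearranged form with $-l$ in place of $l$: this is automatic since $(f^{i_3i_4})^{-l} = (f^{i_3i_4})^{l'}$ with $l' = -l \in \mathbb{Z}$, so the conclusion $m = l' = 0$ translates to $m = l = 0$ without change. The commutativity lemma is what allows the normal form to exist in the first place, and the equality lemma is what guarantees no nontrivial relation; together they realize $\langle N^{i_1i_2}, N^{i_3i_4}\rangle$ as the direct sum of two infinite cyclic groups.
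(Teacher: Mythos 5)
Your proposal is correct and matches the paper's intent exactly: the paper states this corollary without a separate proof, treating it as the immediate consequence of the commutativity lemma (which gives the surjection from $\mathbb{Z} \oplus \mathbb{Z}$) and the lemma that $(f^{i_1i_2})^m = (f^{i_3i_4})^l$ forces $m = l = 0$ (which gives injectivity). Your explicit handling of the sign $l' = -l$ is a small but worthwhile clarification that the paper leaves implicit.
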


Our next goal is to prove that the distance between $\mathbf{X}$ and $\left(f^{i_1i_2}\right)^m \left(f^{i_3i_4}\right)^l(\mathbf{X})$ is at least $2(m + l)$ in $\mathcal{SPD}$.

Consider a non trivial vertex stabilizer subgroup $H \in \mathscr{H}$. We will define a function $g_{H}^{i_1i_2}$ from the $0$-skeleton of $\mathcal{SPD}$ to the real numbers. For a given tree $T \in \mathcal{SPD}$, the function will count the number of vertices labeled by conjugates of $A_{i_1}$ and $A_{i_2}$ on the $x_{i_1}x_{i_2}$-axis between two points on $T$ as described in the following definition.

\begin{definition}
Consider $T \in \mathcal{SPD}$. For $H \in \mathscr{H}$ let $g_H^{i_1i_2}(T)$ be the number of vertices labeled by subgroups of $A_{i_1}\free A_{i_2}$ which are conjugates of $A_{i_1}$ and $A_{i_2}$ on the $x_{i_1}x_{i_2}$-axis of $T$ between the following two points. \begin{enumerate}[wide, labelwidth=!, labelindent=0pt]
\item The closest point to the $x_{i_1}x_{i_2}$-axis in $T$ from a vertex labeled by the subgroup $H$. 
\item The vertex labeled by $(x_{i_2}x_{i_1})^mA_{i_2}(x_{i_2}x_{i_1})^{-m} $ on $T$.
\end{enumerate}\begin{gather*} g_{H}^{i_1i_2}: \mathcal{SPD}^{0}(G, \mathscr{H}) \rightarrow \mathbb{R}\\
 T \mapsto g_H^{i_1i_2}(T) \end{gather*} 
\end{definition}

\begin{lemma}
If $d_{\mathcal{SPD}}(T_1, T_2) = 1 \text{ and } k \ne i_1, i_2 \text{ then } \vert g_{A_k}^{i_1i_2}(T_1) - g_{A_k}^{i_1i_2}(T_2)\vert \leq 1$.
\end{lemma}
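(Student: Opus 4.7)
The plan is to reduce immediately to the case where $T_2$ is obtained from $T_1$ by a single admissible collapse of an edge orbit; this loses no generality since $d_{\mathcal{SPD}}(T_1,T_2)=1$ means $T_1$ and $T_2$ are related by one collapse--expand move. I will then analyze how each of the two endpoints in the definition of $g_{A_k}^{i_1i_2}$ can move under that collapse.

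First I would invoke Remark \ref{rem:unique_min}: the $A_{i_1}\free A_{i_2}$-minimal subtree is unique up to $G$-equivariant homeomorphism across all trees in the deformation space. This already pins down the combinatorial structure of the $x_{i_1}x_{i_2}$-axis, together with the arrangement of the vertices on it stabilised by conjugates of $A_{i_1}$ and $A_{i_2}$, and in particular locates the target vertex $(x_{i_2}x_{i_1})^{m}A_{i_2}(x_{i_2}x_{i_1})^{-m}$ in the same combinatorial position on the axis in both $T_1$ and $T_2$. Moreover, an edge of the axis itself cannot be collapsed admissibly, since the two endpoints carry non-trivial finite vertex stabilizers from $\mathscr{H}$ whose fusion would produce a vertex stabilizer not lying in $\mathscr{H}$ (by the constraints of Definition \ref{defn:def_sp}). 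Consequently the $x_{i_1}x_{i_2}$-axis embeds into $T_2$ in exactly the same combinatorial way as it does into $T_1$, so the second endpoint used in the definition of $g_{A_k}^{i_1i_2}$ does not move at all.

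The only remaining ingredient is the nearest-point projection onto the axis of the vertex stabilised by $A_k$. The key geometric claim is then: \emph{under a single admissible collapse, this projection can shift by at most one axis-vertex}. I would prove this by looking at the geodesic in $T_1$ from the $A_k$-vertex to its axis projection, locating the collapsed edge orbit relative to this geodesic, and using the fact that the quotient map $T_1\to T_2$ is an isometry on the axis and is distance non-increasing on all arcs transverse to it. The only case with any content is when the collapsed edge orbit is adjacent to the axis on the same side as the branch carrying $A_k$; even then, the quotient identifies at most one off-axis vertex with an on-axis vertex, so the new projection is either the old one or one of its two axis-neighbours, one of which lies on the same side as the target $(x_{i_2}x_{i_1})^{m}A_{i_2}(x_{i_2}x_{i_1})^{-m}$ and one on the opposite side.

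Combining the two observations, $g_{A_k}^{i_1i_2}$ counts vertices of a fixed type on the axis between two points, the target endpoint does not move, and the initial endpoint moves by at most one axis-vertex; hence $|g_{A_k}^{i_1i_2}(T_1)-g_{A_k}^{i_1i_2}(T_2)|\leq 1$. The main obstacle will be the careful case analysis for the projection-shift claim, particularly ruling out pathological configurations in which an admissible collapse at a valence-three vertex with trivial stabilizer adjacent to the axis could \emph{a priori} relocate the projection across a full fundamental domain of $A_{i_1}\free A_{i_2}$; this is handled by observing that such a collapse still factors through a single edge identification and hence only removes one edge from the path joining $A_k$ to its projection.
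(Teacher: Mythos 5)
Your proposal follows essentially the same route as the paper's proof: reduce to collapse moves, use the uniqueness of the $A_{i_1}\free A_{i_2}$-minimal subtree (remark \ref{rem:unique_min}) to fix the axis and the target vertex, and show that the nearest-point projection of the $A_k$-vertex onto the axis shifts by at most one step under a single collapse, which is exactly the paper's claim $\vert r-s\vert\leq 1$. One small caveat: your justification that no axis edge can be collapsed because both of its endpoints carry non-trivial stabilizers is not accurate in general (the minimal subtree may contain trivially stabilized vertices), but the conclusion you need there already follows from the uniqueness statement you cite.
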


\begin{proof}

The idea of the proof is derived from the knowledge of uniqueness (up to $A_{i_1}*A_{i_2}$ equivariant homeomorphism of topological space) of $A_{i_1}*A_{i_2}$-minimal subtree inside every tree of $\mathcal{SPD}$, lemma \ref{lemma:unique_min}.

Without loss of generality, let us assume that 

\begin{enumerate}[wide, labelwidth=!, labelindent=0pt]

\item $T_2$ is obtained from $T_1$ by applying a series of collapse moves on its edge orbits.
\item $v_{T_j}$ is the vertex on the $x_{i_1}x_{i_2}$-axis of $T_j$ closest to the vertex labeled by $A_k$, where $j \in \{1, 2\}$.
\item The vertex whose stabilizer subgroup is a conjugate of $A_{i_j}$ and is closest to $v_{T_1}$ on the $x_{i_1}x_{i_2}$-axis of $T_1$ is labeled by $(x_{i_2}x_{i_1})^{s}A_{i_j}(x_{i_2}x_{i_1})^{-{s}}$ (or $(x_{i_1}x_{i_2})^{s}A_{i_j}(x_{i_1}x_{i_2})^{-{s}}$). Here $j \in \{1, 2\}$. 

\end{enumerate}

 If $v_{T_1}$ is part of two different fundamental domains of the $x_{i_1}x_{i_2}$-axis, then we choose the fundamental domain closer to $(x_{i_2}x_{i_1})^mA_{i_2}(x_{i_2}x_{i_1})^{-m}$ and its vertex labeling.

Label the vertex whose stabilizer subgroup is a conjugate of $A_{i_j}$ and is closest to $v_{T_2}$ on the $x_{i_1}x_{i_2}$-axis of $T_2$ by $(x_{i_2}x_{i_1})^{r}A_{i_j}(x_{i_2}x_{i_1})^{-r}$. Here $j \in \{1, 2\}$.

If we get $T_2$ by equivariantly collapsing edges of $T_1$, then
$\vert r - s\vert \leq 1$

\begin{center}
\begin{tikzpicture} \begin{scriptsize}
\draw (-1,0) -- (1, 0);
\draw (0, 0) -- (0, -1);

\draw (5, 0) -- (6, 0);
\draw (6, 0) -- (6, -1);

\draw [fill=black] (-1, 0) circle (1.2pt);
\draw [fill=black] (1, 0) circle (1.2pt);
\draw [fill=black] (0, -1) circle (1.2pt) node[anchor = east]{$A_k$};

\draw [fill=black] (6, -1) circle (1.2pt) node[anchor=east]{$A_k$};
\draw [fill=black] (6, 0) circle (1.2pt);
\draw [fill=black] (5, 0) circle (1.2pt);

\draw [<-] (4, -.5) to (2, -.5);
\draw node at (3, -.5) [anchor=south]{collapse};
\draw node at (0, -2) [anchor=south]{$T_1$};
\draw node at (6, -2) [anchor=south]{$T_2$};

\end{scriptsize} \end{tikzpicture} 
\end{center}

Hence, $ \vert g_{A_k}^{i_1i_2}(T_1) - g_{A_k}^{i_1i_2}(T_2)\vert  \leq 1$.
\end{proof}

\begin{corollary}
$g_{A_k}^{i_1i_2}$ can be continuously extended to all of $\mathcal{SPD}$, so that it is a Lipschitz map.
\end{corollary}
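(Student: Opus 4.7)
Proof proposal. The plan is to use the standard affine (barycentric) extension across each simplex of $\mathcal{SPD}(G,\mathscr{H})$. For a point $p$ lying in a closed simplex with vertex set $\{T_0,\ldots,T_d\}\subset\mathcal{SPD}^0$ and barycentric coordinates $(\lambda_0,\ldots,\lambda_d)$, set
\[
g_{A_k}^{i_1i_2}(p)\ :=\ \sum_{l=0}^d \lambda_l\, g_{A_k}^{i_1i_2}(T_l).
\]
This definition is consistent across shared faces, since a face corresponds to setting some coordinates $\lambda_l$ equal to zero, so the formulas agree on overlaps and glue to a well-defined continuous function on all of $\mathcal{SPD}$.

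Next I would upgrade the previous lemma from the $0$-skeleton to the $1$-skeleton. If $T_1,T_2\in\mathcal{SPD}^0$ are the endpoints of an edge of $\mathcal{SPD}$, then the linearity of the extension combined with $|g_{A_k}^{i_1i_2}(T_1)-g_{A_k}^{i_1i_2}(T_2)|\leq 1$ from the previous lemma shows that the restriction of $g_{A_k}^{i_1i_2}$ to that edge is $1$-Lipschitz; hence so is its restriction to $\mathcal{SPD}^1$.

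The step requiring the most care is passing from this $1$-skeleton estimate to a global Lipschitz constant on $\mathcal{SPD}$. Within any closed simplex $\sigma$, the restriction of $g_{A_k}^{i_1i_2}$ is an affine map on a Euclidean simplex of dimension at most $2n-3$ (lemma \ref{lemma:count}); its Lipschitz constant on $\sigma$ is therefore bounded in terms of the values at the vertices of $\sigma$ and the dimension, which yields a uniform constant $C=C(n)$ independent of $\sigma$. Given any two points $p,q\in\mathcal{SPD}$, local finiteness of $\mathcal{SPD}$ implies that a geodesic from $p$ to $q$ decomposes into finitely many straight subsegments, each contained in a single closed simplex; summing the simplex-wise Lipschitz estimates along this decomposition produces
\[
|g_{A_k}^{i_1i_2}(p)-g_{A_k}^{i_1i_2}(q)|\ \leq\ C\cdot d_{\mathcal{SPD}}(p,q),
\]
which is the desired global Lipschitz bound.
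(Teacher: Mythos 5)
Your proposal is correct and follows essentially the same route as the paper: the paper simply extends $g_{A_k}^{i_1i_2}$ affinely over each simplex using barycentric coordinates and asserts that the resulting extension is Lipschitz, which is exactly your construction with the per-simplex gradient bound and path-decomposition details spelled out. (Note only that the paper later invokes this map as $1$-Lipschitz, whereas your argument yields a constant $C(n)$ depending on the dimension; this does not affect the statement as written, nor the subsequent distance lower bounds up to a multiplicative constant.)
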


\begin{proof}
This is a result of the definition of a simplicial complex. Any point in a simplicial complex, which is not in the $0$-skeleton, is in the interior of a unique simplex. Any point in $\mathcal{SPD}$, which is not in the $0$-skeleton can be expressed as a linear combination of the points in the $0$-skeleton of the simplex containing them. Hence, we can extend $g_{A_k}^{i_1i_2}$ linearly, and the resulting extension is Lipschitz.
\end{proof}

\begin{definition}
We will abuse notation to denote the extension of $g_{H}^{i_1i_2}$ to all of $\mathcal{SPD}$ by
$ g_{H}^{i_1i_2}: \mathcal{SPD}(G, \mathscr{H}) \rightarrow \mathbb{R}$.
\end{definition}

\begin{lemma}
$g_{A_k}^{i_1i_2}(T_\mathbf{X}) = 2m$ and $g_{A_k}^{i_1i_2}(\left(f^{i_1i_2}\right)^m(T_\mathbf{X})) = 0$.
\end{lemma}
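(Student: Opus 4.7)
The plan is to identify explicitly the axis $L$ of $x_{i_2}x_{i_1}$ inside $T_\mathbf{X}$, label its vertices by combinatorial distance from a reference vertex, and then count.

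First I would fix the obvious fundamental domain of $T_\mathbf{X}$: a trivially-stabilized central vertex $c$ of valence $n$, adjacent to $n$ valence-$1$ vertices $v_1,\dots,v_n$ with stabilizers $A_1,\dots,A_n$. Since $x_{i_1}\in A_{i_1}$ and $x_{i_2}\in A_{i_2}$ are elliptic fixing $v_{i_1}$ and $v_{i_2}$ respectively, and the geodesic from $v_{i_1}$ to $v_{i_2}$ consists of two edges (passing through $c$), the hyperbolic element $x_{i_2}x_{i_1}$ translates by four edges and its axis $L$ contains the arc $v_{i_1}\,c\,v_{i_2}$. Indexing vertices of $L$ by their combinatorial distance from $v_{i_1}$ (call this the \emph{position}), one reads off that position $4k$ is $(x_{i_2}x_{i_1})^k v_{i_1}$, position $4k+1$ is $(x_{i_2}x_{i_1})^k c$, position $4k+2$ is $(x_{i_2}x_{i_1})^k v_{i_2}$, and position $4k+3$ is $(x_{i_2}x_{i_1})^k x_{i_2} c$. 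In particular the vertices of $L$ whose stabilizer is a conjugate of $A_{i_1}$ or $A_{i_2}$ sit exactly at the even positions, and the target vertex stabilized by $(x_{i_2}x_{i_1})^m A_{i_2}(x_{i_2}x_{i_1})^{-m}$ is $(x_{i_2}x_{i_1})^m v_{i_2}$ at position $4m+2$.

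For $T_\mathbf{X}$ itself, $v_k$ is adjacent only to $c$, so its nearest point on $L$ is $c$ at position $1$; the even-position vertices strictly between positions $1$ and $4m+2$ are those at $2,4,\dots,4m$, giving $2m$ vertices and hence the first equality. For $(f^{i_1i_2})^m T_\mathbf{X}$, I would model the tree as $T_\mathbf{X}$ with the $G$-action twisted by the representative $F := \prod_{p\ne i_1,i_2}(F_{A_p}^{x_{i_2}x_{i_1}})^m$; since every factor is the identity on $A_{i_1}\cup A_{i_2}$, $F$ fixes $x_{i_1}$ and $x_{i_2}$, so the axis $L$ and its position labelling carry over unchanged and the target vertex remains at position $4m+2$. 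The new $A_k$-vertex, however, is the underlying-tree vertex whose old stabilizer is $F(A_k)=(x_{i_2}x_{i_1})^m A_k (x_{i_2}x_{i_1})^{-m}$, namely $(x_{i_2}x_{i_1})^m v_k$, and its nearest point on $L$ is $(x_{i_2}x_{i_1})^m c$ at position $4m+1$. No even position lies strictly between $4m+1$ and $4m+2$, which gives the count $0$.

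The only real subtlety I anticipate is bookkeeping in the twisted-action model: one must check both that the target vertex's stabilizer is preserved by $F$ (because $F$ acts trivially on $A_{i_2}$ and fixes $x_{i_2}x_{i_1}$) and that the $A_k$-vertex shifts by exactly $(x_{i_2}x_{i_1})^m$ along $L$, compressing the gap between $A_k$'s projection and the target from $4m+1$ edges down to a single edge. Everything else reduces to counting even integers in an interval.
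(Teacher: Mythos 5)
Your proposal is correct and follows essentially the same route as the paper: both arguments identify the axis of $x_{i_2}x_{i_1}$ through $v_{i_1}, c, v_{i_2}$ in $T_\mathbf{X}$, observe that the nontrivially stabilized vertices sit at every other position along it, and count those lying between the projection of the $A_k$-vertex (at $c$ for $T_\mathbf{X}$, at $(x_{i_2}x_{i_1})^m c$ for the twisted tree) and the target vertex $(x_{i_2}x_{i_1})^m v_{i_2}$. Your explicit position-indexing and twisted-action bookkeeping is a cleaner write-up of the same computation the paper performs by listing the intermediate vertices directly.
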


\begin{proof}
When $k \notin\{i_1, i_2\}$, the vertices labeled by conjugates of $A_{i_1}$ and $A_{i_2}$ on the $x_{i_1}x_{i_2}$-axis in $T_\mathbf{X}$ between the vertex labeled by $A_k$ and $(x_{i_2}x_{i_1})^mA_{i_2}(x_{i_2}x_{i_1})^{-m}$ are listed below in order of increasing distance:
\begin{enumerate}[wide, labelwidth=!, labelindent=0pt]
\item[(1)] $A_2$
\item[(2)] $x_2A_1x_2^{-1} (= (x_2x_1)A_1(x_2x_1)^{-1})$
\item[(3)] $(x_2x_1)A_2(x_2x_1)^{-1} (= (x_2x_1)A_2(x_2x_1)^{-1})$

\vdots
\item[(2m)] $(x_2x_1)^mA_1(x_2x_1)^{-m}$
\end{enumerate}

So, $g_{A_k}^{i_1i_2}(T_\mathbf{X}) = 2m$. When $k \notin\{i_1, i_2\}$, there are no vertices on the tree $(\left(f^{i_1i_2}\right)^m(T_\mathbf{X}))$, with non trivial stabilizer between the vertex labeled by $A_k$ and  $(x_{i_2}x_{i_1})^mA_{i_2}(x_{i_2}x_{i_1})^{-m}$ . So, 
$(\left(f^{i_1i_2}\right)^m(T_\mathbf{X})) = 0$.
\end{proof}

\begin{lemma}
$d_{\mathcal{SPD}}(T_{\mathbf{X}}, \left(f^{i_1i_2}\right)^m(T_{\mathbf{X}})) \geq 2m$.
\end{lemma}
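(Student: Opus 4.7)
The plan is to combine the two preceding lemmas about $g_{A_k}^{i_1i_2}$ into a direct Lipschitz estimate. The key ingredients are already assembled: the extension lemma gives that $g_{A_k}^{i_1i_2} : \mathcal{SPD}(G,\mathscr{H}) \to \mathbb{R}$ is $1$-Lipschitz on the $0$-skeleton in the simplicial distance (because moving to an adjacent vertex changes the count by at most $1$), and the continuous linear extension preserves this Lipschitz constant on the whole geodesic metric space.

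First, I would pick any $k \notin \{i_1, i_2\}$; such a $k$ exists because $n \geq 4$ in all the cases where $N^{i_1 i_2}$ is being applied (and in any case one only needs $n \geq 3$ here). Then I would invoke the preceding lemma twice to record the two endpoint values $g_{A_k}^{i_1i_2}(T_{\mathbf{X}}) = 2m$ and $g_{A_k}^{i_1i_2}\bigl((f^{i_1i_2})^m(T_{\mathbf{X}})\bigr) = 0$. Finally, since $g_{A_k}^{i_1 i_2}$ is $1$-Lipschitz, for any two vertices $T, T' \in \mathcal{SPD}^{0}(G,\mathscr{H})$ the triangle inequality along a geodesic edge-path of length $d_{\mathcal{SPD}}(T, T')$ gives
\[
\bigl|g_{A_k}^{i_1i_2}(T) - g_{A_k}^{i_1i_2}(T')\bigr| \;\leq\; d_{\mathcal{SPD}}(T, T').
\]
Substituting $T = T_{\mathbf{X}}$ and $T' = (f^{i_1i_2})^m(T_{\mathbf{X}})$ yields $2m \leq d_{\mathcal{SPD}}(T_{\mathbf{X}}, (f^{i_1i_2})^m(T_{\mathbf{X}}))$, which is exactly the desired inequality.

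There is essentially no obstacle here; the work was all done in the two preceding lemmas (namely verifying the $1$-Lipschitz property of $g_{A_k}^{i_1 i_2}$ using uniqueness of the $A_{i_1} \ast A_{i_2}$-minimal subtree, and computing the counting function at the two endpoints). The only subtlety worth double-checking in writing is that one really may chain the single-edge Lipschitz bound along an arbitrary edge-path in $\mathcal{SPD}^{1}$, and that this chained bound agrees with the continuous linear extension to $\mathcal{SPD}$; both follow from the simplicial structure and are standard. This lemma is then precisely what is needed to conclude undistortion of $\langle N^{i_1 i_2}, N^{i_3 i_4}\rangle$ in $\Gamma_n'$ in the next corollary, by combining with the analogous estimate for the $i_3 i_4$ direction and the commutativity established earlier.
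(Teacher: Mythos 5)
Your argument is correct and is essentially the paper's own: both rest on the two endpoint computations $g_{A_k}^{i_1i_2}(T_{\mathbf{X}}) = 2m$ and $g_{A_k}^{i_1i_2}\bigl((f^{i_1i_2})^m(T_{\mathbf{X}})\bigr) = 0$ together with the $1$-Lipschitz property of $g_{A_k}^{i_1i_2}$; the paper merely phrases the final step via the intermediate value theorem applied to a path, whereas you state the equivalent Lipschitz inequality directly. No gap.
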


\begin{proof}
By intermediate value theorem for metric spaces, the image of the path from $T_{\mathbf{X}}$ to $\left(f^{i_1i_2}\right)^m(T_{\mathbf{X}})$ under the $1$-Lipschitz map $g_{A_k}^{i_1i_2}$ contains the interval $[0, 2m].$ Hence, $d_{\mathcal{SPD}}(T_{\mathbf{X}}, \left(f^{i_1i_2}\right)^m(T_{\mathbf{X}})) \geq 2m.$
\end{proof}

Next we want to prove a similar result about $d_{\mathcal{SPD}}(T_{\mathbf{X}}, \left(f^{i_1i_2}\right)^m\left(f^{i_3i_4}\right)^l(T_{\mathbf{X}}))$.

\begin{lemma}

$d_{\mathcal{SPD}}(T_{\mathbf{X}}, \left(f^{i_1i_2}\right)^m\left(f^{i_3i_4}\right)^l(T_{\mathbf{X}})) \geq 2\max(m , l) \geq m + l$.
\end{lemma}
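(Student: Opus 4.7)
The plan is to adapt the Lipschitz-function argument of the previous lemma and apply it twice: once using the function $g_{A_k}^{i_1i_2}$ to obtain the lower bound $d_{\mathcal{SPD}} \ge 2m$, and once using the analogous function $g_{A_k}^{i_3i_4}$ (with the role of $m$ played by $l$) to obtain $d_{\mathcal{SPD}} \ge 2l$; the conclusion will follow from $\max(2m,2l) = 2\max(m,l) \ge m+l$. Since the situation of interest has $n>4$, I will pick an auxiliary index $k \in \{1,\dots,n\} \setminus \{i_1,i_2,i_3,i_4\}$.

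The heart of the argument will be the claim $g_{A_k}^{i_1i_2}\bigl((f^{i_1i_2})^m(f^{i_3i_4})^l(T_\mathbf{X})\bigr) = 0$. To prove it, I will represent $(f^{i_3i_4})^l$ by the automorphism $\Phi_{34}^{l}$ which fixes every $A_p$ with $p \ne i_3,i_4$ elementwise; this differs from the defining product of $F_{A_p}^{x_{i_3}x_{i_4}}$'s by a single global inner conjugation, hence represents the same outer class. In particular $\Phi_{34}^{l}$ will fix $x_{i_1}, x_{i_2}$ and the subgroups $A_{i_1}, A_{i_2}, A_k$ pointwise. Consequently, as labelled points in the underlying tree, the following data in $\Phi_{34}^{l}(T_\mathbf{X})$ coincide with their counterparts in $T_\mathbf{X}$: the $x_{i_1}x_{i_2}$-axis, every vertex on this axis carrying a conjugate of $A_{i_1}$ or $A_{i_2}$ as stabilizer (in particular the vertex labelled $(x_{i_2}x_{i_1})^m A_{i_2}(x_{i_2}x_{i_1})^{-m}$), and the vertex labelled $A_k$. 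I will conclude $g_{A_k}^{i_1i_2}(\Phi_{34}^{l}(T_\mathbf{X})) = g_{A_k}^{i_1i_2}(T_\mathbf{X}) = 2m$.

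Next, I will choose the analogous representative $\Phi_{12}^{m}$ of $(f^{i_1i_2})^m$ fixing $A_p$ elementwise for every $p\ne i_1,i_2$; it commutes with $\Phi_{34}^{l}$ as automorphisms, so their product $\Phi_{12}^{m}\Phi_{34}^{l}$ represents $(f^{i_1i_2})^m(f^{i_3i_4})^l$. Since $\Phi_{34}^{l}$ left every object relevant to $g_{A_k}^{i_1i_2}$ pointwise fixed, post-composing with $\Phi_{12}^{m}$ has exactly the same effect on those objects as applying $\Phi_{12}^{m}$ directly to $T_\mathbf{X}$. Hence the computation in the previous lemma that yielded $g_{A_k}^{i_1i_2}(\Phi_{12}^{m}(T_\mathbf{X})) = 0$ will carry over verbatim to give $g_{A_k}^{i_1i_2}(\Phi_{12}^{m}\Phi_{34}^{l}(T_\mathbf{X})) = 0$. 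Combining the $1$-Lipschitz property of $g_{A_k}^{i_1i_2}$ with the intermediate value theorem, exactly as in the previous lemma, I obtain
\[ d_{\mathcal{SPD}}\bigl(T_\mathbf{X},\,(f^{i_1i_2})^m(f^{i_3i_4})^l(T_\mathbf{X})\bigr) \ge 2m. \]
Interchanging the roles of the pairs $(i_1,i_2)$ and $(i_3,i_4)$ and using $g_{A_k}^{i_3i_4}$ will give the symmetric bound $\ge 2l$; together these yield $\ge 2\max(m,l) \ge m+l$.

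The main obstacle will be the bookkeeping of representatives: I need to verify that $\Phi_{12}^{m}$ and $\Phi_{34}^{l}$ can indeed be chosen as commuting automorphisms which simultaneously fix every $A_p$ with $p\notin\{i_1,i_2,i_3,i_4\}$ elementwise, and that in this form their composition's action on stabilizer labels is entirely transparent. Once this setup is in place, the uniqueness up to $G$-equivariant homeomorphism of the $A_{i_1}*A_{i_2}$-minimal subtree (Remark~\ref{rem:unique_min}) furnishes all the rigidity needed to justify that every vertex used in the definition of $g_{A_k}^{i_1i_2}$ is preserved or controllably moved by each twist.
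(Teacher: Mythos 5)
Your proposal follows essentially the same route as the paper: apply the $1$-Lipschitz function $g_{A_k}^{i_1i_2}$ for an auxiliary index $k\notin\{i_1,i_2,i_3,i_4\}$ to get the bound $2m$, the symmetric function $g_{A_k}^{i_3i_4}$ to get $2l$, and take the maximum. Your careful choice of commuting representatives fixing the irrelevant factors elementwise is a sound (and in fact cleaner) justification of the key evaluation $g_{A_k}^{i_1i_2}\bigl((f^{i_1i_2})^m(f^{i_3i_4})^l(T_\mathbf{X})\bigr)=0$ versus $g_{A_k}^{i_1i_2}(T_\mathbf{X})=2m$, where the paper's own proof records the pair of values inconsistently with its preceding lemma.
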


\begin{proof}

Consider $k \notin \{i_1, i_2, i_3, i_4 \}$. Then, 
\begin{gather*}g_{A_k}^{i_1i_2}(T_\mathbf{X}) = 0\\
g_{A_k}^{i_1i_2}( \left(f^{i_1i_2}\right)^m\left(f^{i_3i_4}\right)^l(T_{\mathbf{X}})) = m\\
\implies d_{\mathcal{SPD}}(T_{\mathbf{X}}, \left(f^{i_1i_2}\right)^m\left(f^{i_3i_4}\right)^l(T_{\mathbf{X}})) \geq 2m.\\
\text{Similarly, we can show that }\\
d_{\mathcal{SPD}}(T_{\mathbf{X}}, \left(f^{i_1i_2}\right)^m\left(f^{i_3i_4}\right)^l(T_{\mathbf{X}})) \geq 2l.\\
d_{\mathcal{SPD}}(T_{\mathbf{X}}, \left(f^{i_1i_2}\right)^m\left(f^{i_3i_4}\right)^l(T_{\mathbf{X}})) \geq 2\max(m , l).
\end{gather*}
\end{proof}

\begin{corollary} \label{cor:nij_undistorted}
$\langle N^{i_1i_2}, N^{i_3i_4} \rangle := \langle f^{i_1i_2}, f^{i_3i_4} \rangle $ is quasi isometrically embedded in $\Gamma_n'$, when $i_1, i_2, i_3, i_4 \in \{1,..., n\}$ are distinct integers.

\end{corollary}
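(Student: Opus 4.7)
The plan is to pass from the word geometry of $\langle N^{i_1i_2},N^{i_3i_4}\rangle$ to the geometry of $\mathcal{SPD}(G,\mathscr{H})$ by way of an orbit map. By Lemma~\ref{lemma:mil_sva} applied to the $\Gamma_n$-action on $\mathcal{SPD}$, and since $\Gamma_n'$ has finite index in $\Gamma_n$, the orbit map $\Gamma_n'\to\mathcal{SPD}$, $\phi\mapsto \phi(T_\mathbf{X})$, is a quasi-isometry. Setting $H:=\langle N^{i_1i_2},N^{i_3i_4}\rangle\cong\mathbb{Z}\oplus\mathbb{Z}$ by Corollary~\ref{cor:nij_zerothick}, it therefore suffices to show that the restriction of this orbit map to $H$ is a quasi-isometric embedding; equivalently, that for $h=(f^{i_1i_2})^m(f^{i_3i_4})^l$ the distance $d_\mathcal{SPD}(T_\mathbf{X},h(T_\mathbf{X}))$ is comparable, up to multiplicative and additive error, with the word length $|m|+|l|$ of $h$ in the generating set $\{f^{i_1i_2},f^{i_3i_4}\}$ of $H$.

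For the upper Lipschitz bound, both $f^{i_1i_2}$ and $f^{i_3i_4}$ act by isometries on $\mathcal{SPD}$ and displace $T_\mathbf{X}$ by at most some constant $D$. The triangle inequality applied to any factorisation of $h$ into $|m|+|l|$ generators then gives $d_\mathcal{SPD}(T_\mathbf{X},h(T_\mathbf{X}))\le D(|m|+|l|)$.

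For the lower bound, the preceding lemma already furnishes $d_\mathcal{SPD}(T_\mathbf{X},h(T_\mathbf{X}))\ge 2\max(m,l)\ge m+l$ whenever $m,l\ge 0$. To cover the remaining three sign-configurations I will rerun the counting-function argument with the distinguished endpoint on the $x_{i_1}x_{i_2}$-axis placed on whichever side of $T_\mathbf{X}$ the translate $h(T_\mathbf{X})$ sits: concretely, replace $(x_{i_2}x_{i_1})^{m}A_{i_2}(x_{i_2}x_{i_1})^{-m}$ by $(x_{i_1}x_{i_2})^{|m|}A_{i_1}(x_{i_1}x_{i_2})^{-|m|}$ when $m<0$, and analogously for $l$. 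Each modified counting function remains $1$-Lipschitz on $\mathcal{SPD}$ (the unit-change estimate under a single collapse or expansion used only uniqueness of the $A_{i_1}*A_{i_2}$-minimal subtree, Remark~\ref{rem:unique_min}), evaluates to $0$ at $T_\mathbf{X}$, and to $2|m|$ (respectively $2|l|$) at $h(T_\mathbf{X})$. The intermediate value theorem then yields $d_\mathcal{SPD}(T_\mathbf{X},h(T_\mathbf{X}))\ge 2\max(|m|,|l|)\ge |m|+|l|$ in all sign-configurations, and combining this with the upper bound completes the quasi-isometric embedding.

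The main obstacle is purely bookkeeping: verifying that for each of the four sign-configurations of $(m,l)$ there is a variant of the counting function $g^{i_1i_2}_{A_k}$ whose $1$-Lipschitz property and whose values at the two endpoints are correctly established. Once that is in place, the upper and lower Lipschitz inequalities fit together to deliver Corollary~\ref{cor:nij_undistorted}.
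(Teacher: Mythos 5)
Your proposal is correct and follows essentially the same route as the paper: the Milnor--\u{S}varc orbit map identifies word length in $\Gamma_n'$ with the displacement of $T_{\mathbf{X}}$ in $\mathcal{SPD}$, the trivial triangle-inequality bound gives the upper estimate, and the $1$-Lipschitz counting functions $g^{i_1i_2}_{A_k}$ from the preceding lemmas give the lower bound $d_{\mathcal{SPD}}(T_{\mathbf{X}}, h(T_{\mathbf{X}})) \geq 2\max(|m|,|l|) \geq |m|+|l|$. The paper's own proof is terser and tacitly treats only $m,l \geq 0$; your explicit handling of the four sign-configurations fills in a detail the paper glosses over.
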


\begin{proof}
$\langle N^{i_1i_2}, N^{i_3i_4} \rangle \cong \mathbb{Z} \oplus \mathbb{Z}$, is generated by $f^{i_1i_2}, \text{and } f^{i_3i_4}$.

Consider, $g:= \left(f^{i_1i_2}\right)^m \left(f^{i_3i_4}\right)^l \in \langle N^{i_1i_2}, N^{i_3i_4} \rangle.$
As $\mathcal{SPD}$ acts geometrically on $\Gamma_n'$, we have, $\Vert g \Vert_{\Gamma_n'} \approx d_{\mathcal{SPD}}(T_{\mathbf{X}}, g(T_{\mathbf{X}})) \geq m + l = \Vert g \Vert_{\langle N^{i_1i_2}, N^{i_3i_4} \rangle}.$So, $\langle N^{i_1i_2}, N^{i_3i_4} \rangle := \langle f^{i_1i_2}, f^{i_3i_4} \rangle $ is quasi isometrically embedded in $\Gamma_n'$
\end{proof}

\subsection{An undistorted subgroup of $\mathsf{Out}(A_1*A_2*A_3*A_4)$}\label{subsec:M4_nondistortion}

Let us recall definition \ref{defn:M1234}, of $M^{12} \oplus M^{34} \leq {\Omega_4}$, where\\
 $M^{12} := \la \h{1}{1}, \h{2}{1}, \h{1}{2}, \h{2}{2}\ra; M^{34} := \la \h{3}{3}, \h{4}{3}, \h{3}{4}, \h{4}{4}\ra$. To show that $M^{12} \oplus M^{34}$ is undistorted in $\mathsf{Out}(A_1*A_2*A_3*A_4)$ we will
\begin{enumerate}[wide, labelwidth=!, labelindent=0pt]
\item Define , $\mathcal{M}_4$, an $M^{12} \oplus M^{34}$ invariant, connected sub-complex of $\mathcal{SPD}(G_4,\mathscr{H})$ such that the action of $M^{12} \oplus M^{34}$ on $\mathcal{M}_4$ is co-compact; and
\item Show that there is a Lipschitz retraction from $\mathcal{SPD}(G_4,\mathscr{H}) \mapsto \mathcal{M}_4$.
\end{enumerate}

\begin{definition}\label{defn:subcomplexM4}
$\mathcal{M}_4$ is the sub-complex of $\mathcal{SPD}(G_4,\mathscr{H})$ spanned by $0$-simplices ($G_4$-trees) of following type- $T \in \mathcal{M}_4^0 \iff \exists$ a fundamental domain, $F$, of $T$ such that the vertices corresponding to the non-trivial vertex stabilizers of $F$ are part of $A_1*A_2$-minimal subtree and $A_3*A_4$-minimal subtree. 
\end{definition}

\begin{example}
An example of a graph of groups corresponding to a vertex of $\mathcal{M}_4$ is:

\begin{center}
\begin{tikzpicture} \begin{scriptsize}
\begin{scriptsize}
\Tw{5}{0}{uA_1u^{-1}}{uA_2u^{-1}}{vA_3v^{-1}}{vA_4v^{-1}}{ Here, $u \in A_1*A_2; v \in A_3*A_4$} 
\end{scriptsize}
\end{scriptsize} \end{tikzpicture} 
\end{center}

\end{example}

\begin{lemma}
$\mathcal{M}_4$ is an $M^{12} \oplus M^{34}$ invariant sub-complex of $\mathcal{SPD}(G_4,\mathscr{H})$.
\end{lemma}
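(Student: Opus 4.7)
The plan is to reduce to verifying that $\mathcal{M}_4^0$ is $(M^{12} \oplus M^{34})$-invariant; since $\mathcal{M}_4$ is the full sub-complex of $\mathcal{SPD}(G_4,\mathscr{H})$ spanned by $\mathcal{M}_4^0$ and the $\mathsf{Out}(G_4)$-action is simplicial, any simplex of $\mathcal{M}_4$ is then sent to a simplex of $\mathcal{SPD}(G_4,\mathscr{H})$ whose vertices all lie in $\mathcal{M}_4^0$, hence to a simplex of $\mathcal{M}_4$.

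For $\phi \in M^{12} \oplus M^{34}$, Lemma~\ref{lemma:M1234thick} lets me factor $\phi = \phi_{12}\phi_{34}$ with $\phi_{12} \in M^{12}$ and $\phi_{34} \in M^{34}$. I would then select a representative $\Phi = \Phi_{12}\Phi_{34} \in \mathsf{Aut}(G_4)$ where each $\Phi_{st}$ is realised as an explicit product of generators $F_{A_i}^w$ (Definition~\ref{defn:FIJ}) with $i \in \{s,t\}$ and $w \in A_s \cup A_t$. A generator $F_{A_i}^w$ of $\Phi_{12}$ (with $i \in \{1,2\}$ and $w \in A_1 \cup A_2$) sends $A_i$ to $wA_iw^{-1} \subseteq A_1 * A_2$ and fixes each $A_j$, $j \ne i$, pointwise; a short induction then gives $\Phi_{12}(A_1 * A_2) = A_1 * A_2$ as subgroups of $G_4$, together with $\Phi_{12}|_{A_3} = \mathrm{id}$ and $\Phi_{12}|_{A_4} = \mathrm{id}$. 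Symmetrically $\Phi_{34}(A_3 * A_4) = A_3 * A_4$ and $\Phi_{34}$ is the identity on $A_1$ and on $A_2$. Consequently $\Phi$ restricts to an automorphism of $A_1 * A_2$ and to one of $A_3 * A_4$, viewed as subgroups of $G_4$.

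Now take $T \in \mathcal{M}_4^0$ with a witnessing fundamental domain $F$. I would argue that $F$ witnesses $\phi(T) \in \mathcal{M}_4^0$ as well. The underlying tree of $\Phi(T)$ equals $T$, and since $\Phi$ is a bijection of $G_4$ the twisted $G_4$-orbits in $\Phi(T)$ coincide with the original $G_4$-orbits, so $F$ is still a fundamental domain of $\Phi(T)$. Because $\Phi$ stabilises $A_1 * A_2$ setwise, the twisted action of $A_1 * A_2$ on the underlying tree matches the original action, so the $A_1 * A_2$-minimal subtree of $\Phi(T)$ coincides with that of $T$ as a subset of $T$; likewise for $A_3 * A_4$. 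For a vertex of $F$ whose stabilizer $H$ in $T$ is contained in $A_1 * A_2$ (hence is a conjugate of $A_1$ or $A_2$ by an element of $A_1 * A_2$), its stabilizer in $\Phi(T)$ is $\Phi^{-1}(H) \subseteq \Phi^{-1}(A_1 * A_2) = A_1 * A_2$, again a conjugate of $A_1$ or $A_2$ by an element of $A_1 * A_2$, so that vertex remains in the $A_1 * A_2$-minimal subtree of $\Phi(T)$. The $A_3 * A_4$ case is symmetric, yielding $\phi(T) \in \mathcal{M}_4^0$.

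The technical crux is the setwise equality $\Phi(A_1 * A_2) = A_1 * A_2$, which is strictly stronger than the mere invariance of the conjugacy class implied by $M^{12} \oplus M^{34} \subseteq \Omega_4$; a generic representative of $\phi$ obtained from an arbitrary inner twist would only give $\Phi(A_1 * A_2)$ conjugate to $A_1 * A_2$, which does not directly suffice. Once this setwise preservation is in hand, the identification of the minimal subtrees, the preservation of the fundamental domain, and the passage from $0$-simplices to higher-dimensional ones through the simplicial isometric action are routine.
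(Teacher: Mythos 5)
Your proof is correct and follows essentially the same route as the paper: both hinge on the observation that a suitable representative automorphism of an element of $M^{12}\oplus M^{34}$ preserves $A_1*A_2$ and $A_3*A_4$ setwise, so the corresponding minimal subtrees (and hence the defining condition of $\mathcal{M}_4$) are preserved. You are somewhat more careful than the paper's version, which argues only on a specific graph-of-groups shape and omits both the choice-of-representative issue and the reduction from simplices to the $0$-skeleton, but these are refinements of the same argument rather than a different one.
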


\begin{proof}
Let $\mathbf{X'} \in \mathcal{M}_4$ be a graph of groups. Without loss of generality, assume that $\mathbf{X'}$ is given by -
\begin{center}
\begin{tikzpicture} \begin{scriptsize}
\begin{scriptsize}
\Tw{5}{0}{uA_1u^{-1}}{uA_2u^{-1}}{vA_3v^{-1}}{vA_4v^{-1}}{ Here, $u \in A_1*A_2; v \in A_3*A_4$} 
\end{scriptsize}
\end{scriptsize} \end{tikzpicture} 
\end{center}

So, the vertex group
\begin{enumerate}[wide, labelwidth=!, labelindent=0pt]
\item conjugate to $A_1$ is given by $uA_1u^{-1}$
\item conjugate to $A_2$ is given by $uA_2u^{-1}$
\item conjugate to $A_3$ is given by $vA_3v^{-1}$
\item conjugate to $A_4$ is given by $vA_4v^{-1}$ 
\end{enumerate}
Here, $u \in A_1*A_2$ and $v \in A_3*A_4$. If $f \in M^{12} \oplus M^{34}$, then
$ f(u) \in A_1*A_2; f(v) \in A_3*A_4$. So, $f$ maps the $A_1*A_2$-minimal subtree of $T_{\mathbf{X'}}$ to the $A_1*A_2$-minimal subtree of $f(T_{\mathbf{X'}})$. Similarly,  $f$ maps the $A_3*A_4$-minimal subtree of $T_{\mathbf{X'}}$ to the $A_3*A_4$-minimal subtree of $f(T_{\mathbf{X'}})$. By, uniqueness of $A_1*A_2$-minimal subtree and $A_3*A_4$-minimal subtree inside $f(T_\mathbf{X'})$, we can represent $f(\mathbf{X'})$ by the following graph of groups -
\begin{center}
\begin{tikzpicture} \begin{scriptsize}
\begin{scriptsize}
\Tw{5}{0}{u'A_1u'^{-1}}{u'A_2u'^{-1}}{v'A_3v'^{-1}}{v'A_4v'^{-1}}{ Here, $u' \in A_1*A_2; v' \in A_3*A_4$} 
\end{scriptsize}
\end{scriptsize} \end{tikzpicture} 
\end{center}

Hence, $f(\mathbf{X'}) \in \mathcal{M}_4$ and $\mathcal{M}_4$ is $M^{12} \oplus M^{34}$ invariant sub-complex of $\mathcal{SPD}(G_4,\mathscr{H})$.
\end{proof}

\begin{lemma}
$\mathcal{M}_4$ is a connected sub-complex of $\mathcal{SPD}(G_4,\mathscr{H})$.
\end{lemma}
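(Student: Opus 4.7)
The plan is to mimic the strategy of Lemma \ref{lemma:kconnected}, which established the analogous connectedness for $\mathscr{K}^{ij}$. As a first reduction, I would invoke Lemma \ref{lemma:same_fund} to connect any tree $T \in \mathcal{M}_4$ to a tree of type $X$ with the same non-trivial vertex stabilizers in some fundamental domain. The entire connecting path lies in $\mathcal{M}_4$ because the defining condition of Definition \ref{defn:subcomplexM4} depends only on which subgroups stabilize the non-trivial vertices and whether those subgroups lie inside $A_1*A_2$ or $A_3*A_4$; both properties are preserved when all intermediate trees share the same fundamental-domain vertex groups.

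A type $X$ vertex of $\mathcal{M}_4$ is parametrized by a pair $(u,v) \in (A_1*A_2) \times (A_3*A_4)$ producing the non-trivial stabilizers $(uA_1u^{-1}, uA_2u^{-1}, vA_3v^{-1}, vA_4v^{-1})$. I would fix the base tree $\mathbf{X}_{\ast}$ corresponding to $(u,v) = (\mathrm{id}, \mathrm{id})$ and then induct on the combined alternating word length $|u|+|v|$ to connect an arbitrary such tree to $\mathbf{X}_{\ast}$. For the inductive step, write $u = u'a$ with $a \in A_1$ (the case $a\in A_2$ is symmetric, as are the corresponding moves on the $v$-side). Going from $(u',v)$ to $(u,v)$ conjugates only the vertex stabilizer $u'A_2u'^{-1}$, by the element $c := u'au'^{-1}$, and leaves the other three stabilizers unchanged. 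As in Lemma \ref{lemma:kconnected}, this change is realized by a collapse-expand triple: (i) collapse the edge from the central trivial vertex to the $u'A_1u'^{-1}$-vertex, producing a type $Y$ tree with $u'A_1u'^{-1}$ at its valence-three center; (ii) re-select $c\,(u'A_2u'^{-1})\,c^{-1}$ as the representative of the $u'A_2u'^{-1}$-orbit in a new fundamental domain; (iii) expand at the central $u'A_1u'^{-1}$-vertex to return to a type $X$ tree.

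The hard part is verifying that each of the two intermediate trees produced by this move remains in $\mathcal{M}_4$. This reduces to checking that every non-trivial vertex stabilizer arising along the way is still a subgroup of either $A_1*A_2$ or $A_3*A_4$, so that its vertex lies in the appropriate minimal subtree. For our move the $A_3,A_4$-conjugate stabilizers do not change at all, and the relevant conjugating elements $u'$, $u'a$, and $c$ all lie in $A_1*A_2$; hence the two $A_1,A_2$-conjugate stabilizers remain subgroups of $A_1*A_2$ throughout. Iterating this letter-by-letter extension first on $u$ and then symmetrically on $v$ connects every type $X$ tree of $\mathcal{M}_4$ to $\mathbf{X}_{\ast}$ within $\mathcal{M}_4$, which combined with the first reduction yields the connectedness of $\mathcal{M}_4$.
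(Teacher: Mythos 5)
Your proposal is correct and follows essentially the same route as the paper: reduce to type $X$ trees via Lemma \ref{lemma:same_fund}, then induct on the word lengths of $u$ and $v$, peeling off one letter at a time by a collapse--(re-choose fundamental domain)--expand move through a type $Y$ tree, checking that the conjugating elements stay in $A_1*A_2$ (resp.\ $A_3*A_4$) so every intermediate tree remains in $\mathcal{M}_4$. The only cosmetic difference is that the paper's single inductive step adjusts a letter of $u$ and of $v$ simultaneously, while you treat the two sides separately.
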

\begin{proof}
We will prove this by induction. Let, $\mathbf{X'} \in \mathcal{M}_4$ be given by -
\begin{center}
\begin{tikzpicture} \begin{scriptsize}
\begin{scriptsize}
\Tw{5}{0}{uA_1u^{-1}}{uA_2u^{-1}}{vA_3v^{-1}}{vA_4v^{-1}}{ Here, $u \in A_1*A_2; v \in A_3*A_4$} 
\end{scriptsize}
\end{scriptsize} \end{tikzpicture} 
\end{center}
We will show that $\mathbf{X'}$ is connected in $\mathcal{M}_4$ to the graph of groups, $\mathbf{X''}$, given by - 
\begin{center}
\begin{tikzpicture} \begin{scriptsize}
\begin{scriptsize}
\Tw{5}{0}{(ua_i)A_1(ua_i)^{-1}}{(ua_i)A_2(ua_i)^{-1}}{(va_j)A_3(va_j)^{-1}}{(va_j)A_4(va_j)^{-1}}{ Here, $a_i \in A_1 \sqcup A_2; a_j \in A_3 \sqcup A_4$} 
\end{scriptsize}
\end{scriptsize} \end{tikzpicture} 
\end{center}

Without loss of generality assume that $a_i = a_1 \in A_1$. Hence, $\mathbf{X''}$ can be given by -
\begin{center}
\begin{tikzpicture} \begin{scriptsize}
\begin{scriptsize}
\Tw{5}{0}{(u)A_1(u)^{-1}}{(ua_1)A_2(ua_1)^{-1}}{(va_j)A_3(va_j)^{-1}}{(va_j)A_4(va_j)^{-1}}{ Here, $a_1 \in A_1 \sqcup A_2; a_j \in A_3 \sqcup A_4$} 
\end{scriptsize}
\end{scriptsize} \end{tikzpicture} 
\end{center}

Now we will give a collapse-expand path from $T_{\mathbf{X''}}$ to $T_{\mathbf{X'}}$ contained inside $\mathcal{M}_4$.
\begin{enumerate}[wide, labelwidth=!, labelindent=0pt]
\item \textbf{Collapse} the edge adjacent to the vertex labeled by $uA_1u^{-1}$ of $T_{\mathbf{X''}}$, $G_4$-equivariantly.
\item In the resulting tree choose the vertex labeled by $uA_2u^{-1}$ (instead of the vertex labeled by $(ua_1)A_2(ua_1)^{-1}$) from the $(u)A_1(u)^{-1}*(ua_1)A_2(ua_1)^{-1}$-minimal subtree to observe a fundamental domain where the non-trivial vertices are labeled by \\ $\{uA_1u^{-1}, uA_2u^{-1}, (va_j)A_3(va_j)^{-1}, (va_j)A_4(va_j)^{-1}\}$. Observe that this is a tree in $\mathcal{M}_4$
\item \textbf{Expand} the vertex labeled by $uA_1u^{-1}$, so that in the resulting tree there is a fundamental domain containing the vertices labeled by \\ $\{uA_1u^{-1}, uA_2u^{-1}, (va_j)A_3(va_j)^{-1}, (va_j)A_4(va_j)^{-1}\}$.

\item Starting from the above tree we will follow similar collapse-expand path (described above) to connect it to a tree containing a fundamental domain in which the vertices are labeled by \\ $\{uA_1u^{-1}, uA_2u^{-1}, vA_3v^{-1}, vA_4v^{-1}\}$. This tree is $G_4$-equivariantly isometric to $T_{\mathbf{X'}}$.

\end{enumerate}

Now consider the graph of groups $\mathbf{X}$ given by - 
\begin{center}
\begin{tikzpicture} \begin{scriptsize}
\begin{scriptsize}
\Tw{5}{0}{A_1}{A_2}{A_3}{A_4}{ Graph of groups: $\mathbf{X}$} 
\end{scriptsize}
\end{scriptsize} \end{tikzpicture} 
\end{center}

By an induction on the word length of $u\in A_1*A_2$ and $w \in A_3*A_4$, and repeatedly following the collapse-expand moves described above, we can connect $\mathbf{X'}$ to $\mathbf{X}$ in $\mathcal{M}_4$. 

Now, consider any graph of groups $\mathbf{Z}\in \mathcal{M}_4$. Using lemma \ref{lemma:same_fund} we can find a collapse-expand path in $\mathcal{M}_4$ from $\mathbf{Z}$ to a graph of groups with same non trivial vertex groups as that of $\mathbf{Z}$, whose underlying graph is isomorphic to the underlying graph of $\mathbf{X}$. Hence, there is a collapse-expand path in $\mathcal{M}_4$ from $\mathbf{Z}$ to $\mathbf{X}$
\end{proof}

\begin{lemma}
The action $M^{12}\oplus M^{34} \acts \mathcal{M}_4$ is co-compact.
\end{lemma}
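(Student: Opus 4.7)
The plan is to show there are finitely many $M^{12} \oplus M^{34}$-orbits of $0$-cells in $\mathcal{M}_4$, from which co-compactness of the action follows since $\mathcal{M}_4$ is a locally finite simplicial complex of bounded dimension.

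First, I apply lemma \ref{lemma:count} with $n = 4$: the underlying tree of every graph of groups in $\mathcal{M}_4^0$ has at most $6$ vertices and $5$ edges. Up to simplicial isomorphism, there are only finitely many such trees, so only finitely many combinatorial types of underlying graphs occur among the elements of $\mathcal{M}_4^0$. For each combinatorial type $\tau$, I pick a canonical representative $\mathbf{X}_\tau \in \mathcal{M}_4^0$ whose non-trivial vertex stabilizers are literally $A_1, A_2, A_3, A_4$ (not merely conjugates); such a representative exists because the definition of $\mathcal{M}_4$ permits this choice.

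Next, I would prove that every $\mathbf{X}' \in \mathcal{M}_4^0$ of type $\tau$ lies in the $M^{12} \oplus M^{34}$-orbit of $\mathbf{X}_\tau$. The definition of $\mathcal{M}_4$, combined with the uniqueness (up to $G_4$-equivariant homeomorphism) of the $A_1 * A_2$- and $A_3 * A_4$-minimal subtrees from remark \ref{rem:unique_min}, forces the non-trivial vertex stabilizers of $\mathbf{X}'$ to have the form $u_1 A_1 u_1^{-1},\ u_2 A_2 u_2^{-1},\ v_3 A_3 v_3^{-1},\ v_4 A_4 v_4^{-1}$ for some $u_1, u_2 \in A_1 * A_2$ and $v_3, v_4 \in A_3 * A_4$. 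I would then build $\phi \in M^{12} \oplus M^{34}$ inductively as a composition of generators $f_{A_p}^w$ (with $p \in \{1,2\}$, $w \in A_1 \cup A_2$) and $f_{A_q}^w$ (with $q \in \{3,4\}$, $w \in A_3 \cup A_4$), appending one letter at a time to the conjugating words. This follows the same elementary-move template as the collapse-expand moves used in the proof of connectedness of $\mathcal{M}_4$, where each move alters the relevant conjugator by one letter.

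The main obstacle will be the algebraic bookkeeping inside the automorphism group: when $f_{A_2}^a$ is applied after $f_{A_1}^b$, the representative $F_{A_2}^a$ does not fix the previously produced $A_1$-stabilizer $b A_1 b^{-1}$, since $F_{A_2}^a$ rewrites any $A_2$-letter occurring in the conjugator. However, because $A_1$ normalizes itself in $G_4$ (and similarly for $A_2, A_3, A_4$), the transformed $A_1$-stabilizer still has the form $u_1' A_1 (u_1')^{-1}$ for some adjusted word $u_1' \in A_1 * A_2$, and a careful induction on the combined word length of $u_1, u_2, v_3, v_4$ lets one hit any prescribed configuration. Since each combinatorial type contains only a single $M^{12} \oplus M^{34}$-orbit of vertices, and there are finitely many combinatorial types, the quotient $\mathcal{M}_4 / (M^{12} \oplus M^{34})$ has only finitely many vertex-orbits and is therefore compact.
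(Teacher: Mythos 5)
Your proposal follows essentially the same route as the paper: both reduce co-compactness to showing that every vertex of $\mathcal{M}_4$ lies in the $M^{12}\oplus M^{34}$-orbit of a graph of groups whose non-trivial vertex groups are exactly $A_1,\dots,A_4$, and both obtain the required outer automorphism as a product of the generators $f_{A_p}^{w}$ assembled letter-by-letter from the conjugating words $u\in A_1*A_2$ and $v\in A_3*A_4$ (the paper simply writes this product out explicitly, which is precisely the resolution of the bookkeeping issue you flag). The one overstatement is that each combinatorial type of underlying tree contains a single orbit --- distinct arrangements of the vertex groups along the same tree can lie in distinct orbits --- but since there are still only finitely many such arrangements, the finiteness of the vertex-orbit set, and hence co-compactness, is unaffected.
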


\begin{proof}
We will show that there is an outer automorphism in $M^{12}\oplus M^{34}$, which maps a graph of groups whose non-trivial vertex groups are given by $\{A_1, A_2, A_3, A_4\}$ to any $\mathbf{Z} \in \mathcal{M}_4$, where underlying graph of both graphs of groups are isomorphic.

Assume that the non trivial vertex groups of $\mathbf{Z}$ are given by \\ $\{uA_1u^{-1}, uA_2u^{-1}, vA_3v^{-1}, vA_4v^{-1}\}$, where $u\in A_1*A_2$ and $v \in A_3*A_4$ are reduced words given by \\
$u=a_{1i_1}^{\epsilon_1}a_{2i_1}a_{1i_2}a_{2i_2}....a_{1i_k}a_{2i_k}^{\epsilon_2};
v=a_{3i_1}^{\epsilon_3}a_{4i_1}a_{3i_2}a_{4i_2}....a_{3i_l}a_{4i_l}^{\epsilon_4}.$
Here, $a_{ji_m} \in A_j$ and $\epsilon_n \in \{0, 1\}$. Then the outer automorphism represented by the following automorphism is the required outer automorphism \\
 $(f_{A_2}^{a_{2i_k}})^{\epsilon_2}(f_{A_1}^{a_{1i_k}})
 ...
 (f_{A_2}^{a_{2i_1}})(f_{A_1}^{a_{1i_1}})^{\epsilon_1}
(f_{A_4}^{a_{4i_l}})^{\epsilon_4}(f_{A_3}^{a_{3i_l}})
 ...
 (f_{A_4}^{a_{4i_1}})(f_{A_3}^{a_{3i_1}})^{\epsilon_3}
 \in M^{12}\oplus M^{34}.
$
\end{proof}

\begin{lemma}
Let $T \in \mathcal{SPD}$ be a tree. Let, $T^{ij}$ denote the $A_i*A_j$-minimal subtree of $T$. If $i_1, i_2, i_3, i_4 \in \{1,...,n\}$ are distinct integers, then $T^{i_1i_2} \cap T^{i_3i_4}$ is homeomorphic to a connected subset of a line segment.
\end{lemma}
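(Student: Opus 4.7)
The plan is to show that $T^{i_1 i_2}\cap T^{i_3 i_4}$, viewed as a sub-tree of $T$, contains no branch point and is therefore homeomorphic to a connected subset of a line segment.

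First, $T^{i_1 i_2}$ and $T^{i_3 i_4}$ are convex in $T$ (minimal sub-trees are convex), so the intersection is a convex sub-tree of $T$. A sub-tree with no branch point is homeomorphic to a point, an arc, a half-line, or a line, each of which is a connected subset of a line segment. Hence it suffices to prove that every vertex $v \in T^{i_1 i_2}\cap T^{i_3 i_4}$ has valence at most $2$ in the intersection.

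The main structural input is a consequence of Lemma~\ref{lemma:unique_min} and Remark~\ref{rem:unique_min}: for any distinct $p,q \in \{1,\dots,n\}$, the quotient $T^{pq}/(A_p * A_q)$ is topologically the segment $A_p - A_q$, possibly subdivided by trivial-stabilizer valence-two vertices. Indeed, a minimal graph of groups with fundamental group $A_p * A_q$, trivial edge groups, and non-trivial vertex groups drawn from $\{A_p,A_q\}$ must be this segment, because a trivial-stabilizer branching vertex in the quotient would force at least one extra leaf with trivial vertex stabilizer, which would be prunable, contradicting minimality. It follows that every vertex of $T^{pq}$ with trivial $A_p * A_q$-stabilizer has valence exactly $2$ in $T^{pq}$; the only higher-valence vertices of $T^{pq}$ are those whose $G$-stabilizer is a conjugate of $A_p$ or $A_q$.

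Now I would case-analyze at $v \in T^{i_1 i_2}\cap T^{i_3 i_4}$. If the $A_{i_1}*A_{i_2}$-stabilizer of $v$ is non-trivial, then $\mathrm{Stab}_G(v)$ is a conjugate of $A_{i_1}$ or $A_{i_2}$. Any non-trivial element $x \in \mathrm{Stab}_G(v) \cap (A_{i_3}*A_{i_4})$ would then be $G$-conjugate to an element of $A_{i_1}$ or $A_{i_2}$ (from the first containment) and also $G$-conjugate to an element of $A_{i_3}$ or $A_{i_4}$ (applying the standard torsion-in-free-products statement inside $A_{i_3}*A_{i_4}$). Since the ``free-factor type'' of a finite-order element of $G$ is uniquely determined and $\{i_1, i_2\} \cap \{i_3, i_4\} = \emptyset$, no such $x$ exists, so the $A_{i_3}*A_{i_4}$-stabilizer of $v$ is trivial; by the key input, $v$ then has valence $2$ in $T^{i_3 i_4}$, hence valence at most $2$ in the intersection. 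The symmetric case (non-trivial $A_{i_3}*A_{i_4}$-stabilizer of $v$) gives the bound from $T^{i_1 i_2}$; and if both stabilizers are trivial, the key input gives valence $2$ on each side. In every case the valence at $v$ in the intersection is at most $2$, which finishes the proof.

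The anticipated main obstacle is pinning down the ``key input'': precisely, ruling out hidden branching at trivial-stabilizer vertices of $T^{pq}$. This reduces to the classical Bass-Serre fact, packaged by Lemma~\ref{lemma:unique_min}, that the minimal graph-of-groups decomposition of $A_p * A_q$ with trivial edge groups and finite vertex groups is the segment $A_p - A_q$; everything else is a short combinatorial check.
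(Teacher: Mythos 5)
Your proof is correct and follows essentially the same route as the paper's: both arguments reduce to the observation that a vertex of valence greater than $2$ in $T^{pq}$ must have stabilizer conjugate (by an element of $A_p * A_q$) to $A_p$ or $A_q$, so a common branch vertex of $T^{i_1i_2}$ and $T^{i_3i_4}$ would need a stabilizer lying simultaneously in the conjugacy classes of two disjoint pairs of factors, which is impossible since $i_1,i_2,i_3,i_4$ are distinct. The paper phrases this as a contradiction at a hypothetical valence-$\geq 3$ vertex of the intersection, whereas you run the case analysis directly and spell out the ``key input'' (that trivial-stabilizer vertices of $T^{pq}$ have valence exactly $2$) more explicitly than the paper does.
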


\begin{proof}
We will prove this by contradiction. Let, $i_1, i_2, i_3, i_4 \in \{1,...,n\}$ be distinct integers such that $I :=T^{i_1i_2} \cap T^{i_3i_4} \ne \phi$. Let, $v \in I$ be a vertex of valence greater than $2$. 

\begin{enumerate}[wide, labelwidth=!, labelindent=0pt]
\item Since, $v$ is a part of $T^{i_1i_2}$, the uniqueness of the minimal subtree $T^{i_1i_2}$ forces  the stabilizer subgroup of $v$ to be either a conjugate of $A_{i_1}$ or a conjugate of $A_{i_2}$, where the conjugating element belongs to the $A_{i_1}*A_{i_2}$. 
\item Similarly, $v$ is a part of $T^{i_3i_4}$. So, the stabilizer subgroup of $v$ is either a conjugate of $A_{i_3}$ or a conjugate of $A_{i_4}$, where the conjugating element belongs to the $A_{i_3}*A_{i_4}$.
\
\end{enumerate} 
We arrive at a contradiction. The intersection, $I$, cannot have a vertex of valence greater than $2$.
\end{proof}

Now we will construct a map from $\mathcal{SPD}(G_4,\mathscr{H})$ to $\mathcal{M}_4$.

\begin{definition}
Consider the map
\begin{gather*}L_4: \mathcal{SPD}^0(G_4,\mathscr{H}) \rightarrow \mathcal{M}^0_4\\
T \mapsto \overline{T},\end{gather*}
where $\overline{T}$ is described as follows:
\begin{enumerate}[wide, labelwidth=!, labelindent=0pt]
\item If $T \in \mathcal{M}_4$, then $\overline{T}:=T$

\item If $T \notin \mathcal{M}_4$, then $T^{12} \cap T^{34} $ can be empty, a point or a line segment.
\begin{enumerate}[wide, labelwidth=!, labelindent=0pt]
\item If $T^{12} \cap T^{34} = \phi$, and $J$ is the segment that realizes the shortest distance between $T^{12}$ and $T^{34}$ in $T$, then let us denote the vertex $ T^{12} \cap J$ by $v_{12}$ and the vertex $T^{34} \cap J$ by $v_{34}$. With these notations $\overline{T}$ will be the Bass-Serre tree of a graph of groups $\overline{\mathbf{X}}$ , where the vertex groups of $\overline{\mathbf{X}}$ are associated to specific vertex labels of $T^{12}$ and $T^{34}$. It is worth noting that the fundamental group of $\overline{X}$ is $A_1*A_2*A_3*A_4$, as $\overline{X}$ is composed of two subgraphs having fundamental groups $A_1*A_2$ and $A_3*A_4$, connected by an edge. $\overline{X}$ and $\overline{T}$ is described below-

Select the vertex labelling of the fundamental domain of $A_1 \free A_2 \acts T^{12}$ ( resp., $A_3 \free A_4 \acts T^{34}$) containing $v_{12} (\text{resp., }v_{34})$. If $v_{12} (\text{resp., }v_{34})$ is a part of two distinct fundamental domains of $T^{12} (\text{resp., }T^{34})$, then select the fundamental domain closer to the vertices labeled by $A_1 \text{ and } A_2 (\text{resp., }A_3 \text{ and } A_4)$. If $\mathbf{X}$ is the graph of groups corresponding to $T$, then $\mathbf{\overline{X}}$ is the graph of groups whose underlying graph is isomorphic to $\mathbf{X}$ and the vertex groups are replaced by the selected vertex groups without varying the conjugacy classes. $\overline{T}$ is the Bass-Serre tree of $\overline{X}$.

\begin{remark}
If we choose the other fundamental domain instead of the one described above we will get a $G_4$-equivariantly isometric graph of groups.
\end{remark}

\item If $T^{12} \cap T^{34}$ is a single point, then we will use strategy similar to the previous case to construct $\overline{T}$. In this case $v_{12} = v_{34}$.

\item If $I := T^{12} \cap T^{34}$ is a line segment, then take $v_{12} = v_{34}$ to be the midpoint of the segment $I$. We follow case 1 for the rest of the construction.
\end{enumerate}
\end{enumerate}
\end{definition}

\begin{lemma}
$L_4$ can be extended to a continuous, Lipschitz map - \\$L_4: \mathcal{SPD}(G_4,\mathscr{H}) \rightarrow \mathcal{M}_4$
\end{lemma}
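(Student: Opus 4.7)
The plan is to mirror the argument used for $L_{ij}$ in Lemma \ref{lemma:retract}: show that $L_4$ moves adjacent $0$-simplices of $\mathcal{SPD}(G_4,\mathscr{H})$ to points of $\mathcal{M}_4$ at uniformly bounded distance, then extend by simplicial linearity over each closed simplex. Since the number of edge orbits in any tree of $\mathcal{SPD}(G_4,\mathscr{H})$ is uniformly bounded (Lemma \ref{lemma:count}), the simplices have bounded dimension, so a uniform bound at the $0$-skeleton level yields a genuine Lipschitz extension.

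First I would verify that if $T_1, T_2 \in \mathcal{SPD}^0(G_4,\mathscr{H})$ are joined by a single edge, say with $T_2$ obtained from $T_1$ by equivariantly collapsing a set of edge orbits, then $d_{\mathcal{M}_4}(L_4(T_1), L_4(T_2)) \leq C$ for some universal constant $C$. I would split this into cases according to where the collapsed orbits sit relative to the two axes $T_1^{12}, T_1^{34}$ and the connecting segment $J$: (a) if the collapsed edges lie outside $T_1^{12}\cup T_1^{34}\cup J$, then both $L_4(T_1)$ and $L_4(T_2)$ admit fundamental domains with identical non-trivial vertex stabilizer subgroups, so they are connected in $\mathcal{M}_4$ by Lemma \ref{lemma:same_fund}; (b) if collapsed edges lie inside $T_1^{12}$ (or inside $T_1^{34}$), then Remark \ref{rem:unique_min} forces any two admissible choices of fundamental domain and of projection vertex $v_{12}$ (resp.\ $v_{34}$) to differ by the action of an element of $A_1*A_2$ (resp.\ $A_3*A_4$), so $L_4(T_2)$ is $G_4$-equivariantly isometric to a collapse of $L_4(T_1)$; (c) if collapsed edges lie on $J$, the segment shortens, possibly merging $T^{12}$ and $T^{34}$, and again the midpoint rule combined with the uniqueness statement of Remark \ref{rem:unique_min} shows that $L_4(T_2)$ is obtained from $L_4(T_1)$ by at most a bounded number of collapse and fundamental-domain-reselection steps, exactly as in the argument for $L_{ij}$.

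Next I would extend $L_4$ across simplices. Any point of $\mathcal{SPD}(G_4,\mathscr{H})$ lies in the interior of a unique simplex $\sigma$ whose vertices $T_1,\ldots,T_k$ all map into a uniformly bounded neighborhood of each other in $\mathcal{M}_4$. Since $\mathcal{M}_4$ is itself a simplicial sub-complex, I can define the extended map by sending the barycentric combination $\sum \lambda_i T_i$ to $\sum \lambda_i L_4(T_i)$ interpreted in the simplicial structure of $\mathcal{M}_4$; continuity across faces is automatic because the vertex assignment on each face agrees with the restriction from $\sigma$. The Lipschitz constant is controlled by $C$ multiplied by the maximum simplex dimension appearing in $\mathcal{SPD}(G_4,\mathscr{H})$, which is finite by Lemma \ref{lemma:count}. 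Since $L_4$ is the identity on $\mathcal{M}_4$ by construction, the extension is a retraction.

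The main obstacle is case (c): one must check that when an edge on the connecting segment $J$ is collapsed so that $T^{12}$ and $T^{34}$ now touch or overlap in a longer segment, the recipe for choosing $v_{12}, v_{34}$ (in particular, the midpoint convention when $T^{12}\cap T^{34}$ is a segment) is compatible with the recipe before the collapse, producing $G_4$-equivariantly isometric output graphs of groups. The key technical input here is the uniqueness of $(H*K)$-minimal subtrees for finite free factors $H, K$ in Remark \ref{rem:unique_min}, which guarantees that any two admissible projections onto $T^{12}$ or $T^{34}$ differ only by the stabilizer action along the axis, so the choices made in defining $L_4(T_1)$ and $L_4(T_2)$ represent the same point of $\mathcal{M}_4$ after at most one collapse. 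Once this rigidity is in hand, the Lipschitz extension goes through exactly as in the proof of Lemma \ref{lemma:retract}.
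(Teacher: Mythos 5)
Your proposal is correct and follows essentially the same route as the paper: verify that adjacent $0$-simplices are sent to uniformly close points of $\mathcal{M}_4$ (using the uniqueness of the $A_1*A_2$- and $A_3*A_4$-minimal subtrees from remark \ref{rem:unique_min} to reconcile the different choices of fundamental domain), then extend linearly over simplices of bounded dimension. Your three-way split according to where the collapsed edge orbits lie relative to $T^{12}$, $T^{34}$ and $J$ is a finer decomposition than the paper's two cases (vertex groups equal versus one differing per subtree), and your case (c) makes explicit a configuration the paper's proof treats only implicitly, but the key inputs and the conclusion are the same.
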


\begin{proof}
If $T_1, T_2 \in \mathcal{SPD}^{0}$ are two trees such that $d_{\mathcal{SPD}}(T_1, T_2) = 1$, \\
then  we will show that $d_{\mathcal{SPD}}(L_4(T_1), L_4(T_2)) = 1.$ 

Without loss of generality, assume that $T_1$ can be obtained from $T_2$ by a series of collapse moves. Note that for $T_1 \in \mathcal{SPD}(G_4,\mathscr{H})$, there can be at most two edge-orbit collapses. Due to a collapse move the isomorphism types of underlying graphs $\mathbf{X_1}$ and $\mathbf{X_2}$ change. Since, the underlying graph of $\mathbf{\overline{X}_1}$ (resp., $\mathbf{\overline{X}_2}$) is isomorphic to $\mathbf{X_1}$ (resp., $\mathbf{X_2}$). So, the isomorphism types of the underlying graphs of $\mathbf{\overline{X}_1}$ and $\mathbf{\overline{X}_2}$ differ. Two different situations can happen with the non-trivial vertex groups:
\begin{enumerate}[wide, labelwidth=!, labelindent=0pt]
\item The vertex groups of $\mathbf{\overline{X}_1}$ and $\mathbf{\overline{X}_2}$ are same. Then the distance between them in $\mathcal{SPD}$ is $1$.
\item One of the vertex groups in $T^{12}$ (resp. $T^{34}$) is different between $\mathbf{\overline{X}_1}$ and $\mathbf{\overline{X}_2}$. Hence, a vertex group conjugate to $A_1$ or $A_2$ may differ (but not both) and a vertex group conjugate to $A_3$ or $A_4$ may differ (but not both). Due to uniqueness of $T^{12}$ and $T^{34}$ in $T_1$ and $T_2$ and by the construction of $\overline{T}_1$ and $\overline{T}_2$, there is a graph of group $\mathbf{\overline{X'}_2}$ equivalent to $\mathbf{\overline{X}_2}$ having same vertex group as $\mathbf{\overline{X}_1}$. So, this is similar to the previous case and the distance between $\mathbf{\overline{X}_1}$ and $\mathbf{\overline{X}_2}$ in $\mathcal{SPD}$ is $1$.
\end{enumerate}

Hence, the map $L_4$ can be linearly extended to $\mathcal{M}_4^1$ and $\mathcal{M}_4$ such that \\$d_{\mathcal{SPD}}(T_1, T_2) = 1 \implies d_{\mathcal{SPD}}(L_4(T_1), L_4(T_2)) = 1$. 
\end{proof}

\begin{corollary}\label{cor:M4_undistorted}
$\mathcal{M}_4$ is a quasi isometrically embedded sub-complex of $\mathcal{SPD}(G_4, \mathscr{H})$. Hence, $M^{12} \oplus M^{34}$ is undistorted in $\mathsf{Out}(A_1*A_2*A_3*A_4)$.
\end{corollary}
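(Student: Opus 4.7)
The plan is to assemble the pieces already established in this subsection into an application of Lemma \ref{lemma:undistorted}. We have just constructed a continuous, $1$-Lipschitz map $L_4 \colon \mathcal{SPD}(G_4,\mathscr{H}) \to \mathcal{M}_4$, and by the first case of its definition $L_4$ restricts to the identity on $\mathcal{M}_4$, so $L_4$ is a coarse Lipschitz retraction. Since $\mathcal{SPD}(G_4,\mathscr{H})$ is a geodesic metric space and $\mathcal{M}_4$, being a connected sub-complex of a locally finite simplicial complex, is a rectifiable subspace, Lemma \ref{lemma:undistorted} applies directly and yields that the inclusion $\mathcal{M}_4 \hookrightarrow \mathcal{SPD}(G_4,\mathscr{H})$ is a quasi-isometric embedding. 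That gives the first assertion of the corollary.

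For the second assertion, I would invoke the Milnor--\v{S}varc lemma on $M^{12} \oplus M^{34}$ acting on $\mathcal{M}_4$. The three hypotheses have been verified above: the action is by isometries (it is the restriction of the action of $\mathsf{Out}(G_4)$ on $\mathcal{SPD}$), it is properly discontinuous (it is a subgroup of $\Omega_4$, which acts properly discontinuously on $\mathcal{SPD}$, and $\mathcal{M}_4$ is $M^{12} \oplus M^{34}$-invariant by the invariance lemma), and it is co-compact (established in the lemma about co-compactness of the action on $\mathcal{M}_4$, since there is only one orbit of the standard graph of groups $\mathbf{X}$, and $\mathcal{M}_4$ is connected). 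Moreover, $\mathcal{M}_4$ is locally finite because it sits inside the locally finite complex $\mathcal{SPD}(G_4,\mathscr{H})$. So $\mathcal{M}_4$ is a proper geodesic metric space on which $M^{12} \oplus M^{34}$ acts geometrically, and Lemma \ref{lemma:mil_sva} produces an orbit map that is a quasi-isometry between the word metric on $M^{12} \oplus M^{34}$ and $\mathcal{M}_4$.

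Combining these two facts, the inclusion $M^{12} \oplus M^{34} \hookrightarrow \mathsf{Out}(A_1*A_2*A_3*A_4)$ factors (up to quasi-isometry) as the composition of the orbit-map quasi-isometry $M^{12} \oplus M^{34} \to \mathcal{M}_4$, the quasi-isometric embedding $\mathcal{M}_4 \hookrightarrow \mathcal{SPD}(G_4,\mathscr{H})$, and the Milnor--\v{S}varc quasi-isometry $\mathcal{SPD}(G_4,\mathscr{H}) \to \mathsf{Out}(A_1*A_2*A_3*A_4)$ coming from Remark \ref{rem:action}. The composition of quasi-isometric embeddings is a quasi-isometric embedding, so $M^{12} \oplus M^{34}$ is undistorted, which is exactly Definition \ref{defn:undistorted}.

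The only step that requires any real care is verifying that all hypotheses of Lemma \ref{lemma:undistorted} (and of Milnor--\v{S}varc on $\mathcal{M}_4$) are actually in place: namely that $\mathcal{M}_4$ inherits the geodesic/rectifiable structure from $\mathcal{SPD}$ and that the co-compactness statement on $\mathcal{M}_4$ uses the \emph{single} orbit of type-$X$ trees under $M^{12} \oplus M^{34}$ rather than under all of $\Gamma_4'$; this last point is what distinguishes the present corollary from Corollary \ref{cor:fin_ind}, and deserves an explicit sentence pointing out that the outer automorphism constructed in the co-compactness lemma indeed lies in $M^{12}\oplus M^{34}$, not merely in $\Gamma_4'$. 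No further computation should be necessary.
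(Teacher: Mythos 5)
Your proposal is correct and follows exactly the route the paper intends: the preceding lemmas (invariance, connectedness, co-compactness of the $M^{12}\oplus M^{34}$ action on $\mathcal{M}_4$, and the Lipschitz retraction $L_4$) are assembled via Lemma \ref{lemma:undistorted} and the Milnor--\u{S}varc lemma, which is precisely the argument the paper leaves implicit by stating the corollary without further proof. Your added remark that the automorphism constructed in the co-compactness lemma must lie in $M^{12}\oplus M^{34}$ rather than merely in $\Gamma_4'$ is a correct and worthwhile observation, and the paper's construction does satisfy it.
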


\section{Summary}\label{section:summary}

We will summarize our work together to give a summary of the proof of theorem \ref{thm:main} in this section.

\begin{proof}[Proof of theorem \ref{thm:main}]

\begin{enumerate}[wide, labelwidth=!, labelindent=0pt]

\item Finiteness of $\Gamma_2$ follows from corollary \ref{cor:G2_fin}

\item Hyperbolicity of $\Gamma_3$ follows from corollary \ref{cor:G3_hyp}

 \item $\Gamma_4$ is thick of order at most $1$. We will take the assistance of the following table to list the subgroups. The subgroups relevant to our discussions are $H^{12}$ (subgroup generated by the cells shaded in blue), $H^{34}$ (subgroup generated by the cells shaded in red), definition \ref{defn:H^ij};  $M^{12} \oplus M^{34}$, definition \ref{defn:M1234} (subgroup generated by the cells shaded in green). For a tabular representation refer to table \ref{tab:G4}. We will list the reasons whose combination make $\Gamma_4'$ thick of order at most $1$. 

 \begin{enumerate}[wide, labelwidth=!, labelindent=0pt]
 \item $ \la H^{12}, H^{34}, M^{12} \oplus M^{34}\ra \geq \Gamma_4'$.
 
 \item $H^{12}, H^{34}$ are undistorted in $\Gamma_4'$ (corollary \ref{cor:hij_undistorted}).  $M^{12} \oplus M^{34}$ is undistorted in $\Gamma_4$ (corollary \ref{cor:M4_undistorted}).
  
 \item Proposition \ref{prop:hij_sum} proves $H^{12}, H^{34}$ is at most zero thick.  Corollaries \ref{lemma:M1234thick}, \ref{cor:M4_undistorted} prove $M^{12} \oplus M^{34}$ is at most zero thick.
 
 \item Remark \ref{rem:M4_connection} proves
 $H^{12}, H^{34}, M^{12} \oplus M^{34}$ are thickly connected.

 \end{enumerate}

\item For $n > 4$, $\Gamma_n$ is thick of order at most $1$, when each $A_i$ is finite. For notations refer to definitions \ref{defn:H^ij}, \ref{defn:nij}. For a tabular representation refer to table \ref{tab:GN}. We will list the reasons whose combination make $\Gamma_n' \leq \Gamma_n$ thick of order at most $1$.
 \begin{enumerate}[wide, labelwidth=!, labelindent=0pt]
 \item When, $i \ne j$ and $i, j \in \{1,..., n\}$, then $H^{ij}$ generate $\Gamma_n'$ 
  
 \item When $i_1, i_2, i_3, i_4$ are all distinct integers, then $H^{i_1i_2}$s are undistorted in $\Gamma_n'$ (corollary \ref{cor:hij_undistorted}). $\la N^{i_1i_2}, N^{i_3i_4}\ra$ is undistorted in $\Gamma_n'$ (corollary \ref{cor:nij_undistorted})
 
 \item When $i_1, i_2, i_3, i_4$ are all distinct integers, then, proposition \ref{prop:hij_sum}, corollary \ref{cor:nij_zerothick} proves $H^{i_1i_2}, H^{i_3i_4}, \la N^{i_1i_2}, N^{i_3i_4}\ra$ are zero thick.
 
 \item When $i_1, i_2, i_3, i_4$ are all distinct integers, then corollary \ref{cor:thickly_connected} proves
 $H^{i_1i_2}, H^{i_3i_4}, \la N^{i_1i_2}, N^{i_3i_4}\ra$ are thickly connected.

 \end{enumerate}

\end{enumerate}
\end{proof}

\nocite{*}

\bibliography{mybib}

\end{document}